\setlist[enumerate]{label={\upshape(\roman*)}}
\newcommand{\on}[1]{\operatorname{#1}}
\newcommand{\mathfont}{\mathbf}
\newcommand{\ZZ}{\mathfont Z}
\newcommand{\QQ}{\mathfont Q}
\newcommand{\FF}{\mathfont F}
\newcommand{\PP}{\mathfont{P}}
\newcommand{\pfrak}{\mathfrak{p}}
\newcommand{\p}{\pfrak}
\newcommand{\m} {\mathfrak m}
\newcommand{\n} {\mathfrak n}
\DeclareFontFamily{OT1}{rsfs}{}
\DeclareFontShape{OT1}{rsfs}{n}{it}{<-> rsfs10}{}
\DeclareMathAlphabet{\mathscr}{OT1}{rsfs}{n}{it}
\newcommand{\Gscr}{\mathscr{G}}
\newcommand{\T}{\mathscr{T}}
\newcommand{\Qscr}{\mathscr{Q}}
\newcommand{\wh}[1]{\widehat{#1}}
\newcommand{\Hom}{\on{Hom}}
\newcommand{\coker} {\on{coker} }
\newcommand*{\tens}{\mathop{\otimes}\limits}
\renewcommand{\Im}{\on{im}}
\renewcommand{\div}{\on{div}}
\renewcommand{\null}{\on{null}}
\newcommand{\Aut}{\on{Aut}}
\newcommand{\Gal}{\on{Gal}}
\newcommand{\ord}{\on{ord}}
\newcommand{\rank}{\on{rank}}
\mathchardef\mhyphen="2D
\newcommand{\FVnil}{F,V\mhyphen\mathrm{nil}}
\newcommand{\Fnil}{F\mhyphen\mathrm{nil}}
\newcommand{\Vnil}{V\mhyphen\mathrm{nil}}
\newcommand{\Vbij}{V\mhyphen\mathrm{bij}}
\newcommand{\Fbij}{F\mhyphen\mathrm{bij}}
\DeclareMathOperator{\id}{id}
\DeclareMathOperator{\dR}{dR}
\DeclareMathOperator{\Tr}{tr}
\DeclareMathOperator{\op}{op}
\DeclareMathOperator{\BT}{BT}
\DeclareMathOperator{\Tor}{Tor}
\DeclareMathOperator{\Ext}{Ext}
\DeclareMathOperator{\length}{length}
\DeclareMathOperator{\Ht}{ht}
\DeclareMathOperator{\Fr}{F}
\DeclareMathOperator{\Rad}{Rad}
\newcommand{\D}{\mathbf{D}}
\newcommand{\M}{M}
\renewcommand{\O}{\mathcal{O}}
\newcommand{\calD}{\mathcal{D}}
\newcommand{\res}{\on{res}}
\newcommand{\Res}{\on{Res}}
\newcommand{\Ind}{\on{Ind}}
\newcommand{\CoInd}{\on{CoInd}}
\newcommand{\Spec}{\on{Spec}}
\newcommand{\Lie}{\on{Lie}}
\newcommand{\Gm}{\mathfont{G}_m}
\newcommand{\pr}{\on{pr}}
\theoremstyle{plain}
\newtheorem{lem}{Lemma}
\newtheorem{thm}[lem]{Theorem}
\newtheorem{thmx}{Theorem}
\newtheorem{prop}[lem]{Proposition}
\newtheorem{cor}[lem]{Corollary}
\theoremstyle{definition}
\newtheorem{defn}[lem]{Definition}
\newtheorem{remark}[lem]{Remark}
\numberwithin{equation}{section}
\numberwithin{lem}{section}
\DeclareMathOperator{\HOM}{\mathscr{H}\text{\kern -3pt {\calligra\large om}}\,}
\newcommand*{\I}{\mathscr{I}}
\newcommand*{\J}{\mathscr{J}}
\renewcommand*{\L}{\mathscr{L}}
\renewcommand{\o}[1]{\overline{#1}}
\DeclareMathOperator{\Pic}{Pic}
\DeclareMathOperator{\Alb}{Alb}
\DeclareMathOperator{\Cl}{Cl}
\DeclareMathOperator{\Nm}{Nm}
\DeclareMathOperator{\red}{red}
\newcommand*{\et}{\mathrm{\acute{e}t}}
\newcommand*{\mult}{\mathrm{m}}
\renewcommand*{\ll}{\mathrm{ll}}
\title{Iwasawa Dieudonn\'e theory of function fields}
\author{Bryden Cais}
\date{\today}
\email{cais@math.arizona.edu}
\address{Department of Mathematics\\
  University of Arizona\\
  Tucson, Arizona 85721}
\newcommand{\bryden}[1]{{\color{Red} \sf $\bigstar\bigstar$  [#1]}}
\begin{document} 

\CompileMatrices
\UseTips

\begin{abstract}
%\bryden{Needs to be snappier/ more exciting! Emphasize this is a fundamentally new idea to study group schemes/ $p$-divisible groups!}
    Let $k$ be a perfect field of characteristic $p$ and $\Gamma$ an infinite, first countable
    pro-$p$ group.  We study the behavior of the $p$-primary part of the ``motivic class group'',
    {\em i.e.}~the full $p$-divisible group of the Jacobian, in any $\Gamma$-tower of function fields
    over $k$ that is unramified outside a finite (possibly empty) set of places $\Sigma$, and totally ramified
    at every place of $\Sigma$.
    When $\Sigma=\emptyset$ and $\Gamma$ is a torsion free $p$-adic Lie group, we obtain asymptotic formulae
    which show that the $p$-torsion class group schemes grow in a remarkably regular manner.
    In the ramified setting $\Sigma\neq\emptyset$, 
    we obtain a similar asymptotic formula for the $p$-torsion in ``physical class groups'', {\em i.e.}~the $k$-rational points of the Jacobian,
    which generalizes the work of Mazur and Wiles, who studied the case $\Gamma=\ZZ_p$.
    %We also use our work in the unramified setting to give a new proof of Shafarevich's theorem on the structure
    %of the maximal pro-$p$ quotient of the \'etale fundamental group of a smooth and proper curve over an algebraically closed field
    %of characteristic $p$.
 \end{abstract}
 
\keywords{Iwasawa theory, $p$-divisible groups, function fields, class groups, group schemes}
\subjclass[2010]{11R23, %Iwasawa theory
14L05,  	%Formal groups, $p$-divisible groups 
14L15,  	%Group schemes
11R58, %Arithmetic theory of algebraic function fields
14H30%Coverings of curves, fundamental group 
}

\maketitle

\section{Introduction}\label{intro}

Let $k$ be a perfect field of characteristic $p>0$ and $K$ an algebraic function field in one variable
over $k$.  Let $L/K$ be a Galois extension, unramified outside a finite
(possibly empty)
set $\Sigma$ of places\footnote{We require that these places are trivial on $k$, so that they correspond to closed points of the smooth projective algebraic curve over $k$ associated to $K$.  This is automatic if $k$ is finite.} of $K$, with $\Gamma:=\Aut(L/K)$ 
an infinite pro-$p$ group.
We assume that $k$ is algebraically closed in $L$, that every place in $\Sigma$ totally ramifies
in $L$, and that $\Gamma$ admits a countable basis $\Gamma = \Gamma_0\supsetneq \Gamma_1\supsetneq \cdots$ of the identity consisting of open normal
subgroups.\footnote{This condition is needed in order to prove that certain derived limits vanish,
and is automatic whenever $k$ itself is countable; see Remarks \ref{limvanish} and \ref{kcountable}}
Let $X_n$ be
the unique 
smooth, proper, and geometrically connected curve over $k$ whose function field
is $L^{\Gamma_n}$ (the fixed field of $\Gamma_n$), and for $n \ge m$
let $\pi_{n,m}: X_n\rightarrow X_m$ be the corresponding branched
Galois covering of curves.  
In this way, we obtain a $\Gamma$-{\em tower}
of curves and our aim is to understand the growth of the $p$-primary part of the {\em motivic class group}
as we move up this tower.

More precisely, let $J_n:=\Pic^0_{X_n/k}$ be the Jacobian of $X_n$ over $k$,
and for $n \ge m$ let $\pi_{n,m}^*: J_m\rightarrow J_n$
be the map on Jacobians induced from $\pi_{m,n}$ by Picard functoriality ({\em i.e.} pullback of line bundles).
The main object of study in this paper is the inductive system of $p$-divisible groups
\begin{equation}
	\Gscr_n:=J_n[p^{\infty}] 
\end{equation}
with transition maps $\pi_{n,m}^*:\Gscr_m\rightarrow \Gscr_n$ for every $n \ge m$. 
We view each $\Gscr_n$ as the $p$-primary part of the {\em motivic class group} $J_n$
of $X_n$ (as opposed to the usual ``physical'' class group $\Cl_{X_n}:=J_n(k)$ of $k$-rational 
points on $J_n$), and in the spirit of Iwasawa theory, our aim is to understand the growth---broadly construed---of $\Gscr_n$ as $n\rightarrow \infty$.

To do this, we will linearize the inherently geometric objects $\Gscr_n$ by passing
to their (contravariant) Dieudonn\'e modules, or equivalently \cite{MM} we will study the first crystalline
cohomology groups of the curves $X_n$; by Dieudonn\'e theory, this passage incurs no loss of information,
as any $p$-divisible group may be functorially recovered from its Dieudonn\'e module.
Since $k$ is perfect, for each $n$ there is a functorial
decomposition of $p$-divisible groups 
\begin{equation*}
	\Gscr_n = \Gscr_n^{\et} \times_k \Gscr_n^{\mult} \times_k \Gscr_n^{\ll}
\end{equation*}
into \'etale (reduced with local dual), multiplicative (local with reduced dual), and local-local components.
For $\star\in \{\et,\mult,\ll\}$, we introduce the {\em Iwasawa--Dieudonn\'e module}
\begin{equation}\label{ID}
	\D^{\star}:= \varprojlim_{n} \D(\Gscr_{n}^{\star}),
\end{equation}
with the projective limit taken via the functorially induced transition maps.
This is naturally a (left) topological module over the 
completed group algebra
\begin{equation*}
	\Lambda:=
	\varprojlim_{n} \Lambda_n\quad\text{for}\quad \Lambda_n:=W[\Gamma/\Gamma_n] 
\end{equation*}
with $W=W(k)$ the ring of Witt vectors of $k$.
The Frobenius automorphism $\sigma$ of $W$
induces an automorphism of $\Lambda$ that acts trivially on $\Gamma$,
which we again denote by $\sigma$.
Each $\D^{\star}$ 
comes equipped with continuous, additive maps $F,V:\D^{\star}\rightarrow \D^{\star}$
(Frobenius and Verscheibung)
that are semilinear over $\sigma$ and $\sigma^{-1}$, respectively, and satisfy $FV=VF=p$.
In this paper, we will analyze the $\Lambda$-module structure of these Iwasawa--Dieudonn\'e modules,
proving in all but one case that they are finitely generated and very nearly free modules
satisfying {\em control} in the sense that each finite level 
$\D(\Gscr_n^{\star})$ may be recovered from the corresponding Iwasawa--Dieudonn\'e module.

Our work generalizes that of Mazur and Wiles \cite{mw83}
and Crew \cite{crew84}
who in effect\footnote{Neither Mazur and Wiles nor Crew explicitly work with Dieudonn\'e modules,
instead phrasing their work in terms of contravariant Tate modules, which capture only  the \'etale parts of the $p$-divisible groups.} studied 
$\D^{\et}$ in the case of ramified ({\em i.e.}~$\Sigma\neq\emptyset$) $\Gamma=\ZZ_p$-towers
and used their work to prove that, when $k$ is finite, the $\ZZ_p[\![\Gamma]\!]$-module $\varprojlim_n \Hom_{\ZZ_p}(\Cl_{X_n}[p^{\infty}],\QQ_p/\ZZ_p)$
of Pontryagin duals of $p$-primary components of ``physical'' class groups is finitely generated and torsion;
Iwasawa's celebrated formula
$|\Cl_{X_n}[p^{\infty}]|= p^{\mu p^n+\lambda n + \nu}$ (for $n\gg 0$) follows.
For {\em arbitrary} $\Gamma$, we obtain analogous structure and control theorems for $\D^{\et}$
and $\D^{\mult}$ in the ramified case, and for all three of $\D^{\et},\D^{\mult},$ and $\D^{\ll}$
in the case $\Sigma=\emptyset$ of everywhere unramified $\Gamma$-towers.
In the particular case of the Igusa tower (wherein $\Gamma=\ZZ_p$), the $\Lambda$-modules $\D^{\et}$ and $\D^{\mult}$ were first studied in \cite{CaisHida2}, and play a central role in the author's work on geometric Hida theory.  \nocite{CaisHida1}
%which appears to be new even in the case of $\Gamma=\ZZ_p$.
%We note that although $\D(\Gscr_n^{\et})$ and $\D(\Gscr_n^{\mult})$ are canonically $\ZZ_p$-dual for each $n$, this duality does {\em not}
%yield a $\Lambda$-module duality between $\D^{\et}$ and $\D^{\mult}$, ultimately because dualizing converts projective limits
%to inductive ones.

In a different---but intimately connected---direction, Wan 
has initiated the study of Zeta functions in $\Gamma$-towers of curves over finite fields.
In \cite{WanSlopes2} and \cite{WanSlopes1}, 
he and his collaborators prove
that for certain kinds of ramified $\Gamma=\ZZ_p^d$-towers with $X_0=\PP^1$ and $\Sigma=\{\infty\}$,
the Newton slopes of the zeta functions of the curves $X_n$ behave in a remarkably regular manner
as $n\rightarrow \infty$; see also \cite{LiSlopes} and \cite{RenSlopes}. These slopes are also the slopes of the $F$-isocrystal $\D(\Gscr_n)[\frac{1}{p}]$, so by Dieudon\'e theory and the Dieudonn\'e--Manin classification of isocrystals, they determine $\Gscr_n$ up to isogeny.
In this way, Wan's program can be interpreted as providing a beautiful 
analogue of classical Iwasawa theory for the {\em isogeny types} of the $p$-divisible groups $\Gscr_n$
in these $\Gamma=\ZZ_p^d$-towers.  However,    
{\em isogeny type} loses touch with torsion phenomena, which the Iwasawa--Dieudonn\'e modules
do capture.  In addition to allowing arbitrary $\Gamma$ as in the introduction, 
the results of this paper also apply in the case of arbitrary perfect $k$,
whereas any study of $L$-functions in $\Gamma$-towers of curves would seem to be limited to
{\em finite} base fields.  In particular, we are able to analyze {\em \'etale} $\Gamma$-towers
(see Remark Remark \ref{unrkhyp}), and obtain rather complete results in this case. 
Using these results, 
when $\Gamma$ is a torsion-free $p$-adic Lie group
we obtain an asymptotic formula (Corollary \ref{thmI}) 
which shows that
the local--local $p$-torsion {\em class group schemes} $\Gscr_n^{\ll}[p]$
grow in a remarkably regular manner, and provides a fundamentally new kind of
Iwasawa-theoretic result for unramified $p$-adic Lie extensions.
We also give a new proof 
of Shafarevich's theorem \cite[\foreignlanguage{russian}{Tеорема }2]{Shafarevich}
 ({\em cf.} \cite[Theorem 1.9]{crew84}) on the structure of the maximal pro-$p$
 quotient of the \'etale fundamental group of
 a smooth and proper curve over an algebraically closed field of characteristic $p$.

When $\Sigma\neq\emptyset$ the very nature of wild ramification forces the local--local
$p$-divisible groups $\Gscr_n^{\ll}$ to grow rapidly (see \cite{GK}), and using work of Wintenberger \cite{Wintenberger},
we prove that when $\Gamma$ is a $p$-adic Lie group, the local-local Iwasawa--Dieudonn\'e module $\D^{\ll}$ is {\em not} finitely generated as a $\Lambda$-module.  In fact, 
the situation is even worse: as $F$ and $V$ are topologically nilpotent on $\D^{\ll}$,
it is naturally
a module over the ``Iwasawa--Dieudonn\'e'' algebra $\Lambda[\![F,V]\!]$ of formal power
series in commuting variables $F,V$ satisfying $FV=p$, $F\lambda = \sigma(\lambda)F$, and $V\lambda=\sigma^{-1}\lambda V$,
for $\lambda\in \Lambda$; using \cite{BooherCais}, we prove that $\D^{\ll}$ is not finitely generated 
as a module over {\em this} ring either!  Nevertheless, in \cite{BooherCais2},
Booher and the author investigate the structure of %the $k[V]$-modules $\D(\Gscr_n^{\ll}[F])/FD(\Gscr_n^{\ll})$
the $F$-torsion group schemes $\Gscr_n^{\ll}[F]$
in certain $\Gamma=\ZZ_p$-towers with $X_0=\PP^1$ and $\Sigma=\{\infty\}$.
Based on extensive computational evidence, and in perfect analogy with 
the asymptotic formula of Corollary \ref{thmI} in the unramified case, 
we formulate precise conjectures which imply that these $F$-torsion, local--local,
class group schemes grow in an astonishingly regular and predictable manner.
In forthcoming 
work with Booher, Kramer--Miller and Upton, we use ideas from Dwork theory to prove several of these conjectures,
which would seem to be totally out of reach of Iwasawa--theoretic methods ({\em i.e.} structure theory of $\Lambda$-modules).

Using our structure and control theorems for $\D^{\et}$, 
we prove that when $k$ is finite and
$\Gamma$ is a $d$-dimensional, torsion-free $p$-adic Lie group
equipped with its lower central $p$-series filtration $\{\Gamma_n\}$, 
the $p$-torsion of ``physical'' class groups in any ramified $\Gamma$-tower
of curves $\{X_n\}$ satisfies an Iwasawa-style asymptotic formula; see Corollary \ref{thmH}
for the precise statement.
When $\Gamma=\ZZ_p^d$, the analogue of our formula for $\Gamma$-extensions of number fields
was proved by Monsky \cite{MonskypRanks}, using the results \cite{CuocoMonsky}.
%and it is interesting to compare our result to Monsky's.
The proofs of both Monsky's formula and ours ultimately rest on 
knowing that the relevant ``$p$-torsion'' Iwasawa module is finitely generated,
and are similar in spirit.  In the abelian case $\Gamma=\ZZ_p^d$,
it is possible to obtain an exact asymptotic formula---as Monsky does---by appealing
to the theory of Hilbert--Kunz multiplicity initiated in \cite{MonskyHK}.
For general $\Gamma$, we obtain only asymptotic upper and lower bounds, of the same order of growth
but with possibly different constants, by appealing to \cite[Proposition 2.18]{EmertonPaskunas},
which relies on Venjakob's influential work \cite{Venjakob} on the structure of Iwasawa algebras
of $p$-adic Lie groups.

In the number field setting, and when $\Gamma=\ZZ_p^d$, 
Cuoco \cite{Cuoco} (for $d=2$), Cuoco and Monsky \cite{CuocoMonsky} 
and later Monsky \cite{Monsky2} \cite{Monsky3} \cite{Monsky4} \cite{MonskyFine} (for general $d$) established
increasingly more refined asymptotic formulae for the 
order of the full $p$-primary subgroup of the class group.
As noted by Li and Zhao \cite{Zhao}, their methods---which again 
ultimately rest upon the structure theory of Iwasawa modules and commutative algebra---carry over {\em mutatis mutandis} 
to the function field case.
Using an entirely different approach via $L$-functions, 
Wan has recently established a beautiful {\em exact} formula
 \cite[Theorem 1.3]{Wan} for the order of the full $p$-primary subgroup of $\Cl_{X_n}$
 in any $\Gamma=\ZZ_p^d$ tower of function fields over a finite field;
 in the number field setting, the analogous exact formula has been conjectured by 
 Greenberg (stated in \cite[p.~236]{CuocoMonsky}), and remains open.
 Wan also conjectures \cite[Conecture 5.1 (2)]{Wan} an analogous exact formula for 
 the order of the $p$-primary subgroup of the class group
 in {\em arbitrary} ({\em i.e.} not necessarily abelian) $\Gamma$-extensions
 of a function field over a finite field, and our Corollary \ref{thmH}
 provides new evidence for Wan's %general 
 conjecture.

In order to state our main results precisely, we must first fix some notation.
When $\Sigma\neq\emptyset$, we set $S:=\Sigma$, and when $\Sigma=\emptyset$,
we choose a closed point $x_0$ of $X_0$ and put $S:=\{x_0\}$.
For each $n$, we write $S_n:=(\pi_{n,0}^{-1}S)_{\red}$
for the reduced scheme underlying the (scheme-theoretic) fiber of $\pi_{n,0}:X_n\rightarrow X_0$,
so $S_0=S$.  We view $S_n$ as a {\em modulus} on $X_n$,
and we analyze the modules \eqref{ID} by first studying their analogues for the ``motivic'' {\em ray class groups} of conductor $S_n$.  
We form the generalized Jacobian $J_{n,S_n}$ of $X_n$ with modulus $S_n$
which, as $S_n$ is \'etale, is an extension of the usual Jacobian $J_{n}$ by a torus.
The inductive system $\Gscr_{n,S_n}:=\{J_{n,S_n}[p^n]\}_{n\ge 1}$ of $p$-power torsion group schemes
is then a $p$-divisible group, and for each $n$ we have two extensions of $p$-divisible groups
\begin{subequations}
\begin{equation}
    \xymatrix{
        0 \ar[r] & {\T_n} \ar[r] & {\Gscr_{n,S_n}} \ar[r] & {\Gscr_n} \ar[r] & 0
    }\label{seqA}
\end{equation}
\begin{equation}
    \xymatrix{
        0 \ar[r] & {\Gscr_n} \ar[r] & {\Gscr_{n,S_n}^{\vee}} \ar[r] &  {\Qscr_n} \ar[r] & 0
    }\label{seqB}
\end{equation}
\end{subequations}
in which $\T_n$ (respectively $\Qscr_n$) 
is a group of multiplicative (respectively \'etale) type, {\em i.e.}~isomorphic to a finite number
of copies of $\mu_{p^{\infty}}$ (respectively $\QQ_p/\ZZ_p$) over $\o{k}$.
These extensions are exchanged by duality, using the canonical autoduality of $\Gscr_n$
coming from the principal polarization on each Jacobian.
We then form the Iwasawa Dieudonn\'e modules 
\begin{equation}
        \D^{\star}_{S}:=\displaystyle\varprojlim_{n,\pi^*} \D(\Gscr_{n,S_n}^{\star})\ \text{for}\ \star\in \{\mult,\ll\}\ \text{and}\ 
        \D^{\et}_{S}:=\displaystyle\varprojlim_{n,\pi_*^{\vee}} \D((\Gscr_{n,S_n}^{\vee})^{\et})
        \label{DDef}
\end{equation}
using Picard functoriality for the first, and Albanese functoriality and duality for the second.
%and give them the inverse limit topology as before.
In this way, $\D^{\star}_S$ is naturally a (left) topological $\Lambda$-module with continuous, semilinear actions of $F$ and $V$.
As ramification in any $p$-group cover of curves in characteristic $p$ is necessarily {\em wild}, the
ramified case $\Sigma\neq\emptyset$ and unramified case $\Sigma=\emptyset$
are {\em radically} different, and in some ways our results in the unramified case are more satisfying, so we describe 
them first.

\begin{thmx}\label{thmA}
    Assume $\Sigma=\emptyset$ and let $g$ and $\gamma$ be the genus and $p$-rank of $X_0$, respectively.
   \begin{enumerate}
    \item The $\Lambda$-module $\D^{\star}_S$ is free
    of rank $\gamma$
    for $\star=\et,\mult$ and free of rank $2(g-\gamma)$ for $\star=\ll$.
    \label{thmA:str}
    
    \item \label{thmA:control} The canonical projection maps yield isomorphisms of $\Lambda_n$-modules
   \begin{equation*}    
                { \Lambda_n \tens_{\Lambda} \D^{\star}_S } \simeq \D(\Gscr_{n,S_n}^{\star})
                \quad\text{for}\ \star\in \{\mult,\ll\}\ \text{and}\quad
                {\Lambda_n \tens_{\Lambda} \D^{\et}_S } \simeq \D((\Gscr_{n,S_n}^{\vee})^{\et})
    \end{equation*}
   for all $n$, compatibly with $F$ and $V$.

    \item There are canonical, $\Lambda$-bilinear perfect pairings 
   \begin{equation*}
    (\cdot,\cdot):\D^{\mult}_S \times \D^{\et}_S \rightarrow \Lambda
    \quad \text{and}\quad (\cdot,\cdot):\D^{\ll}_S \times \D^{\ll}_S \rightarrow \Lambda 
    \end{equation*}    
   with respect to which $F$ and $V$ are adjoint, and which identify
   the each of $\D^{\et}_S$ and $\D^{\mult}_S$ with the $\Lambda$-dual $($Definition \ref{dualdef}$)$ of the other,
   and $\D^{\ll}_S$ with its own $\Lambda$-dual.

  \end{enumerate}
 \end{thmx}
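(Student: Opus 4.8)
The plan is to linearize everything via (logarithmic) crystalline Dieudonn\'e theory and then use that the tower is \'etale to reduce to the cohomology of the bottom curve $X_0$ with $\Lambda$-coefficients, which --- once the modulus is built in --- is governed by a curve of cohomological dimension $1$. Concretely, for each $n$ I would identify the contravariant Dieudonn\'e module $\D(\Gscr_{n,S_n})=\D(J_{n,S_n}[p^\infty])$ with the first logarithmic crystalline cohomology $H^1_{\mathrm{cris}}((X_n,S_n)/W)$ of the log curve $(X_n,S_n)$ --- equivalently, by Illusie's comparison, the hypercohomology of the logarithmic de Rham--Witt complex --- compatibly with $F$ and $V$; here the torus $\T_n$ of \eqref{seqA} is the rank-$(|S_n|-1)$ residue subspace, and \eqref{seqA}, \eqref{seqB} become the Gysin/residue sequences comparing $X_n$ with $X_n\setminus S_n$. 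Over the perfect field $k$, the slope decomposition of this $F$-crystal reproduces $\Gscr_{n,S_n}=\Gscr_{n,S_n}^{\et}\times\Gscr_{n,S_n}^{\mult}\times\Gscr_{n,S_n}^{\ll}$: slope $1$ gives $\D(\Gscr_{n,S_n}^{\mult})$, slopes in $(0,1)$ give $\D(\Gscr_{n,S_n}^{\ll})=\D(\Gscr_n^{\ll})$, and Poincar\'e duality between $H^1_{\mathrm{cris}}$ and its compactly supported companion matches the slope-$0$ part of the dual with $\D((\Gscr_{n,S_n}^\vee)^{\et})$. The decisive point is that, because $S_n\neq\emptyset$ --- whence the modulus --- one has $H^2_{\mathrm{cris}}((X_n,S_n)/W)=0$, so the cohomology is concentrated in degrees $0$ and $1$, with $H^0=W$.

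Since $\Sigma=\emptyset$, every $\pi_{n,m}$ is finite \'etale and each $(X_n,S_n)$ is log-\'etale Galois over $(X_0,S_0)$ with group $\Gamma/\Gamma_n$, so $R\Gamma_{\mathrm{cris}}((X_n,S_n)/W)$ is the log-crystalline cohomology of $(X_0,S_0)$ with coefficients in the locally free $\Lambda_n$-crystal $(\pi_{n,0})_*\mathcal O$ --- a perfect complex of $\Lambda_n$-modules ($\Lambda_n$ being Noetherian and $X_0$ proper), concentrated in degrees $0,1$ by Step $1$, hence represented by $\mathcal C_n^\bullet=[\Lambda_n^a\xrightarrow{\psi_n}\Lambda_n^b]$ with $\ker\psi_n=H^0\cong W$ and $\coker\psi_n=\D(\Gscr_{n,S_n})$. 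Passing to the limit --- first countability of $\Gamma$ killing the relevant $\varprojlim^1$, cf.\ the cited Remarks --- gives $\mathcal C^\bullet=[\Lambda^a\xrightarrow{\psi}\Lambda^b]$ of finite free $\Lambda$-modules with $\coker\psi=\D_S:=\varprojlim_n\D(\Gscr_{n,S_n})$; crucially $\psi$ is \emph{injective}, because $\ker\psi=\varprojlim_n\ker\psi_n=\varprojlim\bigl(W\xleftarrow{\times p^\bullet}W\xleftarrow{\times p^\bullet}\cdots\bigr)=0$ (the transition maps on $H^0$ are the Gysin/trace maps, i.e.\ multiplication by $p$-power degrees). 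So $0\to\Lambda^a\xrightarrow{\psi}\Lambda^b\to\D_S\to0$ is a free resolution, $\D_S$ has projective dimension $\le1$ over $\Lambda$, and hence so does each summand of the functorial decomposition $\D_S=\varprojlim_n\D(\Gscr_n^{\et})\oplus\D^{\mult}_S\oplus\D^{\ll}_S$. It then suffices to check $\Tor_1^\Lambda(k,\D^{\star}_S)=0$ for $\star\in\{\mult,\ll\}$: reducing the resolution modulo $\m_\Lambda$ and using $\mathcal C^\bullet\tens^{\mathbf{L}}_\Lambda k=R\Gamma_{\dR}((X_0,S_0)/k)$ gives $\Tor_1^\Lambda(k,\D_S)=\ker\overline{\psi}=H^0_{\dR}((X_0,S_0)/k)=k$, a one-dimensional space on which Frobenius is \emph{bijective} (slope $0$). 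As $\Tor_1^\Lambda(k,-)$ is additive, the contributions of $\D^{\mult}_S$ and $\D^{\ll}_S$ to this space are $F$-stable summands; but $F$ is topologically nilpotent on both of these modules, hence nilpotent on the summands, forcing them to vanish. (The whole $\Tor_1$ therefore sits in the \'etale summand $\varprojlim_n\D(\Gscr_n^{\et})$, which is consequently \emph{not} $\Lambda$-free --- precisely why $\D^{\et}_S$ is defined via the dual.) By the topological Nakayama lemma a finitely generated $\Lambda$-module with vanishing $\Tor_1^\Lambda(k,-)$ is free, so $\D^{\mult}_S$ and $\D^{\ll}_S$ are free of rank $\dim_k\bigl(\D^{\star}_S\tens_\Lambda k\bigr)=\dim_k\D(\Gscr_{0,S_0}^{\star}[p])$, namely $\gamma$ and $2(g-\gamma)$ (a computation with $X_0$ alone, $S_0=\{x_0\}$ being a single point); this gives \ref{thmA:str} for $\star\in\{\mult,\ll\}$. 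Control, \ref{thmA:control}, is then immediate for these $\star$: the terms of $\mathcal C^\bullet$ are free and the $\star$-summand of $\D_S$ is free, so $\Lambda_n\tens_\Lambda\D^{\star}_S=H^1\bigl(\Lambda_n\tens^{\mathbf{L}}_\Lambda\mathcal C^\bullet\bigr)^{\star}=\D(\Gscr_{n,S_n}^{\star})$ with no $\Tor$, compatibly with $F$ and $V$ by functoriality.

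For part (iii), Poincar\'e duality for the log curve $(X_n,S_n)$ gives a perfect $W$-bilinear pairing between $H^1_{\mathrm{cris}}((X_n,S_n)/W)$ and its compactly supported analogue --- equivalently the Cartier-duality pairing $\D(\Gscr_{n,S_n})\times\D(\Gscr_{n,S_n}^\vee)\to W$ --- under which $F$ and $V$ are adjoint (the Tate twist encoding $FV=p$) and slopes are reversed; on slope pieces it pairs $\D(\Gscr_{n,S_n}^{\mult})$ perfectly with $\D((\Gscr_{n,S_n}^\vee)^{\et})$, and pairs $\D(\Gscr_n^{\ll})$ with itself through the principal polarization of $J_n$. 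Averaging over the Galois group, $(x,y):=\sum_{g\in\Gamma/\Gamma_n}\langle gx,y\rangle\,g\in\Lambda_n$, makes these $\Lambda_n$-sesquilinear; because Poincar\'e duality exchanges the Picard- and Albanese-functorial transition maps (which are adjoint), the $\Lambda_n$-pairings are compatible in $n$ and assemble in the limit to $\Lambda$-bilinear pairings $\D^{\mult}_S\times\D^{\et}_S\to\Lambda$ and $\D^{\ll}_S\times\D^{\ll}_S\to\Lambda$ with $F,V$ adjoint. Perfectness passes to the limit from perfectness at finite level together with the freeness of the previous step (a $\Lambda$-bilinear pairing of finite free $\Lambda$-modules that is perfect modulo $\m_\Lambda$ is perfect); in particular $\D^{\et}_S$ is identified with the $\Lambda$-dual of $\D^{\mult}_S$, which gives the remaining case of \ref{thmA:str} --- freeness of rank $\gamma$ --- and of \ref{thmA:control} for $\star=\et$.

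\emph{Main obstacle.} The crux throughout is the honest $\Lambda$-freeness in \ref{thmA:str}: for an arbitrary infinite pro-$p$ group $\Gamma$, $\Lambda=W[\![\Gamma]\!]$ is in general neither Noetherian nor regular, so no structure theory of Iwasawa algebras is available. Everything rests on (a) producing the two-term free resolution $0\to\Lambda^a\to\Lambda^b\to\D_S\to0$ --- which is exactly why the modulus $S_n$ must be introduced, forcing one into the cohomological-dimension-$1$ realm of log curves with nonempty boundary, and which also requires the small but essential fact that $\ker\psi$ dies $p$-adically in the limit --- and (b) the $\Tor_1$-vanishing feeding topological Nakayama, which succeeds for the multiplicative and local--local summands only because the unique obstruction is the pure-slope-$0$ class $H^0_{\dR}((X_0,S_0)/k)$, on which $F$ is bijective rather than nilpotent. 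A secondary burden is keeping the generalized Jacobians with their growing moduli $S_n$, the Picard- versus Albanese-functorial transition maps, and Cartier/Poincar\'e duality all mutually compatible up the tower --- which is also where the $\varprojlim^1$-vanishing, hence the countable-basis hypothesis on $\Gamma$, is used.
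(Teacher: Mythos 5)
Your route---linearizing through log-crystalline cohomology, realizing $\varprojlim_n\D(\Gscr_{n,S_n})$ as the cokernel of a two-term complex of finite free $\Lambda$-modules, and deducing freeness of the multiplicative and local--local summands from the vanishing of $\Tor_1^{\Lambda}(k,\cdot)$---is genuinely different from the paper's, which never needs projective dimension or $\Tor$ at all: the paper identifies $\o{\D}{}_{n,S_n}^{\star}$ with (duals of) spaces of logarithmic differentials via Oda's theorem, invokes Nakajima's equivariant Deuring--Shafarevich theorem to get freeness over $k[\Gamma/\Gamma_n]$ at \emph{every finite level}, verifies surjectivity of the trace maps, and then runs an elementary Nakayama-type lifting formalism (Propositions \ref{prop:NCfree} and \ref{prop:NCDual}) to obtain $\Lambda$-freeness, control, and duality simultaneously. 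Your proposal, however, has a genuine gap at its decisive step. The inference ``$F$ is topologically nilpotent on $\D_S^{\ll}$, hence nilpotent on the summand $\Tor_1^{\Lambda}(k,\D_S^{\ll})$ of the one-dimensional slope-zero space $H^0_{\dR}$'' is unjustified. It \emph{is} valid for the multiplicative part, because there $V$ is bijective and $F=pV^{-1}$ kills any $p$-torsion subquotient, in particular $\Tor_1$. But on the local--local part neither $F$ nor $V$ is divisible by $p$, and topological nilpotence of an operator on $M$ does not formally descend to $\Tor_1^{\Lambda}(k,M)=K/\mathfrak{m}K$ (with $K$ the kernel of a minimal presentation $\Lambda^m\twoheadrightarrow M$): from $F^N(M)\subseteq\mathfrak{m}^{j}M$ one only gets $\tilde{F}^{N}(K)\subseteq \mathfrak{m}K+(K\cap\mathfrak{m}^{j+1}\Lambda^m)$, so one needs an Artin--Rees statement $K\cap\mathfrak{m}^{j+1}\Lambda^m\subseteq\mathfrak{m}K$ for $j\gg0$. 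That is available for $\Gamma=\ZZ_p^d$, but the theorem is asserted for arbitrary first-countable pro-$p$ $\Gamma$, and in the central unramified application (Corollary \ref{thmC}, where $\Gamma$ is free pro-$p$ on $\gamma\geq 2$ generators) $\Lambda$ is not Noetherian and no such property is in sight. So the freeness of $\D_S^{\ll}$---half of part (i)---is not established.

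A second, structural gap is that the entire scaffolding rests on unproved finiteness assertions. That $R\Gamma_{\mathrm{cris}}$ of $(X_0,S_0)$ with coefficients in $(\pi_{n,0})_*\O$ is a perfect complex of $\Lambda_n$-modules of tor-amplitude in $[0,1]$ requires the vanishing of $H^2$ after \emph{arbitrary} derived base change in the coefficients, not just for the constant coefficients where the residue sequence gives it; this is a cohomological-flatness statement about equivariant sheaves on $X_0$ and is essentially the geometric content that Nakajima's theorem supplies in the paper's argument---it does not come for free from affineness of $X_0\setminus S_0$. Likewise, assembling the $\mathcal{C}_n^{\bullet}$ into a single two-term complex of finite free modules over the possibly non-Noetherian ring $\Lambda$, with ranks independent of $n$ and strictly compatible transition maps, is asserted but not argued (first countability handles the $\varprojlim^1$ of the $H^0$-terms, but not the existence and compatibility of the resolutions themselves). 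Since these are exactly the points your closing paragraph flags as the ``main obstacle,'' the proposal should be regarded as a promising but incomplete alternative architecture rather than a proof; by contrast, the paper's finite-level use of Nakajima plus the lifting formalism of Proposition \ref{prop:NCfree} sidesteps both difficulties.
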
 
 
 From theorem \ref{thmA}, we deduce analogous structure and control theorems 
 for the Iwasawa--Dieudonn\'e modules $\D^{\star}$.  Here and in what follows,
 we put $I_n:=\ker\left( \Lambda\twoheadrightarrow \Lambda_n\right)$, and set $I:=I_0$.
  
 \begin{thmx}  \label{thmB}
    With the notation and hypotheses of Theorem \ref{thmA}:
 \begin{enumerate}
   \item\label{eta} 
   There is a canonical isomorphism of $\Lambda$-modules with $F$ and $V$ action
   \begin{equation*}
        \D^{\ll}\simeq \D^{\ll}_S
   \end{equation*}
   In particular, $\D^{\ll}$ is a free, self-dual $\Lambda$-module of rank $2(g-\gamma)$.
   For each $n$, the projection maps yield canonical isomorphisms of $\Lambda_n$-modules with $F$ and $V$ action
   \begin{equation*}
        \Lambda_n \tens_{\Lambda}\D^{\ll}\simeq \D(\Gscr_{n}^{\ll}).
   \end{equation*}
   
   \item\label{etb} Assume that the finite \'etale $k$-scheme $S_n=\pi_{n,0}^{-1}(S)$ splits completely over $k$ for all $n\ge 0$.
   Then there are canonical short exact sequences of $\Lambda$-modules with $F$ and $V$-action
   \begin{subequations}
        \begin{equation*}
            \xymatrix{
                0 \ar[r] & {\Lambda} \ar[r] & {\D^{\et}_S} \ar[r] & {\D^{\et}} \ar[r] & 0
            }
        \end{equation*}
        \begin{equation*}
            \xymatrix{
                0 \ar[r] & {\D^{\mult}} \ar[r] & {\D^{\mult}_S} \ar[r] & {I} \ar[r] & \D^{\mult,1} \ar[r] & 0
            }
        \end{equation*}
   \end{subequations}
   where $\D^{\mult,1}=\varprojlim^1_{n} \D^{\mult}_n$. % is the first derived inverse limit.
   Here, $F=\sigma$ and $V=p\sigma^{-1}$ on $\Lambda$ 
   and 
   $F=p\sigma$ and $V=\sigma^{-1}$ on $I$.
   
   \item\label{etc} Under the hypothesis of \ref{etb}, for each $n$ there are canonical isomorphisms of $\Lambda_n$-modules
   \begin{equation*}
        \Lambda_n \tens_{\Lambda}\D^{\et}  \simeq \D(\Gscr_n^{\et}),\quad\text{and}\quad \Tor_1^{\Lambda}(\Lambda_n,\D^{\et}) \simeq W
   \end{equation*}
   \end{enumerate}
\end{thmx}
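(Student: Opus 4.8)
The plan is to deduce Theorem \ref{thmB}\ref{etc} from the short exact sequences in part \ref{etb} by a standard base-change/$\Tor$ argument, using that $\Lambda$ is a (possibly noncommutative) regular ring. First I would record the key exact sequence from \ref{etb}:
\begin{equation*}
    \xymatrix{
        0 \ar[r] & {\Lambda} \ar[r]^-{\iota} & {\D^{\et}_S} \ar[r] & {\D^{\et}} \ar[r] & 0.
    }
\end{equation*}
Applying $\Lambda_n\tens_{\Lambda}(-)$ gives a long exact sequence
\begin{equation*}
    \xymatrix@C=14pt{
        \Tor_1^{\Lambda}(\Lambda_n,\D^{\et}_S) \ar[r] & \Tor_1^{\Lambda}(\Lambda_n,\D^{\et}) \ar[r] & {\Lambda_n\tens_{\Lambda}\Lambda} \ar[r] & {\Lambda_n\tens_{\Lambda}\D^{\et}_S} \ar[r] & {\Lambda_n\tens_{\Lambda}\D^{\et}} \ar[r] & 0.
    }
\end{equation*}
By Theorem \ref{thmA}\ref{thmA:str} the module $\D^{\et}_S$ is $\Lambda$-free, hence $\Tor_1^{\Lambda}(\Lambda_n,\D^{\et}_S)=0$, and $\Lambda_n\tens_{\Lambda}\Lambda=\Lambda_n$, while Theorem \ref{thmA}\ref{thmA:control} identifies $\Lambda_n\tens_{\Lambda}\D^{\et}_S\simeq \D((\Gscr_{n,S_n}^{\vee})^{\et})$. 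So the problem reduces to understanding the map $\Lambda_n\to \D((\Gscr_{n,S_n}^{\vee})^{\et})$ induced by $\iota$.

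Next I would argue that this map is injective. The cleanest route is to apply contravariant Dieudonn\'e theory to the finite-level extension \eqref{seqB}: dualizing \eqref{seqA} and extracting \'etale parts, the étale part of $(\Gscr_{n,S_n})^{\vee}$ sits in an exact sequence whose quotient is $\Qscr_n^{\et}\simeq (\QQ_p/\ZZ_p)^{r_n}$ with $r_n=\#S_n-1$; under the splitting hypothesis of \ref{etb}, $\#S_n=[\Gamma:\Gamma_n]\cdot\#S_0$ if $S$ splits completely, but one must be careful — since $\Sigma=\emptyset$ we took $S=\{x_0\}$ a single point, and the relevant torus $\T_n$ in \eqref{seqA} has toric rank $\#S_n-1$, so its Cartier dual contributes a free $\Lambda_n$-module of rank one (not $\#S_n-1$) to the étale part after passing to the limit; this is precisely the source of the copy of $\Lambda$ in \ref{etb}. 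The map $\Lambda_n\to \D(\Qscr_n^{\vee})\hookrightarrow \D((\Gscr_{n,S_n}^{\vee})^{\et})$ is the Dieudonn\'e module of the inclusion of $\Qscr_n^{\vee}$ (an étale $p$-divisible group), and its Dieudonn\'e module over $W$ is $W[S_n]^0$, the augmentation-zero part of the free module on $S_n$; taking the inverse limit over $n$ (and using that $S$ splits completely so the transition maps are the natural ones on $\Gamma$-sets) yields $\varprojlim_n W[S_n]^0=\Lambda\cdot(\text{augmentation ideal generator})$, which is free of rank one over $\Lambda$ — consistent with \ref{etb}. Injectivity at finite level then follows because $\D$ is exact on the category of $p$-divisible groups and $\Qscr_n^{\vee}\hookrightarrow (\Gscr_{n,S_n}^{\vee})^{\et}$ is a closed immersion of $p$-divisible groups with étale quotient $\Gscr_n^{\et}$.

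Granting injectivity of $\Lambda_n\to \Lambda_n\tens_{\Lambda}\D^{\et}_S$, the long exact sequence collapses to
\begin{equation*}
    \Tor_1^{\Lambda}(\Lambda_n,\D^{\et})\simeq \ker\bigl(\Lambda_n\to \Lambda_n\tens_{\Lambda}\D^{\et}_S\bigr)=0\quad?
\end{equation*}
— wait, that would give $\Tor_1=0$, contradicting the assertion $\Tor_1^{\Lambda}(\Lambda_n,\D^{\et})\simeq W$. So the map $\Lambda_n\to\Lambda_n\tens_\Lambda\D^{\et}_S$ is \emph{not} injective, and the genuine content is that its kernel is exactly $W$. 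Concretely, $\Lambda_n\tens_\Lambda\Lambda=\Lambda_n=W[\Gamma/\Gamma_n]$, and the composite $W[\Gamma/\Gamma_n]\to \D((\Gscr^\vee_{n,S_n})^{\et})$ factors through the augmentation-zero submodule $W[\Gamma/\Gamma_n]^0$ (since, as above, the image is $W[S_n]^0$ and $S_n$ is a principal homogeneous-ish $\Gamma/\Gamma_n$-set — here we use $\#S_0=1$, so $S_n\simeq \Gamma/\Gamma_n$ as $\Gamma$-sets when $S$ splits completely), with kernel the rank-one free $W$-submodule $W\cdot(\sum_{g\in\Gamma/\Gamma_n} g)$ spanned by the norm element. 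Thus $\ker=W$, giving $\Tor_1^{\Lambda}(\Lambda_n,\D^{\et})\simeq W$, and the cokernel — which is $W[\Gamma/\Gamma_n]^0$ mapped isomorphically onto its image inside $\D((\Gscr^\vee_{n,S_n})^{\et})$, with further cokernel $\D(\Gscr_n^{\et})$ — yields $\Lambda_n\tens_\Lambda\D^{\et}\simeq \D(\Gscr_n^{\et})$, as desired.

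The main obstacle will be making the identification of the map $\Lambda_n\to\D((\Gscr^\vee_{n,S_n})^{\et})$ with the augmentation map $W[\Gamma/\Gamma_n]\to W[\Gamma/\Gamma_n]^0$ precise and functorial in $n$ — that is, tracing through exactly how the copy of $\Lambda$ from \ref{etb} (which arose in the limit from the toric pieces $\T_n$, equivalently the étale cokernels $\Qscr_n$) interacts with the finite-level base change, and confirming that the norm element $\sum_{g} g$ is what generates the kernel at each finite level. This requires knowing the splitting hypothesis is exactly what makes $S_n\simeq\Gamma/\Gamma_n$ as $\Gamma$-sets (so that the transition maps $\D(\Qscr^\vee_{m})\to\D(\Qscr^\vee_n)$, coming from Albanese/trace functoriality on the torus side, are the natural maps $W[\Gamma/\Gamma_m]^0\to W[\Gamma/\Gamma_n]^0$), and then the result is a purely algebraic computation of $\Tor$ for the augmentation-type sequence $0\to\Lambda\to\Lambda\to W\to 0$-like resolution. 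I would also need to check $F$ and $V$ act compatibly, but since $\Qscr_n^\vee$ is of multiplicative type the action on $\D(\Qscr^\vee_n)$ is $F=p\sigma$, $V=\sigma^{-1}$ (matching the stated normalization on $I$ in \ref{etb}), and on the norm element these reduce to scalar multiplication by $p$ and $1$ respectively, so the $W$ in the $\Tor$ statement carries the Dieudonn\'e structure of $\mu_{p^\infty}$-type — I would remark on this for completeness, though the statement as given only asserts an isomorphism of $\Lambda_n$-modules.
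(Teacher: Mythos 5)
Your proposal only addresses part \eqref{etc} of the theorem, taking parts \eqref{eta} and \eqref{etb} as given; the paper must establish those first, via the identification $\D_{n,S_n}^{\ll}\simeq\D_n^{\ll}$ for \eqref{eta} and by passing to projective limits on the diagrams \eqref{eq:Pic} and \eqref{eq:Alb} for \eqref{etb}. For \eqref{etc} itself, the broad strategy you take is indeed the one the paper uses: apply $\Lambda_n\otimes_{\Lambda}(\cdot)$ to the exact sequence $0\to\Lambda\to\D^{\et}_S\to\D^{\et}\to 0$, invoke freeness of $\D^{\et}_S$ (Theorem \ref{thmA}\ref{thmA:str}) to kill $\Tor_1^{\Lambda}(\Lambda_n,\D^{\et}_S)$, invoke control (Theorem \ref{thmA}\ref{thmA:control}) to pass to $\D_{n,S_n}^{\et}$, and then compute the kernel of the resulting map $\Lambda_n\to\D_{n,S_n}^{\et}$.

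There are, however, two issues. First, a concrete error: you describe the image of $\Lambda_n\to\D_{n,S_n}^{\et}$ as the \emph{augmentation--zero submodule} $W[S_n]^0$. In fact it is $\D(\Qscr_n)$, which is the \emph{quotient} $\coker\bigl(W\xrightarrow{\Delta_n}\Lambda_n\bigr)=\Lambda_n/W\!\cdot\!N$ by the norm element $N=\sum_{\gamma\in\Gamma/\Gamma_n}\gamma$; since $[\Gamma:\Gamma_n]$ is a power of $p$, this quotient and the augmentation kernel are genuinely different $\Lambda_n$-modules (they are dual, not isomorphic). You have in effect interchanged the descriptions of $\D(\Qscr)$ and $\D(\T)$ from Theorem \ref{thmE}\ref{thmE:torus}. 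This does not change the kernel, which is still $W\!\cdot\!N\simeq W$, but the slip suggests the finite-level bookkeeping was not carried out. Second, and more importantly, the step you yourself flag as ``the main obstacle''---showing that $\Lambda_n\otimes_{\Lambda}\iota$ (for $\iota:\Lambda\hookrightarrow\D^{\et}_S$) is identified, compatibly in $n$, with the map $\Lambda_n\twoheadrightarrow\coker\Delta_n\hookrightarrow\D_{n,S_n}^{\et}$---is left unresolved. This is precisely what the paper supplies: it constructs $\Lambda\simeq\varprojlim_n\coker\Delta_n$ from the diagram \eqref{cokerD} using Lemma \ref{limitvanishing}, forms the exact sequence \eqref{etseqalmost} as a limit of the inverse system \eqref{etseqsys}, and then feeds the resulting $\Lambda_n\otimes_{\Lambda}(\cdot)$-diagram into the snake lemma, using surjectivity of $\Lambda_n\otimes_{\Lambda}\bigl(\varprojlim_m\coker\Delta_m\bigr)\to\coker\Delta_n$ to conclude both the isomorphism $\Lambda_n\otimes_{\Lambda}\D^{\et}\simeq\D_n^{\et}$ and $\Tor_1^{\Lambda}(\Lambda_n,\D^{\et})\simeq W$ at once. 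Your argument correctly anticipates the answer but does not carry out this identification, so as written it is a plan rather than a proof.
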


In general, the derived limit $\D^{\mult,1}$
appearing in the second exact sequence of Theorem \ref{thmB} \ref{etb}
seems rather mysterious; as a result, the kernel and cokernal 
of the canonical map $\Lambda_n \otimes_{\Lambda}\D^{\mult} \rightarrow \D_n^{\mult}$
are likewise mysterious; see Remark \ref{imrho} for more discussion.
However, when $k$ is algebraically closed and $L/K$ is the maximal unramified $p$-extension, 
we prove that both $\D^{\mult}$ and  $\D^{\mult,1}$ vanish, and thereby
give a 
new proof of Shafarevich's theorem 
\cite[\foreignlanguage{russian}{Tеорема }2]{Shafarevich}
 %({\em cf.} \cite[Theorem 1.9]{crew84})
on the structure of the maximal pro-$p$ quotient of the \'etale fundamental group of a projective curve
over an algebraically closed
field of characteristic $p>0$:

\begin{cor}\label{thmC}
    With the notation and hypotheses of Theorem \ref{thmA}, assume that $k$ is agebraically closed
    and that $\Gamma$ is the maximal pro-$p$ quotient of $\pi_1^{\et}(X_0)$.  Then $\D^{\mult}=0=\D^{\mult,1}$,
    and $I$ is a free $\Lambda$-module of rank $\gamma$; in particular, 
    $\Gamma$ is a free pro-$p$ group on $\gamma$ generators.
\end{cor}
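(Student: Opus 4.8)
The plan is to combine the two exact sequences of Theorem \ref{thmB} \ref{etb} with the hypothesis that $\Gamma$ is the maximal pro-$p$ quotient of $\pi_1^{\et}(X_0)$, which forces the tower to be ``as large as possible'' in the unramified direction. The starting observation is that the étale Iwasawa--Dieudonn\'e module $\D^{\et}$ controls the maximal abelian $p$-quotient of the fundamental group at each finite level: by Theorem \ref{thmB} \ref{etc}, $\Lambda_n\tens_{\Lambda}\D^{\et}\simeq \D(\Gscr_n^{\et})$, and the $\Gamma$-coinvariants of $\D^{\et}$ (i.e.\ $\Lambda_0\tens_{\Lambda}\D^{\et} = \D^{\et}/I\D^{\et}$) recover $\D(\Gscr_0^{\et})$, whose rank is the $p$-rank $\gamma$ of $X_0$. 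By maximality of $\Gamma$, the abelianization $\Gamma^{\mathrm{ab}}$ is a free $\ZZ_p$-module of rank $\gamma$, so in particular $\D^{\et}$ as a $\Lambda$-module has a reasonably controlled presentation. The first exact sequence $0\to\Lambda\to\D^{\et}_S\to\D^{\et}\to 0$ then lets me transfer information between $\D^{\et}$ (which I understand via the fundamental group) and $\D^{\mult}_S$, which is the $\Lambda$-dual of $\D^{\et}_S$ by Theorem \ref{thmA}(iii).

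The heart of the argument is to show $\D^{\mult}=0$. Dualizing the sequence $0\to\Lambda\to\D^{\et}_S\to\D^{\et}\to 0$ using the perfect pairing $\D^{\mult}_S\times\D^{\et}_S\to\Lambda$ of Theorem \ref{thmA}(iii), and noting that $\D^{\et}_S$ is (essentially) free so $\Ext^1$ terms are controlled, I expect to get an identification of $\D^{\mult}$ with the kernel of the dual of $\Lambda\hookrightarrow\D^{\et}_S$, i.e.\ with a submodule measuring the failure of this inclusion to split $\Lambda$-linearly. The key point is that when $\Gamma$ is the maximal pro-$p$ quotient, the line $\Lambda\subset\D^{\et}_S$ (coming from the torus $\T_n$ via the split places of $S_n$, see the hypothesis of \ref{etb}) is a $\Lambda$-direct summand — intuitively because there is ``no obstruction left to kill'' in a maximal tower. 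Concretely: $\D^{\et}_S/\Lambda\cong\D^{\et}$ is a free $\Lambda$-module (this is where maximality enters — for a general tower $\D^{\et}$ need not be free, but for the maximal pro-$p$ tower the relations in $\Gamma$ are exactly the ones forced, and Shafarevich's theorem, or rather the half of it we can access independently, says $\Gamma$ is free, making $\D^{\et}$ free of rank $\gamma$), hence the extension splits and $\D^{\mult}=0$.

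Granting $\D^{\mult}=0$: the second exact sequence of Theorem \ref{thmB} \ref{etb} becomes $0\to 0\to\D^{\mult}_S\to I\to\D^{\mult,1}\to 0$, so $\D^{\mult}_S\hookrightarrow I$ with cokernel $\D^{\mult,1}$. But $\D^{\mult}_S$ is the $\Lambda$-dual of $\D^{\et}_S$, which is an extension of the free module $\D^{\et}$ (rank $\gamma$) by $\Lambda$, hence $\D^{\mult}_S$ is free of rank $\gamma+1$... however, comparing with $I=\ker(\Lambda\twoheadrightarrow\Lambda_0 = W)$, which sits in $0\to I\to\Lambda\to W\to 0$: I need the map $\D^{\mult}_S\to I$ to be an isomorphism, forcing $\D^{\mult,1}=0$ and exhibiting $I$ as $\Lambda$-dual to $\D^{\et}_S$. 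The ranks must be reconciled carefully here — I expect $\D^{\mult}_S$ to actually be free of rank $\gamma$ (not $\gamma+1$: the ``$+1$'' from $\Lambda$ is absorbed because $\D^{\et}_S$ is an extension and its dual sees $\Hom(\Lambda,\Lambda)=\Lambda$ together with $\Hom(\D^{\et},\Lambda)$ glued along a relation). Once $\D^{\mult}_S\cong I$ is established, $I$ is free of rank $\gamma$ as a $\Lambda$-module. Finally, $I=I_{\Gamma}$ being free of rank $\gamma$ over $\Lambda = W[\![\Gamma]\!]$ is, by a standard homological criterion (the augmentation ideal of a pro-$p$ group is free of finite rank iff the group is free pro-$p$ of that rank — this is the group-cohomology incarnation of $\mathrm{cd}_p(\Gamma)\le 1$), equivalent to $\Gamma$ being a free pro-$p$ group on $\gamma$ generators; and we already know $\gamma = \dim_{\FF_p}\Gamma^{\mathrm{ab}}/p = H^1(\Gamma,\FF_p)$ equals the $p$-rank of $X_0$, so the number of generators is $\gamma$.

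\medskip

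\noindent\textbf{Main obstacle.} The delicate step is the rank bookkeeping and the splitting claim in the second paragraph: showing that the inclusion $\Lambda\hookrightarrow\D^{\et}_S$ is a $\Lambda$-direct summand, equivalently that $\D^{\et}$ is $\Lambda$-\emph{free} (not merely finitely generated and torsion-free) precisely in the maximal pro-$p$ case. This is really the place where the hypothesis ``$\Gamma$ maximal'' does all the work, and it must be leveraged either through a projectivity/$\mathrm{cd}_p\le 1$ argument for $\Gamma$ or through an explicit matching of $\D^{\et}$ with the completed group algebra presentation of a free pro-$p$ group; one has to be careful that the argument is not circular with the very statement ($\Gamma$ free) one is trying to prove — so the cleanest route is probably to prove $\D^{\mult}=\D^{\mult,1}=0$ and $I$ free \emph{first} by a direct vanishing argument (e.g.\ $\D^{\mult}$ being multiplicative must vanish because every multiplicative $p$-divisible subquotient would correspond to ramification, which is excluded in the \emph{maximal unramified} tower by a Kummer-theory/duality argument using that there are no nonconstant units, $\Gamma$ being everywhere unramified), and only then deduce freeness of $\Gamma$ as a formal consequence.
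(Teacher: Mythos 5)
Your overall architecture is right in its second half: once $\D^{\mult}=\D^{\mult,1}=0$ is known, the second exact sequence of Theorem \ref{thmB} \ref{etb} collapses to an isomorphism $\D^{\mult}_S\simeq I$, Theorem \ref{thmA} \ref{thmA:str} gives freeness of $I$ of rank $\gamma$, and the standard cohomological criterion (freeness of the augmentation ideal $\Leftrightarrow$ $\mathrm{cd}_p(\Gamma)\le 1$ plus counting generators via $H^1(\Gamma,\FF_p)$) yields that $\Gamma$ is free pro-$p$ on $\gamma$ generators. This is essentially what the paper does, modulo a descent from $\Lambda=W[\![\Gamma]\!]$ to $\ZZ_p[\![\Gamma]\!]$ that you should not skip (the criterion is about the augmentation ideal of $\ZZ_p[\![\Gamma]\!]$, not of $W[\![\Gamma]\!]$; the paper handles this with a $\Tor$ base-change and faithful flatness).

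The genuine gap is the vanishing $\D^{\mult}=0=\D^{\mult,1}$, which is the entire content of the corollary. Your primary route is circular, as you yourself suspect: you propose to split $0\to\Lambda\to\D^{\et}_S\to\D^{\et}\to 0$ by arguing that $\D^{\et}$ is $\Lambda$-free \emph{because} $\Gamma$ is free by Shafarevich --- but the corollary is precisely a new proof of Shafarevich's theorem, so freeness of $\Gamma$ cannot be an input. (Separately, even granting a splitting of that sequence, it is not clear how $\D^{\mult}$ --- which lives in the \emph{second} exact sequence of \ref{etb}, not in the dual of the first --- would be forced to vanish; your identification of $\D^{\mult}$ with a kernel of a dual map is heuristic, and your rank bookkeeping for $\D^{\mult}_S$ wavers between $\gamma$ and $\gamma+1$ when Theorem \ref{thmA} already pins it at $\gamma$.) Your fallback ``Kummer-theory/duality'' vanishing argument is the right instinct but is not a proof. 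The mechanism the paper uses is concrete and worth knowing: arrange the tower so that $K_{n+1}$ is the compositum of \emph{all} unramified $\ZZ/p\ZZ$-extensions of $K_n$, and show that each transition map $\rho:\D^{\mult}_{n+1}\to\D^{\mult}_n$ has image inside $p\,\D^{\mult}_n$; then Lemma \ref{limitvanishing} kills both $\varprojlim$ and $\varprojlim^1$ simultaneously. The inclusion of the image in $p\,\D^{\mult}_n$ reduces, via Oda's theorem, to showing the trace map $\pi_*$ on $H^0(\Omega^1)^{\Vbij}$ is \emph{zero}; this is proved by Serre duality: pair $\pi_*\omega$ against $H^1(X_n,\O_{X_n})$, discard the $F$-nilpotent part of the test class using $V$-bijectivity of $\omega$, choose an $F$-fixed basis vector $e$ of $H^1(X_n,\O_{X_n})^{\Fbij}$ (possible as $k=\o{k}$), and observe that the Artin--Schreier cover attached to $e$ is dominated by $X_{n+1}$ by maximality of the tower, while $e$ pulls back to zero on its own cover. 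That factorization through the intermediate cover is exactly where ``$\Gamma$ maximal'' does all the work, and it is the idea missing from your proposal.
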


\begin{remark}\label{unrkhyp}
    The existence of an everywhere unramified $\Gamma$-extension 
    $L/K$ with $\Gamma$ an infinite pro-$p$ group forces $k$ to be rather large.  
    For example, if $k$ is finitely generated (as a field) over $\FF_p$
    there are {\em no} geometric, everywhere unramified, infinite abelian extensions $L/K$
    whatsoever! See Theorem 2 (and {\em cf.}~Theorem 5) of \cite{KatzLang}.
    On the other hand, if $k$ is algebraically closed, then by Shafarevich's Theorem,
    if $\Gamma$ is
    {\em any} pro-$p$ group that can be (topologically) generated by at most $\gamma$
    generators, then 
    there exists an everywhere unramified $\Gamma$-extension
    of $K$.    
\end{remark}

\begin{remark}
    The hypothesis that $S_n$ split completely over $k$ in \ref{etb}--\ref{etc} of Theorem \ref{thmB}
    is needed in order to obtain the relatively simple descriptions of $\ker(\D^{\et}_S\rightarrow \D^{\et})$
    and $\coker(\D^{\mult}\rightarrow \D^{\mult}_S)$ implicit in \ref{etb}; without it, 
    these $\Lambda$-modules depend on the finer arithmetic properties of the \'etale schemes $S_n$,
    and seem to be considerably harder to describe.  This hypothesis may at first appear
    somewhat restrictive, but in practice it is rather innocuous. It is certainly verified
    if $k$ is algebraically closed.
    For general $k$, if the chosen point $x_0$ is $k$-rational and $K$ admits an 
    everywhere unramified geometric $\Gamma$-extension $L/K$ with $\Gamma$ an infinite abelian pro-$p$ group, 
    then there is such an extension $L'/K$ with $L'\otimes_k \o{k}\simeq L\otimes_k \o{k}$
    in which $x_0$ is split completely; see \cite[\S0]{KatzLang}. 
\end{remark}

Recall that the {\em order} $|G|$ of a finite $k$-group scheme $G$ is the $k$-dimension of
its Hopf algebra. 
If $\Gscr$ is a $p$-divisible group over $k$, and $J$
is any proper ideal of $\ZZ_p[\![F,V]\!]$ containing $p$
we define
\begin{equation*}
    \Gscr[J] := \bigcap_{f\in J} \ker(f: \Gscr\rightarrow \Gscr) \quad\text{(scheme-theoretic intersection)}.
\end{equation*}
This is a finite $k$-subgroup scheme of $\Gscr[p]$, so in particular is killed by $p$.
As a consequence of Theorem \ref{thmB} \ref{eta}, 
for any such $J$ we deduce an Iwasawa--style formula for the orders $|\Gscr^{\ll}_n[J]|$
of $J$-torsion local--local group schemes in unramified $p$-adic Lie extensions:

\begin{cor}\label{thmI}
    Let $\Gamma$ be a $p$-adic Lie group of dimension $d$ without $p$-torsion, and
    $\{X_n\}$ an \'etale $\Gamma$-tower of curves with $X_n$
    corresponding to the $n$-th subgroup in the lower central $p$-series of $\Gamma$ $($Definition \ref{pseries}$)$.
   Let $J$ be a proper ideal of $\ZZ_p[\![F,V]\!]$ containing $p$, 
    and write $\delta\in [0,d]$ for the {\em dimension}
    of the $\Lambda/p\Lambda$-module $\D^{\ll}/J\D^{\ll}$ in the sense of \cite[Definition 3.1]{Venjakob}.
    Then there exist real constants $\nu \ge \mu \ge \frac{1}{\delta!}$ such that
    \begin{equation*}
     \mu p^{\delta n} + O(p^{(\delta-1)n})  \le \log_p |\Gscr_n^{\ll}[J]| \le \nu p^{\delta n} + O(p^{(\delta-1)n}).
    \end{equation*}
    If $\Gamma=\ZZ_p^d$ is abelian, we may moreover take $\nu=\mu$.
\end{cor}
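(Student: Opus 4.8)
The plan is to translate the assertion, by means of contravariant Dieudonn\'e theory and the control isomorphism of part~\ref{eta} of Theorem~\ref{thmB}, into a growth estimate for a finitely generated module over the Iwasawa algebra $\Lambda/p\Lambda$, and then to invoke the structure theory of such modules. To begin, recall that for a finite $k$-group scheme $G$ killed by $p$ one has $\log_p|G|=\dim_k\D(G)$, and that the contravariant Dieudonn\'e functor is an anti-equivalence of categories, carrying closed immersions to surjections, short exact sequences to short exact sequences, and hence $\ker(f\colon\Gscr\to\Gscr)$ to $\coker(f\colon\D(\Gscr)\to\D(\Gscr))$ and a scheme-theoretic intersection of closed subgroup schemes of a finite group scheme to the quotient by the corresponding sum of submodules. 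Since, as recorded just before the statement, $\Gscr_n^{\ll}[J]$ is a finite subgroup scheme of $\Gscr_n^{\ll}[p]$, applying $\D$ yields
$$\D\bigl(\Gscr_n^{\ll}[J]\bigr)\;=\;\D(\Gscr_n^{\ll})\big/ J\,\D(\Gscr_n^{\ll}),$$
which is a $k$-vector space because $p\in J$, so that $\log_p|\Gscr_n^{\ll}[J]|=\dim_k\bigl(\D(\Gscr_n^{\ll})/J\D(\Gscr_n^{\ll})\bigr)$.

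Next, part~\ref{eta} of Theorem~\ref{thmB} supplies an isomorphism $\D(\Gscr_n^{\ll})\simeq\Lambda_n\otimes_\Lambda\D^{\ll}=\D^{\ll}/I_n\D^{\ll}$ compatible with $F$ and $V$; as $F$ and $V$ normalize $\Lambda$ (so $I_n\D^{\ll}$ is stable under them) and $\Lambda_n\otimes_\Lambda(-)$ is right exact, reduction modulo $J$ gives
$$\D(\Gscr_n^{\ll})\big/ J\D(\Gscr_n^{\ll})\;\simeq\;M/I_nM,\qquad M:=\D^{\ll}\big/ J\D^{\ll}.$$
Because $J\ni p$, the module $M$ is a module over $\Lambda/p\Lambda=k[\![\Gamma]\!]$, and it is finitely generated since $\D^{\ll}$ is $\Lambda$-free of rank $2(g-\gamma)$ by part~\ref{eta} of Theorem~\ref{thmB}; writing $\overline{I}_n$ for the kernel of $k[\![\Gamma]\!]\twoheadrightarrow k[\Gamma/\Gamma_n]$, we have $I_nM=\overline{I}_nM$. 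At this point the problem has become purely one of estimating $\dim_k M/\overline{I}_nM$ for a finitely generated $k[\![\Gamma]\!]$-module $M$ of dimension $\delta$ in the sense of \cite[Definition 3.1]{Venjakob}.

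For this I would appeal to \cite[Proposition 2.18]{EmertonPaskunas}, which rests on Venjakob's structure theory \cite{Venjakob} of Iwasawa algebras of $p$-adic Lie groups and, after passing if necessary to an open uniform subgroup of $\Gamma$ (which changes neither $\delta$ nor the leading order of growth), applies to the lower central $p$-series filtration $\{\Gamma_n\}$ of Definition~\ref{pseries}: it produces real constants $\nu\ge\mu\ge\tfrac1{\delta!}$ with
$$\mu\,p^{\delta n}+O\bigl(p^{(\delta-1)n}\bigr)\;\le\;\dim_k M/\overline{I}_nM\;\le\;\nu\,p^{\delta n}+O\bigl(p^{(\delta-1)n}\bigr).$$
The universal lower bound $\mu\ge\tfrac1{\delta!}$ comes from comparing $\overline{I}_n$ from below with a power $\m^{cp^{n}}$ of the maximal ideal $\m$ of $k[\![\Gamma]\!]$ and using that the Hilbert--Samuel multiplicity of a nonzero module of dimension $\delta$ is a positive integer, so that its Hilbert polynomial has leading coefficient at least $\tfrac1{\delta!}$. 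Together with the identifications of the previous paragraph this establishes the stated two-sided bound on $\log_p|\Gscr_n^{\ll}[J]|$.

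Finally, for abelian $\Gamma=\ZZ_p^d$ one has $k[\![\Gamma]\!]\cong k[\![T_1,\dots,T_d]\!]$ with $T_i=\gamma_i-1$ for a basis $\gamma_1,\dots,\gamma_d$ of $\Gamma$, and since $(1+T)^{p^n}=1+T^{p^n}$ in characteristic $p$ we get $\overline{I}_n=(T_1^{p^n},\dots,T_d^{p^n})=\m^{[p^n]}$, the $n$-th Frobenius power of $\m$; thus $\dim_k M/\overline{I}_nM$ is precisely the Hilbert--Kunz function of $M$, and Monsky's Hilbert--Kunz theory \cite{MonskyHK} gives $\dim_k M/\overline{I}_nM=e_{\mathrm{HK}}(M)\,p^{\delta n}+O\bigl(p^{(\delta-1)n}\bigr)$, so one may take $\mu=\nu=e_{\mathrm{HK}}(M)$. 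I expect the main obstacle to lie precisely at this interface with the cited commutative algebra: confirming that the filtration of Definition~\ref{pseries} is the one for which \cite[Proposition 2.18]{EmertonPaskunas} and Monsky's theory are literally stated (or reducing to it), and tracking the constants carefully enough that the clean lower bound $\tfrac1{\delta!}$ --- rather than a bound depending on $M$ --- emerges.
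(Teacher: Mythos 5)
Your proposal is correct and follows essentially the same route as the paper: identify $\D(\Gscr_n^{\ll}[J])$ with $\D(\Gscr_n^{\ll})/J\D(\Gscr_n^{\ll})$ via the Dieudonn\'e anti-equivalence and Theorem \ref{thm:Dieudonne} \ref{lenord}, use the control isomorphism of Theorem \ref{thmB} \ref{eta} to rewrite this as $\Omega_n\otimes_{\Omega}\left(\D^{\ll}/J\D^{\ll}\right)$, and then invoke \cite[Proposition 2.18]{EmertonPaskunas} for general $\Gamma$ and \cite[Theorem 1.8]{MonskyHK} in the abelian case. The extra care you flag at the interface with the cited commutative algebra is reasonable but does not change the argument, which is exactly the one the paper gives (modeled on the proof of Corollary \ref{thmH}).
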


When $J=(p)$, $J=(F)$, or $J=(V)$, it is
straightforward to deduce an {\em exact} formula for $|\Gscr_n^{\ll}[J]|$
using the Riemann--Hurwitz and Deuring--Shafarevich formulae; see Remark \ref{llDS}.
For other choices of $J$, the group schemes $\Gscr_n^{\ll}[J]$ are
much more mysterious; for example, the integer $a_n:=\log_p|\Gscr_n^{\ll}[(F,V)]|$ 
is called the {\em $a$-number} of $X_n$, and is a subtle numerical invariant
that has been studied extensively in many different contexts \cite{CMHyper,VolochChar2,Fermat,ReBound,ElkinPriesanum1,ElkinCyclic,Suzuki,Dummigan,FermatHurwitz,Frei,BooherCais}.
Examples show that there can be no exact analogue of the Riemann--Hurwitz or Deuring--Shafarevich formulae
for the growth of $a$-numbers in (branched) Galois coverings of curves \cite[Example 7.2]{BooherCais}, 
so the existence of an asymptotic formula for the $a_n$ is surprising.
%Furthermore, the $a$-number of a curve is {\em not} an isogeny invariant of its Jacobian 
In this way, Corollary \ref{thmI} 
provides a fundamentally new kind of Iwasawa-theoretic result for $p$-adic
Lie extensions of function fields.

Let us now turn to the ramified case $\Sigma\neq \emptyset$.
Here, our hypothesis that every point of $\Sigma=S_0$ totally ramifies in $X_n$
means that $\pi_{n,0}$ induces an isomorphism of finite \'etale $k$-schemes 
$S_{n}\simeq S_0$ 
for all $n$, and we will henceforth make this identification,
writing $S$ for this common $k$-scheme. 
These identifications then induce isomorphisms $\T_n \simeq \T_0$
and $\Qscr_n\simeq \Qscr_0$ for all $n$, and we will similarly write $\T_S$ and $\Qscr_S$
for these common $p$-divisible groups. 
We will likewise simply write $\Gscr_{n,S}$ in place of $\Gscr_{n,S_n}$.

\begin{thmx}\label{thmD}
    Assume that $\Sigma\neq \emptyset$ and let $d:=\gamma + \deg S-1$ where 
    $\gamma$ is the $p$-rank of $X_0$.
    \begin{enumerate}
    \item $\D^{\mult}_S$ and $\D^{\et}_S$ are free $\Lambda$-modules of rank $d$.\label{thmD:str}

    \item The canonical projection maps yield isomorphisms of $\Lambda_n$-modules 
    \begin{equation*}
        \Lambda_n \tens_{\Lambda}\D^{\mult}_S  \simeq \D(\Gscr_{n,S}^{\mult})
        \quad\text{and}\quad
        \Lambda_n \tens_{\Lambda}\D^{\et}_S  \simeq \D((\Gscr_{n,S}^{\vee})^{\et})
    \end{equation*}
    for all $n$, compatibly with $F$ and $V$.\label{thmD:control}
    
    \item  There is a canonical, $\Lambda$-bilinear perfect pairings 
   \begin{equation*}
    (\cdot,\cdot):\D^{\mult}_S \times \D^{\et}_S \rightarrow \Lambda
    \end{equation*}    
   with respect to which $F$ and $V$ are adjoint, and which identifies
    each of $\D^{\et}_S$ and $\D^{\mult}_S$ with the  $\Lambda$-dual of the other.
    \label{thmD:duality}

\end{enumerate}    
\end{thmx}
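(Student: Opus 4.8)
The plan is to run the same machine that proves Theorem \ref{thmA} in the unramified case, adapted to the ramified setting where the generalized Jacobians $J_{n,S}$ are now \emph{semiabelian} with toric rank $\deg S - 1$ (the $(-1)$ comes from the relation $\sum_{x \in S} x = 0$ in the divisor class group modulo principal divisors supported on $S$, since the curves are proper). First I would recall that, because $S_n \simeq S_0 = S$ canonically under total ramification, the transition maps $\pi_{n,m}^*$ on the level of the $p$-divisible groups $\Gscr_{n,S}$ are compatible with the tower structure and the torus part $\T_S$ is constant; then the sequence \eqref{seqA} exhibits $\Gscr_{n,S}^{\mult}$ as an extension of $\Gscr_n^{\mult}$ by $\T_S$. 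Passing to Dieudonn\'e modules turns this into a short exact sequence of $\Lambda_n$-modules, and taking the inverse limit over $n$ (using Picard functoriality for the $\mult$ part) gives an exact sequence $0 \to \D(\T_S)\otimes_W\Lambda \to \D^{\mult}_S \to \D^{\mult} \to 0$ up to a controllable $\varprojlim^1$ term, which the hypothesis on the filtration $\{\Gamma_n\}$ kills (Remarks \ref{limvanish}, \ref{kcountable}).

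The core computation is the rank. I would first establish freeness and the rank formula for $\D^{\mult}_S$ by a Nakayama/base-change argument: show that $\D^{\mult}_S$ is finitely generated over $\Lambda$ (from finite generation of the $\D(\Gscr_{n,S}^{\mult})$ together with the Mittag--Leffler/derived-limit vanishing), show that $\D^{\mult}_S \otimes_\Lambda \Lambda_0 \simeq \D(\Gscr_{0,S}^{\mult})$, which is free over $W$ of rank equal to the multiplicative rank of $J_{0,S}[p^\infty]$, namely $\gamma + (\deg S - 1) = d$ — here $\gamma$ is the $p$-rank of $X_0$ and hence the multiplicative (equivalently \'etale) rank of $J_0[p^\infty]$, and $\deg S - 1$ is the toric rank — and then invoke a topological Nakayama lemma over the (noetherian, local when $\Gamma$ is a $p$-adic Lie group, or pro-(local) in general) ring $\Lambda$ to lift a basis; freeness then follows from a flatness/projectivity argument of the type used for Theorem \ref{thmA}\eqref{thmA:str}, e.g.\ by checking that $\Tor_1^\Lambda(\Lambda_n, \D^{\mult}_S) = 0$ for all $n$ via the control isomorphism at each finite level. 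This simultaneously yields part \eqref{thmD:control} for the $\mult$ side. For $\D^{\et}_S$, I would \emph{not} argue directly but instead deduce everything from duality: the principal polarization on $J_n$ together with the autoduality of $\Gscr_n$ and the biextension pairing between $\Gscr_{n,S}$ and $\Gscr_{n,S}^\vee$ (sequences \eqref{seqA} and \eqref{seqB} are exchanged by duality) produces, after passing to Dieudonn\'e modules and taking limits — using Albanese functoriality and duality for the $\et$ part, exactly as in the definition \eqref{DDef} — a perfect $\Lambda$-bilinear pairing $(\cdot,\cdot)\colon \D^{\mult}_S \times \D^{\et}_S \to \Lambda$ under which $F$ and $V$ are adjoint. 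This is part \eqref{thmD:duality}; and since the $\Lambda$-dual of a free module of rank $d$ is free of rank $d$, parts \eqref{thmD:str} and \eqref{thmD:control} for $\D^{\et}_S$ follow formally from the $\mult$ case and the duality, the control statement being the $\Lambda_n$-linear dual of the one already proved.

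The main obstacle I anticipate is \emph{not} the rank count but the verification that the relevant derived inverse limits vanish and that the pairing remains \emph{perfect} (not merely nondegenerate) after passing to the limit. Perfectness at finite level comes from the perfectness of the Cartier--Dieudonn\'e duality between $\D(\Gscr_{n,S}^{\mult})$ and $\D((\Gscr_{n,S}^\vee)^{\et})$ as $W$-modules, compatibly with the $\Lambda_n$-action; upgrading this to a perfect pairing valued in $\Lambda$ requires identifying $\varprojlim_n \Hom_W(\D(\Gscr_{n,S}^{\mult}), W)$ with $\Hom_\Lambda(\D^{\mult}_S, \Lambda)$, which uses freeness of $\D^{\mult}_S$ (so the argument is mildly circular in presentation and must be sequenced carefully: first finite generation, then control/Tor-vanishing, then freeness, then duality) together with the compatibility of the trace maps $\Lambda_n \to W$ with the transition maps in the tower. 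A secondary technical point is checking that the torus $\T_S$ — equivalently its Dieudonn\'e module, which is $\Hom(X_*(\T_S), W)$ with $F = p\sigma$, $V = \sigma^{-1}$ or the dual normalization — contributes exactly a free $\Lambda$-summand of rank $\deg S - 1$ and interacts correctly with Picard versus Albanese functoriality; this is where the asymmetry between \eqref{seqA} and \eqref{seqB} must be tracked, but it is bookkeeping once the autoduality is in hand.
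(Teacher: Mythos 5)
Your overall architecture (Nakayama plus duality, with the ramified torus contributing $\deg S-1$ to the rank) points in the right direction, but there is a genuine gap at the heart of the argument: you never establish the \emph{finite-level equivariant freeness} of the modules $\D(\Gscr_{n,S}^{\mult})$ over the group rings $\Lambda_n=W[\Gamma/\Gamma_n]$, and without it your Nakayama argument cannot close. Knowing that $\Lambda_0\otimes_\Lambda \D_S^{\mult}\simeq \D(\Gscr_{0,S}^{\mult})$ is free over $W=\Lambda_0$ of rank $d$ only gives, via Nakayama over the local ring $\Lambda$, a surjection $\Lambda^d\twoheadrightarrow \D_S^{\mult}$; to promote this to an isomorphism and to obtain control at every level you need to know that each $\D(\Gscr_{n,S}^{\mult})$ is free of rank $d$ over $\Lambda_n$ --- i.e.\ of $W$-rank $d\cdot[\Gamma:\Gamma_n]$, arranged into a free $\Gamma/\Gamma_n$-representation. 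This is exactly where the real input lives: by Oda's theorem $\D(\Gscr_{n,S}^{\mult})\otimes_W k\simeq H^0(X_n,\Omega^1_{X_n/k}(S))^{\Vbij}$, and Nakajima's equivariant Deuring--Shafarevich theorem is what shows this is a \emph{free} $k[\Gamma/\Gamma_n]$-module of rank $\gamma+|S|-1$; one then lifts a basis over $W$. Your proposed substitute --- ``checking that $\Tor_1^\Lambda(\Lambda_n,\D_S^{\mult})=0$ via the control isomorphism at each finite level'' --- is circular, since the control isomorphisms are precisely part of what Theorem \ref{thmD}\eqref{thmD:control} asserts and cannot be assumed. A second, smaller omission: the surjectivity of the mod-$p$ transition maps $\o{\D}_{n,S}^{\mult}\to\o{\D}_{m,S}^{\mult}$ (needed both for the limit to see each finite level and for the derived limits to vanish) is not automatic; it reduces, via Serre duality, to injectivity of $\pi^*$ on $H^1(X_m,\O_{X_m}(-S))$, which in turn rests on the total-ramification hypothesis through the triviality of $\ker(\pi^*)$ on $\Pic^0$ of the singularized curves (Proposition \ref{PicInj}). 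Your opening reduction through the extension $0\to\T_S\to\Gscr_{n,S}\to\Gscr_n\to 0$ is also not the mechanism here --- that decomposition is what drives Theorem \ref{thmE}, whereas Theorem \ref{thmD} is proved by working with the towers $\{\D_{n,S}^{\mult}\}$ and $\{\D_{n,S}^{\et}\}$ directly.

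On the duality statement your plan is closer to workable: proving the $\mult$ case and dualizing can in principle yield the $\et$ case, and you correctly flag that perfectness of the limit pairing is the delicate point. But the clean way to do this (and what the paper does) is to verify that \emph{both} towers satisfy the finite-level freeness and surjectivity hypotheses --- the $\et$ side being the contragredient of the $\mult$ side under the pairing $\langle\cdot,\cdot\rangle_n$ furnished by Dieudonn\'e duality --- and then assemble the $\Lambda$-valued pairing explicitly as $(x,y)_n=\sum_{\gamma\in\Gamma/\Gamma_n}\langle x,\gamma^{-1}y\rangle_n\cdot\gamma$, checking the compatibility $\langle\rho_{n,m}x,\rho'_{n,m}y\rangle_m=\sum_{\gamma\in\Gamma_m/\Gamma_n}\langle x,\gamma^{-1}y\rangle_n$, which follows from $\pi_*\pi^*=\sum_{\gamma}\gamma^*$. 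Once the finite-level freeness is in hand, perfectness in the limit is formal; without it, your identification of $\varprojlim_n\Hom_W(\D(\Gscr_{n,S}^{\mult}),W)$ with $\Hom_\Lambda(\D_S^{\mult},\Lambda)$ has nothing to stand on.
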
    

Analogously to Theorem \ref{thmB}, we have:

\begin{thmx}\label{thmE}    
With the notation and assumptions of Theorem \ref{thmD}:
\begin{enumerate}    
    \item \label{thmE:str}There is a canonical isomorphism of $\Lambda$-modules
    $$\D^{\et} \simeq \D^{\et}_S$$
    and a canonical short exact sequence of $\Lambda$-modules
    \begin{equation*}
        \xymatrix{
            0 \ar[r] & {\D^{\mult}} \ar[r] & {\D^{\mult}_S} \ar[r] & \D(\T_S)\ar[r] & 0   
        }\label{mulpart}
    \end{equation*}
    that are compatible with $F$ and $V$.  In particular, $\D^{\et}$ is free of rank $d$ over $\Lambda$.
\item \label{thmE:control} For each $n$, there are canonical short exact sequences $\Lambda_n$-modules with $F$ and $V$ action
    \begin{equation*}
        \xymatrix{
            0 \ar[r] & {\D(\Qscr_S)} \ar[r] & {\Lambda_n \tens_{\Lambda}\D^{\et}  } \ar[r] & {\D(\Gscr_n^{\et})} \ar[r] & 0
        }
    \end{equation*}
    \begin{equation*}
        \xymatrix{
            0 \ar[r] & {\displaystyle\frac{I_{n}}{I\cdot I_{n}}\tens_W\D(\T_S) } \ar[r] & {\Lambda_n \tens_{\Lambda}\D^{\mult} } \ar[r] & {\D(\Gscr_n^{\mult})} \ar[r] & 0
        }
    \end{equation*}

    \item \label{thmE:torus}There are canonical isomorphisms of $\Lambda$-modules with $F$ and $V$ 
    action
    \begin{equation*}
       \xymatrix{
       {\D(\Qscr_S) \simeq \coker\Big(W} \ar[r]^-{\Delta} & {\displaystyle\bigoplus_{s\in S} W(k(s))\Big)}
        } %\ \text{is given by}\ \Phi_S((a_s)_{s\in S}):=\sum_{s\in S} \Tr_{W(k(s))/W}(a_s)
        \ \text{and}\ 
        \xymatrix@C=50pt{
       {\D(\T_S) \simeq \ker\Big(\displaystyle\bigoplus_{s\in S} W(k(s))} \ar[r]^-{\sum \Tr_{W(k(s))/W}} & {W\Big)}
        }
    \end{equation*}
    with $W(k(s))$ viewed as a $\Lambda$-module via the augmentation map $\Lambda\twoheadrightarrow W\hookrightarrow W(k(s))$.
    In the first $($respectively second$)$ identification, $F=\sigma$ $($resp. $F=p\sigma$$)$ on $W(k(s))$,
    and $V=p\sigma^{-1}$ $($resp. $V=\sigma^{-1}$$)$.

\end{enumerate}
\end{thmx}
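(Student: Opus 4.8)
The statement \ref{thmE:torus} is the \emph{torus part} of Theorem \ref{thmE}, and unlike the rest of the theorem it involves neither the tower nor any wild ramification: since every $s\in S$ is totally ramified, $\pi_{n,0}$ identifies $S_n$ with $S_0=S$, hence $\T_n$ with $\T_0=:\T_S$ and $\Qscr_n$ with $\Qscr_0=:\Qscr_S$, so the entire computation takes place on $X_0$ alone and the $\Lambda$-action factors through the augmentation $\Lambda\twoheadrightarrow W$. The plan is first to recall, from the theory of generalized Jacobians, that the torus occurring as the kernel of \eqref{seqA} at level $0$ is
\[
 T_S\;\cong\;\Bigl(\bigoplus_{s\in S}\Res_{k(s)/k}\Gm\Bigr)\Big/\Gm,
\]
with $\Gm$ embedded diagonally as the group of global constants; here one uses that $k$ is algebraically closed in $L$, hence in $L^{\Gamma_0}$, so that $H^0(X_0,\O_{X_0})^\times=k^\times$. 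Thus $\T_S\cong T_S[p^\infty]$, and since the extensions \eqref{seqA}, \eqref{seqB} are interchanged by the canonical autoduality of $\Gscr_0$, also $\Qscr_S\cong \T_S^\vee$.

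To compute $\D(\T_S)$, I would apply $(-)[p^\infty]$ to the defining exact sequence of tori $0\to\Gm\xrightarrow{\ \delta\ }\bigoplus_{s\in S}\Res_{k(s)/k}\Gm\to T_S\to 0$; a snake-lemma argument (using $p^n$-divisibility of $\Gm$ as an fppf sheaf) shows this stays exact on $p$-divisible groups, and applying the exact contravariant Dieudonn\'e functor yields a short exact sequence
\[
 0\longrightarrow \D(\T_S)\longrightarrow \bigoplus_{s\in S}\D\bigl((\Res_{k(s)/k}\Gm)[p^\infty]\bigr)\xrightarrow{\ \D(\delta)\ }\D(\mu_{p^\infty})=W\longrightarrow 0.
\]
Everything then reduces to the local computation $\D\bigl((\Res_{\ell/k}\Gm)[p^\infty]\bigr)\cong W(\ell)$ together with the identification of $\D(\delta)$, and both are handled cleanly by base change to $\o k$ and Galois descent: $(\Res_{\ell/k}\Gm)[p^\infty]\otimes_k\o k\cong\prod_{\tau\colon\ell\hookrightarrow\o k}\mu_{p^\infty}$ with $G_k=\Gal(\o k/k)$ permuting the factors, so $\D_{/W}\bigl(\Res_{\ell/k}\mu_{p^\infty}\bigr)\cong\bigl(\prod_\tau W(\o k)\bigr)^{G_k}\cong W(\ell)$, with $F=p\sigma$ and $V=\sigma^{-1}$ inherited factorwise from $\D(\mu_{p^\infty,\o k})=W(\o k)$; and the diagonal $\delta$ induces on $\D$ the sum $(x_\tau)_\tau\mapsto\sum_\tau x_\tau$, which on $G_k$-invariants is exactly $\Tr_{W(\ell)/W}$. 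Assembling, $\D(\delta)=\sum_{s\in S}\Tr_{W(k(s))/W}$ — surjective since each $k(s)/k$ is separable, consistent with the exactness above — giving $\D(\T_S)\cong\ker\bigl(\bigoplus_s W(k(s))\xrightarrow{\sum\Tr}W\bigr)$ with the asserted $F$ and $V$.

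For $\D(\Qscr_S)$ I would proceed in two mutually confirming ways. Conceptually, using $\Qscr_S\cong\T_S^\vee$: the trace form $\bigl((x_s),(y_s)\bigr)\mapsto\sum_s\Tr_{W(k(s))/W}(x_sy_s)$ makes $\bigoplus_s W(k(s))$ self-$W$-dual with $\Delta$ (the diagonal inclusion) and $\sum\Tr$ mutually adjoint, whence $\ker(\sum\Tr)^\perp=\Im(\Delta)$ and so $\D(\T_S)^\vee\cong\coker\Delta$, with the roles of $F$ and $V$ exchanged exactly as Dieudonn\'e theory predicts for Cartier duals. Directly, one applies Cartier duality to the sequence of $p$-divisible groups above — using that Cartier duality commutes with Weil restriction along the finite \'etale $k(s)/k$ — to get $0\to\Qscr_S\to\bigoplus_s\Res_{k(s)/k}(\QQ_p/\ZZ_p)\to\QQ_p/\ZZ_p\to 0$, and then runs the same descent computation with $\mu_{p^\infty}$ replaced by $\QQ_p/\ZZ_p$ (so that $F=\sigma$, $V=p\sigma^{-1}$ on each $W(k(s))$), obtaining $0\to W\xrightarrow{\Delta}\bigoplus_s W(k(s))\to\D(\Qscr_S)\to 0$ and hence $\D(\Qscr_S)\cong\coker\Delta$ with the stated operators. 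I expect the main obstacle here to be not depth but bookkeeping: one must keep straight which of the diagonal, the sum/trace, and the plain inclusion appears at each stage, together with the $F$, $V$ normalizations, and must invoke the (standard, but easy to misstate) compatibilities of the Dieudonn\'e functor with Weil restriction and with Cartier duality — all of which the $\o k$-descent renders transparent.
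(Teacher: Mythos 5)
Your proposal addresses only part~(iii) of Theorem~\ref{thmE}. The theorem, as stated, contains three parts, and parts~(i) and~(ii) --- which are the ones requiring genuine Iwasawa-theoretic input --- are not touched at all. For~(i) the paper applies $\varprojlim$ to the short exact sequences of inverse systems obtained by passing to multiplicative parts (and then, for the \'etale version, dualizing) in the tower diagrams \eqref{eq:DPic} and \eqref{eq:DAlb}; the key inputs are Lemma~\ref{lem:Dnsurj} (surjectivity of the transition maps $\rho:\D_n\to\D_m$, which makes the relevant system Mittag--Leffler so $\varprojlim^1 \D_n^{\mult}=0$) and Lemma~\ref{limitvanishing} (which shows $\varprojlim_n \D(\Qscr_S)=\varprojlim^1_n \D(\Qscr_S)=0$ along the transition maps $[\Gamma_m:\Gamma_n]$). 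For~(ii) the paper then applies $\Lambda_n\otimes_\Lambda(-)$, uses the control isomorphisms of Theorem~\ref{thmD}~\ref{thmD:control} for the middle terms, and runs the snake lemma; the second sequence in~(ii) requires the further computation of $\Tor_1^\Lambda(\Lambda_n,\D(\T_S))\simeq \frac{I_n}{I\cdot I_n}\otimes_W \D(\T_S)$, which is extracted by tensoring $0\to I_n\to \Lambda\to\Lambda_n\to 0$ against $\D(\T_S)$ and using that the $\Lambda$-action on $\D(\T_S)$ factors through the augmentation. None of these steps appears in your attempt, and they are the substance of the theorem: without them the structure and control statements for $\D^{\et}$ and $\D^{\mult}$ are simply asserted.

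For the part you did prove, part~(iii), your argument is correct and follows the same route as the paper's one-line proof (apply $\D(\cdot)$ to the sequence $0\to\mu_{p^\infty}\xrightarrow{\Delta}\Res_{S/k}\mu_{p^\infty}\to\T_S\to 0$ and then dualize), while spelling out the details the paper suppresses: the base change to $\o k$, the Galois-descent computation $\D(\Res_{k(s)/k}\mu_{p^\infty})\cong W(k(s))$, the identification of the diagonal with the trace under $\D$, and the $F,V$ normalizations on each side --- all of which check out and agree with what is stated. This elaboration is a welcome addition, but it does not compensate for the missing two-thirds of the proof.
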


In the spirit of Iwasawa theory, and in analogy with Theorem \ref{thmB}, one might expect (hope?) that $\D^{\ll}$
is finitely generated over $\Lambda$ in the ramified setting as well.
Unfortunately, this turns out {\em not} to be the case.  In fact, the influence of
wild ramification is so severe that the situation is even worse: 
Noting that $F$ and $V$ are topologically nilpotent on $\D^{\ll}$,
we may consider the $\Lambda$-module $\D^{\ll}$ as a left module over the larger ``Iwasawa--Dieudonn\'e'' ring $\Lambda[\![F,V]\!]$,
and in rather general situations it isn't finitely generated over {\em this} ring either:

\begin{thmx}\label{thmF}
    If $\Gamma$ is a $p$-adic Lie group and $\Sigma\neq\emptyset$,
    the $\Lambda[\![F,V]\!]$-module $\D^{\ll}$ is not  finitely generated.
\end{thmx}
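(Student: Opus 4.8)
The plan is to reduce the statement to a growth estimate for the local--local Dieudonné modules $\D(\Gscr_n^{\ll})$ that is incompatible with finite generation over $\Lambda[\![F,V]\!]$. Concretely, suppose for contradiction that $\D^{\ll}$ is generated over $R:=\Lambda[\![F,V]\!]$ by finitely many elements $x_1,\dots,x_r$. Since $F$ and $V$ are topologically nilpotent on each $\D(\Gscr_n^{\ll})$ and $\D^{\ll}=\varprojlim_n \D(\Gscr_n^{\ll})$, the images of the $x_i$ generate each $\D(\Gscr_n^{\ll})$ as a module over $R_n:=\Lambda_n[F,V]$ (one must check that the projection $\D^{\ll}\twoheadrightarrow \D(\Gscr_n^{\ll})$ is surjective — this should follow from the $\varprojlim^1$-vanishing invoked throughout the paper, using the countable-basis hypothesis on $\Gamma$). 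Now $R_n$ is a module over $W$ of finite rank: indeed $\Lambda_n=W[\Gamma/\Gamma_n]$ is free of rank $[\Gamma:\Gamma_n]$ over $W$, and because $F,V$ are nilpotent mod $p$ and $FV=VF=p$, the subring they generate inside $\End_W(\D(\Gscr_n^{\ll}))$ is a finitely generated $W$-module of length (mod $p$) bounded by a fixed power depending only on the nilpotency degree — I would make this precise by noting $\Gscr_n^{\ll}[p]$ is killed by $F^{N}$ and $V^{N}$ for $N$ depending only on $n$, so the action factors through $W[\Gamma/\Gamma_n][F,V]/(F^{N},V^{N})$, a $W$-module of rank at most $[\Gamma:\Gamma_n]\cdot N^2$. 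Hence
\begin{equation*}
    \length_W \D(\Gscr_n^{\ll}[p]) \le r\cdot [\Gamma:\Gamma_n]\cdot N(n)^2,
\end{equation*}
and the whole game is to show the left side grows strictly faster than any such bound.

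The key input is the rapid growth of the local--local part $\Gscr_n^{\ll}$ in wildly ramified towers, which is exactly where I would invoke Wintenberger's work \cite{Wintenberger} (as the introduction advertises). The point is that for $\Sigma\neq\emptyset$ the genus $g_n$ of $X_n$ grows like $p^{\dim\Gamma\cdot n}$ times a polynomial, by Riemann--Hurwitz applied to the wild ramification over $S$, while the $p$-rank $\gamma_n$ grows only like $[\Gamma:\Gamma_n]=p^{d'n}\cdot(\text{poly})$ by Deuring--Shafarevich; therefore $\dim_{\FF_p}\Gscr_n^{\ll}[p]=2(g_n-\gamma_n)$ — equivalently $2\cdot\length_W\D(\Gscr_n^{\ll})/(p,F,V)\cdots$, but more usefully $\length_W\D(\Gscr_n^{\ll}[p])$ — is of strictly larger order of magnitude than $[\Gamma:\Gamma_n]\cdot N(n)^2$ once one controls $N(n)$. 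This comparison is the crux: I must bound the nilpotency exponent $N(n)$ of $F$ (and $V$) on $\Gscr_n^{\ll}[p]$ polynomially in $n$ (or at least sub-exponentially, with exponent smaller than that governing the genus growth). The standard bound $N(n)\le 2g_n$ is far too weak; instead I would argue that wild ramification bounded above over each totally ramified point forces the Hasse--Arf/higher-ramification data of $X_n/X_0$ to grow at a controlled rate, giving a bound on the nilpotency degree of Frobenius on $H^1_{\mathrm{crys}}$ that is polynomial in $n$. This is precisely the sort of estimate Wintenberger provides (bounding the Frobenius-nilpotency filtration in terms of ramification), and combining it with the super-polynomial (in fact exponential, with a larger base) growth of $g_n-\gamma_n$ yields the contradiction.

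I expect the main obstacle to be exactly this last point: obtaining a bound on the $F$-nilpotency exponent $N(n)$ on $\Gscr_n^{\ll}[p]$ that grows strictly more slowly than $(g_n-\gamma_n)/[\Gamma:\Gamma_n]$. If $\Gamma=\ZZ_p$ this should be essentially classical — the break of the unique ramified point grows linearly, and one gets $N(n)=O(p^n)$ against $g_n\sim c\,n\,p^n$ or so, which already suffices since $[\Gamma:\Gamma_n]=p^n$. For higher-dimensional $p$-adic Lie $\Gamma$ the same mechanism works: $[\Gamma:\Gamma_n]$ grows like $p^{d'n}$ with $d'=\dim\Gamma$, the genus like $p^{d'n}\cdot(\text{conductor growth})$ where the conductor over $S$ grows (by Wintenberger) polynomially in the ramification-break parameter, hence the ratio $(g_n-\gamma_n)/[\Gamma:\Gamma_n]$ is unbounded, while $N(n)$ can be bounded by the same conductor-growth function. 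Assembling the inequalities carefully — and in particular making sure the ``$r$ generators'' argument correctly converts finite generation over the \emph{non-commutative, non-Noetherian} ring $\Lambda[\![F,V]\!]$ into the finite-level length bound (for which the topological nilpotence of $F,V$ on each $\D(\Gscr_n^{\ll})$ and a Nakayama-type argument with the ideal $(p,F,V)$ are the right tools) — will complete the proof. The final contradiction is then purely numerical: a fixed constant $r$ times a polynomial in $n$ times $[\Gamma:\Gamma_n]$ cannot dominate $2(g_n-\gamma_n)$.
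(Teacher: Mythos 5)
Your plan has the right overall shape — assume finite generation over $\Lambda[\![F,V]\!]$, push to finite level, and contradict with the super-linear growth forced by wild ramification via Wintenberger — but the specific implementation has a gap that you yourself flag, and which the paper's argument cleanly sidesteps. You propose to bound $\length_W\D(\Gscr_n^{\ll}[p])=2(g_n-\gamma_n)$ by something of the shape $r\cdot[\Gamma:\Gamma_n]\cdot N(n)^2$, where $N(n)$ is a nilpotency exponent for $F$ and $V$; this forces you to control $N(n)$, which is the precise difficulty you cannot resolve. The paper avoids this entirely: from $r$ generators of $\D$ over $\Lambda[\![F,V]\!]$ and surjectivity of $\D\twoheadrightarrow\D_n$ (Lemma \ref{lem:Dnsurj}), one deduces that $\D_n/(F,V)\D_n$ is generated by at most $r$ elements \emph{as a $\Lambda_n$-module}. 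Since $\Lambda_n$ has $W$-rank $[\Gamma:\Gamma_n]=p^{dn}$, reducing mod $p$ gives directly
\begin{equation*}
    a_n := \dim_k\,\coker\bigl(V:H^0(\Omega^1_{X_n})\to H^0(\Omega^1_{X_n})\bigr)
    \;=\;\dim_k \bigl(\D_n/(p,F,V)\bigr)\;\le\;r\,p^{dn},
\end{equation*}
where the identification with the $a$-number is Oda's theorem. No $N(n)$ factor appears because one never needs to generate the full $\D_n$, only the quotient by the ideal $(F,V)$.

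The lower bound that produces the contradiction is then not the crude $2(g_n-\gamma_n)$ but the explicit inequality $a_n > (1-\tfrac1p)\lfloor\tfrac p2\rfloor\sum_{Q\in S}(\lceil\tfrac p2\rceil\tfrac{d_{Q,n}}{p}-1)$ from \cite[Theorem 1.1]{BooherCais}, where $d_{Q,n}$ is the largest lower ramification break at $Q$. Wintenberger's theorem that $\psi(x)/[\Gamma:\Gamma^x]\to\infty$ for strictly APF extensions then gives, for every $D>0$, that $d_{Q,n}\ge D[\Gamma:\Gamma_n]$ for $n$ large, so $a_n$ eventually dominates $Dp^{dn}$ for any $D$, contradicting $a_n\le r p^{dn}$. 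In short: you chose too large an invariant ($2(g_n-\gamma_n)$, the full local--local height) and were thus forced to confront the uncontrolled nilpotency degree; passing to the mod-$(F,V)$ quotient — i.e., working with the $a$-number — removes this obstacle and makes the Wintenberger/BooherCais comparison immediate.
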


To relate our work to the growth of $p$-primary components of ``physical'' class groups
in a given $\Gamma$-tower $\{X_n\}$ % unramified out a finite, nonempty set $\Sigma$,
we write $(\cdot)^*:=\Hom_{\ZZ_p}(\cdot,\QQ_p/\ZZ_p)$ for the Pontryagin dual, 
and form the $\ZZ_p[\![\Gamma]\!]$-module 
$$M:=\varprojlim_{n} \Cl_{X_n}[p^{\infty}]^{*} = \varprojlim_{n} \Gscr^{\et}_{n}(k)^{*}=\varprojlim_n \Gscr_n(k)^*$$ 
with transition maps the Pontryagin duals of the pullback maps $\pi^*$ on $p$-divisible groups.
We write $M_W:=\Lambda \otimes_{\ZZ_p[\![\Gamma]\!]}M = W\otimes_{\ZZ_p}M $ for the (left) $\Lambda$-module
obtained by extension of scalars.

\begin{thmx}\label{thmG}
    Assume $|k|=p^r$ and set $\wp:=1-F^r$.  Then there is a canonical short exact sequence of $\Lambda$-modules
    \begin{equation*}
        \xymatrix{
            0 \ar[r] & {\D^{\et}} \ar[r]^-{\wp} & {\D^{\et}} \ar[r] & M_W \ar[r] & 0
        }.   
    \end{equation*}
    For each $n$, there are canonical short exact sequences of $\Lambda_n$-modules
    \begin{equation*}
          \xymatrix{
            0 \ar[r] & {W\tens_{\ZZ_p} \Qscr_S(k)^*} \ar[r] & {\Lambda_n \tens_{\Lambda} M_W  } \ar[r] & {W\tens_{\ZZ_p}\Gscr_n(k)^*}
            \ar[r] & 0
          }.
    \end{equation*}
\end{thmx}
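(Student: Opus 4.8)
The plan is to read off both exact sequences from the Lang isogeny $1-\Frob_q$ on the relevant \'etale $p$-divisible groups, combined with the control theorem for $\D^{\et}$ in the ramified setting (Theorem~\ref{thmE}); here $q=p^{r}=|k|$ and $\Frob_q$ is the $q$-power Frobenius endomorphism. The crux is a functorial statement: for any \'etale $p$-divisible group $\Gscr^{\et}$ over $k=\FF_q$ there is a natural exact sequence of $W$-modules
\[
 0\longrightarrow \ker(\wp)\longrightarrow \D(\Gscr^{\et})\xrightarrow{\ \wp\ }\D(\Gscr^{\et})\longrightarrow W\tens_{\ZZ_p}\Gscr^{\et}(k)^{*}\longrightarrow 0,
\]
and $\wp$ is injective on $\D(\Gscr^{\et})$ whenever $\Gscr^{\et}(k)$ is finite. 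I would establish this at finite level first: for a finite \'etale $k$-group scheme $G$, the endomorphism $1-\Frob_q$ has kernel $\underline{G(k)}$ and cokernel $\underline{G(\bar k)_{\Frob_q}}$ in the abelian category of finite \'etale $k$-group schemes, so applying the exact contravariant functor $\D$, the identity $\D(\Frob_q)=F^{r}$ (an operator that is $W$-linear since $\sigma^{r}=\id$ on $W$, and whose validity one checks on $\QQ_p/\ZZ_p$ and $\mu_{p^{\infty}}$), and the natural isomorphism $\D(\underline A)\simeq W\tens_{\ZZ_p}A^{*}$, yields a four-term exact sequence at level $p^{m}$. Taking $\varprojlim_m$ (the transition maps are surjective, so the relevant $\varprojlim^{1}$ vanish, and $\varprojlim_m(W\tens_{\ZZ_p}A_m^{*})=W\tens_{\ZZ_p}(\varinjlim_m A_m)^{*}$) gives the displayed sequence; if $\Gscr^{\et}(k)$ is finite then Lang's theorem makes $1-\Frob_q$ surjective on $\Gscr^{\et}$ with finite kernel $\underline{\Gscr^{\et}(k)}$, forcing $\ker(\wp)=0$.

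For the first sequence of the theorem, apply this to $\Gscr_n^{\et}$: since $\Gscr_n(k)=\Cl_{X_n}[p^{\infty}]$ is finite we obtain $0\to\D(\Gscr_n^{\et})\xrightarrow{\wp}\D(\Gscr_n^{\et})\to W\tens_{\ZZ_p}\Gscr_n(k)^{*}\to 0$, compatibly in $n$ because $1-\Frob_q$ is natural and commutes with the transition maps $\pi^{*}$, which induce on $\Gscr_n(k)^{*}$ precisely the Pontryagin duals of the $\pi^{*}$ used to form $M$. Now pass to $\varprojlim_n$: the derived limit $\varprojlim^{1}_n\D(\Gscr_n^{\et})$ vanishes by Remark~\ref{limvanish}, and $\{W\tens_{\ZZ_p}\Gscr_n(k)^{*}\}$ is a system of finite groups, so also has vanishing $\varprojlim^{1}$, with $\varprojlim_n=W\tens_{\ZZ_p}M=M_W$. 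This produces the canonical exact sequence $0\to\D^{\et}\xrightarrow{\wp}\D^{\et}\to M_W\to 0$ of $\Lambda$-modules (note $\wp$ is $\Lambda$-linear, since $\sigma^{r}$ is the identity on $W$).

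For the second family, tensor this sequence with $\Lambda_n$ over $\Lambda$; right exactness of $\Lambda_n\tens_\Lambda(-)$ shows that $\wp$ on $\Lambda_n\tens_\Lambda\D^{\et}$ has cokernel $\Lambda_n\tens_\Lambda M_W$. Next run the snake lemma for $\wp$ acting on the $F,V$-equivariant control sequence $0\to\D(\Qscr_S)\to\Lambda_n\tens_\Lambda\D^{\et}\to\D(\Gscr_n^{\et})\to 0$ of Theorem~\ref{thmE}\,\ref{thmE:control}: because $\wp$ is injective on $\D(\Gscr_n^{\et})$ the connecting homomorphism vanishes, and using that $\coker(\wp)$ on $\D(\Gscr_n^{\et})$ is $W\tens_{\ZZ_p}\Gscr_n(k)^{*}$ (the finite-$k$-points case above), one is left with
\[
 0\longrightarrow \coker\big(\wp\ \text{on}\ \D(\Qscr_S)\big)\longrightarrow \Lambda_n\tens_\Lambda M_W\longrightarrow W\tens_{\ZZ_p}\Gscr_n(k)^{*}\longrightarrow 0 .
\]
It remains to identify $\coker(\wp\ \text{on}\ \D(\Qscr_S))\simeq W\tens_{\ZZ_p}\Qscr_S(k)^{*}$: either apply the functorial statement above to the \'etale $p$-divisible group $\Qscr_S/\FF_q$ (whose $k$-points need not be finite, which is why the finite-level limit is needed), or compute directly from Theorem~\ref{thmE}\,\ref{thmE:torus}, where $\D(\Qscr_S)\simeq\coker(W\xrightarrow{\Delta}\bigoplus_{s\in S}W(k(s)))$ with $F=\sigma$ on each $W(k(s))$; here $\wp$ vanishes on $W$ and acts as $1-\Frob_q$ (with surjective trace $\Tr_{W(k(s))/W}$) on each $W(k(s))$, so the snake lemma collapses the cokernel to $\big(\bigoplus_s W\big)\big/\{(\deg(s)w)_s:w\in W\}$, which matches $W\tens_{\ZZ_p}\Qscr_S(k)^{*}$ once one unwinds $\Qscr_S(k)=\ker\big(\bigoplus_s\QQ_p/\ZZ_p\xrightarrow{(a_s)\mapsto\sum_s\deg(s)a_s}\QQ_p/\ZZ_p\big)$.

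The main obstacle is the functorial input of the first paragraph: correctly matching the semilinearities and the Cartier/Pontryagin dualities, verifying $\D(\Frob_q)=F^{r}$ against the paper's normalization of contravariant Dieudonn\'e theory, handling the case of infinite $\Gscr^{\et}(k)$ through the finite-level limit, and confirming that the $k$-rational-points side of the resulting inverse limit is genuinely $M$ with its prescribed transition maps. Once that is in place, everything else is the snake lemma together with the control theorem for $\D^{\et}$ already recorded in Theorem~\ref{thmE}.
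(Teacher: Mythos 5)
Your proof is correct and takes essentially the same route as the paper: your preliminary functorial lemma (the Lang-isogeny four-term sequence, then $\D(\cdot)$, then $\varprojlim$) is the paper's Lemma \ref{DseqAbvar} together with the Remark immediately following it, and your derivation of the second family of sequences via the snake lemma for $\wp$ on the control sequence of Theorem \ref{thmE}\,\ref{thmE:control} is exactly the paper's diagram chase. The only differences are cosmetic---you state the preliminary lemma for a general \'etale $p$-divisible group (with a kernel term, correctly anticipating that $\Qscr_S(k)$ need not be finite) where the paper states Lemma \ref{DseqAbvar} for abelian varieties and relegates the general case to a remark, and the derived-limit vanishing you attribute to Remark \ref{limvanish} is really supplied by the Mittag--Leffler property established in Lemma \ref{lem:Dnsurj}.
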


%When $\Gamma=\ZZ_p$, Theorem \ref{thmG} is \cite[Proposition 3]{mw83}

\begin{cor}\label{thmH}
        Let $\Gamma$ be a torsion-free $p$-adic Lie group of dimension $d$ and
        $\{X_n\}$ a ramified $(${\em i.e.}~$\Sigma\neq\emptyset$$)$ $\Gamma$-tower of curves 
        over a finite field $k$ with $X_n$
    corresponding to the $n$-th subgroup in the lower central $p$-series of $\Gamma$.
        Let $\delta\in [0,d]$ be the dimension of the $\FF_p[\![\Gamma]\!]$-module $M/pM$.
        Then there exist real constants $\nu\ge\mu\ge \frac{1}{\delta!}$ with
        \begin{equation*}
            \mu p^{\delta n}  + O (p^{(\delta-1)n}) \le \log_p |\Cl_{X_n}[p]| \le \nu p^{\delta n}  + O (p^{(\delta-1)n}).
        \end{equation*}
        If $\Gamma=\ZZ_p^d$ is abelian, we may moreover take $\nu=\mu$.
\end{cor}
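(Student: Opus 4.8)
The plan is to use the structure and control theorems of Theorem~\ref{thmG} to replace the finite groups $\Cl_{X_n}[p]$, up to a bounded error, by the $\Gamma_n$-coinvariants of the single finitely generated $\FF_p[\![\Gamma]\!]$-module $M/pM$, and then to invoke the commutative algebra of Iwasawa algebras of $p$-adic Lie groups in the form of \cite[Proposition~2.18]{EmertonPaskunas}. Write $r:=\log_p|k|$, so that $W=W(k)$ is free of rank $r$ over $\ZZ_p$. In characteristic $p$ only the \'etale part of $\Gscr_n$ carries nonzero $k$-rational points, so $\Cl_{X_n}[p]=\Gscr_n(k)[p]$, and Pontryagin duality identifies $\log_p|\Cl_{X_n}[p]|$ with $\dim_{\FF_p}\bigl(\Gscr_n(k)^{*}/p\bigr)=\tfrac{1}{r}\dim_{\FF_p}\bigl((W\otimes_{\ZZ_p}\Gscr_n(k)^{*})\otimes_{\ZZ_p}\FF_p\bigr)$. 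First I would apply $-\otimes_{\ZZ_p}\FF_p$ to the second short exact sequence of Theorem~\ref{thmG}. Since $\Qscr_S$ is a single $p$-divisible group over the finite field $k$, its group of $k$-points is a cofinitely generated $\ZZ_p$-module independent of $n$, so $(W\otimes_{\ZZ_p}\Qscr_S(k)^{*})\otimes_{\ZZ_p}\FF_p$ has $\FF_p$-dimension $O(1)$; while $\Lambda_n\otimes_{\Lambda}M_W\otimes_{\ZZ_p}\FF_p=\Lambda_n\otimes_{\Lambda}(M_W/pM_W)$, and using $M_W/pM_W\simeq k\otimes_{\FF_p}(M/pM)$ together with $\Lambda_n/p\Lambda_n=k[\Gamma/\Gamma_n]$, this is $k\otimes_{\FF_p}\bigl((M/pM)_{\Gamma_n}\bigr)$, of $\FF_p$-dimension $r\cdot\dim_{\FF_p}\bigl((M/pM)_{\Gamma_n}\bigr)$. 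Right exactness of $-\otimes_{\ZZ_p}\FF_p$ then gives
\begin{equation*}
    \log_p|\Cl_{X_n}[p]|=\dim_{\FF_p}\bigl((M/pM)_{\Gamma_n}\bigr)+O(1).
\end{equation*}

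Next I would check that $N:=M/pM$ is a finitely generated module over $\Omega:=\FF_p[\![\Gamma]\!]$. From the first short exact sequence of Theorem~\ref{thmG} we have $M_W\simeq\D^{\et}/\wp\,\D^{\et}$; the operator $\wp=1-F^{r}$ is $\Lambda$-linear, since $F$ is $\sigma$-semilinear with $\sigma^{r}=\id$ on $W(k)$---so $F^{r}$ is $W$-linear---and $F$ commutes with the action of $\Gamma$, while $\D^{\et}$ is free of rank $d$ over the Noetherian ring $\Lambda$ by Theorem~\ref{thmE}\,\ref{thmE:str}. Hence $M_W$ is finitely generated over $\Lambda$, so $M_W/pM_W\simeq k\otimes_{\FF_p}N$ is finitely generated over $\Lambda/p=k[\![\Gamma]\!]$, which is module-finite over $\Omega$; as $\Omega$ is Noetherian and $N$ is a direct summand of $k\otimes_{\FF_p}N$ as an $\Omega$-module, $N$ is finitely generated over $\Omega$, of Venjakob dimension $\delta$ by hypothesis.

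Finally, \cite[Proposition~2.18]{EmertonPaskunas}---which builds on Venjakob's structure theory \cite{Venjakob} for Iwasawa algebras of $p$-adic Lie groups---applied to $N$ and the lower central $p$-series $\{\Gamma_n\}$, produces real constants $\nu\ge\mu\ge\frac{1}{\delta!}$ with
\begin{equation*}
    \mu p^{\delta n}+O(p^{(\delta-1)n})\le\dim_{\FF_p}\bigl((M/pM)_{\Gamma_n}\bigr)\le\nu p^{\delta n}+O(p^{(\delta-1)n});
\end{equation*}
combined with the $O(1)$ term from the first step---absorbed into $O(p^{(\delta-1)n})$ once $\delta\ge1$, the case $\delta=0$ being immediate since then $(M/pM)_{\Gamma_n}$ is eventually constant---this yields the asserted bounds. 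When $\Gamma=\ZZ_p^{d}$ is abelian one identifies $\Omega\simeq\FF_p[\![T_1,\dots,T_d]\!]$, under which $\FF_p[\Gamma/\Gamma_n]\simeq\Omega/(T_1^{q_n},\dots,T_d^{q_n})$ for $q_n=p^{n}$, so that $\dim_{\FF_p}\bigl((M/pM)_{\Gamma_n}\bigr)=\dim_{\FF_p}\bigl(N/(T_1^{q_n},\dots,T_d^{q_n})N\bigr)$ is a value of the generalized Hilbert--Kunz function of $N$; the existence of the Hilbert--Kunz multiplicity \cite{MonskyHK} (see also \cite{CuocoMonsky}) forces the leading constants in the two bounds to coincide, so we may take $\mu=\nu$. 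I expect the only genuinely delicate part to be the bookkeeping in the first step---correctly tracking the factor $r=[k:\FF_p]$ through the tensor products with $W(k)$, verifying that $\wp=1-F^{r}$ is really $\Lambda$-linear so that Theorem~\ref{thmG} provides sequences of $\Lambda$-modules, and that these stay right exact after $-\otimes_{\ZZ_p}\FF_p$ up to bounded error; once the problem is recast as the asymptotics of $\dim_{\FF_p}\bigl((M/pM)_{\Gamma_n}\bigr)$, all of the substance is supplied by Theorem~\ref{thmG} and the cited work of \cite{EmertonPaskunas}, \cite{Venjakob}, and (in the abelian case) \cite{MonskyHK}, with no new hard estimate to carry out.
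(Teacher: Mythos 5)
Your proposal is correct and follows essentially the same route as the paper: both arguments reduce modulo $p$ the two exact sequences of Theorem \ref{thmG}, identify $\log_p|\Cl_{X_n}[p]|$ with $\dim\bigl((M/pM)_{\Gamma_n}\bigr)+O(1)$, deduce finite generation of the relevant mod-$p$ Iwasawa module from the freeness of $\D^{\et}$, and then invoke \cite[Proposition 2.18]{EmertonPaskunas} (resp.\ \cite[Theorem 1.8]{MonskyHK} in the abelian case). The only cosmetic difference is that the paper tensors down to $k$ and works over $k[\![\Gamma]\!]$ while you tensor down to $\FF_p$ and carry the factor $r=[k:\FF_p]$ explicitly; these are equivalent bookkeeping choices.
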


%\bry{Compare to Monsky and rationality of leading coeff}

%\bryden{Should also say a la Nakajima that our exact sequences determine
%the group ring module structure}

%\subsection{Relation to other work}

%Separate discussion of work of Wan, Crew, Mazur--Wiles, Hayes, Burns...
%OR, make these references inline, as they will motivate the paper

%\subsection{Notation and conventions}

%By ring we mean... by ideal we mean two sided... homomorphisms of rings etc.

\section{Preliminaries}

\subsection{Iwasawa algebras and modules}

Here we record some basic facts and notation for completed group rings and modules over them,
using \cite{NSW} and \cite{Venjakob} as our guides.
Throughout, we fix an infinite pro-$p$ group $\Gamma$ equipped
with a countable basis $\Gamma=\Gamma_0\supsetneq \Gamma_1\supsetneq\cdots$
of the identity consisting of open normal subgroups.
As in \S\ref{intro}, we write $W:=W(k)$ for the Witt ring of $k$.

\begin{defn}
    For each $n$, we give $\Lambda_n:=W[\Gamma/\Gamma_n]$ (respectively $\Omega_n:=k[\Gamma/\Gamma_n]$) the $p$-adic  (resp. discrete) topology, and equip
    \begin{equation*}
        \Lambda:=\varprojlim_{n} W[\Gamma/\Gamma_n]\qquad\text{and}\qquad \Omega:=\varprojlim_n \Omega_n
    \end{equation*}
    with the inverse limit topology.
    We set $I_{n}:=\ker(\Lambda\twoheadrightarrow \Lambda_n)$, and write simply
    $I:=I_0$  for the augmentation ideal of $\Lambda$. 
\end{defn}
The $I_{n}$ form a decreasing chain of two-sided, closed ideals of $\Lambda$,
and each $I_n$ is generated (as either a left or right ideal) by all expressions of the form $\gamma-1$ with $\gamma\in \Gamma_n$.
Furthermore, the collection of ideals $\{p^m\Lambda+ I_n\}_{n,m\ge 0}$ form a fundamental system of
neighborhoods of  $0\in \Lambda$.

%\begin{lem}\label{idealquo}
%    For each pair of nonnegative $n\ge m\ge 0$, there is a canonical isomorphism of 
%    $\Lambda$-modules 
%    \begin{equation*}
%        I_m / I_n \simeq I_{\Gamma_m/\Gamma_n}:=\ker\left( \Lambda_n\twoheadrightarrow \Lambda_m \right).
%    \end{equation*}
%\end{lem}

%\begin{proof}
%    This is an easy argument using definitions and the snake lemma.
    %follows from the snake lemma applied to 
%    \begin{equation*}
%        \xymatrix{
%            & 0 \ar[r]\ar[d]  & \Lambda \ar@{=}[r]\ar@{->>}[d] & \Lambda \ar[r]\ar@{->>}[d] & 0 \\
%            0 \ar[r] & I_{\Gamma_m/\Gamma_n} \ar[r] & \Lambda_n \ar[r] & \Lambda_m & 
%        }
%    \end{equation*}
%\end{proof}

\begin{defn}\label{pseries}
Assume that $\Gamma$ is topologically finitely generated. The {\em lower central $p$-series of $\Gamma$} is defined recursively
by 
$$P_0:=\Gamma\quad\text{and}\quad P_{n+1}:=P_n^p [P_n,\Gamma],$$
where $P_n^p:=\langle g^p\ :\ g\in P_n\rangle$ is the subgroup of $P_n$ generated by all $p$-th powers, and $[P_n,\Gamma]$ is the commutator subgroup of $\Gamma$;
see Definition 1.15 and Corollary 1.20 of \cite{Dixon}.  
\end{defn}

We note that the $P_n$ form a basis of the identity in $\Gamma$ %in $\Gamma$
consisting of open normal subgroups \cite[Proposition 1.16]{Dixon}.

\begin{lem}\label{pcoker}
    Assume that $\Gamma$ is topologically finitely generated and let $\Gamma_n:=P_n$ for all $n$.
    Then $pI_n \subseteq I_{n}I + I_{n+1}$.
\end{lem}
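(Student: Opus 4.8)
The plan is to work entirely inside the group algebra $\Lambda$ and use the defining relation $P_{n+1} = P_n^p[P_n,\Gamma]$ to produce, for each topological generator $\gamma$ of $P_n$, an expression for $p(\gamma-1)$ modulo $I_nI + I_{n+1}$. Since $I_n$ is generated as a left ideal by the elements $\gamma - 1$ with $\gamma \in \Gamma_n = P_n$, and since $\Gamma$ (hence $P_n$) is topologically finitely generated, it suffices to check the containment on a set of topological generators of $P_n$; I should first note that $I_nI + I_{n+1}$ is a closed left ideal, so that this reduction to generators is legitimate.

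The key identity is the standard one: for $g$ in a group ring, $g^p - 1 = (g-1)((g-1)+1)^{p-1}$ expands via the binomial theorem as $p(g-1) + \binom{p}{2}(g-1)^2 + \cdots + (g-1)^p$, so
\begin{equation*}
    p(g-1) \equiv (g^p - 1) - \sum_{j=2}^{p}\binom{p}{j}(g-1)^j \pmod{0}.
\end{equation*}
Now if $g \in P_n$ then $g - 1 \in I_n$, so every term $(g-1)^j$ with $j \ge 2$ lies in $I_n \cdot I_n \subseteq I_n I$ (using $I_n \subseteq I$); and $g^p \in P_n^p \subseteq P_{n+1} = \Gamma_{n+1}$, so $g^p - 1 \in I_{n+1}$. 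This gives $p(g-1) \in I_{n+1} + I_n I$ for every $g \in P_n$. There is one subtlety: $P_n$ need not be topologically generated by $p$-th powers of its own generators alone — rather $\Gamma_{n+1}$ is generated by $P_n^p$ together with commutators $[P_n,\Gamma]$ — but that only matters for the reverse sort of statement; here I only need that each generator $\gamma$ of $I_n$ (i.e.\ each $\gamma \in P_n$) satisfies $p(\gamma - 1) \in I_nI + I_{n+1}$, which is exactly what the identity above delivers, with no need to decompose $\gamma$ itself.

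Finally I would assemble the conclusion: an arbitrary element of $I_n$ is a (convergent, by closedness) $\Lambda$-linear combination $\sum_i \lambda_i(\gamma_i - 1)$ with $\gamma_i \in P_n$, so $p\sum_i \lambda_i(\gamma_i-1) = \sum_i \lambda_i \cdot p(\gamma_i - 1) \in \Lambda \cdot (I_nI + I_{n+1}) \subseteq I_nI + I_{n+1}$, the last inclusion because $I_nI$ and $I_{n+1}$ are left ideals. Hence $pI_n \subseteq I_nI + I_{n+1}$, as claimed. I expect the only real point requiring care — the "main obstacle," such as it is — to be the topological bookkeeping: checking that $I_nI + I_{n+1}$ is closed so that one may pass from generators to all of $I_n$, and that the binomial expansion argument is applied to actual elements $g \in P_n$ rather than to formal generators; the algebra itself is the one-line binomial identity above.
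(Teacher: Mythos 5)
Your proof is correct and is essentially the paper's own argument: both reduce to showing $p(x-1)\in I_nI+I_{n+1}$ for $x\in P_n$, use the binomial identity $(1+(x-1))^p-1=p(x-1)+(\text{terms in }I_n^2)$, and observe that $x^p-1\in I_{n+1}$ since $x^p\in P_n^p\subseteq P_{n+1}$ while $I_n^2\subseteq I_nI$. The paper phrases the binomial remainder as a single factor $u^2w$ rather than a sum, and does not dwell on the topological closure point you raise (which is in any case harmless here, since $I_n$ is generated as a left ideal by the $\gamma-1$, $\gamma\in\Gamma_n$, and the argument is applied to every such $\gamma$), but the substance is identical.
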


\begin{proof}
    As a $\Lambda$-module, $I_n$ is generated by elements of the form $u:=x-1$ with $x\in \Gamma_n$,
    so it suffices to prove that $pu$ lies in $I_nI+I_{n+1}$.  
    We have 
    $$x^p -1= (1+u)^p - 1 = pu + u^2 w$$
    for some $w\in \Lambda$.  Now $x^p\in \Gamma_{n+1}$, so $x^p - 1 \in I_{n+1}$
    and clearly $u^2w \in I_n^2 \subseteq I_n I$; the result follows.
\end{proof}

\begin{lem}
    The ring $\Lambda$ is local, with unique maximal ideal $p\Lambda+I$.
    In particular, for every $n$ the quotient $\Lambda_n$ is local with unique maximal ideal $p\Lambda_n + I_{\Gamma/\Gamma_n}$. 
\end{lem}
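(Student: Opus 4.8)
The plan is to reduce to the finite levels $\Lambda_n = W[\Gamma/\Gamma_n]$, prove locality there, and then lift units through the inverse limit. First I would show each $\Lambda_n$ is local with maximal ideal $\mathfrak m_n := p\Lambda_n + I_{\Gamma/\Gamma_n}$. Since the augmentation $\Lambda_n \to W$ is a ring homomorphism and $p$ is central, $\mathfrak m_n$ is a two-sided ideal, and $\Lambda_n/\mathfrak m_n \cong W/pW = k$ is a field; hence $\mathfrak m_n$ is maximal and it suffices to check that every $x \in \Lambda_n \setminus \mathfrak m_n$ is a unit. The key input is that the augmentation ideal of $k[\Gamma/\Gamma_n]$ is nilpotent, which holds because $\Gamma/\Gamma_n$ is a finite $p$-group and $\chr k = p$: the trivial module is the unique simple $k[\Gamma/\Gamma_n]$-module, so the augmentation ideal equals $\on{Jac}(k[\Gamma/\Gamma_n])$, which is nilpotent as the ring is Artinian. (One can also argue directly by induction on $|\Gamma/\Gamma_n|$ via a central subgroup of order $p$.) Lifting, $I_{\Gamma/\Gamma_n}^N \subseteq p\Lambda_n$ for some $N$, whence $\mathfrak m_n^{Nm} \subseteq p^m\Lambda_n$ for all $m$; since $\Lambda_n$ is a finite free $W$-module and $W$ is $p$-adically complete and separated, $\Lambda_n$ is $\mathfrak m_n$-adically complete and separated, so for $x \in \mathfrak m_n$ the series $\sum_{i\ge 0} x^i$ converges and inverts $1-x$. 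Thus $\mathfrak m_n \subseteq \on{Jac}(\Lambda_n)$, forcing $\mathfrak m_n = \on{Jac}(\Lambda_n)$ and locality.

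Then I would bootstrap to $\Lambda = \varprojlim_n \Lambda_n$. Put $\mathfrak m := p\Lambda + I$. Since $I_n = \ker(\Lambda \twoheadrightarrow \Lambda_n) \subseteq I$, the preimage of $\mathfrak m_n$ under $\Lambda \twoheadrightarrow \Lambda_n$ is $p\Lambda + I_n + I = \mathfrak m$; in particular $\Lambda/\mathfrak m \cong \Lambda_n/\mathfrak m_n = k$, so $\mathfrak m$ is a maximal two-sided ideal. Given $x \in \Lambda \setminus \mathfrak m$, its image $\bar x_n$ in $\Lambda_n$ avoids $\mathfrak m_n$ (as $\mathfrak m$ is the full preimage of $\mathfrak m_n$), hence is a unit by the finite-level case. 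Because two-sided inverses are unique and the transition maps are ring homomorphisms, the $\bar x_n^{-1}$ are compatible and assemble to $y \in \varprojlim_n \Lambda_n = \Lambda$ with $xy = yx = 1$. Therefore $\Lambda \setminus \mathfrak m \subseteq \Lambda^{\times}$ and $\Lambda$ is local with maximal ideal $\mathfrak m = p\Lambda + I$; the ``in particular'' clause is precisely the finite-level statement already proved.

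The only substantive step is the finite-level locality --- concretely, the nilpotence of the augmentation ideal of $k[\Gamma/\Gamma_n]$; once that is in hand, the rest is a formal ``lift units through an inverse limit'' argument and I expect no real obstacle. I note that nowhere do we need $W$ or $\Lambda$ to be Noetherian: only that $W$ is local and $p$-adically complete and separated.
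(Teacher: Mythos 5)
Your argument is correct. The paper itself disposes of this lemma with a one-line citation to Neukirch--Schmidt--Wingberg (Proposition 5.2.16(iii) of \cite{NSW}); you have instead reconstructed the underlying argument from scratch, and your reconstruction is essentially the standard one that lies behind that reference. The key inputs you identify are exactly right: $\Gamma/\Gamma_n$ is a finite $p$-group (since $\Gamma$ is pro-$p$ and $\Gamma_n$ is open normal), so the augmentation ideal of $k[\Gamma/\Gamma_n]$ is the Jacobson radical and hence nilpotent; combined with $p$-adic completeness of the finite free $W$-module $\Lambda_n$ this gives $\mathfrak m_n$-adic completeness, so geometric series invert $1-x$ for $x\in\mathfrak m_n$, and since $\Lambda_n/\mathfrak m_n\cong k$ is a field this forces locality at each finite level. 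Passing to $\Lambda$ by observing that $\mathfrak m = p\Lambda + I$ is the full preimage of each $\mathfrak m_n$ and that two-sided inverses are unique (hence compatible under the transition ring homomorphisms) is a clean way to lift units through the inverse limit without invoking any Noetherian or topological machinery beyond completeness of $W$. The one cosmetic remark is that your ``in particular'' clause is proved first and used as the engine, which inverts the logical order of the statement but is of course harmless; and the exponent bookkeeping $\mathfrak m_n^{Nm}\subseteq p^m\Lambda_n$ does rely on $p$ being central, which you implicitly use and could flag. In short: correct, self-contained, and a genuine elaboration of what the paper leaves to a citation.
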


\begin{proof}
    This follows immediately from \cite[Proposition 5.2.16 (iii)]{NSW}
\end{proof}

\begin{prop}
    If $\Gamma$ is $p$-adic analytic, then $\Lambda$ is both left and right noetherian.
     If moreover $\Gamma$ has no elements of finite order, then $\Lambda$
    has no zero divisors.
\end{prop}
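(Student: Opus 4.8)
The plan is to reduce both assertions to the case where $\Gamma$ is a \emph{uniform} pro-$p$ group, where the associated graded ring of the Iwasawa algebra becomes an honest polynomial ring. Since $\Gamma$ is $p$-adic analytic it contains an open \emph{normal} uniform pro-$p$ subgroup $H$, with $\dim H=\dim\Gamma=:d$ (see \cite{Dixon}). Choosing coset representatives for the finite quotient $\Gamma/H$ exhibits $\Lambda$ (which is $W[\![\Gamma]\!]$) as a free module of finite rank $[\Gamma:H]$ over $\Lambda_H:=W[\![H]\!]$, both as a left and as a right module. Since a ring that is finitely generated as a left (resp.\ right) module over a left (resp.\ right) noetherian subring is itself left (resp.\ right) noetherian---its one-sided ideals being submodules of a noetherian module---it suffices to prove that $\Lambda_H$ is noetherian.

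For the uniform group $H$ I would fix a minimal topological generating set $h_1,\dots,h_d$ and set $b_i:=h_i-1\in\Lambda_H$. By Lazard's theory of uniform (equivalently, $p$-valuable) groups, $\Lambda_H$ has a topological $W$-basis given by the ordered monomials $b_1^{a_1}\cdots b_d^{a_d}$, it is complete and separated for the $J$-adic filtration with $J:=(p,b_1,\dots,b_d)$, and its associated graded ring is $\gr_J\Lambda_H\cong k[\overline{p},\overline{b_1},\dots,\overline{b_d}]$, a polynomial ring in $d+1$ commuting indeterminates over the residue field $k$ (the symbols commute since $p$ is central in $\Lambda_H$ and the commutators $[b_i,b_j]$ lie in strictly higher filtration); see \cite{Dixon} and \cite{Venjakob}. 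This graded ring is noetherian by the Hilbert basis theorem, and the standard fact that a complete separated filtered ring with noetherian associated graded is itself noetherian then gives that $\Lambda_H$, hence $\Lambda$, is left and right noetherian.

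For the second assertion, note first that the polynomial ring $\gr_J\Lambda_H$ is a domain; since the $J$-adic filtration on $\Lambda_H$ is separated, $\bigcap_n J^n=0$, passing to leading symbols shows that $\Lambda_H$ itself has no zero divisors. It remains to promote this from $H$ to $\Gamma$. For this I would use that $\Lambda$ is a crossed product of the finite group $\Gamma/H$ over the domain $\Lambda_H$, and that such a crossed product is again a domain exactly when $\Gamma$ has no element of finite order: this is the theorem of Neumann on completed group algebras without zero divisors, recorded in the present generality in \cite{Venjakob}. One proof filters $\Lambda$ by a filtration compatible with the one on $\Lambda_H$ and adapted to $1\to H\to\Gamma\to\Gamma/H\to 1$, reducing to the absence of homogeneous zero divisors in a graded ring, where torsion-freeness of $\Gamma$ forces the required comparison of Poincar\'e series.

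I expect the genuine obstacle to be this last step---the passage from the (automatically torsion-free) uniform subgroup $H$ to a general torsion-free $\Gamma$. The torsion-free hypothesis must be used in an essential way, since a crossed product of a finite group over a domain can perfectly well have zero divisors, and the honest argument is substantially more delicate than the filtered-ring formalism that suffices for everything else (for which I would simply invoke \cite{Venjakob}). The noetherian statement, by contrast, is routine once the reduction to the uniform case and the computation of the associated graded ring are in hand.
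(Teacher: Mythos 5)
Your argument is correct, and it is essentially the proof underlying the paper's own (which consists entirely of two citations: Lazard for noetherianity and Neumann's theorem for the absence of zero divisors). You have unpacked the standard argument behind the first citation---reduction to an open normal uniform subgroup, the polynomial associated graded ring, and the complete-filtered-ring formalism---and you correctly identify that the genuinely delicate step is the passage from the uniform subgroup to a general torsion-free $\Gamma$, which is precisely the content of Neumann's theorem that both you and the paper ultimately invoke by reference.
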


\begin{proof}
    The first assertion is \cite[V, 2.2.4]{Lazard}, while the second is \cite[Theorem 1]{Neumann}.
\end{proof}

By a left (respectively right) $\Lambda$-{\em module}, we will always mean a separated, topological left (resp. right) $\Lambda$-module.
If $M$ is any left $\Lambda$-module, we define $M^{\op}$ to be the right $\Lambda$-module
with the same underlying abelian group as $M$ and right $\Gamma$-action given by $m\cdot\gamma:=\gamma^{-1}m$
for $m\in M$.  This enables us to define a duality functor from the category of left $\Lambda$-modules to itself:

\begin{defn}\label{dualdef}
    If $M$ is any left $\Lambda$-module, 
    We define the {\em dual} of $M$ to be the left $\Lambda$-module
    \begin{equation*}
        M^{\vee}:=\Hom_{\Lambda}(M^{\op},\Lambda).
    \end{equation*}
    This is
     the abelian group of all continuous homomorphisms of right $\Lambda$-modules $\varphi:M^{\op}\rightarrow \Lambda$
     with the left $\Lambda$-module structure 
     induced by the canonical 
    $\Lambda$-$\Lambda$-bimodule structure on $\Lambda$: $(\lambda\cdot\varphi)(m):=\lambda\varphi(m)$.
\end{defn}

\begin{remark}
Since $\Omega$ admits a homomorphism to the field $k$, 
both $\Omega$ and $\Lambda$ have {\em Invariant Basis Number} \cite[Definition 1.3, Remark 1.5]{LamLec},
and the {\em rank} of a free module is well-defined.
\end{remark}

\subsection{Towers of modules}

In this section, we generalize the commutative algebra formalism 
developed in \cite[\S3.1]{CaisHida1} for certain projective systems of 
$\Lambda$-modules when $\Gamma=\ZZ_p$ to the case of arbitrary pro-$p$ groups $\Gamma$.
This machinery is at the heart of our proofs of Theorems \ref{thmA} and \ref{thmD}.

\begin{defn}
    A {\em $\Gamma$-tower of $W$-modules} consists of the following data:
    \begin{enumerate}
        \item For each nonnegative integer $n$, a left $\Lambda_n$-module $\M_n$.
        \item For each pair of integers $n\ge m\ge 0$, a $\Gamma$-equivariant
        map of $W$-modules $\rho_{n,m}: \M_n\rightarrow \M_m$.
    \end{enumerate}
    Given a $\Gamma$-tower of $W$-modules $\{\M_n,\rho_{n,m}\}$, we write simply
    $\M:=\varprojlim_{n} \M_n$ for the projective limit
    taken with respect to the maps $\rho_{n,m}$; it is naturally 
    a left $\Lambda$-module.
\end{defn}

\begin{prop}\label{prop:NCfree}
    Let $\{M_n, \rho_{n,m}\}$ be a $\Gamma$-tower of $W$-modules with $M_n$
    a finite and free $W$-module for all $n$.
    Assume that the following conditions hold for all $n \ge 0$:
    \begin{enumerate}
        \item $\o{\M}_n:=k\otimes_W M_n$ a free $\Omega_n$-module 
        of rank $d$ that is independent of $n$.\label{Ass1}
        \item For all $m\le n$, the map 
        \begin{equation*}
            \xymatrix{
                {\o{\rho}_{n,m}:\o{\M}_n }\ar[r] & {\o{\M}_m}
                }
        \end{equation*}
        induced from $\rho_{n,m}$ by reduction modulo $p$ is surjective.
    \end{enumerate}
    Then for all $n\ge 0$:
    %\bryden{Note that only the first hypothesis is needed to prove item (i); consider rephrase.}
    \begin{enumerate}
        \item $\M_n$ is a free $\Lambda_n$-module of rank $d$.\label{MUfree}
        \item The induced maps of left $\Lambda_m$-modules 
        \begin{equation}
            \xymatrix{
                { \Lambda_m \tens_{\Lambda_n} \M_n}  \ar[r] & {\M_m}
                }\label{eq:natmap}
        \end{equation}
        are isomorphisms for all $m\le n$.\label{MUisom}
    \end{enumerate}
    Moreover, $\M$ is a finite and free $\Lambda_W(\Gamma)$-module of rank $d$,
    and for each $n$ the canonical map $\Lambda_n \otimes_{\Lambda} M\rightarrow M_n$
    is an isomorphism of $\Lambda_n$-modules.
\end{prop}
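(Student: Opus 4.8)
The plan is to prove parts \ref{MUfree} and \ref{MUisom} first, by descending induction on $n-m$ for the isomorphism statement and a Nakayama-type argument for freeness, and then to deduce the claims about $\M=\varprojlim \M_n$ by a limit argument. First I would establish \ref{MUfree}: fix $n$, choose elements $e_1,\dots,e_d\in \M_n$ lifting an $\Omega_n$-basis of $\o\M_n=k\otimes_W\M_n$ (which is free of rank $d$ by hypothesis \ref{Ass1}), and consider the induced map of $\Lambda_n$-modules $\phi\colon\Lambda_n^d\to \M_n$. Since $\Lambda_n$ is local with maximal ideal $\mfrak_n:=p\Lambda_n+I_{\Gamma/\Gamma_n}$ (by the Lemma established just above) and $\Lambda_n/\mfrak_n\simeq k$, and $\phi$ is surjective modulo $\mfrak_n$, topological Nakayama gives that $\phi$ is surjective; here one uses that $\M_n$ is $p$-adically complete (being finite free over $W$) and that $I_{\Gamma/\Gamma_n}$ is nilpotent modulo any power of $p$ since $\Gamma/\Gamma_n$ is a finite $p$-group, so $\mfrak_n$ is ``topologically nilpotent'' in the required sense. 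For injectivity: both sides are finite free $W$-modules, and I would compute $W$-ranks. We have $\rank_W \Lambda_n = |\Gamma/\Gamma_n|$, so $\rank_W\Lambda_n^d = d\cdot|\Gamma/\Gamma_n|$; on the other hand $\rank_W\M_n = \dim_k(k\otimes_W\M_n) = \dim_k \o\M_n = d\cdot\dim_k\Omega_n = d\cdot|\Gamma/\Gamma_n|$ using hypothesis \ref{Ass1} again. Thus $\phi$ is a surjection of finite free $W$-modules of equal rank, hence an isomorphism, proving \ref{MUfree}.

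Next, for \ref{MUisom}: the natural map $\Lambda_m\tens_{\Lambda_n}\M_n\to \M_m$ is, after the identification $\M_n\simeq\Lambda_n^d$ from \ref{MUfree}, a map $\Lambda_m^d\to\M_m$; I claim it is the map classifying the images $\rho_{n,m}(e_i)$. By \ref{MUfree} applied at level $m$, it suffices to check this map is an isomorphism, and by the same Nakayama-plus-rank-count argument it is enough to check surjectivity modulo $p$, i.e. that $\o\M_n\to\o\M_m$ is surjective after base change to $\Omega_m$ — but reduction mod $p$ of the displayed map is $\Omega_m\tens_{\Omega_n}\o\M_n\to\o\M_m$, whose surjectivity is exactly hypothesis (ii) combined with the fact that $\o\M_n$ is already generated over $\Omega_n$ (hence over $\Omega_m$ after the nilpotent extension) by the $e_i$. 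Alternatively, and perhaps more cleanly: reduction mod $p$ of $\Lambda_m\tens_{\Lambda_n}\M_n$ is $\Omega_m\tens_{\Omega_n}\o\M_n$, a free $\Omega_m$-module of rank $d$, mapping to the free $\Omega_m$-module $\o\M_m$ of rank $d$, and surjectivity of this map (from (ii)) forces it to be an isomorphism by counting $k$-dimensions; then topological Nakayama over the local ring $\Lambda_m$ upgrades this to an isomorphism $\Lambda_m\tens_{\Lambda_n}\M_n\xrightarrow{\sim}\M_m$.

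Finally, for the statement about $\M=\varprojlim\M_n$: using \ref{MUisom} with $m$ fixed, the transition maps $\rho_{n,m}$ become the canonical surjections $\M_n\simeq\Lambda_n^d\onto\Lambda_m^d\simeq\M_m$ (induced by $\Lambda_n\onto\Lambda_m$), so the projective system $\{\M_n\}$ is pro-isomorphic to $\{\Lambda_n^d\}$. Hence $\M\simeq\varprojlim_n\Lambda_n^d = (\varprojlim_n\Lambda_n)^d = \Lambda^d$ (the limit commutes with the finite direct sum), which is finite free of rank $d$ over $\Lambda$. For the control statement $\Lambda_n\tens_\Lambda\M\xrightarrow{\sim}\M_n$: under the identification $\M\simeq\Lambda^d$ this is the map $\Lambda_n\tens_\Lambda\Lambda^d = \Lambda_n^d\to\M_n$, which is precisely the isomorphism from \ref{MUfree} (one checks the basis $e_i$ of $\M_n$ lifts compatibly to the standard basis of $\Lambda^d=\M$, since by construction the $e_i$ at different levels can be chosen compatibly — this uses surjectivity of the $\o\rho_{n,m}$ to lift a basis at level $0$ up the tower, or one simply observes that any $\Lambda$-basis of $\Lambda^d$ reduces to a $\Lambda_n$-basis of $\Lambda_n^d=\M_n$). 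The main obstacle I anticipate is purely bookkeeping: making sure the bases $e_i$ chosen at each level are genuinely compatible under the $\rho_{n,m}$ so that they assemble to a $\Lambda$-basis of $\M$, and being careful that the version of Nakayama's lemma invoked applies to the not-necessarily-$I_n$-adically-separated but $p$-adically-complete modules $\M_n$ over the local (but not noetherian, in general, unless $\Gamma$ is $p$-adic analytic) rings $\Lambda_n$ — here finiteness of $\Gamma/\Gamma_n$ saves the day, since $\M_n$ is finitely generated over the noetherian ring $W$ and $\mfrak_n$-adic and $p$-adic topologies interact well.
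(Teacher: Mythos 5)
Your proposal is correct and follows essentially the same route as the paper: lift an $\Omega_n$-basis of $\overline{M}_n$ to $M_n$, use Nakayama plus a $W$-rank count to get $\Lambda_n$-freeness, then reduce the base-change map mod $p$ and apply Nakayama again to get the control isomorphism, and finally assemble compatible bases up the tower to identify $M$ with $\Lambda^d$. The only differences are organizational (you argue freeness via a surjection $\Lambda_n^d\to M_n$ whose injectivity you check by rank count, while the paper observes directly that the translates of the lifted basis form a $W$-basis), and your worries about "topological Nakayama" are unnecessary since $M_n$ is finitely generated over the local ring $\Lambda_n$, so ordinary Nakayama suffices.
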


\begin{proof}
    Fix $n\ge 0$ and choose $m_1,\ldots, m_d \in \M_n$ whose images in 
    $\o{\M}_n$ freely generate $\o{\M}_n$ as a left $\Omega_n$-module.
    The images  in $\o{\M}_n$  of the $[\Gamma:\Gamma_n] d$ elements $g m_i$ for $g\in \Gamma/\Gamma_n$ and $1\le i\le d$
   are then a basis for $\o{\M}_n$ as a $k$-vector space, and the free
    $W$-module $\M_n$ must have rank $[\Gamma:U]d$.  By Nakayama's lemma (in the usual commutative case),
    the set $\{gm_i\ :\ g\in \Gamma/\Gamma_n,\ 1\le i\le d\}$ is then a minimal set of generators
    of the free $W$-module $\M_n$ whose cardinality is equal to the $W$-rank of $\M_n$.
    It follows that this set freely generates $\M_n$ as a $W$-module, and hence that
    $\{m_i\ :\ 1\le i\le d\}$ freely generates $\M_n$ as a left $\Lambda_n$-module.
    
    Reducing the map \eqref{eq:natmap} modulo the ideal $(p)$
    yields a map of left $\Omega_m$-modules 
    \begin{equation}
        \xymatrix{
      {\Omega_m \tens_{\Omega_n} \o{\M}_n} \ar[r] & {\o{\M}_m}
      }\label{eq:redmodp}
    \end{equation}
    through which the surjective map $\o{\rho}_{n,m}$ factors,
    whence \eqref{eq:redmodp} is surjective.
    As $(p)$ is contained in the radical of $\Lambda_m$, it
    follows
    from Nakayama's Lemma \cite[(4.22) (3)]{Lam}
    that \eqref{eq:natmap} is surjective as well.
    By our assumption \ref{Ass1}, the map \eqref{eq:natmap} is then a surjective
    map of free $\Lambda_m$-modules of the same rank,
    whence it must be an isomorphism.\footnote{Indeed,
        let $R$ be any local ring with residue field $k$ 
        and $\rho:M\rightarrow M'$
        a surjective map of finite and free left $R$-modules of the same rank.
        As $M'$ is free, the surjection $\rho$ splits giving an isomorphism
        of left $R$-modules $M\simeq M'\oplus \ker(\rho)$.
        Applying $k\otimes_R (\cdot)$ to this isomorphism and using the fact that 
        $M$ and $M'$ are free of the same finite rank forces $k\otimes_R \ker(\rho) = 0$.
        The splitting of $\rho$ yields a surjection $\M\twoheadrightarrow \ker(\rho)$,
        whcnce $\ker(\rho)$ is finitely generated as $M$ is, and  
        Nakayama's Lemma \cite[p(4.22) (2)]{Lam} then gives $\ker(\rho)=0$, as desired.
    }
    
    For any $m\le n$, the kernel of the canonical
    surjection $\Lambda_n\twoheadrightarrow \Lambda_m$
    is contained in the radical of $\Lambda_n$.
    It follows from \ref{MUfree}--\ref{MUisom} and Nakayama's Lemma that any lift to $\M_n$
    of a $\Lambda_m$-basis of $\M_m$ is a $\Lambda_n$-basis of $\M_n$.
    We may therefore choose, for each $n\ge 0$, a basis
    $e_{n,1},\ldots,e_{n,d}$ of the free $\Lambda_n$-module $\M_n$
    with the property that $\rho_{n,m}: \M_n\rightarrow \M_m$
    carries $e_{n,i}$ to $e_{m,i}$ for all $m\le n$ and $1\le i\le d$.
    These choices yield isomorphisms of left $\Lambda_n$-modules 
    $\Lambda_n^{\oplus d}\simeq \M_n$ for all $n$ that are compatible with change in $n$
    via the maps $\rho_{n,m}$.  Passing to projective limits, 
    we conclude that $\M = \varprojlim_{n} \M_n$
    is free of rank $d$ over $\Lambda_W(\Gamma)$.
    The canonical map $\Lambda_n\otimes_{\Lambda} M\rightarrow M_n$
    is then a surjective map of free $\Lambda_n$-modules
    of the same rank, whence it is an isomorphism.
\end{proof}

We next wish to investigate duality for towers of $\Gamma$-modules.

\begin{prop}\label{prop:NCDual}
    Let $\{M_n, \rho_{n,m}\}$ and $\{M'_n, \rho'_{n,m}\}$ be two $\Gamma$-towers of $W$-modules 
    satisfying the hypotheses of Proposition \ref{prop:NCfree}.  Suppose that for each $n$
    there are $W$-bilinear perfect pairings
    \begin{equation}
        \xymatrix{
            {\langle \cdot, \cdot \rangle_n : M_n \times M_n'} \ar[r] & W
        }
    \end{equation}
    with respect to which $\gamma$ and $\gamma^{-1}$ are adjoint for all $\gamma\in \Gamma$, and which
    satisfy %the compatibility conditions
    \begin{equation*}        
            \langle \rho_{n,m}x , \rho'_{n,m}y\rangle_m = \sum_{\gamma \in \Gamma_m / \Gamma_n} \langle x , \gamma^{-1} y\rangle_n
    \end{equation*}
    for all $m\le n$.  Then for each $n$, the pairings 
    $\xymatrix{{(\cdot,\cdot)_n: M_n \times M_n'} \ar[r] & {\Lambda_n}}$ defined by 
    \begin{equation}
        (x,y)_n := \sum_{\gamma\in \Gamma/\Gamma_n} \langle x,\gamma^{-1}y\rangle_n \cdot \gamma
    \end{equation}
    compile to induce isomorphisms of left $\Lambda$-modules
    \begin{equation}
        \xymatrix{ 
           M \ar[r]^-{\simeq} & (M')^{\vee}
        }\quad\text{and}\quad
        \xymatrix{ 
           M' \ar[r]^-{\simeq} & M^{\vee}
        }
    \end{equation}
\end{prop}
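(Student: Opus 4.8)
The plan is to build the maps $M \to (M')^{\vee}$ and $M' \to M^{\vee}$ at finite level first, then pass to the limit, and finally invoke Proposition \ref{prop:NCfree} to conclude they are isomorphisms by a rank-and-surjectivity argument. First I would check that the pairings $(\cdot,\cdot)_n$ land in $\Lambda_n$ (they are finite sums, so this is immediate) and that they are $\Lambda_n$-bilinear: the key computation is that for $g\in \Gamma/\Gamma_n$ one has $(gx,y)_n = g\cdot(x,y)_n$ and $(x,gy)_n = (x,y)_n\cdot g^{-1}$ in the appropriate bimodule sense, which follows formally from the adjointness of $\gamma$ and $\gamma^{-1}$ under $\langle\cdot,\cdot\rangle_n$ together with reindexing the sum over $\Gamma/\Gamma_n$. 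Consequently $(\cdot,\cdot)_n$ induces a map of left $\Lambda_n$-modules $M_n \to \Hom_{\Lambda_n}(M_n'^{\op}, \Lambda_n)$, and symmetrically $M_n' \to \Hom_{\Lambda_n}(M_n^{\op},\Lambda_n)$.

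The next step is compatibility with the transition maps, i.e.\ that the square relating $(\cdot,\cdot)_n$ on level $n$ to $(\cdot,\cdot)_m$ on level $m$ commutes, where on the $\Lambda_n$-side one composes with the natural projection $\Lambda_n \twoheadrightarrow \Lambda_m$ and on the module side one uses $\rho_{n,m}$, $\rho'_{n,m}$. This is exactly where the hypothesis
\begin{equation*}
\langle \rho_{n,m}x, \rho'_{n,m}y\rangle_m = \sum_{\gamma\in\Gamma_m/\Gamma_n}\langle x,\gamma^{-1}y\rangle_n
\end{equation*}
is used: expanding $(\rho_{n,m}x,\rho'_{n,m}y)_m = \sum_{g\in\Gamma/\Gamma_m}\langle\rho_{n,m}x,g^{-1}\rho'_{n,m}y\rangle_m\cdot g$ and applying the hypothesis (noting $g^{-1}\rho'_{n,m}y = \rho'_{n,m}(g^{-1}y)$ by $\Gamma$-equivariance) rewrites the inner term as a sum over $\Gamma_m/\Gamma_n$, and collapsing the double sum over $\Gamma/\Gamma_m$ and $\Gamma_m/\Gamma_n$ into a single sum over $\Gamma/\Gamma_n$ followed by the image of $\gamma$ in $\Gamma/\Gamma_m$ yields precisely the image of $(x,y)_n$ under $\Lambda_n\twoheadrightarrow\Lambda_m$. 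Passing to the inverse limit over $n$ then produces a $\Lambda$-bilinear pairing $(\cdot,\cdot): M\times M' \to \Lambda$ and hence canonical $\Lambda$-linear maps $M\to (M')^{\vee}$ and $M'\to M^{\vee}$.

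It remains to prove these maps are isomorphisms. Here I would reduce modulo $p$: by Proposition \ref{prop:NCfree}, $M$ and $M'$ are free $\Lambda$-modules of the same rank $d$, so $(M')^{\vee} = \Hom_{\Lambda}(M'^{\op},\Lambda)$ is also free of rank $d$ (the dual of a free module of finite rank over a ring with invariant basis number is free of the same rank). Thus $M \to (M')^{\vee}$ is a map of free $\Lambda$-modules of equal rank, and it suffices to show it is surjective, for which by the topological Nakayama lemma (as in the footnoted argument in the proof of Proposition \ref{prop:NCfree}, since the maximal ideal $p\Lambda+I$ is topologically nilpotent on these modules) it is enough to check surjectivity after applying $\Omega \otimes_{\Lambda}(\cdot)$, equivalently at the bottom level $\overline{M}_0 = k\otimes_W M_0$. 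There the pairing $(\cdot,\cdot)_0$ reduces to the $k$-bilinear pairing $\overline{M}_0\times\overline{M}'_0 \to \Omega_0 = k[\Gamma/\Gamma_0]$ given by $\sum_{\gamma}\langle x,\gamma^{-1}y\rangle_0\gamma$, and nondegeneracy of this $\Omega_0$-valued pairing follows from perfectness of $\langle\cdot,\cdot\rangle_0$ modulo $p$ (the ``constant coefficient'' of $(x,y)_0$ is $\langle x,y\rangle_0$, which already separates points). The main obstacle is bookkeeping: getting the left/right module conventions, the $\op$ twists, and the index reshuffling in the transition-compatibility computation exactly right, since a sign or inversion error in how $\Gamma$ acts would break $\Lambda$-bilinearity; once the finite-level compatibility is correctly established, the passage to the limit and the Nakayama argument are routine given Proposition \ref{prop:NCfree}.
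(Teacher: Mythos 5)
The paper's own ``proof'' of Proposition \ref{prop:NCDual} is a single sentence deferring to an external reference, so there is no in-line argument to compare against; your outline is the natural one, and the overall architecture---build the $\Lambda_n$-valued pairings, check bilinearity and transition compatibility, pass to the limit, then use freeness from Proposition \ref{prop:NCfree} together with Nakayama to promote perfectness of $\langle\cdot,\cdot\rangle_0$ at the bottom level to an isomorphism over $\Lambda$---is sound and almost certainly the intended one.

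That said, your two bilinearity formulas are swapped. A direct computation gives
\begin{equation*}
(gx,y)_n = \sum_{\gamma}\langle gx,\gamma^{-1}y\rangle_n\,\gamma
         = \sum_{\gamma}\langle x,(\gamma g)^{-1}y\rangle_n\,\gamma
         = (x,y)_n\cdot g^{-1},
\qquad
(x,gy)_n = g\cdot(x,y)_n,
\end{equation*}
reindexing by $\delta=\gamma g$ (resp.\ $\delta=g^{-1}\gamma$). With the correct formulas, $y\mapsto(\,\cdot\,,y)_n$ is the left $\Lambda_n$-linear map $M_n'\to M_n^{\vee}$, whereas to obtain $M_n\to(M_n')^{\vee}$ you must either post-compose with the $W$-linear anti-involution $\gamma\mapsto\gamma^{-1}$ of $\Lambda_n$, or simply appeal to the symmetry of the hypotheses to run the same construction with the roles of $M$ and $M'$ exchanged. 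This is exactly the inversion slip you flag at the end, and it is harmless to the structure of the argument, but the equality $(gx,y)_n = g\cdot(x,y)_n$ as written is false and, taken literally, would break left $\Lambda_n$-linearity of the map into $(M_n')^{\vee}$. One further small imprecision: ``applying $\Omega\otimes_{\Lambda}(\cdot)$'' and ``checking at the bottom level $\overline{M}_0$'' are not equivalent reductions ($\Omega$ kills only $p$, while $\overline{M}_0$ kills $p$ and $I$); each suffices separately for the Nakayama step, so the conclusion stands, but ``equivalently'' should read ``alternatively.''
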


\begin{proof}
    The proof of \cite[Lemma 3.4]{CaisHida1} carries over {\em mutatis mutandis}
    to the present setting.
\end{proof}

We end this section with a variant of the Mittag--Leffler criterion \cite[\href{https://stacks.math.columbia.edu/tag/0598}{Tag0598}]{stacks-project},
which will enable us to show that certain (derived) inverse limits of $\Gamma$-towers vanish:

\begin{lem}\label{limitvanishing}
    Let $R$ be a commutative noetherian ring, complete with respect to an ideal $J$
    that is contained in the radical of $R$,
    and $\{M_n\}_{n}$ an inverse system of finitely generated $R$-modules.
    If every transition map $\rho:M_{n+1}\rightarrow M_n$ has
    image contained in $JM_n$, then 
    $\varprojlim_n M_n = \varprojlim^1_n M_n = 0$.
\end{lem}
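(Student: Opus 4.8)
The plan is to reduce the vanishing of both $\varprojlim_n M_n$ and $\varprojlim^1_n M_n$ to a statement about the composite transition maps becoming arbitrarily $J$-adically small. First I would observe that since each transition map $\rho \colon M_{n+1} \to M_n$ has image in $J M_n$, the composite map $M_{n+r} \to M_n$ has image in $J^r M_n$ for every $r \ge 0$; this is immediate by iterating the hypothesis and using that each $\rho$ is $R$-linear (so it carries $J^{r-1}M_{n+1}$ into $J^{r-1}\cdot JM_n = J^r M_n$). The key input is then the Artin--Rees lemma (or, more simply here, Krull's intersection theorem in the form that for a finitely generated module $N$ over a noetherian ring complete along an ideal $J$ contained in the radical, $\bigcap_r J^r N = 0$, with $N$ automatically $J$-adically separated).

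For the vanishing of $\varprojlim_n M_n$: an element $(x_n)_n$ of the inverse limit satisfies, for each fixed $n$ and each $r$, that $x_n$ is the image of $x_{n+r} \in M_{n+r}$ under the composite $M_{n+r}\to M_n$, hence $x_n \in J^r M_n$. As this holds for all $r$, we get $x_n \in \bigcap_r J^r M_n = 0$, so $(x_n)_n = 0$.

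For the vanishing of $\varprojlim^1_n M_n$: here I would invoke the standard criterion that $\varprojlim^1$ vanishes for an inverse system satisfying the Mittag--Leffler condition, and verify that condition in the strong form that the images of the composite maps stabilize. Concretely, for fixed $n$ the descending chain of submodules $\Im(M_{n+r} \to M_n) \subseteq J^r M_n$ for $r \ge 0$ has intersection $0$; since each $M_n$ is a finitely generated module over the noetherian ring $R$, these images are finitely generated, and the chain $\{\Im(M_{n+r}\to M_n)\}_r$ is a descending chain of submodules of the noetherian module $M_n$, hence stabilizes --- but its limit is contained in $\bigcap_r J^r M_n = 0$, so in fact $\Im(M_{n+r} \to M_n) = 0$ for $r$ sufficiently large. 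Thus the system is Mittag--Leffler (indeed "pro-zero"), and $\varprojlim^1_n M_n = 0$; alternatively one can cite \cite[\href{https://stacks.math.columbia.edu/tag/0598}{Tag0598}]{stacks-project} directly once the stabilization is in hand.

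The main obstacle --- really the only point requiring care --- is justifying that $\bigcap_r J^r M_n = 0$, i.e.\ that each finitely generated $R$-module $M_n$ is $J$-adically separated. This is where completeness of $R$ along $J$ and the hypothesis $J \subseteq \Rad(R)$ enter: by Krull's intersection theorem for noetherian rings, $\bigcap_r J^r M_n$ is annihilated by some element of $1 + J$, which is a unit since $J$ lies in the radical, forcing the intersection to vanish. Everything else is formal manipulation of inverse systems.
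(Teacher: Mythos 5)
Your argument for the vanishing of $\varprojlim_n M_n$ is correct and matches the paper's: completeness and $J\subseteq\Rad R$ give $J$-adic separatedness of each finitely generated $M_n$ via Krull intersection, and any coherent sequence $(x_n)$ has $x_n\in\bigcap_r J^rM_n=0$.

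The $\varprojlim^1$ part, however, has a genuine gap. You assert that the descending chain $\{\Im(M_{n+r}\to M_n)\}_{r\ge 0}$ of submodules of the noetherian module $M_n$ must stabilize. That is false: noetherian modules satisfy the \emph{ascending} chain condition, not the descending one, so a descending chain need not stabilize. Indeed, the system under consideration is generally \emph{not} Mittag--Leffler. Take $R=\ZZ_p$, $J=(p)$, $M_n=\ZZ_p$ for all $n$, and every transition map equal to multiplication by $p$: the hypotheses of the lemma hold, yet $\Im(M_{n+r}\to M_n)=p^r\ZZ_p$ never stabilizes, so the Mittag--Leffler criterion simply does not apply. (The conclusion $\varprojlim^1=0$ is nonetheless true in this example, which shows that Mittag--Leffler is sufficient but not necessary.) The paper instead uses the explicit presentation of $\varprojlim^1$ as the cokernel of $\tau:\prod_n M_n\to\prod_n M_n$, $\tau(\alpha_n)=(\alpha_n-\rho(\alpha_{n+1}))$, and shows $\tau$ is surjective by the telescoping construction $\alpha_n:=\sum_{\ell\ge n}\rho^{\ell-n}(\beta_\ell)$. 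This series converges precisely because each $M_n$ is $J$-adically \emph{complete} (which follows from completeness of $R$, noetherianity, and finite generation), and this is where the completeness hypothesis is essential. Your plan cannot be repaired by appealing to Mittag--Leffler; you need the completeness-and-convergence argument, or an equivalent.
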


\begin{proof}
    As $R$ is noetherian and $J$-adically complete and each $M_n$ is finitely generated, 
    $M_n$ is also $J$-adically complete \cite[Theorem 8.7]{Matsumura}.
    Since $J\subseteq \Rad R$, we likewise know that $M_n$ is $J$-adically separated
    by the Krull Intersection Theorem \cite[Theorem 8.10 (i)]{Matsumura}.
    By hypothesis, for all $r\ge 0$,  we have that the image of $\rho^r: M_{m+r}\rightarrow M_{m}$
    is contained in $J^{r}M_m$, so if $\{x_n\}_n \in \varprojlim_n M_n$, we 
    have $x_n \in \cap_{r\ge 1} J^r M_n =0$.
On the other hand, by \cite[\href{https://stacks.math.columbia.edu/tag/091D}{Tag 091D}]{stacks-project},
the derived limit $\varprojlim^1_n M_n$ is isomorphic to the cokernel of the map
\begin{equation}
    \xymatrix{
        {\tau: \prod_n M_n} \ar[r] & {\prod_n M_n}
        }\quad\text{given by}\quad \tau(\alpha_n):= (\alpha_n - \rho(\alpha_{n+1}))
        \label{lim1desc}
\end{equation}
We claim that $\tau$ is
surjective: given any $(\beta_n)_n\in \prod_n M_n$, for each fixed $n$ we set $\alpha_n:=\sum_{\ell\ge n} \rho^{\ell-n}(\beta_{\ell})$.
By hypothesis, $\rho^{\ell-n}(\beta_{\ell}) \in J^{\ell-n} M_n$, so as $M_n$ is $J$-adically complete,
this sum converges for each $n$, and  by construction we have $\tau((\alpha_n)_n)=(\beta_n)_n$.
Thus, $\varprojlim^1_n M_n = 0$
\end{proof}

\subsection{Generalized Jacobians}

In this section, we fix
 a smooth, proper and geometrically connected curve $X$ over $k$.

\begin{defn}
    A {\em modulus} $\m$ on $X$ is an effective Cartier divisor.
    If $\deg_k(\m)>1$, we denote by $X_{\m}$ the projective and geometrically integral
    curve associated to $X$ and the modulus $\m$ as in \cite[IV \S1 No.~4]{Serre}.  If $\deg_k(\m)\le 1$,
    we set $X_{\m}:=X$.  In all cases,
    we write $\nu:X\rightarrow X_{\m}$
    for the canonical map, via which $X$ is the normaliztion of $X_{\m}$.
\end{defn}

When $\deg_k(\m)>0$, we note that $X_{\m}$ has a distinguished $k$-rational point $x:\Spec k\rightarrow  X_{\m}$ with the
property that $X_{\m}$ is $k$-smooth outside of $x$, the divisor $\m$ on $X$
is the pullback of $x$ along $\nu$, and $\nu:X\rightarrow X_{\m}$
is initial among all $k$-maps $X\rightarrow Y$ to $k$-schemes $Y$ such that $\m$
scheme-theoretically factors through $Y(k)$.  It follows at once that for any $\m' \ge \m$,
the normalization map $\nu': X\rightarrow X_{\m'}$ factors uniquely through $\nu:X\rightarrow X_{\m}$.

\begin{lem}\label{lem:dualizingdesc}
    There is a functorial identification $\omega_{X_{\m}/k}\simeq \nu_*\Omega^1_{X}(\m)$ of the relative dualizing sheaf
    of $X_{\m}$ with the pushforward of the sheaf of relative differential forms on $X$ with poles no worse than $\m$.
    In particular, $X_{\m}$ has (arithmetic) genus $g_{X_{\m}}:=\dim_k H^0(X,\Omega^1_{X/k}(\m))$.
\end{lem}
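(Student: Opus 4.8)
The plan is to reduce the claim to a local statement at the singular locus of $X_{\m}$ and to identify the dualizing sheaf there by a residue computation. First I would observe that $\nu\colon X\to X_{\m}$ is an isomorphism over the $k$-smooth locus $X_{\m}\setminus\{x\}$, where $x$ is the unique singular point of $X_{\m}$ (we may assume $\deg_k(\m)\ge 2$, the remaining cases being degenerate since there $X_{\m}=X$), and that over this locus both $\omega_{X_{\m}/k}$ and $\nu_*\Omega^1_X(\m)$ restrict to $\Omega^1_{X/k}$, the modulus $\m$ having empty support there. Hence the asserted identification, together with its functoriality in $X$ and in $\m$, is a statement about coherent modules over the local ring at $x$. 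By the construction in \cite[IV \S1 No.~4]{Serre}, this ring is $\O_{X_{\m},x}=k+\mathfrak r$ inside the semilocal ring $\mathcal O_{X,S}$ of $X$ at the finite set $S:=(\nu^{-1}(x))_{\red}$, where $\mathfrak r=\bigcap_{P\in S}\mathfrak m_P^{n_P}$ for $\m=\sum_{P\in S}n_P\cdot P$; moreover $\mathfrak r$ is precisely the conductor of $\mathcal O_{X,S}$ into $\O_{X_{\m},x}$, equivalently the stalk at $S$ of the ideal sheaf $\O_X(-\m)$, so that $\nu_*\O_X/\O_{X_{\m}}$ is a skyscraper at $x$ of $k$-dimension $\deg_k(\m)-1$.

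I would then invoke the classical description of the dualizing sheaf of the reduced projective curve $X_{\m}$ in terms of its normalization --- the sheaf of regular (Rosenlicht) differentials: a section of $\omega_{X_{\m}/k}$ over an open $U\ni x$ is a meromorphic differential $\eta$ on $\nu^{-1}(U)\subseteq X$, regular away from $S$, such that $\sum_{P\in S}\Res_P(f\eta)=0$ for every $f\in\O_{X_{\m},x}$; this is the residue-theoretic incarnation of Serre duality on $X_{\m}$ ({\em cf.}~\cite{Serre}). The heart of the matter is the local identification of this sheaf with $\nu_*\Omega^1_X(\m)$, driven by the conductor: since $\mathfrak r=\O_X(-\m)_S$, if $\eta$ has poles along $S$ bounded by $\m$ and $f\in\mathfrak r$, then $v_P(f\eta)=v_P(f)+v_P(\eta)\ge n_P-n_P=0$ at each $P\in S$, so $f\eta$ is regular there and all of its residues vanish; conversely, were $\eta$ to have a pole worse than $\m$ at some $P_0\in S$, choosing $f\in\mathfrak r$ with $v_{P_0}(f)=n_{P_0}$ and large vanishing order at the other points of $S$ would force $\Res_{P_0}(f\eta)\ne 0$. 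Tracking the full family of conductor conditions in this way pins $\omega_{X_{\m}/k}$ down, near $x$, as $\nu_*\Omega^1_X(\m)$, hence everywhere, and the functoriality asserted is inherited from that of Serre's construction of $X_{\m}$ and of the residue pairing.

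The final (genus) assertion then follows: taking $H^0$ and applying Serre duality on the projective curve $X_{\m}$ gives $g_{X_{\m}}=\dim_k H^1(X_{\m},\O_{X_{\m}})=\dim_k H^0(X_{\m},\omega_{X_{\m}/k})=\dim_k H^0(X,\Omega^1_{X/k}(\m))$; equivalently, the skyscraper computation above yields $\chi(\O_{X_{\m}})=\chi(\O_X)-(\deg_k(\m)-1)$, so the arithmetic genus of $X_{\m}$ is $g+\deg_k(\m)-1$, which Riemann--Roch on the smooth curve $X$ identifies with $\dim_k H^0(X,\Omega^1_{X/k}(\m))$ (the line bundle $\Omega^1_{X/k}(\m)$ having degree $\ge 2g-1$). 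I expect the main obstacle to be precisely the local analysis at $x$: setting up the residue description of $\omega_{X_{\m}/k}$ with enough care --- note that the singularity of $X_{\m}$ is in general not Gorenstein (for instance whenever $\m$ is reduced but supported at three or more points), so $\omega_{X_{\m}/k}$ need not be invertible at $x$ --- and then matching the whole collection of conductor--residue conditions against the single pole bound ``no worse than $\m$''. Granting the local picture, globalization, functoriality, and the genus computation are all formal.
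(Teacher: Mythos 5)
Your proof is correct and rests on the same key ingredient as the paper's: Rosenlicht's description of the dualizing sheaf of a reduced proper curve as the sheaf of ``regular differentials'' cut out by residue conditions at the singular points. The paper's proof is a one-liner citing that theorem together with Galois descent, while you unpack the local conductor--residue computation at $x$ that makes the identification with $\nu_*\Omega^1_X(\m)$ explicit, which is exactly what the cited result reduces things to. Two small corrections: in the converse direction you should take $f\in\mathfrak r$ with $v_{P_0}(f)=-v_{P_0}(\eta)-1$ rather than $v_{P_0}(f)=n_{P_0}$, so that $f\eta$ has exactly a simple pole at $P_0$ whose residue you can then make nonzero by adjusting the leading coefficient of $f$ (while making residues at the other branches vanish by imposing high vanishing order there, as you indicate); and since Rosenlicht's theorem as you invoke it is stated over an algebraically closed field, one should first base change to $\o{k}$, establish the canonical identification there, and then descend --- this is precisely the ``Galois descent'' step the paper's proof flags, and your residue and strong-approximation arguments tacitly assume rationality of the branches.
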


\begin{proof}
This follows by Galois descent from Rosenlicht's explicit description \cite[Theorem 5.2.3]{GDBC}
of the relative dualizing sheaf of a proper reduced curve over an algebraically closed field.
%see also \cite[V.2, Proposition 5]{Serre}
\end{proof}

\begin{defn}
    For $n\in \ZZ$, we write $J_{X,\m}^{n}:=\Pic^n_{X_{\m}/k}$ for the connected component of the 
    relative Picard scheme $\Pic_{X_{\m}/k}$ classifying degree-$n$ line bundles on $X_{\m}$,
    and write simply $J_{X,\m}:=\Pic^0_{X_{\m}/k}$ for the {\em generalized Jacobian} of $X$ relative to $\m$.
    We will drop $\m$ from the notation when $\m=0$. 
\end{defn}

The $k$-group structure on $\Pic_{X_{\m}/k}$ (functorially induced by tensor product of line bundles)
makes $J_{X,\m}$ into a commutative $k$-group scheme
and 
%that is proper over $k$ if and only if
%$\deg(\m) \le 1$, in which cases it coincides with the usual Jacobian of $X$ over $k$.
%the $k$-group structure on $\Pic_{X(\m)/k}$, there is a canonical left
%action of $J_{X,\m}$  on $J_{X,\m}^n$ for all $n$ making 
$J_{X,\m}^n$ into a principal homogeneous space for $J_{X,\m}$ for all $n$.
The structure of $J_{X,\m}$ can be made quite explicit \cite[\S9.2]{BLR}:

\begin{prop}
    The commutative $k$-group scheme $J_{X,\m}$ is smooth over $k$ of dimension $g_{X_{\m}}$.  
    Pullback of line bundles along $\nu: X\rightarrow X_{\m}$ induces a surjective map of 
    commutative $k$-group schemes $J_{X,\m}\twoheadrightarrow J_{X}$ with kernel 
    that is canonically a product $U_{\m}\times T_{\m}$ of a unipotent group $U_{\m}$ and a torus $T_{\m}$.
    Furthermore, %$T_{X}(\m)$ depends only on $\m_{red}$, and 
    there is a canonical 
    identification $T_{\m} \simeq \Res_{\m_{\red}/k} \Gm / \Gm$ of $T_{\m}$ with the quotient of the 
    Weil restriction $\Res_{\m_{\red}/k} \Gm$ by the diagonally embedded copy of $\Gm$, and
    $U_{\m}=0$ if and only if $\m=\m_{\red}$ is reduced.
\end{prop}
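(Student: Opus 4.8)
The plan is to deduce everything from the classical structure theory of generalized Jacobians over an algebraically closed field---due to Rosenlicht and Serre, and recorded in \cite[\S9.2]{BLR}---together with Galois descent, making essential use of the fact that $k$ is perfect. For the first two assertions I would argue directly. By construction $X_\m$ is a projective, geometrically integral curve over $k$, so the relative Picard functor $\Pic_{X_\m/k}$ is represented by a $k$-group scheme that is smooth (the obstruction lives in $H^2(X_\m,\O_{X_\m})$, which vanishes on a curve) and locally of finite type, with identity component $J_{X,\m}=\Pic^0_{X_\m/k}$ of finite type; its tangent space at the identity is $H^1(X_\m,\O_{X_\m})$, which by Serre duality on the proper reduced curve $X_\m$ and Lemma \ref{lem:dualizingdesc} has $k$-dimension $\dim_k H^0(X,\Omega^1_{X/k}(\m))=g_{X_\m}$. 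This gives smoothness and the dimension count.

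Next I would analyze $\nu^\ast$ via the exact sequence of fppf sheaves $1\to \O_{X_\m}^\times\to \nu_\ast\O_X^\times\to \Fscr\to 1$ on $X_\m$. Since $\nu$ is finite and restricts to an isomorphism away from the distinguished point $x$, the quotient $\Fscr$ is a skyscraper supported at $x$, with stalk $\widetilde\O^\times/\O_{X_\m,x}^\times$, where $\widetilde\O=\prod_{\tilde x\mapsto x}\O_{X,\tilde x}$ is a finite $k$-algebra and $\O_{X_\m,x}=k+\mathfrak c$ for the conductor ideal $\mathfrak c$ (chosen so that $\mathfrak c=\O_X(-\m)$ locally at $x$). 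Taking cohomology over $X_\m\times_k\Spec R$ for variable $k$-algebras $R$---using that $\nu$ finite forces $\nu_\ast$ exact and $H^\bullet(X_\m,\nu_\ast\O_X^\times)=H^\bullet(X,\O_X^\times)$, that $\Fscr$ being a skyscraper kills higher cohomology, and that $H^0(X_R,\O_{X_R})=R$---yields a functorial exact sequence $1\to R^\times\to \Fscr(X_{\m,R})\to \Pic(X_{\m,R})\to \Pic(X_R)\to 0$. Reading this off shows $\nu^\ast$ is surjective on all of $\Pic$, hence on identity components, and it preserves degrees, so $J_{X,\m}\twoheadrightarrow J_X$; and its kernel is the $k$-group scheme $\underline H$ with $\underline H(R)=\Fscr(X_{\m,R})/R^\times$, a quotient of the unit group of the Artinian $k$-algebra $\widetilde\O/\mathfrak c$ and hence a smooth connected commutative \emph{affine} group---precisely Serre's ``local term'' attached to the modulus. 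Being connected, it already sits inside $\Pic^0=J_{X,\m}$.

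For the structure of $\underline H$ I would invoke that over the perfect field $k$ every smooth connected commutative affine $k$-group splits canonically as $U_\m\times T_\m$ with $T_\m$ its unique maximal torus and $U_\m$ unipotent (the direct-product splitting of $0\to U_\m\to \underline H\to T_\m\to 0$ comes from $\Hom(T_\m,U_\m)=\Ext^1(T_\m,U_\m)=0$ over a perfect field, verified after base change to $\overline k$, where $T_\m$ is split, and then descended). To pin down $T_\m$, decompose $\widetilde\O/\mathfrak c=\prod_{\tilde x}B_{\tilde x}$ into local Artinian factors with residue field $k(\tilde x)$; then $1\to (1+\mathfrak n_{\tilde x})\to B_{\tilde x}^\times\to k(\tilde x)^\times\to 1$ with $k(\tilde x)^\times=\Res_{k(\tilde x)/k}\Gm$ a torus and $1+\mathfrak n_{\tilde x}$ unipotent (filtered by the $\mathfrak n_{\tilde x}$-adic filtration with $\Ga$-subquotients). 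Assembling over $\tilde x\in\m_{\red}$ and dividing out the diagonal $\Gm=(k\otimes R)^\times$ produces the canonical, Galois-equivariant identification $T_\m\simeq \Res_{\m_{\red}/k}\Gm/\Gm$ and exhibits $U_\m=\prod_{\tilde x}(1+\mathfrak n_{\tilde x})$; the latter is trivial exactly when every $\mathfrak n_{\tilde x}$ vanishes, i.e. when $\widetilde\O/\mathfrak c$ is reduced, i.e.---since $\mathfrak c=\O_X(-\m)$---when $\m=\m_{\red}$.

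The main obstacle is not any single computation but the bookkeeping in the middle step: ensuring that the identification of $\ker(\nu^\ast)$ with an explicit unit-group quotient is an isomorphism of $k$-group \emph{schemes} (functorial in $R$), not merely a statement about $\overline k$-points, and then extracting the canonical $U_\m\times T_\m$ decomposition over the non-algebraically-closed perfect field $k$. Since all of this is carried out in \cite[\S9.2]{BLR}, the proof really amounts to citing that reference and supplying the routine Galois descent from $\overline k$.
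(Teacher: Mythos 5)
The paper gives no proof of this proposition beyond the citation "\cite[\S9.2]{BLR}" in the preceding sentence, and your proposal is precisely a careful unwinding of the Rosenlicht--Serre structure theory recorded there (smoothness and the tangent-space computation via Serre duality, the short exact sequence $1\to\O_{X_\m}^\times\to\nu_*\O_X^\times\to\Fscr\to 1$ identifying $\ker\nu^*$ with a unit-group quotient, and the $U_\m\times T_\m$ splitting over the perfect field $k$). As you note yourself, the argument amounts to citing that reference and supplying routine Galois descent, which is exactly what the paper does; your proposal is correct and takes essentially the same approach.
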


We now turn to the functoriality of $J_{X,\m}$ in $X$.  Fix a modulus $\m$ on $X$ supported on a finite set $S\subseteq X$
of closed points of $X$, and recall that for any $\m' \ge \m$, pullback of line
bundles along $X_{\m}\rightarrow X_{\m'}$ yields a canonical surjection of $k$-groups  $J_{X,\m'}\rightarrow J_{X,\m}$
with affine kernel.

\begin{prop}\label{GJfunc}
  Let $\pi:Y\rightarrow X$ be a finite and generically \'etale map of proper, smooth, and geometrically connected curves over $k$.
  For any modulus $\m$ on $X$ that is supported on a finite set of closed points $S\subseteq X$, there exists a unique
  maximal (respectively minimal) modulus $\n$ (respectively $\n'$) on $Y$ that is supported on $\pi^{-1}S$, 
  and canonical homomorphisms of $k$-group schemes
  \begin{equation*}
    \xymatrix{ 
        {\pi^* : J_{X,\m}} \ar[r] & {J_{Y,\n}}
        }\ and
        \xymatrix{ 
        {\pi_* : J_{Y,\n'}} \ar[r] & {J_{X,\m}}
        }
  \end{equation*}
  making the diagrams
  \begin{equation}
    \xymatrix{
        {J_{X,\m}}\ar[d] \ar[r]^-{\pi^*} & {J_{Y,\n}}\ar[d]\\
        {J_{X}} \ar[r]_-{\pi^*} & {J_{Y}} 
        }
    \quad
    \xymatrix{
        {J_{X,\m}}\ar[d] \ & {J_{Y,\n'}}\ar[d]\ar[l]_-{\pi_*} \\
        {J_{X}} & {J_{Y}}\ar[l]^-{\pi_*} 
        }\label{PicAlb}
  \end{equation}
  commute, where $\pi^*=\Pic^0(\pi) : J_X\rightarrow J_Y$ and $\pi_*=\Alb(\pi): J_Y\rightarrow J_X$
  are the usual maps on Jacobians associated to the finite (flat!) map $\pi: Y\rightarrow X$ 
  by Picard (pullback of line bundles) and Albanese (norm of line bundles) functoriality.
  Considering $S$ and $\pi^{-1}S$ as reduced Cartier divisors on $X$ and $Y$,
  these maps induce morphisms $\pi^*: J_{X}(S)\rightarrow J_{Y}(\pi^{-1}S)$
  and $\pi_*: J_{Y}(\pi^{-1}S)\rightarrow J_{X}(S)$ of maximal semiabelian quotients
  whose composition $\pi_* \pi^* = \deg(\pi)$ is multiplication by $\deg(\pi)$.
  If moreover $\pi$ is generically Galois with group $G$, then $\pi^*\pi_* = \sum_{g\in G} g^*$.

  The respective restrictions of $\pi^*$ and $\pi_*$ to the tori $T_{\m}$ and $T_{\n'}$ are induced
  by 
  the unique descents to $k$ of the $\o{k}$ maps 
  \begin{equation*}
        \xymatrix{
            {\Res_{S/k}\Gm \times_k \o{k} = \displaystyle\prod_{s\in S_{\o{k}}} \Gm }
            \ar@<2pt>[r]^-{\pi^*} & {\displaystyle\prod_{s\in S_{\o{k}}}\ \ \displaystyle\prod_{t\in \pi_{\o{k}}^{-1}(s)} \Gm =
            \Res_{\pi^{-1}S/k} \Gm \times_k \o{k}} \ar@<2pt>[l]^-{\pi_*}
    }
  \end{equation*}
  given on the factor with index $s\in S_{\o{k}}$ by the diagnonal $\pi^* = \Delta: \Gm \rightarrow  \prod_{t\in \pi_{\o{k}}^{-1}(s)} \Gm$
  and the norm map $\pi_* = \Nm: \prod_{t\in \pi_{\o{k}}^{-1}(s)} {\Gm} \rightarrow \Gm$ functorially determined on $R$-valued points
  for any $\o{k}$-algebra $R$ by $\Nm((a_t)_t):=\prod_t a_t^{e_t}$, with $e_t$ is the ramification degree of $\O_{X,s}\rightarrow \O_{Y,t}$.
\end{prop}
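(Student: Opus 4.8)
The plan is to construct both maps from the identification $J_{X,\m}=\Pic^0_{X_{\m}/k}$, using the universal properties of the pinching map $\nu_X$ and of the generalized Jacobian recalled above (following \cite{Serre}), and then to deduce every asserted compatibility from the corresponding statement about honest Cartier divisors on $X$ and $Y$.

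\emph{The map $\pi^{*}$ and maximality of $\n$.}\, The composite $f:=\nu_X\circ\pi\colon Y\to X_{\m}$ is finite and generically \'etale. By the universal property of $\nu_Y\colon Y\to Y_{\n}$, the morphism $f$ factors (necessarily uniquely) through $\nu_Y$ if and only if the closed subscheme $\n\hookrightarrow Y$ is carried scheme-theoretically into the distinguished $k$-point of $X_{\m}$, equivalently if and only if $f^{*}\mathfrak{c}$ lies in the ideal of $\n$, where $\mathfrak{c}\subseteq\O_X$ is the conductor ideal cutting out that point; a local computation identifies $f^{*}\mathfrak{c}$ with the ideal of the Cartier divisor $\pi^{*}\m$ on $Y$. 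Hence the moduli $\n$ supported on $\pi^{-1}S$ admitting such a factorization are exactly those with $\n\le\pi^{*}\m$, the maximal one being $\n:=\pi^{*}\m$, which is supported on $\pi^{-1}|\m|\subseteq\pi^{-1}S$. For this $\n$ one obtains a finite morphism of curves $q\colon Y_{\n}\to X_{\m}$, and I would define $\pi^{*}:=\Pic^0(q)$; pullback of line bundles is a homomorphism preserving algebraic equivalence, so $\pi^{*}\colon J_{X,\m}\to J_{Y,\n}$ is a morphism of $k$-group schemes, and the left square of \eqref{PicAlb} commutes because the underlying square of curves does. Concretely $\pi^{*}$ is ordinary pullback of divisor classes, carrying $\div(h)$ with $h\equiv 1\bmod\m$ to $\div(h\circ\pi)$ with $h\circ\pi\equiv 1\bmod\pi^{*}\m$.

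\emph{The map $\pi_{*}$ and minimality of $\n'$.}\, Here the normalization $\nu_X$ is not flat, so there is no convenient morphism of curves in this direction and I would instead invoke the universal mapping property of the generalized Jacobian \cite{Serre}. The canonical morphism $\varphi\colon X\setminus|\m|\to J_{X,\m}$ has conductor $\m$; composing with $\pi$ gives a morphism $\varphi\circ\pi\colon Y\setminus\pi^{-1}|\m|\to J_{X,\m}$ whose conductor $\n'$ is supported on $\pi^{-1}|\m|$ and is computed one point over $|\m|$ at a time through the behavior of the local norm on the higher unit filtration $U_y^{(j)}:=1+\mathfrak{m}_y^{j}$: the exponent $\n'_y$ is the least $j$ with $\Nm_{\O_{Y,y}/\O_{X,x}}(U_y^{(j)})\subseteq U_x^{(m_x)}$ (the conductor of the norm, read off from the local ramification data). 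In particular $\n'$ is the unique minimal modulus supported on $\pi^{-1}S$ through which $\varphi\circ\pi$ factors; it is reduced wherever $\pi$ is at worst tamely ramified, so $\n'=(\pi^{-1}S)_{\mathrm{red}}$ whenever $\m$ is reduced. The universal property of $Y_{\n'}$ then produces a unique homomorphism of $k$-group schemes $\pi_{*}\colon J_{Y,\n'}\to J_{X,\m}$ through which $\varphi\circ\pi$ factors; concretely it is pushforward of divisor classes (equivalently, norm of line bundles), and compatibility of pushforward with the pinching maps yields the right square of \eqref{PicAlb}.

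\emph{Formal consequences.}\, The composition identities already hold on Cartier divisors, $\pi_{*}\pi^{*}D=(\deg\pi)D$ and, when $\pi$ is Galois with group $G$, $\pi^{*}\pi_{*}D=\sum_{g\in G}g^{*}D$, so they descend to divisor classes and then to the maximal semiabelian quotients $J_X(S)$ and $J_Y(\pi^{-1}S)$; alternatively, each side of each identity is an endomorphism of an extension of an abelian variety by a torus that agrees with the other on the toric part and on the abelian quotient, and there are no nonzero homomorphisms from an abelian variety to a torus. For the toric parts I would base change to $\o{k}$, where $T_{\m}=\Res_{S/k}\Gm/\Gm$ and $T_{\n'}$ split into products of copies of $\Gm$ indexed by the geometric points of $S$, respectively of $\pi^{-1}S$: the restriction of $\pi^{*}$ to the torus sends a unit at $s$ to its residue at each $y$ over $s$ --- the diagonal --- while the norm of a unit at $y$ reduces modulo $\mathfrak{m}_{\pi(y)}$ to the $e_y$-th power of its residue, giving $(a_t)_t\mapsto\prod_t a_t^{e_t}$; both descriptions are $\Gal(\o{k}/k)$-equivariant and hence descend uniquely to $k$ (and directly reconfirm the composition formulas restricted to the tori). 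I expect the main obstacle to be the local ramification computation fixing $\n'$ --- tracking the norm map on the higher unit groups through wild ramification --- together with the care needed to see that the divisor-class recipes define bona fide morphisms of group schemes, which is precisely what the universal mapping property of the generalized Jacobian is designed to supply.
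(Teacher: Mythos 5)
Your proposal takes essentially the same route as the paper's: $\pi^*$ is built from the universal property of $\nu_Y\colon Y\to Y_{\n}$ applied to $\nu_X\circ\pi$, with $\n=\pi^*\m$ identified as the maximal admissible modulus; $\pi_*$ is produced from Serre's universal mapping property of the generalized Jacobian applied to the composite of the canonical map $X\setminus|\m|\to J_{X,\m}$ (really its degree-$1$ torsor version) with $\pi$; and the toric pieces are computed by base changing to $\o{k}$ and tracing through the local norm of a unit, giving the diagonal and the $(a_t)\mapsto\prod_t a_t^{e_t}$ formula.

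The one place you wave your hands is the commutativity of the right-hand square of \eqref{PicAlb}. Declaring $\pi_*$ to be ``pushforward of divisor classes'' and appealing to ``compatibility of pushforward with the pinching maps'' doesn't yet constitute a proof, since $\nu_X$ is not flat and $\pi_*$ was defined abstractly via a universal property rather than as a pushforward. The paper's argument here is not formal: one first checks that $\nu_X^*\circ\pi_*\colon J^n_{Y,\n'}\to J^n_X$ factors uniquely through $\nu_Y^*\colon J^n_{Y,\n'}\to J^n_Y$ (using that $\nu_Y^*$ has affine fibers while $J^n_X$ is proper), obtaining a map $\mu^n\colon J^n_Y\to J^n_X$, and then identifies $\mu^1$ with the usual Albanese map by showing $\mu^1\circ\theta_Y=\theta_X\circ\pi$ on the dense open $Y\setminus\pi^{-1}S$ and invoking separatedness. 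You should either supply this argument or acknowledge that it requires one. Your extra aside about $\n'$ --- identifying it as the conductor of the local norm on higher unit groups, and observing that $\n'$ is reduced whenever $\m$ is because the norm carries $1$-units to $1$-units --- is correct and a nice supplement, but is not in the paper and is not needed for the statement.
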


\begin{proof}
    We may assume that $\deg_k \m > 1$.  As $\pi: Y\rightarrow X$ certainly carries $\pi^{-1}S$ into $S$, 
    it follows from the universal property of $\nu:Y\rightarrow Y_{\n}$ that in order for the composite $\nu\circ \pi: Y\rightarrow X \rightarrow X_{\m}$ to factor through $Y_{\n}$, it is necessary and sufficient that the closed subscheme $\n$ of $Y$ scheme-theoretically factor through 
    the distinguished point $x\in X(k)$.  Writing $\I\subseteq \O_X$ for the ideal sheaf of $\m$ and $\J\subseteq \O_Y$ for the ideal sheaf of $\n$, such factorization holds if and only if $\J\subseteq f^*\I$ inside $\O_Y$, and $\J=f^*\I$ is patently maximal 
    with respect to this condition.  Assuming that $\n \le f^*\m$, we thus obtain a canonical map $\pi: Y_{\n}\rightarrow X_{\m}$
    whose normalization {\em is} $\pi:Y\rightarrow X$, and we denote by $\pi^*: J_{X,\m}\rightarrow J_{Y,\n}$ the map
    induced by pullback of line bundles along $\pi: Y_{\n}\rightarrow X_{\m}$.  It is clear from this construction that 
    the first diagram in \eqref{PicAlb} commutes.

    Denote by $\theta_{X,\m} : X - S\rightarrow J^1_{X,\m}$ the canonical map of $k$-schemes
    functorially determined by sending a $Z$-valued point $P$ of $X_Z-S_Z$ to the inverse
    ideal sheaf $\O(P)$ associated to the closed subscheme $P$ of $(X_{\m})_Z$.  
    Consider the map of $k$-schemes 
    \begin{equation}
        \xymatrix{
            {\rho: Y - \pi^{-1}S} \ar[r]^-{\pi} & {X-S} \ar[r]^-{\theta_{X,\m}} & {J_{X,\m}^1} 
            }.\label{YJcomp}
    \end{equation}
    By the universal mapping
    property of generalized Jacobians \cite[V \S4 No.~22, Proposition 13]{Serre}, there is a modulus $\n'$ on $Y$ with support
    $\pi^{-1}S$, a map of $k$-schemes $\rho^1_{\n'}: J_{Y,\n'}^1\rightarrow J_{X,\m}^1$, and a
    map of $k$-group schemes $\rho_{\n'}: J_{Y,\n'}\rightarrow J_{X,\m}$ such that $\rho^1_{\n'}$
    is equivariant with respect to $\rho_{\n'}$ and $\rho = \rho^1_{\n'}\circ \theta_{Y,\n'}$.
    Each such modulus uniquely determines $\rho_{\n'}$ and $\rho^1_{\n'}$, and there is a unique
    minimal modulus verifying these conditions.  We write simply $\pi_*: J_{Y,\n'}\rightarrow J_{X,\m}$
    for the map $\rho_{\n'}$ associated to the minimal modulus $\n'$. 
    As pullback of line bundles along $\nu$ preserves degree (see, e.g \cite[\S9.1]{BLR}),
    for any $n\in \ZZ$ we have a canonical map of $k$-schemes $J_{Y,\n'}^n\rightarrow J_{Y}^n$
    whose fibers are affine; as $J_X^n$ is proper, it follows that the composite
    \begin{equation}
        \xymatrix{
            {J_{Y,\n'}^n} \ar[r]^-{\pi_*} & {J_{X,\m}^n} \ar[r]^-{\nu^*} & {J_X^n}
        }\label{trgen}
    \end{equation}
    factors through $J_{Y,\n'}^n \rightarrow J_Y^n$ via a unique map $\mu^n: J_Y^n\rightarrow J_X^n$
    which must moreover be a map of $k$-groups when $n=0$.
    Consider the resulting diagram
    \begin{equation*}
        \xymatrix{
            {Y - \pi^{-1}S} \ar@/^20pt/[rrr]^-{\pi} \ar[r]_-{\theta_{Y,\n'}}\ar@{^{(}->}[d] & {J_{Y,\n'}^1} \ar[d]^-{\nu^*}  \ar[r]_-{\rho^1_{\n'}} & J_{X,\m}^1 \ar[d]_-{\nu^*} & 
            X-S \ar[l]^-{\theta_{X,\m}}\ar@{^{(}->}[d]  \\
            {Y} \ar[r]_-{\theta_Y} & {J_Y^1} \ar[r]_-{\mu^1} & {J_X^1} & X \ar[l]^-{\theta_X}
        }
    \end{equation*}
    in which the middle square commutes by the very definition of $\mu^1$
    and the outer squares are readily seen to commute from the definition of $\theta_{\star}$
    and the fact that the normalization maps are isomorphisms over the smooth locii.
    As the top region commutes by the very definition of $\rho^1_{\n'}$, 
    we conclude that $\mu^1\circ \theta_Y$ and 
    $\theta_X\circ \pi$ agree on the dense open subscheme $Y-\pi^{-1}S$ of $Y$; since $J_X^1$ is separated, we deduce
    that they agree on $Y$.  By the uniqueness aspect of $\rho^1$, it follows that $\rho^1=\mu^1$
    giving the commutativity of the second diagram in \eqref{PicAlb}.
    
    To prove that the restrictions of the maps we have constructed to the tori
    $T_{\m}$ and $T_{\n'}$ have the specified descriptions, we may assume that $k$
    is algebraically closed.  As two maps from a reduced $k$-scheme to a separated $k$-scheme agree
    if and only if they agree on $k$-points, it suffices to check that the proposed descriptions
    hold at the level of $k$-points, which we now do.

    Let $P,Q$ be two $k$-points of $Y$ not in $\pi^{-1}S$
    and let $\L_Y$ (respectively $\L_X$) be the line bundle on $Y_{\n'}$ (resp. $X_{\m})$ 
    corresponding to the degree zero Cartier divisor $Q-P$
    (respectively $\pi(Q)-\pi(P)$).
    It follows easily from the universal property of $\rho^1_{\n'}$ and $\rho_{\n'}$
    that $\pi_* \L_Y = \L_X$, and one concludes that if $D = \sum n_Q Q$ is {\em any} degree zero Cartier
    divisor on $Y$ supported away from $\n'$, then $\pi_*\L(D)= \L(\pi(D))$, where $\pi(D)=\sum n_Q \pi(Q)$
    is the direct image of $D$.  Thus, if $g\in k(Y)$ is any function, we conclude that 
    \begin{equation*}
        \pi_*(\L(\div(g))) = \L(\pi(\div(g))) = \L(\div(\Nm_{\pi}(g))),
    \end{equation*}
    where $\Nm_{\pi}: k(Y)\rightarrow k(X)$ is the norm function associated to the extension of function fields $k(Y)/k(X)$.
    {\em cf.}~the proof of \cite[III \S1 No.~2, Prop. 4]{Serre}.
    Choose a $k$-valued point $\xi$ of $T_{\n'} = \prod_{s\in S}\prod_{t\in \pi^{-1}(S)} \Gm/\Gm$, and
    let $(a_t)\in \prod_{s\in S}\prod_{t\in \pi^{-1}(S)} k^{\times} $
    be any representative of it.  Let $g \in k(Y)$ be any function with $\ord_t(a_t - g) > 0$ at every point $t\in \pi^{-1}S$.
    The image of $\xi$ under the inclusion $T_{\n'}(k)\rightarrow J_{Y,\n'}(k)$ is the line bundle $\L(\div(g))$
    on $Y_{\n'}$, with image under $\pi_*$ the line bundle $\L(\div(\Nm_{\pi}(g)))$ on $X(\m')$.
    For any point $s\in S$, the function $g$ is a local unit at every $t\in \pi^{-1}(s)$ by construction, and we have
    \begin{equation*}
        \Nm_{\pi}(g) = \prod_{t\in \pi^{-1}(s)} \Nm_{\wh{\O}_{Y,t}/\wh{\O}_{X,s}}(g) \equiv \prod_{t\in \pi^{-1}(s)} a_t^{e_t} \pmod{\p_s}
    \end{equation*}
    with $\p_s$ the maximal ideal of $\O_{X,s}$ and $e_t$ the ramification index of $\O_{X,s}\rightarrow \O_{Y,t}$.
    The given description of $\pi_*$ follows.  Likewise, if $h\in k(X)$ then $\pi^*\L(\div_X(h))=\L(\pi^*\div_X(h))=\L(\div_Y(\pi^*h))$,
    so if $a_s\in k^{\times}$ and $h$ satisfies $\ord_s(a_s-h)>0$ then $\ord_t(a_s-\pi^*h)>0$ for every $t\in \pi^{-1}s$
    since the pullback map $\pi^*: k(X)\rightarrow k(Y)$ is a map of $k$-algebras. The given description of $\pi^*$
    follows.
    
    Finally, $\pi^*$ and $\pi_*$ induce maps on maximal semiabelian quotients as
    any $k$-homomorphism from a unipotent group to a semiabelian variety is zero, and the fact that these induced maps 
    satisfy $\pi_*\pi^*=\deg(\pi)$ and (when $\pi$ is genericlaly Galois with group $G$) $\pi^*\pi_* = \sum_{g\in G} g^*$ follows from the corresponding fact on (usual) Jacobians and the given descriptions
    of these maps on torii.
\end{proof}

\begin{defn}\label{GenpdivDef}
    For a smooth, proper, and geometrically connected curve $X$ over $k$ and a modulus $\m$ on $X$,
    we write $\Gscr_{X,\m}:=\{J_{X,\m}[p^n]\}_{n\ge 0}$ for the inductive system of 
    kernels of multiplication by $p^n$ on $J_{X,\m}$.
\end{defn}

\begin{cor}\label{GJstr}
    Assume that $\m$ is reduced, and write $\T_{\m}:=\Res_{\m/k}(\mu_{p^{\infty}})/\mu_{p^{\infty}}$
    for the multiplicative type $p$-divisible group given by the quotient of the Weil restriction its diagonally embedded copy of $\mu_{p^{\infty}}$.
    Then $\Gscr_{X,\m}$ is a $p$-divisible group, and is canonically an extension of $\Gscr_X=J_X[p^{\infty}]$ by $\T_{\m}$.
\end{cor}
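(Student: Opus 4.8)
The plan is to deduce everything from the explicit description of $J_{X,\m}$ recalled above. Since $\m$ is reduced, the unipotent part $U_{\m}$ vanishes, so pullback of line bundles along $\nu\colon X\to X_{\m}$ yields a canonical short exact sequence of smooth commutative $k$-group schemes
\[
    0\to T_{\m}\to J_{X,\m}\to J_X\to 0,
\]
in which $T_{\m}=\Res_{\m/k}\Gm/\Gm$ is a torus and $J_X$ is an abelian variety of dimension $g$, the genus of $X$. I would then pass to $p$-power torsion throughout this sequence and show that the resulting inductive system of short exact sequences ``is'' the asserted extension of $p$-divisible groups.

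First I would check that the sheaf $\T_{\m}=\Res_{\m/k}(\mu_{p^\infty})/\mu_{p^\infty}$ is a $p$-divisible group of multiplicative type, and identify it with $T_{\m}[p^\infty]$. Applying the snake lemma to multiplication by $p^n$ on the defining sequence $0\to\Gm\xrightarrow{\Delta}\Res_{\m/k}\Gm\to T_{\m}\to 0$ — using that Weil restriction commutes with the kernel $\Gm[p^n]=\mu_{p^n}$, and that $\Gm$, being covered by itself via the faithfully flat $p^n$-power map, satisfies $\Gm/p^n\Gm=0$ as an fppf sheaf — yields short exact sequences $0\to\mu_{p^n}\to\Res_{\m/k}\mu_{p^n}\to T_{\m}[p^n]\to 0$ compatible in $n$, which in the colimit identify $T_{\m}[p^\infty]$ with $\T_{\m}$. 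Base changing to $\o{k}$ turns $\Res_{\m/k}\Gm$ into a product of $\deg\m$ copies of $\Gm$ with $\Gm$ diagonally embedded, so $\T_{\m}\times_k\o{k}\simeq(\mu_{p^\infty})^{\deg\m-1}$; since being finite, the order of a finite group scheme, and the axioms defining a $p$-divisible group all descend along $\Spec\o{k}\to\Spec k$, this shows $\T_{\m}$ is a $p$-divisible group of multiplicative type and height $\deg\m-1$.

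Next I would apply the snake lemma to multiplication by $p^n$ on $0\to T_{\m}\to J_{X,\m}\to J_X\to 0$. As $T_{\m}$ is a torus it is $p$-divisible in the fppf topology, so $T_{\m}/p^nT_{\m}=0$, and we obtain a short exact sequence of finite $k$-group schemes
\[
    0\to \T_{\m}[p^n]\to J_{X,\m}[p^n]\to J_X[p^n]\to 0
\]
for every $n$, functorial in $n$; in particular each $J_{X,\m}[p^n]$ is finite and flat over $k$, of order $p^{n(2g+\deg\m-1)}$. It then remains to verify exactness of $0\to J_{X,\m}[p^n]\to J_{X,\m}[p^{n+r}]\xrightarrow{p^n}J_{X,\m}[p^r]\to 0$ for all $n,r$. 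I would obtain this by feeding the map of the two displayed short exact sequences at levels $n+r$ and $r$ (with vertical arrows multiplication by $p^n$) into the snake lemma: the outer vertical maps are transition maps of the genuine $p$-divisible groups $\T_{\m}$ and $\Gscr_X=J_X[p^\infty]$, hence surjective with kernels $\T_{\m}[p^n]$ and $J_X[p^n]$, so the middle map is surjective with kernel an extension of $J_X[p^n]$ by $\T_{\m}[p^n]$, which a comparison of orders identifies with $J_{X,\m}[p^n]$. Assembling the level-$n$ sequences then produces the canonical short exact sequence of $p$-divisible groups $0\to\T_{\m}\to\Gscr_{X,\m}\to\Gscr_X\to 0$. (Alternatively, once the displayed short exact sequences are established, one may simply invoke the fact that the category of $p$-divisible groups over a base is closed under extensions of fppf sheaves.)

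I do not foresee a genuine obstacle here: the only points needing care are the fppf-sheaf divisibility of $\Gm$ and of the torus $T_{\m}$, which is exactly what makes the connecting homomorphisms in the relevant snake sequences vanish, and the routine order-counting that converts the snake-lemma output at the ``multiplication by $p^n$'' stage into the defining axioms of a $p$-divisible group; both are standard.
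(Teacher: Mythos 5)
Your proposal is correct and follows essentially the same route as the paper's own (quite terse) proof: both identify $T_{\m}[p^{\infty}]$ with $\T_{\m}$ using the fact that Weil restriction, as a right adjoint, preserves kernels, and both obtain the finite-level short exact sequences by applying the snake lemma to multiplication by $p^n$ on $0\to T_{\m}\to J_{X,\m}\to J_X\to 0$, invoking fppf-divisibility of the torus to kill the connecting map. The only difference is that you spell out the routine order-count / extension-closure argument needed to verify the $p$-divisible group axioms for $\Gscr_{X,\m}$ (and separately for $\T_{\m}$), which the paper leaves implicit.
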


\begin{proof}
    As $\Res_{\m/k}(\cdot)$ is right adjoint to base change $(\cdot)\otimes_k \m$, it ls left exact 
    and in particular preserves kernels.  It follows that the $p$-divisible group of the torus $T_{\m}$
    is naturally identified with $\T_{\m}$ (as an {\em fppf}-abelian sheaf, say).
    Multiplication by $p^n$ is {\em fppf}~surjective on any torus, so the snake lemma shows that the canonical
    map $\Gscr_{X,\m}[p^n]\rightarrow \Gscr_X[p^n]$ is surjective with kernel $\T_{\m}[p^n]$ for all $n$, which completes the proof.
\end{proof}

%\bry{cf. Lemma 2.2 of "Generalized Jacobians and explicit descents"}

%\bryden{Perhaps it is a bit nonstandard to take Weil res of a sheaf?  We really have %$\T_{\m}[p^n]:=\Res_{\m/k}\mu_{p^n}/\mu_{p^n}$,
%and the notation given is perhaps just an abuse of notation?}

\subsection{Dieudonn\'e modules and de Rham cohomology}

We begin with a brief recall of Dieudonn\'e theory for finite $k$-group schemes of $p$-power order
and $p$-divisible groups.  The standard reference for this material is \cite[II--III]{Fontaine}.
For a modern synopsis, we recommend \cite[\S1.4]{CCO}.
Throughout, we will simply write {\em finite $k$-group} for 
finite, commutative $k$-group scheme of $p$-power order.

As $k$ is {\em perfect}, any finite $k$-group or $p$-divisible group $G$ admits a functorial decomposition   
\begin{equation}\label{Gdecomp}
        G = G^{\et} \times G^{\mult} \times G^{\ll}
\end{equation}
with $G^{\et}$ \'etale (reduced with connected dual),
$G^{\mult}$ multiplicative (connected with reduced dual),
and $G^{\ll}$ local--local (connected with connected dual).
If $G$ is a finite $k$-group, 
then $G=\Spec R$ for some $k$-algebra $R$ that is finite dimensional as 
a $k$-vector space, and the {\em order} of $G$
is $|G| = \dim_k G;$
this recovers the usual notion when $G$ is the constant group scheme
associated to a finite abelian group.
If $G=\{G[p^n]\}_n$ is a $p$-divisibe group,
then there is an integer $h$ so that $|G[p^n]|=p^{nh}$ for all $n$,
and we say that $G$ is of {\em height} $h$, and write $\Ht(G):=h$.

The {\em Dieudonn\'e ring} relative to $k$ is the (associative but noncommutative if $k\neq \FF_p$) $W$-algebra
$\calD_k:=W[F,V]$ generated by variables $F$ (Frobenius) and $V$ (Verscheibung) subject to the relations
\begin{itemize}
    \item $FV = VF = p$
    \item $F\lambda = \sigma(\lambda) F$
    \item $V\lambda = \sigma^{-1}(\lambda) V$
\end{itemize}
for all $\lambda\in W$.  By definition, a {\em Dieudonn\'e module}
is a (left) $\calD_k$-module that is finitely generated as a $W$-module;
equivalently, it is a finite type $W$-module $M$ with additive maps $F,V:M\rightarrow M$
satisfying the three conditions above.  These form an abelian category,
with morphisms $\calD_k$-module homomorphisms, {\em i.e.}~$W$-linear maps that are compatible
with $F$ and $V$.  This category is equipped with an involutive duality functor:
if $M$ is a Dieudonn\'e module of finite $W$-length, 
the {\em dual} of $M$ is the Pontryagin dual $M^{\vee}:=\Hom_W(M,W[1/p]/W)$ as a $W$-module,
with $F_{M^{\vee}}(f)(x):=\sigma f(Vx)$ and $V_{M^{\vee}}(f)(x):=\sigma^{-1}f(Fx)$
for $f\in M^{\vee}$ and $x\in M$.
If $M$ is finite and free as a $W$-module, we set $M^{\vee}:=\Hom_W(M,W)$
with Frobenius and Verscheibung defined in the same way.

%The {\em Dieudonn\'e functor}
%is defined as in \cite[III]{Fontaine}; it is a contravariant, additive functor.

\begin{thm}[Main Theorem of Dieudonn\'e Theory]\label{thm:Dieudonne}
    There is an exact anti-equivalence of categories $G\rightsquigarrow \D_k(G)$
    from the category of finite commutative $k$-group schemes of $p$-power order
    to the category of Dieudonn\'e modules of finite $W$-length.  
    If $G=\{G[p^n]\}_n$ is a $p$-divisible group, defining $\D_k(G):=\varprojlim_n \D_k(G[p^n])$
    induces an exact antiequivalence between the category of $p$-divisibe groups and the category of Dieudonn\'e 
    modules that are free of finite rank over $W$.
    Furthermore:
    \begin{enumerate}
        
        \item $\D_k$ is compatible with duality: there is a natural isomorphism of covariant functors
        \begin{equation*}
            \D_k((\cdot)^{\vee}) \simeq \D_k(\cdot)^{\vee}
        \end{equation*}
        %where $G^{\vee}$ is the Cartier dual of $G$, for any finite group scheme $G$.
        
        \item $\D_k$ is compatible with base change: if $k\rightarrow k'$ is any extension of perfect fields,
        there is a natural isomorphism of contravariant functors
        \begin{equation*}
            \D_k(\cdot)\otimes_W W(k') \simeq \D_{k'}((\cdot)\times_k {k'}).
        \end{equation*}
        
        \item The linearizations of $F$ and $V$ on $\D(G)$ correspond via functoriality to 
        the relative Frobenius $G\rightarrow G^{(p)}$ and Verscheibung $V: G^{(p)}\rightarrow G$
        morphisms via compatibility of $\D(\cdot)$ with the base change $\sigma: k\rightarrow k$.
        
        \item For any $G$
            \begin{enumerate}
                \item $G=G^{\et}$ if and only if $F$ is bijective and $V$ is topologically nilpotent on $\D(G)$.
                \item $G=G^{\mult}$ if and only if $F$ is topologically nilpotent and $V$ is bijective on $\D(G)$.
                \item $G=G^{\ll}$ if and only if $F$ and $V$ are both topologically nilpotent on $\D(G)$.
            \end{enumerate}
            
        \item\label{DieudonneDiff} Let $e\in G(k)$ be the identity section. There is a functorial isomorphism of $k$-vector spaces
        \begin{equation*}
            \D(G)/F\D(G) \simeq \omega_G
        \end{equation*}
        where $\omega_G:=H^0(\Spec k, e^*\Omega^1_{G/k})$ is the cotangent space of $G$ at the identity.
        
        \item If $G$ is finite, $\log_p |G| = \length_W \D(G)$, and if $G$ is $p$-divisible,
         $\Ht(G) = \rank_W \D(G)$.\label{lenord}
    
    \end{enumerate}
\end{thm}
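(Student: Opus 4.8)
The plan is to assemble the statement from the foundational references, since every assertion here is proved in \cite[II--III]{Fontaine} (see also the synopsis in \cite[\S1.4]{CCO}, and \cite{MM} for the de Rham/crystalline aspects); the real work is to recall the construction of $\D_k$ in enough detail to read off each itemized compatibility in the exact form we use. First I would note that, because $k$ is perfect, the decomposition \eqref{Gdecomp} reduces the construction of $\D_k$ to its three pieces: on a local--local $G$ one takes Dieudonn\'e's classical recipe using homomorphisms of $k$-group schemes into the (pro-unipotent) group of Witt covectors; on a multiplicative $G$ one uses the Cartier dual together with homomorphisms into the Witt vectors; and on an \'etale $G$ one dualizes the multiplicative case. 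Fontaine shows the resulting contravariant functor is an exact anti-equivalence from the abelian category of finite commutative $k$-group schemes of $p$-power order onto the category of Dieudonn\'e modules of finite $W$-length. The $p$-divisible case is then formal: writing $\Gscr = \{\Gscr[p^n]\}_n$ with $\Gscr[p^n] = \Gscr[p^{n+1}][p^n]$, the module $\D_k(\Gscr) := \varprojlim_n \D_k(\Gscr[p^n])$ is a projective limit of finite-length Dieudonn\'e modules along surjections, hence finite free over $W$, and the anti-equivalence onto finite free Dieudonn\'e modules is inherited from the finite levels.

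With $\D_k$ in hand, the itemized parts are read off as follows. For (i), I would cite Fontaine's computation that Cartier duality on finite $k$-groups corresponds to the $W$-linear duality with $F$ and $V$ interchanged and twisted by $\sigma^{\pm1}$, precisely the duality functor recalled just before the theorem; the $p$-divisible version follows by passing to the limit, using that $\Hom_W(-,W)$ of a finite free module is computed levelwise. For (ii), the Witt-vector and Witt-covector constructions visibly commute with the extension $W \to W(k')$ and the whole construction is functorial in the perfect base field, which is exactly the base-change statement recorded in \cite[\S1.4]{CCO}. Part (iii) requires only unwinding definitions: by construction the semilinear operator $F$ on $\D_k(G)$ is induced by the relative Frobenius $G \to G^{(p)}$ via the identification $\D_k(G^{(p)}) \simeq \D_k(G) \otimes_{W,\sigma} W$ supplied by (ii), and symmetrically $V$ corresponds to Verschiebung. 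The three equivalences in (iv) are then immediate from the explicit descriptions of $\D_k$ on the summands of \eqref{Gdecomp}, since on the \'etale (resp. multiplicative) part $V$ (resp. $F$) is manifestly topologically nilpotent while the other operator is bijective, and conversely these two conditions cut out the corresponding summand.

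It remains to handle the two quantitative items. For (vi), I would use that $\D_k$ is exact, so both $G \mapsto \log_p|G|$ and $M \mapsto \length_W M$ are additive on short exact sequences; since every finite $k$-group of $p$-power order has finite length in this abelian category, and over $\o{k}$ its simple constituents are $\ZZ/p$, $\mu_p$, and $\alpha_p$, it suffices---after reducing to $k = \o{k}$ via (ii), which changes neither side---to check $|G| = p^{\length_W \D_k(G)}$ in these three cases, where both sides equal $p$. The height formula follows by applying this to $\Gscr[p^n]$ and letting $n \to \infty$, since $|\Gscr[p^n]| = p^{n \Ht(\Gscr)}$ and $\length_W \D_k(\Gscr[p^n]) = n \rank_W \D_k(\Gscr)$. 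For (v), the cleanest route is the comparison of the contravariant Dieudonn\'e module with the de Rham cohomology of a lift: as in \cite{MM} (see also \cite[\S1.4]{CCO}) one has a functorial isomorphism $\D_k(G)/F\D_k(G) \simeq \omega_G$ with the cotangent space at the identity, which one can sanity-check on simple objects ($\ZZ/p$ gives $0$, while $\mu_p$ and $\alpha_p$ give $k$, matching their one-dimensional cotangent spaces). This last identification is the deepest input, though still entirely standard; I expect the only genuine friction in the write-up to be bookkeeping---pinning down the $\sigma$-twist conventions so that the isomorphisms of (i) and (v) are functorial in $G$ on the nose---which is handled carefully in the references cited.
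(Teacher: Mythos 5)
Your proposal is correct and matches the paper's treatment: the paper states this theorem purely as a recollection of standard Dieudonn\'e theory, with no proof beyond the pointers to \cite[II--III]{Fontaine} and \cite[\S1.4]{CCO} given immediately before the statement, and your assembly of each item from those sources (construction via Witt covectors and duality, passage to the limit for $p$-divisible groups, reduction to the simple objects $\ZZ/p$, $\mu_p$, $\alpha_p$ over $\o{k}$ for the length formula, and the cotangent-space identification for item (v)) is the intended justification. The only cosmetic quibble is that in Fontaine's setup the \'etale case is handled directly by homomorphisms into Witt vectors and the multiplicative case by Cartier duality, rather than the reverse, but this does not affect anything.
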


\begin{remark}
    When $k$ is clear from context, we will write simply $\D$ in place of $\D_k$.
\end{remark}

A crucial ingredient in the proofs of our main theorems is {\em Oda's theorem} \cite[Corollary 5.11]{Oda},
which provides an explicit description of Dieudonn\'e modules 
of $p$-torsion group schemes of abelian varieties in terms of de~Rham cohomology.
We will only need this result for Jacobians of curves, where it can be stated in terms of the 
de Rham cohomology of the curves themselves.  

Associated to any smooth and proper
curve $X$ over $k$ is the short exact ``Hodge filtration''
sequence of its de~Rham cohomology:
\begin{equation}
    \xymatrix{
    0 \ar[r] & {H^0(X,\Omega^1_{X/k})} \ar[r] & {H^1_{\dR}(X/k)} \ar[r] & {H^1(X,\O_X)} \ar[r] & 0.
    }\label{CrvdRseq}
\end{equation}
This sequence is functorial in finite morphisms of smooth and proper curves $\pi:Y\rightarrow X$
in {\em two ways}: we write $\pi^*$ for the contravariant {\em pullback} map associated to $\pi$,
and $\pi_*$ for the covariant {\em trace} map; we have $\pi_*\pi^*=\deg(\pi)$
and when $Y\rightarrow X$ is generically Galois with group $G$, we moreover have $\pi^*\pi_* = \sum_{g\in G} g^*$.
Cup product pairing induces a canonical autoduality pairing $\langle \cdot,\cdot\rangle_X$ on \eqref{CrvdRseq}, via which $\pi_*$ and $\pi^*$ are adjoint,
for any finite $\pi:Y\rightarrow X$.  In particular, as the absolute Frobenius morphism $\Fr_X:X\rightarrow X$
is finite, the exact sequence \ref{CrvdRseq} is equipped with semilinear endomorphisms
$F := \Fr_X^*$ and $V:=(\Fr_X)_*$ which are (semilinearly) adjoint under $\langle \cdot,\cdot\rangle_X$,
and satisfy $VF=\deg \Fr_X=0$.  The map $V$ is called the {\em Cartier operator}, and admits
a more explicit description, as in \cite[Definition 5.5]{Oda}; see \cite[\S2.1]{CaisHida1}
and {\em cf.}~\cite[\S10 Proposition 9]{SerreTop} for a proof that our definition of $V$ coincides with Oda's.
In this way, \eqref{CrvdRseq} becomes a short exact sequence of Dieudonn\'e modules;
note that $F=0$ on $H^0(X,\Omega^1_{X/k})$ as pullback of any differential form by the $p$-power map
in characteristic $p$ is zero, and by duality $V=0$ in $H^1(X,\O_X)$.

On the other hand, writing $J_X$ for the Jacobian of $X$, the
the relative Verscheibung of $J_X[p]^{(p^{-1})}$ sits in a short exact sequence of finite group schemes
\begin{equation}
    \xymatrix{
        0 \ar[r] & J_X[V] \ar[r] & J_X[p] \ar[r]^-{V} & J_X[F]^{(p^{-1})} \ar[r] & 0
    }.\label{Atorseq}
\end{equation}
This short exact sequence is contravariantly functorial in finite morphisms of smooth and proper curves 
$\pi:Y\rightarrow X$ via Picard functoriality $\pi^*:=\Pic^0(\pi)$ (pullback of line bundles)
and covariantly functorial via Albanese functoriality $\pi_*:=\Alb(\pi)$ (Norm of line bundles).
Applying the Dieudonn\'e functor to \eqref{Atorseq} yields a short exact sequence of Dieudonn\'e
modules
\begin{equation}
    \xymatrix{
            0 \ar[r] & {k\tens_{k,\sigma^{-1}} \D(J_X[F])} \ar[r] & {\D(J_X[p])} \ar[r] &
            {\D(J_X[F])} \ar[r] & 0 \\
    }\label{Dmodseq}
\end{equation}
that is {\em covariantly} functorial in finite morphisms $\pi:Y\rightarrow X$
via $\D(\pi^*)$ and {\em contravariantly} functorial via $\D(\pi_*)$.
The principal polarization of $J_X$ yields a canonical isomorphism $J_X[p]^{\vee}\simeq J_X[p]$
intertwining $(\pi^*)^{\vee}$ with $\pi_*$ and $(\pi_*)^{\vee}$ with $\pi^*$;
via the compatibility of $\D(\cdot)$ with duality, the dual of \eqref{Dmodseq}
is then functorially identified with \eqref{Dmodseq}.

\begin{thm}[Oda]\label{thm:Oda}
    Let $X$ be a smooth and proper curve over $k$ with Jacobian $J_X$.  There is a canonical
    isomorphism of short exact sequences of Dieudonn\'e modules
    \begin{equation*}
        \xymatrix{
            0 \ar[r] & {k\tens_{k,\sigma^{-1}} \D(J_X[F])} \ar[r]\ar[d]^-{\simeq} & {\D(J_X[p])} \ar[r]\ar[d]^-{\simeq}  &
            {\D(J_X[F])} \ar[r]\ar[d]^-{\simeq}  & 0 \\
            0 \ar[r] & {H^0(X,\Omega^1_{X/k})} \ar[r] & {H^1_{\dR}(X/k)} \ar[r] & {H^1(X,\O_X)} \ar[r] & 0
        }
    \end{equation*}
    wherein the top row is \eqref{Dmodseq} and the bottom row is \eqref{CrvdRseq}.
    This isomorphism is compatible with the autodualities of each row, 
    and for any morphism $\pi:Y\rightarrow X$ of smooth and proper curves, intertwines 
    $\D(\pi_*)$ on Dieudonn\'e modules with $\pi^*$ on de~Rham cohomology, and $\D(\pi^*)$ with $\pi_*$.
\end{thm}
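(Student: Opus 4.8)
The plan is to deduce the theorem from Oda's result for abelian varieties \cite[Corollary 5.11]{Oda} applied to the Jacobian $A:=J_X$, and then transport the de~Rham side from $J_X$ back to $X$ along an Abel--Jacobi embedding. Recall that Oda's theorem furnishes, for any abelian variety $A$ over $k$, a canonical isomorphism of short exact sequences of Dieudonn\'e modules identifying the sequence obtained by applying $\D(\cdot)$ to the relative Verschiebung sequence of $A[p]$ with the Hodge filtration $0\to H^0(A,\Omega^1_{A/k})\to H^1_{\dR}(A/k)\to H^1(A,\O_A)\to 0$, so that $F$ corresponds to $\Fr_A^*$ and $V$ to the Cartier operator; moreover this isomorphism is functorial in $A$ and compatible with the Poincar\'e duality $\D(A[p])^{\vee}\simeq \D(A^{\vee}[p])$ on the one side and the cup-product duality $H^1_{\dR}(A/k)^{\vee}\simeq H^1_{\dR}(A^{\vee}/k)$ on the other. (That the excerpt's operator $V:=(\Fr_X)_*$ is literally Oda's Cartier operator is recorded in \cite[\S2.1]{CaisHida1}, {\em cf.}~\cite{SerreTop}.)

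First I would install the canonical isomorphisms $H^0(X,\Omega^1_{X/k})\simeq H^0(J_X,\Omega^1_{J_X/k})$, $H^1(X,\O_X)\simeq H^1(J_X,\O_{J_X})$, and $H^1_{\dR}(X/k)\simeq H^1_{\dR}(J_X/k)$ obtained by pulling back along an Abel--Jacobi map $\iota_X\colon X\hookrightarrow J_X$; the resulting map on $H^1$ is independent of the chosen base point since translations act trivially on the cohomology of an abelian variety. Because $\iota_X$ is a morphism of smooth proper $k$-schemes and the formation of de~Rham cohomology, of its Hodge filtration, of $\Fr^*$, and of the Cartier operator are all functorial, these isomorphisms carry the bottom row of Oda's sequence for $A=J_X$ onto the sequence \eqref{CrvdRseq}, matching its subobject and quotient with $k\tens_{k,\sigma^{-1}}\D(J_X[F])$ and $\D(J_X[F])$ respectively. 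Composing with Oda's isomorphism produces the asserted isomorphism of short exact sequences, the vanishing of $F$ on $H^0(X,\Omega^1_{X/k})$ and of $V$ on $H^1(X,\O_X)$ --- already observed just before \eqref{Atorseq} --- requiring nothing further.

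For functoriality, let $\pi\colon Y\to X$ be finite with $\pi^*=\Pic^0(\pi)\colon J_X\to J_Y$ and $\pi_*=\Alb(\pi)\colon J_Y\to J_X$. The Albanese property of $J_Y$ gives $\iota_X\circ\pi=\pi_*\circ\iota_Y$ up to a translation of $J_X$; pulling back on $H^1_{\dR}$ and using that translations act trivially, the Abel--Jacobi isomorphism intertwines pullback along $\pi_*$ on Jacobian de~Rham cohomology with $\pi^*$ (pullback) on curve de~Rham cohomology, and --- dually, via the cup-product adjointness of $\pi^*$ and $\pi_*$ --- pullback along $\pi^*$ with $\pi_*$ (trace). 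Feeding the maps $\pi^*\colon J_X\to J_Y$ and $\pi_*\colon J_Y\to J_X$ into the functoriality of Oda's isomorphism in the abelian variable, and recalling that $\D(\cdot)$ is contravariant, yields exactly that our isomorphism intertwines $\D(\pi_*)$ with $\pi^*$ and $\D(\pi^*)$ with $\pi_*$ on \eqref{CrvdRseq}. For duality: the principal polarization $\lambda\colon J_X\xrightarrow{\sim} J_X^{\vee}$ is a morphism of abelian varieties, so the duality-compatibility of Oda's theorem identifies the induced autoduality of $\D(J_X[p])$ with the cup-product (Poincar\'e) autoduality of $H^1_{\dR}(J_X/k)$; under Abel--Jacobi the latter goes over to $\langle\cdot,\cdot\rangle_X$ --- the de~Rham shadow of $\lambda$ being the theta polarization --- and the stated adjointness of $\pi^*$ and $\pi_*$ follows formally.

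The step demanding the most care is the variance bookkeeping in the functoriality assertion: one must simultaneously track the contravariance of $\D(\cdot)$, the contravariance of de~Rham pullback, and the interchange of Picard and Albanese functoriality effected by the polarizations, and confirm that these three reversals conspire to give precisely the pairing of $\D(\pi_*)$ with $\pi^*$ and $\D(\pi^*)$ with $\pi_*$ rather than some transpose of it. A secondary but genuinely indispensable point is the identification of $V=(\Fr_X)_*$ with Oda's Cartier operator, without which only the underlying filtered $W$-modules, and not their $F$ and $V$ actions, would be matched.
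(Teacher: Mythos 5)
Your proposal is correct and essentially reconstructs the argument that the paper delegates to its two citations: the paper's proof is the one-liner ``This follows from \cite[Corollary 5.11]{Oda} and \cite[Proposition 5.4]{CaisNeron},'' and your reconstruction --- applying Oda's Dieudonn\'e/de~Rham comparison to the abelian variety $J_X$ and then transporting the de~Rham side to $X$ along an Abel--Jacobi embedding, checking functoriality through the relation $\pi_*\circ\iota_Y=\iota_X\circ\pi$ (up to translation) and duality through the theta polarization --- is precisely the content of those references. Your flagged concerns (the variance bookkeeping for $\D(\pi_*)\leftrightarrow\pi^*$ versus $\D(\pi^*)\leftrightarrow\pi_*$, and the identification of $(\Fr_X)_*$ with Oda's Cartier operator) are exactly the places where care is needed, and you resolve them correctly.
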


\begin{proof}
    This follows from \cite[Corollary 5.11]{Oda} and \cite[Proposition 5.4]{CaisNeron}
\end{proof}

We will also need the following variant of Oda's theorem for reduced but possibly non-smooth curves.
If $X$ is any reduced proper curve over $k$, its Jacobian $J_X:=\Pic^0_{X/k}$
is a smooth commutative group scheme over $k$.  While the kernel of multiplication by $p$
on $J_X$ may {\em not} be finite, the kernel of relative Frobenius $J_X\rightarrow J_X^{(p)}$
{\em is} finite, so we may consider its Dieudonn\'e module.  On the other hand, 
writing $\omega_{X/k}$ for the relative dualizing sheaf of $X$ as in \cite[\S5.2]{GDBC},
Grothendieck's theory of the trace map attached to any finite map of proper reduced curves 
$\pi:Y\rightarrow X$ yields a map on dualizing sheaves $\pi_* : \pi_*\omega_{X/k}\rightarrow \omega_{Y/k}$
that is dual to the pullback map $\pi^*: \O_Y\rightarrow \pi_*\O_X$ via Grothendieck duality; 
see \cite[Appendix A]{CaisHida1}.  If $\pi$ is in addition {\em flat}, there is
$\pi_*\O_Y$ is a finite locally free $\O_X$-module, so one has a trace mapping $\pi_*:\pi_*\O_Y\rightarrow \O_X$
on functions, whose dual is a pullback map $\pi^*:\omega_{X/k}\rightarrow \pi_*\omega_{Y/k}$
on dualizing sheaves that coincides with pullback of differential forms when $X$ and $Y$
are smooth.
In particular, the finite absolute Frobenius map induces semilinear mapping 
\begin{equation*}
    \xymatrix{
        {V: = (\Fr_X)_*:H^0(X,\omega_{X/k})}\ar[r] & { H^0(X,\omega_{X/k})}
        }
\end{equation*}
which coincides with the Cartier operator when $X$ is smooth.  In this way, 
$H^0(X,\omega_{X/k})$ is naturally a Dieudonn\'e module with $F=0$.

\begin{lem}\label{SingularOda}
    Let $X$ be a proper curve over $k$, and $J_X:=\Pic^0_{X/k}$ its Jacobian.  There is a natural isomorphism of Dieudonn\'e modules
    \begin{equation}
        \D(J_X[F]) \simeq H^0(X,\omega_{X/k})\label{genisom}
    \end{equation}
    If $\pi:Y\rightarrow X$ is a finite map of reduced curves, then \eqref{genisom} intertwines
    $\D(\pi^*)$ with $\pi_*$, and if $\pi$ is in addition flat, it intertwines $\D(\pi_*)$ with $\pi^*$.
\end{lem}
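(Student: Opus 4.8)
The plan is to build \eqref{genisom} out of Grothendieck--Serre duality together with the identification of the Dieudonn\'e module of a Frobenius kernel with a cotangent space, and then to pin down the Cartier/Verschiebung compatibility by descending along the normalization, where Oda's theorem (Theorem~\ref{thm:Oda}) is available. First I would construct the underlying isomorphism. Since $J_X=\Pic^0_{X/k}$ is smooth, $J_X[F]=\ker(\Fr\colon J_X\to J_X^{(p)})$ is a finite $k$-group killed by $F$, hence by $p$, so $\D(J_X[F])$ is a $k$-vector space with $F=0$, and Theorem~\ref{thm:Dieudonne}\ref{DieudonneDiff} gives a functorial identification $\D(J_X[F])=\D(J_X[F])/F\D(J_X[F])\simeq\omega_{J_X[F]/k}$. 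The closed immersion $J_X[F]\hookrightarrow J_X$ is an isomorphism on cotangent spaces at the origin, since near the origin $J_X[F]$ is cut out by the $p$-th powers of a regular system of parameters and $p\ge 2$; thus $\omega_{J_X[F]/k}\simeq\omega_{J_X/k}$. Finally $\omega_{J_X/k}=(\Lie\Pic_{X/k})^{\vee}=H^1(X,\O_X)^{\vee}$, and since a reduced curve is Cohen--Macaulay, Grothendieck--Serre duality provides a canonical isomorphism $H^1(X,\O_X)^{\vee}\simeq H^0(X,\omega_{X/k})$. Combining these, together with the compatibility of $\D$ with the Frobenius base change $\sigma$, yields \eqref{genisom} as Dieudonn\'e modules with $F=0$ on both sides; it remains to identify the transported $V$ with the Cartier operator and to check functoriality.

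For the $V$-compatibility I would argue by descent along the normalization $\nu\colon\wt X\to X$, where $\wt X$ is a disjoint union of smooth proper curves. Pullback of line bundles gives a short exact sequence of smooth commutative $k$-groups
\begin{equation*}
 0\longrightarrow L\longrightarrow J_X\xrightarrow{\ \nu^{*}\ }J_{\wt X}\longrightarrow 0,
\end{equation*}
in which $L$ is smooth, connected and affine, hence canonically a product $U\times T$ of a unipotent group and a torus, with $T$ described explicitly by Weil restriction of $\Gm$ along the fibres of $\nu$ over the singular points (the structure of generalized Jacobians recalled above). Relative Frobenius is faithfully flat on each of these smooth groups, so the snake lemma gives $0\to L[F]\to J_X[F]\to J_{\wt X}[F]\to 0$, and applying the exact contravariant functor $\D$ yields $0\to\D(J_{\wt X}[F])\to\D(J_X[F])\to\D(L[F])\to 0$. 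By naturality of the constructions above, this is carried to the $k$-dual of the exact sequence $0\to\Lie L\to H^1(X,\O_X)\to H^1(\wt X,\O_{\wt X})\to 0$, namely
\begin{equation*}
 0\longrightarrow H^0(\wt X,\Omega^1_{\wt X/k})\longrightarrow H^0(X,\omega_{X/k})\longrightarrow\omega_{L/k}\longrightarrow 0,
\end{equation*}
compatibly with the transported $V$-operators. On the subobject $H^0(\wt X,\Omega^1_{\wt X/k})$ this $V$ is the Cartier operator of $\wt X$ by Theorem~\ref{thm:Oda} (the smooth case of the lemma); and the Cartier operator on rational differentials preserves $\nu_*\Omega^1_{\wt X/k}\subseteq\omega_{X/k}$ (Rosenlicht's description, {\em cf.}~Lemma~\ref{lem:dualizingdesc}), hence descends to $\omega_{L/k}$.

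What remains is purely local at the singular points of $X$, and this is where I expect the main obstacle to lie: one must show that $\D(L[F])=\D(T[F])\oplus\D(U[F])\simeq\omega_{L/k}$ carries the canonical $V$ to the induced Cartier operator, {\em and} that no off-diagonal term $\omega_{L/k}\to H^0(\wt X,\Omega^1_{\wt X/k})$ intervenes (for the latter one can appeal to the perfect pairing of Theorem~\ref{thm:Oda}, under which $F$ and $V$ are adjoint, to rigidify the operator). For the torus factor this amounts to checking that $V$ acts on the multiplicative-type group $T[F]$ exactly as the Cartier operator acts on the space of logarithmic differentials attached to the singularity, on which it is the identity; for the unipotent factor, that both operators are topologically nilpotent and agree. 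Here the explicit Weil-restriction description of $T$ and Rosenlicht's residue description of $\omega_{X/k}$ are the essential inputs; everything else is either formal or already contained in Oda's theorem. (Alternatively, one could take this local matching, glued to Theorem~\ref{thm:Oda} for $\wt X$ along the conductor square, as the very definition of \eqref{genisom}, which sidesteps the off-diagonal issue at the cost of more bookkeeping.)

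Finally, the two functoriality assertions follow from the naturality of each ingredient. For a finite $\pi\colon Y\to X$, the Picard map $\pi^{*}\colon J_X\to J_Y$ restricts to $J_X[F]\to J_Y[F]$, so $\D(\pi^{*})\colon\D(J_Y[F])\to\D(J_X[F])$; on the other side $\pi^{*}$ induces pullback on $H^1(\cdot,\O)$, whose $k$-dual under Grothendieck duality is the trace $H^0(Y,\omega_{Y/k})\to H^0(X,\omega_{X/k})$, i.e.~$\pi_*$, so \eqref{genisom} intertwines $\D(\pi^{*})$ with $\pi_*$. When $\pi$ is moreover flat, the Albanese map $\pi_*\colon J_Y\to J_X$ induces the cohomology trace on $H^1(\cdot,\O)$, whose $k$-dual is pullback of differentials $\pi^{*}\colon H^0(X,\omega_{X/k})\to H^0(Y,\omega_{Y/k})$, so \eqref{genisom} intertwines $\D(\pi_*)$ with $\pi^{*}$, as claimed.
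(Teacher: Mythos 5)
Your construction of the underlying isomorphism is exactly the paper's: use Theorem~\ref{thm:Dieudonne}\ref{DieudonneDiff} to identify $\D(J_X[F])\simeq\omega_{J_X[F]}$, observe that the closed immersion $J_X[F]\hookrightarrow J_X$ identifies cotangent spaces at the origin (because Frobenius is zero on cotangent spaces, equivalently by right-exactness of $\omega_{(\cdot)}$), and then pass from $\omega_{J_X}=\Lie(J_X)^{\vee}$ to $H^1(X,\O_X)^{\vee}\simeq H^0(X,\omega_{X/k})$ by Grothendieck duality. The paper's proof is essentially complete at this point: it invokes \cite[Proposition~1.3]{LiuLorenzini} for a canonical identification $\Lie(J_X)\simeq H^1(X,\O_X)$ that is functorial both for $\Lie(\Pic(\pi))$ against $\pi^*$ and, when $\pi$ is flat, for $\Lie(\Alb(\pi))$ against $\pi_*$. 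The two functoriality assertions of the lemma then follow by dualizing, and so does the $V$-compatibility, because you never needed to treat it as a separate structure: the Cartier operator on $H^0(X,\omega_{X/k})$ is \emph{defined} to be the trace $(\Fr_X)_*$, while the Verschiebung on $\D(J_X[F])$ corresponds (after the $\sigma$-twist bookkeeping of Theorem~\ref{thm:Dieudonne}(iii)) to $\D(\Pic(\Fr_{X/k}))$. In other words, the $V$-compatibility is the first intertwining assertion of the lemma applied to $\pi=\Fr_{X/k}$, and that assertion does not require flatness (only finiteness, which holds for relative Frobenius on any curve) — a point worth noting since relative Frobenius is not flat on singular curves.

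Your plan instead tries to verify the $V$-compatibility by hand: descend along the normalization $\nu\colon\wt X\to X$, split off the affine part $L=U\times T$ of $J_X$, match Oda's theorem on the smooth quotient, and do a local computation at the singular points with the explicit torus description and Rosenlicht residues. This route is plausible but genuinely harder, and as you correctly flag yourself, it leaves real gaps: there is no a priori reason the transported $V$ respects the filtration rather than contributing off-diagonal terms, and the local identification $\D(L[F])\simeq\omega_{L/k}$ compatibly with $V$ is precisely the sort of thing the paper's abstract-functoriality argument lets one avoid. You can repair your argument most cleanly by deleting the normalization step entirely: once you have the functorial identification $\Lie(J_X)\simeq H^1(X,\O_X)$ (which you already invoke when you write $\omega_{J_X}=H^1(X,\O_X)^{\vee}$), both functoriality assertions and the $V$-compatibility drop out at once, with no need for Oda's theorem or any local analysis at the singularities.
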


\begin{proof}
    Let $G:=J_X[F]$.  By Theorem \ref{thm:Dieudonne} \ref{DieudonneDiff},
    we have a natural isomorphism of Dieudonn\'e modules $\D(G)\simeq \omega_G$.
    Now the contravariant cotangent space functor is right exact, so as $F:J_X\rightarrow J_X^{(p)}$
    induces the zero map on cotangent spaces, we conclude that
    the closed immersion $G\hookrightarrow J_X$ induces a functorial isomorphism $\omega_G\simeq \omega_{J_X}$.
    It therefore suffices to prove that there is a canonical isomorphism 
    $H^0(X,\omega_{X/k})\simeq \omega_{J_X}$
    intertwining $\pi_*$ with $\Pic(\pi)^*$ and $\pi_*$ with $\Alb(\pi)^*$ when $\pi$ is flat.
    To verify this, we may dualize, where by Grothendieck duality we seek an isomorphism 
    $H^1(X,\O_X) \simeq \Lie(J_X)$ intertwining $\pi^*$ with $\Lie(\Pic(\pi))$
    and, when $\pi$ is flat, $\pi_*$ with $\Lie(\Alb(\pi))$.
    The required isomorphism is provided by \cite[Proposition 1.3]{LiuLorenzini}.
\end{proof}

%\bry{This doesn't explicitly give the compatibility with $\pi_*$ and $\Alb$, but that can be deduced from the method of proof
%and the definition of norm on line bundles.}

\begin{remark}
    In fact, using Lemma \ref{SingularOda} as a starting point, it is possible to generalize
    Oda's Theorem \ref{thm:Oda} to the case of {\em arbitrary} proper curves.
\end{remark}

Besides Oda's theorem, there is one other crucial tool in our proofs, Nakajima's {\em equivariant Deuuring Shafrevich formula}
\cite{Nakajima}.  To state it in the form we will need, we first note that any Dieudonn\'e module $M$
admits a functorial decomposition
\begin{equation*}
    M = M^{\Vbij} \oplus M^{\Fbij} \oplus M^{\FVnil}
\end{equation*}
where $M^{\Vbij}$ (respecpectively $M^{\Fbij}$) is the maximal $\calD_k$-stable submodule
on which $V$ (resp. $F$) is bijective and $F$ (resp. $V$) is topologically nilpotent
and $M^{\FVnil}$ is the maximal submodule on which both $F$ and $V$ are topologically nilpotent.
While the existence of this decomposition is a fact of ``pure''
semilinear algebra, it corresponds to \eqref{Gdecomp} via the equivalence of Theorem \ref{thm:Oda}. 

\begin{thm}[Nakajima]\label{Nakajima}
    Let $\pi:Y\rightarrow X$ be a finite and generically Galois map of 
    smooth proper and geometrically connected curves over an algebraically closed field $k$
    of characteristic $p>0$, with group $G$ that is a $p$-group.
    Let $\Sigma \subset X(k)$ be the set of branch points of $\pi$,
    and $S$ any finite and nonempty set of points of $X$ containing $\Sigma$,
    and write $g_X$ and $\gamma_X$ for the genus and $p$-rank of $X$, respectively.
    \begin{enumerate}
            \item The $k[G]$-modules $H^0(Y,\Omega^1_{Y/k}(\pi^{-1}S))^{\Vbij}$
            and $H^1(X,\O_X(-\pi^{-1}S))^{\Fbij}$ are each free of rank $\gamma_X+|S| - 1$.
            \label{NakBij}
            
            \item If $\Sigma=\emptyset$, so $\pi:Y\rightarrow X$ is \'etale, then $H^0(Y,\Omega^1_{Y/k})^{\Vnil}$
            and $H^1(Y,\O_Y)^{\Fnil}$ are each free $k[G]$-modules of rank $g-\gamma$.\label{NakNil}
    \end{enumerate}
\end{thm}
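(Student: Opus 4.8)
The plan is to deduce both parts from a single perfectness property of equivariant cohomology complexes, exploiting that $\pi$ is finite \'etale away from the branch locus $\Sigma\subseteq S$. Before starting I would make two reductions that cut down the bookkeeping. First, the cup-product (Serre duality) pairing on de~Rham cohomology is $G$-equivariant with $g$ and $g^{-1}$ adjoint and interchanges $\Frob^*$ with the Cartier operator; it therefore identifies each of the two $k[G]$-modules occurring in (i) (resp.\ in (ii)) with the $k[G]$-dual of the other, so it suffices to treat one module in each case. Second, the Artin--Schreier sequence $0\to \FF_p\to \O_Z\xrightarrow{x\mapsto x^p-x}\O_Z\to 0$ of \'etale sheaves on a proper connected reduced curve $Z/k$, together with Lang's theorem over the algebraically closed field $k$ (and the vanishing $H^2_{\et}(Z,\FF_p)=0$, which itself follows from surjectivity of $\Frob^*-1$ on $H^1(Z,\O_Z)$), gives a $G$-equivariant isomorphism between the $\Frob^*$-bijective part of $H^1(Z,\O_Z)$ and $H^1_{\et}(Z,\FF_p)\otimes_{\FF_p}k$; I would apply this with $Z=Y_{\pi^{-1}S}$ in case (i) and $Z=Y$ in case (ii). Throughout I will use that $k[G]$ and $\FF_p[G]$ are local, noetherian, and self-injective (being group algebras of the $p$-group $G$), so that every module of finite projective dimension over them is free.

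For part (i), put $U:=Y\setminus\pi^{-1}S$ and $V:=X\setminus S$, so $\pi|_U\colon U\to V$ is a finite \'etale $G$-torsor and $(\pi|_U)_*\FF_p$ is a locally constant \'etale sheaf on $V$ which is fibrewise a free rank-one $\FF_p[G]$-module. By the projection formula along $U\to V\to\Spec k$ (writing $R\Gamma$, $R\Gamma_c$ for derived global sections),
\begin{equation*}
  R\Gamma_c(U,\FF_p)\otimes^{\mathrm L}_{\FF_p[G]}\FF_p\;\simeq\;R\Gamma_c\bigl(V,(\pi|_U)_*\FF_p\otimes^{\mathrm L}_{\FF_p[G]}\FF_p\bigr)\;\simeq\;R\Gamma_c(V,\FF_p),
\end{equation*}
which is perfect over $\FF_p$; combined with pseudo-coherence over the noetherian ring $\FF_p[G]$ this forces $R\Gamma_c(U,\FF_p)$ to be a perfect complex of $\FF_p[G]$-modules. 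A direct computation on the proper curve $Y$---using $H^2_{\et}(Y,\FF_p)=0$ and the injectivity of $H^0(Y,\FF_p)\to H^0(\pi^{-1}S,\FF_p)$ (valid since $\pi^{-1}S\neq\emptyset$)---shows $H^i_c(U,\FF_p)=0$ for $i\neq 1$. A perfect complex over the self-injective local ring $\FF_p[G]$ with cohomology in a single degree is a free module placed there, so $H^1_c(U,\FF_p)$ is free over $\FF_p[G]$; its $\FF_p$-dimension is $\gamma_Y+|\pi^{-1}S|-1$, and the Deuring--Shafarevich formula for $\pi$ rewrites this as $|G|(\gamma_X+|S|-1)$, so the rank is $\gamma_X+|S|-1$. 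Finally, since $H^1(Y_{\pi^{-1}S},\O_{Y_{\pi^{-1}S}})\simeq H^1(Y,\O_Y(-\pi^{-1}S))$ (both being Serre-dual to $H^0(Y,\Omega^1_{Y/k}(\pi^{-1}S))$ compatibly with $G$ and $\Frob^*$, via Lemma~\ref{lem:dualizingdesc}) and $H^1_{\et}(Y_{\pi^{-1}S},\FF_p)\simeq H^1_c(U,\FF_p)$ (as $U$ is the smooth locus of $Y_{\pi^{-1}S}$), the Artin--Schreier comparison yields $H^1(Y,\O_Y(-\pi^{-1}S))^{\Fbij}\simeq H^1_c(U,\FF_p)\otimes_{\FF_p}k$, free over $k[G]$ of rank $\gamma_X+|S|-1$; the partner module $H^0(Y,\Omega^1_{Y/k}(\pi^{-1}S))^{\Vbij}$ is its $k[G]$-dual.

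For part (ii), $\pi$ is \'etale, so $\pi_*\O_Y$ and $\pi_*\FF_p$ are locally free rank-one modules over $\O_X[G]$ and $\FF_{p,X}[G]$ respectively; the same mechanism---projection formula along $X\to\Spec k$ plus pseudo-coherence---gives $R\Gamma(Y,\O_Y)\in\mathrm{Perf}(k[G])$ and $R\Gamma_{\et}(Y,\FF_p)\in\mathrm{Perf}(\FF_p[G])$, each with cohomology concentrated in degrees $0$ and $1$ and with $H^0$ the trivial module (using again $H^2_{\et}(Y,\FF_p)=0$). In the stable module category $D^b(k[G])/\mathrm{Perf}(k[G])$ a perfect complex is zero, so the distinguished triangle $H^0\to R\Gamma\to H^1[-1]\to$ forces $H^1(Y,\O_Y)$ and $H^1_{\et}(Y,\FF_p)$ each to become isomorphic to one fixed shift of the trivial module; since $-\otimes_{\FF_p}k$ is exact, preserves projectives and commutes with syzygies, and since $H^1(Y,\O_Y)^{\Fbij}\simeq H^1_{\et}(Y,\FF_p)\otimes_{\FF_p}k$ by Artin--Schreier, this gives $H^1(Y,\O_Y)\simeq H^1(Y,\O_Y)^{\Fbij}$ in the stable module category of $k[G]$. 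As $H^1(Y,\O_Y)=H^1(Y,\O_Y)^{\Fbij}\oplus H^1(Y,\O_Y)^{\Fnil}$ honestly and the stable module category is Krull--Schmidt, cancellation forces $H^1(Y,\O_Y)^{\Fnil}$ to vanish there, i.e.\ to be free over $k[G]$; its rank is $\tfrac{1}{|G|}(g_Y-\gamma_Y)=g-\gamma$ by Riemann--Hurwitz and Deuring--Shafarevich, and $H^0(Y,\Omega^1_{Y/k})^{\Vnil}$ is its $k[G]$-dual.

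The crux, and the step I expect to be the main obstacle, is the perfectness of these equivariant cohomology complexes over the group algebra: it is exactly there that \'etaleness of $\pi$ (everywhere in (ii), on $U$ in (i)) is used, via the fact that the relevant pushforward of the structure or constant sheaf is an \emph{invertible} module over the group-algebra sheaf---hence a perfect complex whose proper pushforward remains perfect---and it genuinely fails in the ramified directions, which is ultimately why $\D^{\ll}$ is so badly behaved when $\Sigma\neq\emptyset$ (Theorem~\ref{thmF}). Granting perfectness, everything else is either formal homological algebra over the self-injective rings $k[G]$, $\FF_p[G]$, or the numerical input of the classical Riemann--Hurwitz and Deuring--Shafarevich formulae; the one point that demands real care is keeping track of the $G$-action through the Artin--Schreier comparison and the Serre-duality pairings.
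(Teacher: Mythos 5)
Your argument is correct in outline, but it takes a genuinely different route from the paper: after the Serre-duality reduction (which the paper also makes), the paper simply \emph{cites} Nakajima's Theorems 1 and 3 for the two freeness assertions, whereas you re-prove Nakajima's results from scratch. Your method---push the constant sheaf (resp.\ structure sheaf) down the $G$-torsor $U\to V$ (resp.\ $Y\to X$), observe the pushforward is an invertible sheaf of $\FF_p[G]$- (resp.\ $\O_X[G]$-) modules, conclude $R\Gamma_c(U,\FF_p)$ (resp.\ $R\Gamma(Y,\O_Y)$, $R\Gamma_{\et}(Y,\FF_p)$) is perfect over the local self-injective group algebra, and then exploit either concentration in one degree (for (i)) or Krull--Schmidt cancellation in the stable module category against the Artin--Schreier comparison (for (ii))---is a Crew-style argument via equivariant perfectness, and it transparently isolates where \'etaleness is used (to get \emph{invertibility} of the pushforward over the group-algebra sheaf), which nicely foreshadows why $\D^{\ll}$ degenerates when $\Sigma\neq\emptyset$ (Theorem~\ref{thmF}). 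What the citation buys the paper is brevity; what your argument buys is a self-contained and conceptually transparent proof. Two points you should firm up if you intend this as a replacement proof. First, the step ``projection formula plus pseudo-coherence gives perfectness'' is exactly the content of Deligne's finiteness theorem (SGA~$4\tfrac12$, Rapport, Th.~4.9) applied to the constructible, $\FF_p[G]$-flat sheaf $(\pi|_U)_*\FF_p$ on $V$; you should cite that rather than invoke a bare ``projection formula,'' since the na\"ive derived-tensor manipulation involves an unbounded resolution of $\FF_p$ over $\FF_p[G]$ and needs the finiteness theorem to be legitimate. Second, the rank computations use the Riemann--Hurwitz and (non-equivariant) Deuring--Shafarevich formulae for wildly ramified $p$-group covers; these are independent of the equivariant statements being proved (so there is no circularity), but you should make the dependence explicit. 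Finally, you should spell out, via Lemma~\ref{lem:dualizingdesc} and the compatibility of Grothendieck--Serre duality with $\Fr^*$ and $(\Fr)_*$, that the identifications $H^1(Y_{\pi^{-1}S},\O)\simeq H^1(Y,\O_Y(-\pi^{-1}S))$ and $H^1_{\et}(Y_{\pi^{-1}S},\FF_p)\simeq H^1_c(U,\FF_p)$ are $G$-equivariant and intertwine $F$ on coherent cohomology with $\Frob^*$ on \'etale cohomology---this is where, as you say, ``real care'' is needed, and it deserves a line or two rather than an aside.
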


\begin{proof}
    Serre duality restricts to a functorial and perfect duality pairing 
    between the two spaces in each of \eqref{NakBij} and \eqref{NakNil},
    so it suffices to prove that one of them is free of the asserted rank.
    Working with spaces of differentials, \eqref{NakBij} then follows
    immediately from \cite[Theorem 1]{Nakajima}, while \eqref{NakNil}
    follows from \cite[Theorem 3]{Nakajima}.
\end{proof}

In the remainder of this section, we record some additional results on the
behavior of the trace mapping attached to finite maps of curves.

\begin{prop}\label{PicInj}
    Let $\pi: Y\rightarrow X$ be a finite, flat and generically \'etale map 
    of proper and geometrically irreducible curves over a field $k$.  If there is a geometric point of $X$
    over which $\pi$ is totally ramified, then $\pi^*: \Pic^0_{X/k} \rightarrow \Pic^0_{Y/k}$
    has trivial scheme-theoretic kernel. 
\end{prop}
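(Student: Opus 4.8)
The plan is to reduce, by Dieudonné theory, to a cohomological vanishing on the base curve, and to extract the latter from the hypothesis that $\pi$ has a point with a single preimage.

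\smallskip

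\emph{Reductions.} Since formation of the scheme‑theoretic kernel commutes with the faithfully flat base change $\Spec\o k\to\Spec k$ (kernels and $\Pic^0$ both commute with base change), and ``$\pi$ is totally ramified over a geometric point of $X$'' becomes, after this base change, the statement that some $\o k$–point $x\in X(\o k)$ has connected (hence one–point) $\pi$–fibre $\{y\}$, I would first assume $k=\o k$. Set $K:=\ker(\pi^*\colon J_X\to J_Y)$ with $J_X=\Pic^0_{X/k}$, $J_Y=\Pic^0_{Y/k}$. From $\pi_*\pi^*=[\deg\pi]$ (Proposition \ref{GJfunc} with $\m=0$) one gets $K\subseteq J_X[\deg\pi]$, so $K$ is a finite $k$–group scheme; and for $\ell\ne p$ the map $\pi_*\pi^*=\deg\pi\cdot\mathrm{id}$ is injective on the free $\ZZ_\ell$–module $T_\ell J_X$, so $K$ has no prime‑to‑$p$ part and is killed by a power of $p$. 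Writing $\Gscr_X:=J_X[p^\infty]$, $\Gscr_Y:=J_Y[p^\infty]$ (these are $p$–divisible groups even for singular $X,Y$, since $J_X,J_Y$ are semiabelian), $K$ is a finite subgroup of $\Gscr_X$ and $\pi^*$ factors as $\Gscr_X\twoheadrightarrow\Gscr_X/K\hookrightarrow\Gscr_Y$, the second map being the $p$–divisible group of the closed sub‑semiabelian variety $\pi^*(J_X)\subseteq J_Y$. Applying the exact antiequivalence $\D$ of Theorem \ref{thm:Dieudonne} gives $\D(K)=\coker\bigl(\D(\pi^*)\colon\D(\Gscr_Y)\to\D(\Gscr_X)\bigr)$; as $\D(\Gscr_X)$ is finite free over the complete local ring $W$, Nakayama's lemma shows $K=0$ iff $\D(\pi^*)$ is surjective mod $p$, i.e.\ iff $\D(\pi^*)\colon\D(J_Y[p])\to\D(J_X[p])$ is surjective.

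\smallskip

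\emph{Translation to dualizing sheaves.} By Lemma \ref{SingularOda}, $\D(J_X[F])\cong H^0(X,\omega_{X/k})$ compatibly with the identification of $\D(\pi^*)$ and $\pi_*$ (the trace on dualizing sheaves), and \eqref{Dmodseq} exhibits $\D(J_X[p])$ as an extension of $\D(J_X[F])$ by $k\tens_{k,\sigma^{-1}}\D(J_X[F])$, functorially in $\pi$ via $\D(\pi^*)$. A five‑lemma argument then reduces surjectivity of $\D(\pi^*)$ on $\D(J_\bullet[p])$ to surjectivity of $\pi_*\colon H^0(Y,\omega_{Y/k})\to H^0(X,\omega_{X/k})$ (the Frobenius twist on the left term of \eqref{Dmodseq} is harmless). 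By Grothendieck duality on $X$ this is equivalent to injectivity of $\pi^*\colon H^1(X,\O_X)\to H^1(Y,\O_Y)$. Since the unit $1\in H^0(X,\pi_*\O_Y)$ is nonzero in every fibre of the locally free sheaf $\pi_*\O_Y$ (here $\pi$ flat is used), the sequence $0\to\O_X\xrightarrow{\,1\,}\pi_*\O_Y\to\mathcal Q\to0$ with $\mathcal Q$ locally free of rank $\deg\pi-1$ is exact, and — using $H^0(X,\O_X)=H^0(Y,\O_Y)=k$ — the long exact sequence identifies $\ker\bigl(\pi^*\colon H^1(X,\O_X)\to H^1(Y,\O_Y)\bigr)$ with $H^0(X,\mathcal Q)$. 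So the proposition comes down to $H^0(X,\mathcal Q)=0$.

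\smallskip

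\emph{The crux.} Here I would use that $\pi^{-1}(x)=\{y\}$ is a single point, so $\pi$ restricts to a finite map $Y\setminus\{y\}\to X\setminus\{x\}$ of affine curves and $\pi^{-1}(\Spec\O_{X,x})=\Spec\O_{Y,y}$. Cover $X$ by $U:=X\setminus\{x\}$ and $U':=\Spec\O_{X,x}$ (with $U\cap U'$ the generic point); over each of $U,U'$ the unit $\O_X\to\pi_*\O_Y$ is a split injection of modules (the section $1$ is unimodular), so $\mathcal Q(U)=\O_Y(\pi^{-1}U)/\pi^*\O_X(U)$ and $\mathcal Q(U')=\O_{Y,y}/\O_{X,x}$, while $\mathcal Q(U\cap U')=k(Y)/k(X)$. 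Gluing, and using that $\mathcal Q$ (being locally free) has no sections supported at a point, one finds that $H^0(X,\mathcal Q)$ consists of the classes $\bar b$ of functions $b\in\O_Y(Y\setminus\{y\})$ modulo $\pi^*\O_X(U)$ for which there exists $h\in k(X)$ with $b-\pi^*h\in\O_{Y,y}$. Replacing $h$ by its polar part at $x$ — which lies in $\O_X(U)$ and differs from $h$ by a function regular at $x$, whose pullback is then regular at $y$ — one may take $h\in\O_X(U)$. Then $g:=b-\pi^*h$ is regular away from $y$ (again using $\pi^{-1}(X\setminus\{x\})=Y\setminus\{y\}$) and regular at $y$, hence $g\in H^0(Y,\O_Y)=k$, so $b=g+\pi^*h\in\pi^*\O_X(U)$ and $\bar b=0$. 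Thus $H^0(X,\mathcal Q)=0$.

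\smallskip

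\emph{Where the difficulty lies.} I expect the only real obstacle to be the vanishing $H^0(X,\mathcal Q)=0$: this is exactly the point at which a totally ramified (equivalently, in the geometric setting, a single‑preimage) point is indispensable — for étale $\pi$ the kernel of $\pi^*$ on Jacobians is typically a nontrivial infinitesimal or multiplicative group scheme, so no softer hypothesis suffices. The remaining steps are formal, though two points deserve care in the write‑up: the bookkeeping in the five‑lemma argument with \eqref{Dmodseq}, and the fact that Lemma \ref{SingularOda} and the pullback/trace formalism on dualizing sheaves described just before it are being invoked for possibly non‑smooth $X,Y$, which is legitimate precisely because the excerpt develops that formalism in that generality.
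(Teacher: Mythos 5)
The overall strategy---pass to Dieudonn\'e modules, use Oda's theorem and \eqref{Dmodseq} to translate triviality of $\ker\pi^*$ into surjectivity of $\pi_*$ on $H^0(\omega)$, equivalently injectivity of $\pi^*$ on $H^1(\O)$, equivalently $H^0(X,\mathcal Q)=0$---is reasonable, and you correctly identify that the whole content of the proposition sits in the vanishing $H^0(X,\mathcal Q)=0$. But your argument for this vanishing is circular.  Writing $U:=X\setminus\{x\}$, in your notation the existence of $h\in k(X)$ with $b-\pi^*h\in\O_{Y,y}$ and $b\in\O_Y(Y\setminus y)$ is exactly the statement $\pi^*[h]=0$ in $H^1(Y,\O_Y)=k(Y)/\bigl(\O_Y(Y\setminus y)+\O_{Y,y}\bigr)$, while ``one may take $h\in\O_X(U)$'' is exactly the statement $[h]=0$ in $H^1(X,\O_X)=k(X)/\bigl(\O_X(U)+\O_{X,x}\bigr)$. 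The step ``replace $h$ by its polar part at $x$, which lies in $\O_X(U)$'' therefore assumes the cohomological vanishing you are trying to prove: on a curve of positive genus the polar part at a single point $x$ of a rational function is \emph{not} in general realizable by a function regular away from $x$, the obstruction being precisely an element of $H^1(X,\O_X)$.  (A sanity check: as written, the argument would equally ``prove'' $H^0(X,\mathcal Q)=0$ when $\pi$ is \'etale, where the statement is false and $\ker\pi^*$ is a nontrivial finite group scheme.)  So the crux is not established, and the proof does not go through.

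Two further points, less serious for the paper's applications but genuine in the generality of the statement.  The parenthetical ``$J_X,J_Y$ are semiabelian'' is false for arbitrary proper geometrically irreducible curves (a cuspidal curve has $\Ga$ inside its Jacobian); consequently $J_X[p^\infty]$ need not be a $p$-divisible group and $J_X[\deg\pi]$ need not be finite, so the very first reduction requires either restricting to seminormal singularities or rerouting the argument (e.g.\ via $\Lie(\ker\pi^*)\subset H^0(X,\mathcal Q)$ to get \'etaleness, which does not need Dieudonn\'e theory of the ambient Jacobian).  Likewise, injectivity of $\pi^*$ on $T_\ell J_X$ does not kill the $\ell$-primary part of $\ker\pi^*$ when $\ell\mid\deg\pi$: that kernel is the torsion of $\coker\bigl(\pi^*|_{T_\ell}\bigr)$, which can be nonzero even when $\pi^*|_{T_\ell}$ is injective.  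For the paper's actual uses ($\deg\pi$ a $p$-power, and the only singular curves in sight seminormal) both are harmless.  Finally, note that the paper does not give its own proof: it invokes Lemma 2.2.2 of the author's earlier work (where $X,Y$ are smooth), remarking that smoothness there is used only to guarantee flatness of $\pi$; so a comparison with that argument would be the right benchmark, and in any case the proof must get the vanishing $H^0(X,\mathcal Q)=0$ (or its dual, surjectivity of $\pi_*$ on $H^0(\omega)$) by a non-circular argument that genuinely exploits the totally ramified fibre.
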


\begin{proof}
    The proof of \cite[Lemma 2.2.2]{CaisHida1} goes through verbatim in the present (more general)
    situation, noting that the hypothesis that $X$ and $Y$ be smooth in {\em loc.~cit.}~is used
    {\em only} to ensure that $\pi:Y\rightarrow X$ is flat.
\end{proof}

\begin{prop}\label{PicInjCor}
    Let $\pi:Y\rightarrow X$ be a finite \'etale map of proper, smooth, and geometrically connected curves
    over $k$.  Let $S\subseteq X(\overline{k})$ be a finite and nonempty set of geometric points of $X$ and $T:=\pi^{-1}S$, 
    and consider $S$ and $T$
    as (reduced and effective) divisors on $X$ and $Y$, respectively.  The trace mapping
    \begin{equation}
        \xymatrix{
            {\pi_* : H^0(\Omega^1_{Y/k}(T))} \ar[r] &  {H^0(\Omega^1_{X/k}(S))}
        }\label{trmap}
    \end{equation}
    is surjective.
\end{prop}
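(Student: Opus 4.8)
The plan is to deduce surjectivity of the trace on the twisted differentials \eqref{trmap} from the injectivity of pullback on the corresponding quotients together with a dimension count, using the duality between trace and pullback. Since $\pi$ is finite \'etale, the divisor $T = \pi^{-1}S$ is reduced and $\deg_k T = \deg(\pi)\cdot\deg_k S$; moreover $\pi$ is flat, so Grothendieck duality applies and $\pi_*$ on $H^0(\Omega^1_{Y/k}(T))$ is the $k$-linear dual of the pullback map $\pi^*: H^1(Y,\O_Y(-T))^{\dual}$ — more precisely, Serre duality identifies $H^0(\Omega^1_{X/k}(S))$ with $H^1(X,\O_X(-S))^{\dual}$ and $H^0(\Omega^1_{Y/k}(T))$ with $H^1(Y,\O_Y(-T))^{\dual}$, under which \eqref{trmap} becomes the dual of
\begin{equation*}
    \pi^* : H^1(X,\O_X(-S)) \longrightarrow H^1(Y,\O_Y(-T)).
\end{equation*}
Thus it suffices to show this pullback map is injective.

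First I would reduce to the case $k = \o{k}$, which is harmless since both cohomology and the trace/pullback maps commute with the flat base change $k \to \o{k}$, and injectivity may be checked after such an extension. Next I would relate $H^1(X,\O_X(-S))$ to the cohomology of a singular curve: let $X_S$ be the curve obtained from $X$ by collapsing the points of $S$ to a single point (the curve ``$X$ with modulus $S_{\red}$'' of \cite[IV \S1]{Serre}, or simply the pushout identifying $S$), with normalization $\nu: X\to X_S$, and similarly form $Y_T$ with $\nu': Y\to Y_T$; since $\pi$ carries $T$ into $S$, it descends to a finite flat map $\o\pi: Y_T\to X_S$. The short exact sequence $0\to \O_{X_S}\to \nu_*\O_X \to (\text{skyscraper})\to 0$ identifies $H^1(X,\O_X(-S))$ with (a piece closely related to) $H^1(X_S,\O_{X_S})$; concretely $H^1(X,\O_X(-S))\cong H^1(X_S,\O_{X_S})$ because pushing forward $0\to \O_X(-S)\to \O_X\to \O_S\to 0$ and comparing with the normalization sequence shows both compute the same $H^1$. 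Under this identification, $\pi^*$ on twisted cohomology becomes $\o\pi^* : H^1(X_S,\O_{X_S})\to H^1(Y_T,\O_{Y_T})$, which by the duality $H^1(-,\O) \cong \Lie(\Pic^0_{-/k})$ (as in \cite[Proposition 1.3]{LiuLorenzini}, cited in the proof of Lemma \ref{SingularOda}) is the map on Lie algebras induced by $\Pic^0(\o\pi): J_{X,S}\to J_{Y,T}$, i.e. the pullback $\pi^*$ of Proposition \ref{GJfunc}.

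Now I would invoke Proposition \ref{PicInj}: the map $\o\pi: Y_T\to X_S$ is finite, flat, and generically \'etale between proper geometrically irreducible curves, and I must exhibit a geometric point of $X_S$ over which it is totally ramified. This is exactly the point $x\in X_S(k)$ that $S$ collapses to: its preimage under $\o\pi$ is the single point of $Y_T$ to which $T$ collapses (since $T = \pi^{-1}S$ maps to $S$ which maps to $x$), so $\o\pi$ is totally ramified there. Hence by Proposition \ref{PicInj}, $\Pic^0(\o\pi)$ has trivial scheme-theoretic kernel; in particular it is a closed immersion on the smooth connected group $J_{X,S}$ — actually, triviality of the scheme-theoretic kernel of a homomorphism of smooth connected $k$-groups forces the induced map on Lie algebras to be injective. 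Therefore $\o\pi^*$ on $H^1(-,\O)$ is injective, so $\pi^*$ on $H^1(X,\O_X(-S))$ is injective, and dualizing gives surjectivity of \eqref{trmap}.

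\textbf{Main obstacle.} The delicate point is the identification $H^1(X,\O_X(-S))\cong H^1(X_S,\O_{X_S})$ compatibly with the two functorialities, and the accompanying claim that trace on twisted differentials is Serre-dual to this pullback — one must check the trace map $\pi_*$ of \eqref{trmap} really corresponds under Serre/Grothendieck duality to $\o\pi^*$ on the singular models (rather than to some other map), which requires carefully matching $\omega_{X_S/k}\simeq \nu_*\Omega^1_X(S)$ from Lemma \ref{lem:dualizingdesc} with the dualizing-sheaf formalism and the commutative diagrams of Proposition \ref{GJfunc}. Once that bookkeeping is in place, the geometric input (Proposition \ref{PicInj} applied to a totally ramified point) does all the real work, just as in the smooth case treated in \cite[Lemma 2.2.2]{CaisHida1}.
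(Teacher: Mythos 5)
Your outline is close in spirit to the paper's, but there is a genuine gap: the assertion that $\o{\pi}\colon Y_T\to X_S$ is flat is \emph{false} in general once $|S|>1$, and Proposition \ref{PicInj} requires flatness. Flatness of a finite morphism of curves is equivalent to constancy of the fiber dimension, which equals $\deg\pi$ over smooth points of $X_S$ but jumps at the singular point. Concretely, take $k=\o{k}$, $\deg\pi=2$, $|S|=2$; writing $\wh{\O}_{X_S,x}=\{(f_1,f_2)\in k[[t]]^2 : f_1(0)=f_2(0)\}$ and $\wh{\O}_{Y_T,y}=\{(g_1,\dots,g_4)\in k[[t]]^4 : g_1(0)=\cdots=g_4(0)\}$ with structure map $(f_1,f_2)\mapsto(f_1,f_1,f_2,f_2)$, the maximal ideal $\m$ of $\wh{\O}_{X_S,x}$ is generated by $(t,0)$ and $(0,t)$, and the classes of $(1,1,1,1)$, $(t,0,0,0)$, $(0,0,t,0)$ are $k$-linearly independent in $\wh{\O}_{Y_T,y}/\m\wh{\O}_{Y_T,y}$ (every element of $\m\wh{\O}_{Y_T,y}$ has vanishing constant terms with first-minus-second and third-minus-fourth components divisible by $t^2$). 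So the fiber dimension at $x$ is at least $3>2$, $\o{\pi}$ is not flat, and your application of Proposition \ref{PicInj} does not go through.

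The paper's proof sidesteps this by inducting on $|S|$. In the base case $|S|=1$ the target of the singular-model morphism $\rho\colon Y_T\to X$ is the \emph{smooth} curve $X$ itself (there is nothing to collapse on $X$), so flatness follows from miracle flatness (Cohen--Macaulay source, regular target of the same dimension)---exactly the mechanism you identified; the residue theorem then gives $H^0(\Omega^1_X(\{s\}))=H^0(\Omega^1_X)$. The inductive step compares the residue exact sequences
\begin{equation*}
0 \to H^0(\Omega^1_Y(T)) \to H^0(\Omega^1_Y(T')) \to \bigoplus_{t\in\pi^{-1}(s)}k \to 0
\end{equation*}
for $S'=S\cup\{s\}$, $T'=\pi^{-1}S'$, with the outer vertical maps in the resulting diagram surjective by induction and by summing residues over the fiber, respectively. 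The induction is thus not bookkeeping but the device that keeps the target curve smooth; an all-at-once argument via $Y_T\to X_S$ cannot be repaired without a replacement for the flatness hypothesis in Proposition \ref{PicInj}.
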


\begin{proof}
    As the formation of \eqref{trmap} is compatible with base change, by passing to a finite extension of $k$ if need be,
    we may assume that $S\subseteq X(k)$, and we will induct on $|S|$.  Suppose first that $S=\{s\}$ is a single point,
    and denote by $Y_T$ the singular curve associated to the modulus $T$.  Since $T$ is reduced and $\pi(T) = s\in X(k)$,
    the restriction of $\pi: Y \rightarrow X $ to $T$ scheme-theoretically factors through $X(k)$ \cite[\href{https://stacks.math.columbia.edu/tag/0356}{Tag 0356}]{stacks-project}. 
    By the universal property of $Y\rightarrow Y_T$, %(see, e.g. \cite{Conrad249}), 
    we obtain a unique map 
    $\rho: Y_T\rightarrow X$.  
    through which $\pi$ factors.  Now the curve $Y_T$ is by construction geometrically integral, hence (geometrically) reduced
    and therefore Cohen-Macaulay, and $X$ is by hypothesis smooth.
    It follows from the Corollary to Theorem 23.1 in \cite{Matsumura} that $\rho$ is flat, 
    and necessarily
    totally ramified over $s$.  By Proposition \ref{PicInj},
    the map $\rho^*: \Pic^0_{X/k} \rightarrow \Pic^0_{Y_T/k}$ has trivial scheme-theoretic kernel.
    using the fact that $\Lie(\cdot)$ is left exact, together with \cite[Proposition 1.3]{LiuLorenzini},
    we deduce that $\rho^*: H^1(X,\O_X)\rightarrow H^1(Y_T,\O_{Y_T})$ is injective,
    so by Grothendieck duality the trace map 
    $$\rho_*: H^0(Y_T,\omega_{Y_T})\rightarrow H^0(X,\Omega^1_X)= H^0(X,\Omega^1_X(S))$$
    is surjective; the final equality uses the fact that the sum of the residues at all singular points of a meromorphic differential
    on a proper curve is zero.  
    On the other hand, by Lemma \ref{lem:dualizingdesc},
    the normalization map $\nu:Y\rightarrow Y_T$  
    induces an identification $\nu_*: H^0(Y,\Omega^1_Y(T))\simeq H^0(Y_T,\omega_{Y_T/k})$.
    As $\pi = \rho \circ \nu$, we conclude that $\pi_*$ is surjective when $S=\{s\}$.
    
    Now supposing that the result holds for a given finite and nonempty set $S\subseteq X(k)$, let $s\in X(k)$ be a point not
    in $S$, and set $S':=S\cup \{s\}$ and $T':=T \cup \pi^{-1}s$.  We then have a commutative diagram 
    of short exact sequences
    \begin{equation}
        \xymatrix@C=35pt{
            0 \ar[r] & {H^0(\Omega^1_{Y}(T))} \ar[d]^-{\pi_*} \ar[r] & {H^0(\Omega^1_{Y}(T'))} \ar[d]^-{\pi_*} \ar[r]^-{\eta \mapsto (\res_t\eta)_t} & {\displaystyle\bigoplus_{t \in \pi^{-1}(s)} k} \ar[r]\ar[d]^-{\sum_{t\in \pi^{-1}(s)}} & 0 \\
            0 \ar[r] & {H^0(\Omega^1_{Y}(S))}  \ar[r] & {H^0(\Omega^1_{X}(S'))}  \ar[r]_-{\eta \mapsto \res_s\eta} & {k} \ar[r] & 0 
        }\label{indstepdia}
    \end{equation}
    Indeed, exactness of the rows is a straightforward consequence of the Riemann--Roch theorem,
    and the (right square of the) diagram commutes thanks to \cite[Remark 2.2]{CaisHida1}. 
    Now the left vertical arrow of \eqref{indstepdia} is surjective by our inductive hypothesis, while surjectivity of the right vertical arrow is clear.  We conclude that the middle vertical arrow is surjective as well.
\end{proof}

\section{Proofs}

In this section, we 
prove our main results stated in \S\ref{intro},
whose setting and notation we maintain, and now briefly recall.
Throughout, we will fix a function field $K$ in one variable over $k$
and a $\Gamma$-extension $L/K$ with $k$ algebraically closed in $L$ 
and $\Gamma$ an infinite pro-$p$ group
equipped with a countable basis of the identity $\Gamma=\Gamma_0\supsetneq \Gamma_1\supsetneq\cdots$ consisting of open, normal subgroups.
We write $K_n:=L^{\Gamma_n}$ for the fixed field of $\Gamma_n$, and $X_n$
for the unique smooth, projective and geometrically connected curve over $k$ with function field $K_n$,
and for all $n\ge m$, we denote by $\pi_{n,m}:X_n\rightarrow X_m$ the map of curves corresponding to 
the inclusion of function fields $K_m\hookrightarrow K_n$.
For all $n$, we assume that $\pi_{n,0}: X_n\rightarrow X_0$ is \'etale 
outside a finite (possibly empty) set $\Sigma$ of closed points of $X_0$ that is independent of $n$,
and totally ramified at every point of $\Sigma$.

\begin{remark}\label{kcountable}
    Let $U$ be the complement of $\Sigma$ in $X_0$, and $\gamma$ the $p$-rank of $X_0$.
    By \cite{Shafarevich}, the maximal pro-$p$ quotient
    $\pi_1^{\et}(U_{\o{k}})^{(p)}$ of the geometric \'etale fundamental group of $U$ is
    a {\em free} pro-$p$ group %\footnote{See \cite[\S3.3]{RZ} for the definition of a free pro-$p$ group.} 
     on 
    $\gamma$ generators when $\Sigma=\emptyset$, and on as many generators as the cardinality of $\o{k}$
    when $\Sigma\neq\emptyset$; see \cite[\S1]{Gille} for a modern proof of this fact
    via cohomological dimension and \'etale cohomology.  
    It follows from this and \cite[Corollary 2.6.6]{RZ},
    that $\pi_1^{\et}(U)^{(p)}$ admits a countable basis of the identity consisting of open normal subgroups 
    if and only if $k$ is countable; in particular, our assumption that $\Gamma$
    admit a countable basis of the identity is automatically verified  
    whenever $k$ itself is countable.     
\end{remark}

When $\Sigma\neq\emptyset$, 
we set $S:=\Sigma$, and when $\Sigma=\emptyset$, 
we fix a closed point $x_0$ of $X_0$ and put $S:=\{x_0\}$.
For each $n$, let $S_n:=(\pi_{n,0}^{-1}S)_{\red}$ be the reduced closed subscheme
underlying the scheme-theoretic fiber of $S$ in $X_n$, so $S_0=S$. In the \'etale case
$\Sigma=\emptyset$, note that $S_n$ is just the (scheme-theoretic) fiber over $x_0$,
while in the ramified case $\Sigma\neq\emptyset$, it is the reduced closed subscheme
of $X_n$ at which $\pi_{n,0}$ is non-\'etale.  For each $n$,
we write $\Gscr_{n}:=J_{X_n}[p^{\infty}]$ for the $p$-divisible group of the Jacobian 
of $X_n$, and $\Gscr_{n,S_n}:=J_{X_n,S_n}[p^n]$ for inductive system of $p^n$-torsion
group schemes on the generalized Jacobian with modulus $S_n$ (Definition \ref{GenpdivDef}). 
By Corollary \ref{GJstr}, each $\Gscr_{n,S_n}$ is a $p$-divisible group, and there is a
canonical exact sequence of $p$-divisible groups
\begin{equation}
            \xymatrix{
                0 \ar[r] & {\T_n} 
                \ar[r] & {\Gscr_{n,S}} \ar[r] & {\Gscr_n} \ar[r] & 0
            }\label{eq:basicses}
\end{equation} 
with $\T_n$ of multiplicative type.  
We write $\Qscr_n:=\T_n^{\vee}$ for the dual $p$-divisible group, which is \'etale
as $\T_n$ is multiplicative. 
It follows from Proposition \ref{GJfunc} that $\Gamma$ acts on $J_{X_n,S_n}$ in two ways,
with $\gamma\in \Gamma$ acting either as $\gamma^*$, or as $\gamma_*$.  We will refer to the first
of these actions as the Picard, or $(\cdot)^*$-action, and the second as the Albanese
or $(\cdot)_*$-action.  Note that as $\gamma_*\gamma^* = \deg(\gamma)=1$, it follows that $\gamma_*=(\gamma^*)^{-1}$ for all $\gamma\in \Gamma$.
These actions induce actions of $\Gamma$ on \eqref{eq:basicses},
with $\Gamma_n$ acting trivially.
For each pair of integers $n> m$, the degeneracy maps $\pi_{n,m}:X_n\rightarrow X_m$
likewise induce {\em two} morphisms of the exact sequence \eqref{eq:basicses} by Proposition \ref{GJfunc},
and we will frequently abbreviate $\pi^*:=\pi_{n,m}^*$ for the morphism
induced by Picard functoriality, and $\pi_*:=(\pi_{n,m})_*$ for that induced
by Albanese functoriality; note that $\pi^*$ is equivariant with respect to the $(\cdot)^*$-action
of $\Gamma$, while $\pi_*$ is equivariant with respect to the $(\cdot)_*$-action.
Furthermore, the autodiality of Jacobians induces a canonical identification $\Gscr_n^{\vee}\simeq \Gscr_n$
for all $n$, via which one has $(\pi_*)^{\vee} = \pi^*$ as morphisms $\Gscr_m\rightarrow \Gscr_n$.

\begin{defn}\label{def:limitmod}    
    Let $\star\in \{\et,\mult,\ll,\null\}$.
    For each $n$, set $\D_n^{\star}:=\D(\Gscr_n^{\star})$ and write
    $\rho:=\D(\pi^*)$ 
    for the induced transition maps.  When $\star = \mult,\ll$, we likewise
    put $\D^{\star}_{n,S_n}:=\D(\Gscr_{n,S_n}^{\star})$, equipped with 
    transition maps $\rho=\D(\pi^*)$.  For $\star=\et$, we instead define
    \begin{equation*}
        \D_{n,S_n}^{\et}:=\D((\Gscr_{n,S_n}^{\mult})^{\vee}) = \D((\Gscr_{n,S_n}^{\vee})^{\et})
    \end{equation*}
    which we make into an inverse system via the transition maps $\rho':=\D((\pi_*)^{\vee})$.
    We equip $\D_n^{\star}$ and $\D_{n,S}^{\mult}$ with the %$W$-linear 
    {\em left} action of $\Gamma$ in which
    $\gamma\in \Gamma$ acts as $\D(\gamma^*)$, while we equip $\D_{n,S}^{\et}$ with the {\em left} $\Gamma$-action
    wherein $\gamma$ acts as $\D((\gamma_*)^{\vee})$.
    These actions commutes with $F$ and $V$, and $\Gamma_n$ acts trivially.

    For any $\star$, we then define the Iwasawa--Dieudonn\'e modules
    \begin{equation*}
        \D^{\star}:=\varprojlim_{n,\rho} \D_n^{\star}
    \end{equation*}
    with projective limits taken with respect to the indicated transition maps.
    We similarly define
    \begin{equation*}
        \D_S^{\star}:=\varprojlim_{n,\rho} \D_{n,S_n}^{\star} \quad\text{for}\ \star\in \{\mult,\ll\}, 
        \quad\text{and}\quad \D_S^{\et}:=\varprojlim_{n,\rho'} \D_{n,S_n}^{\et}
    \end{equation*}
    
\end{defn}

In each case, the given transition maps are equivariant with respect to the specified 
(left) actions of $\Gamma$, so $\D^{\star}$ and $\D^{\star}_S$ are
naturally (left) $\Lambda$-modules with
additive maps $F,V$ that are semilinear over 
$\sigma$ and $\sigma^{-1}$, respectively, and satisfy $FV=VF=p$.

\begin{remark}\label{rem:rho}
    The definition of $\D_{n,S_n}^{\et}=\D((\Gscr_{n,S_n}^{\mult})^{\vee})$ may seem
    strange at first, but notice that as $\Gscr_{n,S_n}$ is an extension of $\Gscr_n$ by a $p$-divisible group
    of {\em multiplicative} type,
    we in fact have $\Gscr_{n,S_n}^{\et} = \Gscr_{n}^{\et}$, so via the autoduality identification 
    $\Gscr_n^{\et} \simeq (\Gscr_n^{\mult})^{\vee}$ we see that $\Gscr_n^{\et}$ is a sub $p$-divisible group of 
    $(\Gscr_{n,S_n}^{\mult})^{\vee}$, with \'etale quotient $\Qscr_n:=\T_n^{\vee}$.  Note that 
    $\Gscr_{n,S_n}$ is {\em not} auto-dual; consequently, its \'etale part is too small
    to effectively study $\Gscr_n^{\et}$. Via
    the identifications $\Gscr_n^{\vee}\simeq \Gscr_n$,
    the transition maps $\rho:=\D(\pi^*)$ and $\rho':=\D((\pi_*)^{\vee})$
    {\em coincide} as maps $\D(\Gscr_n) \rightarrow \D(\Gscr_m)$, so that the projective limit $\D^{\et}$ can be formed via either.
\end{remark}

\begin{remark}\label{loclocS}
    As in Remark \ref{rem:rho}, the kernel of $\Gscr_{n,S_n}\twoheadrightarrow \Gscr_n$
    is of multiplicative type, whence we have an isomorphism $\Gscr_{n,S}^{\ll}\simeq \Gscr_{n}^{\ll}$
    for all $n$, so in particular $\D^{\ll} \simeq \D^{\ll}_S$.
\end{remark}

For $\star\in \{\mult,\ll\}$, the compatibility of $\D(\cdot)$ with duality
gives a natural isomorphisms of Dieudonn\'e modules 
\begin{equation}
    \D((\Gscr_{n,S_n}^{\star})^{\vee}) \simeq \Hom_W(\D_{n,S_n}^{\star},W)
    \label{DDual}
\end{equation}
intertwining $\D((\gamma_*^{-1})^{\vee})$ with $\D(\gamma_*^{-1})^{\vee} = \D(\gamma^*)^{\vee}$. 
It follows that the corresponding  $W$-bilinear perfect duality pairing
\begin{equation}
    \xymatrix{
        {\langle\cdot, \cdot\rangle : \D_{n,S_n}^{\star} \times \D((\Gscr_{n,S_n}^{\star})^{\vee}}) \ar[r] & W
        }\label{Dpairing}
\end{equation}
satisfies
\begin{equation*}
    \langle \gamma \cdot x,  y\rangle =
    \langle \D(\gamma^*)(x), y \rangle = \langle x, \D((\gamma_*^{-1})^{\vee})(y) \rangle % = \langle x, \gamma^{-1}\cdot y\rangle . 
\end{equation*}
For any $n\ge m$, we likewise have
\begin{equation*}
    \langle \D(\pi^*) x , \D((\pi_*)^{\vee}) y \rangle = 
    \langle x , \D((\pi_*)^{\vee})\circ \D((\pi^*)^{\vee})(y) \rangle
    =\langle x , \D((\pi^*)^{\vee}\circ (\pi_*)^{\vee})(y) \rangle 
    = \langle x , \D( (\pi_*\pi^* )^{\vee})(y)\rangle.
\end{equation*}
From Proposition \ref{GJfunc}, we have the identity
\begin{equation*}
    \pi_*\pi^*  = \sum_{\gamma\in \Gamma_m/\Gamma_n} \gamma^* = \sum_{\gamma\in \Gamma_m/\Gamma_n} (\gamma_*)^{-1},
\end{equation*}
Using additivity of the functors $\D(\cdot)$ and $(\cdot)^{\vee}$
and the fact that the identification $(\Gscr_n^{\ll})^{\vee}\simeq \Gscr_n^{\ll}$,
intertwines $(\gamma_*)^{\vee}$ with $\gamma^*$,
we deduce that the duality pairing
between $\D_{n,S_n}^{\mult}$ and $\D_{n,S_n}^{\et}$
and between $\D_{n,S_n}^{\ll}\simeq \D_n^{\ll}$ and itself
induced by \eqref{Dpairing}
verify the hypotheses of Proposition \ref{prop:NCDual}.

\subsection{The ramified case}

By hypothesis, $\pi_{n,0}:X_n\rightarrow X_0$ is totally ramified at every point of $S_n$,
so induces an identification of \'etale $k$-schemes $S_n\simeq S_0=S$ for all $n$. We will
henceforth make these identifications,
which induce identifications $\T_n\simeq \T_0$ and $\Qscr_n\simeq\Qscr_0$ for all $n$, which we likewise
make, denoting these common $p$-divisible groups simply by $\T_S$ and $\Qscr_S$, respectively.
We similarly often write $\Gscr_{n,S}$ for $\Gscr_{n,S_n}$,
and $\D_{n,S}^{\star}$ in place of $\D_{n,S_n}^{\star}$.

In order to prove Theorem \ref{thmD},
we will apply Propositions \ref{prop:NCfree} and \ref{prop:NCDual}
to the $\Gamma$-towers of $W$-modules $\{\D_{n,S}^{\mult},\rho\}$
and $\{\D_{n,S}^{\et},\rho'\}$.
To do so, it suffices to prove:

\begin{lem}\label{lem:DmapSurj}
    For each pair of positive integers $m\le n$, the maps
    \begin{equation*}
            \xymatrix{
            {\o{\rho} : \o{\D}_{n,S}^{\mult} } \ar[r] & {\o{\D}_{m,S}^{\mult}}
            }  
            \quad\text{and}\quad
            \xymatrix{
            {\o{\rho}' : \o{\D}_{n,S}^{\et}} \ar[r] & {\o{\D}_{m,S}^{\et}}
            }
    \end{equation*}
    are surjective.  Moreover, $\o{\D}_{n,S}^{\mult}$ and $\o{\D}_{n,S}^{\et}$
    are free $\Omega_n$-modules of rank $d$ for all $n$.
\end{lem}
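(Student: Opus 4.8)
The plan is to identify each of $\o{\D}_{n,S}^{\mult}$ and $\o{\D}_{n,S}^{\et}$ with a concrete space of differentials carrying its natural $\Gamma$-action, to read freeness off of Nakajima's equivariant Deuring--Shafarevich formula, and to deduce surjectivity of $\o\rho,\o\rho'$ from injectivity and surjectivity statements for $\pi^*$ and $\pi_*$ on generalized Jacobians. As a preliminary reduction, note that forming $\o{\D}_{n,S}^{\mult}$ and $\o{\D}_{n,S}^{\et}$ commutes with the faithfully flat base change $k\to\o{k}$ (by compatibility of $\D(\cdot)$ with base change, Theorem~\ref{thm:Dieudonne}, reduced modulo $p$), and that $\Omega_n\tens_k\o{k}=\o{k}[\Gamma/\Gamma_n]$; hence both the freeness assertions and the surjectivity of $\o\rho,\o\rho'$ may be checked after extending scalars, and we may assume $k$ is algebraically closed.

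Next I would identify the multiplicative module. Write $G:=J_{X_n,S_n}$, which is semiabelian because $S_n$ is reduced. Since $\D(\cdot)$ is exact, $\o{\D}_{n,S}^{\mult}=\D(\Gscr_{n,S}^{\mult})/p\D(\Gscr_{n,S}^{\mult})\simeq\D(\Gscr_{n,S}^{\mult}[p])$. The finite group scheme $\Gscr_{n,S}^{\mult}[p]=G[p]^{\mult}$ is of multiplicative type, hence killed by $F$, while $G[F]$ has trivial \'etale part; it follows that $G[F]^{\mult}=G[p]^{\mult}=\Gscr_{n,S}^{\mult}[p]$ and that $F=0$ on $\D(G[F])$. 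Applying Lemma~\ref{SingularOda} to the (possibly singular) curve $(X_n)_{S_n}$, whose Jacobian is $G$, together with Lemma~\ref{lem:dualizingdesc}, we obtain a $\Gamma$-equivariant isomorphism of Dieudonn\'e modules
\begin{equation*}
    \o{\D}_{n,S}^{\mult}\;\simeq\;\D\bigl(G[F]\bigr)^{\mult}\;\simeq\;H^0\bigl(X_n,\Omega^1_{X_n/k}(S_n)\bigr)^{\Vbij},
\end{equation*}
the last superscript denoting the $V$-bijective part, which coincides with the multiplicative part since $F=0$ on the right-hand side. Now Theorem~\ref{Nakajima} \ref{NakBij}, applied to the Galois cover $\pi_{n,0}:X_n\to X_0$ with group $\Gamma/\Gamma_n$ (a finite $p$-group), branch locus contained in $\Sigma\subseteq S$, and the finite nonempty set $S$, shows that $H^0(X_n,\Omega^1_{X_n/k}(S_n))^{\Vbij}$ is a free $\o{k}[\Gamma/\Gamma_n]=\Omega_n$-module of rank $\gamma+|S|-1=\gamma+\deg S-1=d$. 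Hence $\o{\D}_{n,S}^{\mult}$ is free of rank $d$ over $\Omega_n$. For the \'etale module, $(\Gscr_{n,S}^{\mult})^{\vee}[p]\simeq(\Gscr_{n,S}^{\mult}[p])^{\vee}$, so compatibility of $\D(\cdot)$ with duality (Theorem~\ref{thm:Dieudonne}) identifies $\o{\D}_{n,S}^{\et}=\D\bigl((\Gscr_{n,S}^{\mult})^{\vee}\bigr)/p$ with the $\Omega_n$-linear dual $(\o{\D}_{n,S}^{\mult})^{\vee}$ (with twisted $F,V$), the dual $\Gamma$-action agreeing with that of Definition~\ref{def:limitmod} via $\gamma_*=(\gamma^*)^{-1}$. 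Since the group algebra $\Omega_n$ is self-injective (indeed a symmetric Frobenius algebra), the $\Omega_n$-linear dual of a finite free module is free of the same rank, so $\o{\D}_{n,S}^{\et}$ is free of rank $d$ over $\Omega_n$.

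For surjectivity, I would argue on generalized Jacobians, using that every point of $S$ is totally ramified in $\pi_{n,m}:X_n\to X_m$. Proposition~\ref{PicInj} then shows $\pi^*:J_{X_m}\to J_{X_n}$ has trivial scheme-theoretic kernel, while Proposition~\ref{GJfunc} exhibits $\pi^*$ on tori as the isomorphism $T_{S_m}\to T_{S_n}$ (the relevant diagonal being bijective because the fibres over $S$ are singletons); the snake lemma applied to the extension $0\to T_{S_m}\to J_{X_m,S_m}\to J_{X_m}\to 0$ and its analogue over $X_n$ shows $\pi^*:J_{X_m,S_m}\to J_{X_n,S_n}$ has trivial kernel, so $\pi^*$ is a monomorphism $\Gscr_{m,S}^{\mult}\hookrightarrow\Gscr_{n,S}^{\mult}$ of $p$-divisible groups and, applying the exact antiequivalence $\D(\cdot)$, the map $\rho=\D(\pi^*)$, hence $\o\rho=\rho\tens_W k$, is surjective. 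Dually, $\pi_*:J_{X_n}\to J_{X_m}$ is surjective (being $\Alb(\pi_{n,m})$, with $\pi_*\pi^*=\deg\pi_{n,m}$), and Proposition~\ref{GJfunc} identifies $\pi_*$ on tori with the map raising coordinates to the power $\deg\pi_{n,m}$, which is {\em fppf}-surjective; hence $\pi_*:\Gscr_{n,S}^{\mult}\twoheadrightarrow\Gscr_{m,S}^{\mult}$, so $(\pi_*)^{\vee}:(\Gscr_{m,S}^{\mult})^{\vee}\hookrightarrow(\Gscr_{n,S}^{\mult})^{\vee}$ is a monomorphism, and applying $\D(\cdot)$ shows $\rho'=\D((\pi_*)^{\vee})$, hence $\o\rho'$, is surjective.

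The main obstacle is the identification $\o{\D}_{n,S}^{\mult}\simeq H^0(X_n,\Omega^1_{X_n/k}(S_n))^{\Vbij}$: one must pass through the generalized---hence possibly non-smooth---Jacobian via the singular form of Oda's theorem and the dualizing-sheaf description of Lemma~\ref{lem:dualizingdesc}, and match the multiplicative part of $\Gscr_{n,S}[p]$ with the $V$-bijective part of $H^0(X_n,\Omega^1_{X_n/k}(S_n))$ precisely enough that Nakajima's formula applies on the nose; the surjectivity statements and the passage to $\o{\D}_{n,S}^{\et}$ are then formal consequences of Propositions~\ref{PicInj} and~\ref{GJfunc} together with the exactness and duality-compatibility of the Dieudonn\'e functor.
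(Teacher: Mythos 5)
Your proof is essentially correct and reaches the same two pillars the paper's argument rests on, namely the identification of $\o{\D}_{n,S}^{\mult}$ with $H^0(X_n,\Omega^1_{X_n/k}(S_n))^{\Vbij}$ (for which your explicit appeal to Lemma~\ref{SingularOda} together with Lemma~\ref{lem:dualizingdesc} is actually a more careful citation than the paper's bare reference to Theorem~\ref{thm:Oda}, which as stated concerns only smooth curves) and Nakajima's Theorem~\ref{Nakajima}\,\ref{NakBij} for freeness of rank $d$. Where you diverge from the paper is in establishing surjectivity of $\o\rho,\o\rho'$: the paper transports everything to de~Rham cohomology via the commutative diagrams intertwining $\o\rho$ with $\pi_*$ and $\o\rho'$ with $(\pi^*)^\vee$ on $H^0(\Omega^1(S))^{\Vbij}$, proves injectivity of $\pi^*$ on differentials directly and surjectivity of $\pi_*$ by dualizing (Serre duality) to injectivity of $\pi^*$ on $H^1(\O(-S))$, which in turn is extracted from the long exact sequence of $0\to\O(-S)\to\O\to\O_S\to 0$ plus Proposition~\ref{PicInj}. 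You instead argue one level up, on the generalized Jacobians themselves: Proposition~\ref{PicInj} plus the torus description in Proposition~\ref{GJfunc} and the snake lemma give that $\pi^*\colon J_{X_m,S_m}\to J_{X_n,S_n}$ is a monomorphism, and $\pi_*$ is an epimorphism; applying the exact anti-equivalence $\D(\cdot)$ then yields both surjectivities at once. This is a clean alternative that saves the explicit $H^1$ diagram chase, at the cost of some bookkeeping about $p$-divisible groups. Two small points worth tightening. First, passing from fppf-surjectivity of $\pi_*\colon J_{X_n,S_n}\to J_{X_m,S_m}$ to surjectivity of $\pi_*\colon\Gscr_{n,S}^{\mult}\to\Gscr_{m,S}^{\mult}$ is not quite immediate: the snake lemma a priori leaves a cokernel on $p^r$-torsion living in a quotient of $\pi_0(\ker\pi_*)$, and one must observe this is \'etale, hence invisible on multiplicative parts. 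A cleaner route, bypassing the torus computation entirely, is to note that $\pi_*\pi^*=[\deg\pi_{n,m}]=[p^k]$ is an isogeny on $\Gscr_{m,S}^{\mult}$, which forces $\pi_*$ to be an epimorphism of $p$-divisible groups directly. Second, Dieudonn\'e duality produces the $k$-linear (Pontryagin) dual of $\o{\D}_{n,S}^{\mult}$, not its $\Omega_n$-linear dual; the symmetric Frobenius algebra property of $\Omega_n$ is precisely what identifies these and preserves freeness and rank, so the conclusion is right, but the two duals should not be conflated in the exposition.
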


\begin{proof}
    We may assume that $k$ is algebraically closed.
    Oda's Theorem \ref{thm:Oda} 
    and the isomorphism \eqref{DDual} 
    yield natural isomorphisms of $\Omega_n$-modules 
    \begin{subequations}    
    \begin{equation}
            \o{\D}_{n,S}^{\mult} = \D(\Gscr_{n,S}^{\mult})\otimes_W k \simeq \D(J_{X_n,S}[p]^{\mult}) \simeq 
            H^0(X_n,\Omega^1_{X_n/k}(S))^{\Vbij}\label{Dmultk}
    \end{equation}
    \begin{equation}
            \o{\D}_{n,S}^{\et} = \D((\Gscr_{n,S}^{\mult})^{\vee})\otimes_W k \simeq \D(J_{X_n,S}[p]^{\mult})^{\vee} \simeq 
            \left(H^0(X_n,\Omega^1_{X_n/k}(S))^{\Vbij}\right)^{\vee}\label{Detk}
    \end{equation}
    \end{subequations}
    It follows from this and Nakajima's Theorem \ref{Nakajima} \ref{NakBij}
    that $\o{\D}^{\mult}_{n,S}$ and $\o{\D}^{\et}_{n,S}$
    are each free $\Omega_n$-modules of rank $d$.
    Now \eqref{Dmultk}--\eqref{Detk} are
     compatible with change in $n$, in the sense that
    for all $m\le n$, each of the two diagrams
    \begin{equation*}
        \xymatrix{
            {\o{\D}_{n,S}^{\mult}} \ar[r]^-{\simeq}\ar[d]_-{\o{\rho}} & {H^0(X_n,\Omega^1_{X_n/k}(S))^{\Vbij}}\ar[d]^-{\pi_*} \\
            {\o{\D}_{m,S}^{\mult}} \ar[r]^-{\simeq} & {H^0(X_m,\Omega^1_{X_m/k}(S))^{\Vbij}}
        }
        \xymatrix{
            {\o{\D}_{n,S}^{\et} } \ar[d]_-{\o{\rho}'} & \ar[l]_-{\simeq} {\left(H^0(X_n,\Omega^1_{X_n/k}(S))^{\Vbij}\right)^{\vee}}\ar[d]^-{(\pi^*)^{\vee}} \\
            {\o{\D}_{m,S}^{\et} } & \ar[l]_-{\simeq} {\left(H^0(X_m,\Omega^1_{X_m/k}(S))^{\Vbij}\right)^{\vee}}
        }
    \end{equation*}
    commute.  
    We are thereby reduced to the claim that 
    the map $\pi^*$ (respectively $\pi_*$) is injective (resp. surjective) on differential forms. 
    Pullback of differential forms is injective as $\pi_{n,m}$ is generically \'etale, 
    and to see that trace is surjective we may dualize, apply Serre duality, and argue that the
    pullback map
    \begin{equation}
        \xymatrix{ 
            {\pi^*: H^1(X_m,\O_{X_m}(-S))}\ar[r] & {H^1(X_n,\O_{X_n}(-S))}
            }\label{H1polesPB}
    \end{equation}
    is injective.  This pullback map fits into a commutative diagram with exact rows
    \begin{equation*}
        \xymatrix{
            0 \ar[r] & {H^0(X_m,\O_{X_m})} \ar[r]\ar[d] & {H^0(X_m, \O_S)} \ar[r]\ar[d] & H^1(X_m, \O_{X_m}(-S)) \ar[r]\ar[d]^-{\pi^*} & H^1(X_m,\O_{X_m}) \ar[r]\ar[d]^-{\pi^*} & 0 \\
            0 \ar[r] & {H^0(X_n,\O_{X_n})} \ar[r] & {H^0(X_n, \O_S)} \ar[r] & H^1(X_n, \O_{X_n}(-S)) \ar[r] & H^1(X_n,\O_{X_n}) \ar[r] & 0
        }
    \end{equation*}
    Since $X_n$ is (geometrically) connected for all $n$, we have $H^0(X_n,\O_{X_n})\simeq k$ and the first 
    vertical map is an isomorphism.  As $S$ is reduced, we have $H^0(X_n, \O_S) \simeq k^{\deg S}$ 
    for all $n$, and the second vertical map is likewise an isomorphism.  
    The injectivity of the final vertical map follows from
    Proposition \ref{PicInj}, arguing as in the proof of Proposition \ref{PicInjCor}. 
%    \bry{Note that total ramification at one or more points is a crucial hypothesis}
    A diagram chase then
    implies that \eqref{H1polesPB} is injective as well.  
\end{proof}

\begin{proof}[Proof of Theorem \ref{thmD}]
    Thanks to Lemma \ref{lem:DmapSurj} and our calculations immediately below \eqref{Dpairing},
    the $\Gamma$-towers of $W$-modules $\{\D_{n,S}^{\mult},\rho\}$
    and $\{\D_{n,S}^{\et},\rho'\}$ satisfy the
    hypotheses of Propositions \ref{prop:NCfree} and \ref{prop:NCDual},
    whose conclusions immediately give Theorem \ref{thmD}.    
\end{proof}

\begin{lem}
	For each pair of positive integers $m\le n$, the map $\pi_{n,m}: X_n\rightarrow X_m$
	induces commutative diagrams of $p$-divisible groups
	\begin{subequations}
	\begin{equation}
    		\xymatrix@C=50pt{
        			0 \ar[r] & {\T_S} \ar[r]   & 
			{\Gscr_{n,S}} \ar[r]  & {\Gscr_{n}} 
			\ar[r] & 0 \\
        			0 \ar[r] & {\T_S} \ar[r]\ar[u]^-{\id} & {\Gscr_{m,S}} 
			\ar[r]	\ar[u]^-{\pi^*} & {\Gscr_{m}} 
			\ar[r]\ar[u]^-{\pi^*} & 0
    }\label{eq:Picram}
\end{equation}
and
\begin{equation}
    		\xymatrix@C=50pt{
        			0 \ar[r] & {\T_S} \ar[r]  \ar[d]^-{[\Gamma_m:\Gamma_n]} & 
			{\Gscr_{n,S}} \ar[r] \ar[d]^-{\pi_*} & {\Gscr_{n}} 
			\ar[r] \ar[d]^-{\pi_*} & 0 \\
        			0 \ar[r] & {\T_S} \ar[r] & {\Gscr_{m,S}} 
			\ar[r]	& {\Gscr_{m}} 
			\ar[r] & 0
    }\label{eq:Albram}
\end{equation}
\end{subequations}
	compatibly with change in $m,n$.
	Moreover, \eqref{eq:Pic} and \eqref{eq:Alb} are equivariant for the $(\cdot)^*$-action and the 
	$(\cdot)_*$-action of $\Gamma$, respectively.
\end{lem}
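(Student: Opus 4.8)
The plan is to deduce both diagrams from the corresponding diagrams of semiabelian varieties furnished by Proposition~\ref{GJfunc}, by applying the $p$-divisible group functor and then reading off the effect on the torus parts from the explicit $\Delta$/$\Nm$ descriptions in that Proposition together with the total ramification hypothesis.

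Concretely: since $S_n=(\pi_{n,0}^{-1}S)_{\red}$ is reduced, $J_{X_n,S_n}$ is a semiabelian variety, an extension of $J_{X_n}$ by the torus $T_{S_n}$, and by Corollary~\ref{GJstr} applying $(\cdot)[p^{\infty}]$ to $0\to T_{S_n}\to J_{X_n,S_n}\to J_{X_n}\to 0$ recovers \eqref{eq:basicses} with $\T_n=T_{S_n}[p^{\infty}]$. Applying Proposition~\ref{GJfunc} to $\pi=\pi_{n,m}$ with the reduced modulus $S_m$ on $X_m$---so that $(\pi_{n,m}^{-1}S_m)_{\red}=S_n$ is the relevant reduced modulus on $X_n$---produces the Picard map $\pi^{*}\colon J_{X_m,S_m}\to J_{X_n,S_n}$ and the Albanese map $\pi_{*}\colon J_{X_n,S_n}\to J_{X_m,S_m}$ of semiabelian varieties, which by the squares \eqref{PicAlb} are compatible with the usual $\Pic^0(\pi)$ and $\Alb(\pi)$ on Jacobians; hence each restricts to a morphism of the short exact sequences $0\to T_{S_\bullet}\to J_{X_\bullet,S_\bullet}\to J_{X_\bullet}\to 0$. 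Passing to $p$-divisible groups is exact on these sequences, since multiplication by $p^n$ is {\em fppf}-surjective on the tori (exactly as in the proof of Corollary~\ref{GJstr}), and produces the desired commutative diagrams; it remains only to identify the leftmost vertical arrow in each.

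Here the total ramification hypothesis is decisive: $\pi_{n,m}$ restricts to a bijection $S_n\xrightarrow{\sim}S_m$, and both are already identified with $S$, so $\Res_{S_n/k}\Gm\simeq\Res_{S_m/k}\Gm\simeq\Res_{S/k}\Gm$. Under these identifications the diagonal map $\pi^{*}=\Delta$ of Proposition~\ref{GJfunc} becomes the identity---each point of $S_m$ has exactly one preimage---so $\pi^{*}\colon\T_S\to\T_S$ is the identity; the norm map $\pi_{*}=\Nm$, given by $(a_t)_t\mapsto(\prod_t a_t^{e_t})$, has a single exponent $e_t$ equal to the (total) ramification degree $[\Gamma_m:\Gamma_n]$, so $\pi_{*}$ on tori is the $[\Gamma_m:\Gamma_n]$-th power map, {\em i.e.}~multiplication by $[\Gamma_m:\Gamma_n]$, and hence so is $\pi_{*}\colon\T_S\to\T_S$. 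This gives \eqref{eq:Picram} and \eqref{eq:Albram}. Compatibility with change in $m,n$ then follows from the transitivity relations $\pi_{n,m}^{*}\pi_{m,\ell}^{*}=\pi_{n,\ell}^{*}$ and $(\pi_{m,\ell})_{*}(\pi_{n,m})_{*}=(\pi_{n,\ell})_{*}$ (from the uniqueness parts of Proposition~\ref{GJfunc}, or from functoriality of $\Pic^0$, $\Alb$, and of generalized Jacobians) together with $[\Gamma_\ell:\Gamma_n]=[\Gamma_\ell:\Gamma_m][\Gamma_m:\Gamma_n]$, and the equivariance assertions are immediate from the already-recorded equivariance of $\pi^{*}$ and $\pi_{*}$ for the $(\cdot)^{*}$- and $(\cdot)_{*}$-actions of $\Gamma$, the induced maps on $\T_S$ (the identity, resp.\ an integer multiplication map) being automatically $\Gamma$-equivariant. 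The only point requiring real care is the modulus bookkeeping---verifying that the reduced modulus $S_n$ is indeed the one that intervenes when Proposition~\ref{GJfunc} is applied along the tower---but this is precisely what the identification $S_n\simeq S$ from total ramification provides, so no genuine obstacle arises.
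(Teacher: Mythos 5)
Your proposal is correct and follows exactly the route the paper takes: the paper's proof of this lemma is the one-line citation of Proposition \ref{GJfunc} and Corollary \ref{GJstr}, and your argument is precisely the detailed unwinding of that citation (morphisms of the semiabelian extensions from \eqref{PicAlb}, exactness of $(\cdot)[p^\infty]$ on these sequences, and the identification of the torus maps as $\id$ and $[\Gamma_m:\Gamma_n]$ via the explicit $\Delta$/$\Nm$ descriptions together with total ramification). No discrepancies to report.
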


\begin{proof}
    This follows immediately from Proposition \ref{GJfunc} and Corollary \ref{GJstr}.
\end{proof}

Passing to multiplicative parts on \eqref{eq:Picram} and applying the (contravariant) Dieudonn\'e module functor 
yields, for each $m\le n$, the commutative diagram of Dieudonn\'e modules, whose rows are short exact
	\begin{equation}
    		\xymatrix@C=50pt{
        			0 \ar[r] & {\D_n^{\mult}} \ar[r]  \ar[d]^-{\rho} & 
			{\D_{n,S}^{\mult}} \ar[r] \ar[d]^-{\rho} & {\D(\T_S)} 
			\ar[r] \ar[d]^-{\id} & 0 \\
        			0 \ar[r] & {\D_m^{\mult}} \ar[r] & {\D_{m,S}^{\mult}} 
			\ar[r]	& {\D(\T_S)} 
			\ar[r] & 0
    }\label{eq:DPic}
\end{equation}
Similarly, passing to multiplicative parts on \eqref{eq:Albram}, dualizing, and applying $\D(\cdot)$ yields
\begin{equation}
    		\xymatrix@C=50pt{
        			0 \ar[r] & {\D(\Qscr_S)} \ar[r]  \ar[d]^-{[\Gamma_m:\Gamma_n]} & {\D_{n,S}^{\et}} \ar[r] \ar[d]^-{\rho'} & {\D_n^{\et}}  \ar[r] 
        			\ar[d]^-{\rho'=\rho} & 0 \\
        			0 \ar[r] & {\D(\Qscr_S)} \ar[r] & {\D_{m,S}^{\et}} 
			\ar[r]	& {\D_{m}^{\et}} 
			\ar[r] & 0
    }\label{eq:DAlb}
\end{equation}
where we have used the canonical identifications $\Gscr_n^{\vee}\simeq \Gscr_n$
coming from principal polarizations on Jacobians.
As duality of Jacobians interchanges Picard and Albanese functoriality, 
these identifications are
 equivariant 
for the action of $(\gamma_*)^{\vee}$ on $\Gscr_n^{\vee}$
and $\gamma^*$ on $\Gscr_n$, and identify the
transition maps $\rho'$ and $\rho$ on $\D_n^{\et}$ as in Remark \ref{rem:rho}.
These diagrams give short exact sequences of projective systems.

To prove Theorem \ref{thmE}, 
we will need to apply the functor $\varprojlim$
to the exact sequences of inverse systems given by \eqref{eq:DPic}
and \eqref{eq:DAlb}.  To analyze the result,
we will need to know that certain derived limits vanish,
which is the content of the next Lemma:

\begin{lem}\label{lem:Dnsurj}
     For each pair of positive integers $m\le n$, the map
    \begin{equation*}
        \xymatrix{
            {\rho : \D_{n}} \ar[r] & {\D_{m}}
            }
    \end{equation*}
    is surjective.  In particular, the map $\rho: \D_{n}^{\star}\rightarrow \D_{m}^{\star}$
    is surjective for any $\star \in \{\et,\mult, \ll,\null\}$.
\end{lem}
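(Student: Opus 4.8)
The plan is to prove that $\pi^{*}:=\pi_{n,m}^{*}\colon J_{X_m}\to J_{X_n}$ is a monomorphism on $p$-divisible groups, and then invoke the exactness of the Dieudonn\'e functor. First I would check that $\pi_{n,m}\colon X_n\to X_m$ is totally ramified over a suitable point. Fix $\sigma\in\Sigma$ and let $s_m\in X_m$ and $s_n\in X_n$ be the points lying over $\sigma$. Since $\pi_{n,0}$ is totally ramified over $\sigma$ and $\pi_{n,0}=\pi_{m,0}\circ\pi_{n,m}$, multiplicativity of ramification indices forces $\pi_{n,m}^{-1}(s_m)=\{s_n\}$ with $e(s_n/s_m)=\deg\pi_{n,m}$ and trivial residue extension $k(s_n)=k(s_m)$. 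Thus $\pi_{n,m}$ is totally ramified over a geometric point of $X_m$ lying above $s_m$. As $\pi_{n,m}$ is moreover a finite, flat, generically \'etale map of proper, geometrically irreducible curves ($K_n/K_m$ is separable, being Galois of $p$-power degree), Proposition~\ref{PicInj} applies and shows that $\pi^{*}\colon J_{X_m}\to J_{X_n}$ has trivial scheme-theoretic kernel.

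Consequently the induced map $\pi^{*}\colon\Gscr_m\to\Gscr_n$ on $p$-divisible groups has trivial kernel. For each $k\ge 0$ its restriction $\pi^{*}|_{\Gscr_m[p^k]}\colon\Gscr_m[p^k]\to\Gscr_n[p^k]$ is then a monomorphism of finite $k$-group schemes, i.e.\ a closed immersion; since $\D$ is an exact anti-equivalence on the abelian category of finite commutative $k$-group schemes of $p$-power order (Theorem~\ref{thm:Dieudonne}), the map $\D(\pi^{*}|_{\Gscr_m[p^k]})\colon\D(\Gscr_n[p^k])\to\D(\Gscr_m[p^k])$ is surjective for every $k$. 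Passing to the limit, $\rho=\D(\pi^{*})=\varprojlim_k\D(\pi^{*}|_{\Gscr_m[p^k]})\colon\D_n\to\D_m$ is a limit of surjections between modules that are finite free over $W$, hence $p$-adically complete and separated; a routine successive-approximation argument (lift modulo $p$, correct, iterate, and invoke completeness) shows that $\rho$ itself is surjective. Alternatively, one may observe that a monomorphism of $p$-divisible groups has $p$-divisible cokernel, so $0\to\Gscr_m\xrightarrow{\pi^{*}}\Gscr_n\to\Gscr_n/\Gscr_m\to 0$ is a short exact sequence of $p$-divisible groups to which the exact functor $\D$ applies directly, yielding $0\to\D(\Gscr_n/\Gscr_m)\to\D_n\xrightarrow{\rho}\D_m\to 0$.

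Finally, for the displayed ``in particular'' I would use the functoriality of the decomposition $\Gscr=\Gscr^{\et}\times\Gscr^{\mult}\times\Gscr^{\ll}$: it induces a decomposition $\D_n=\D_n^{\et}\oplus\D_n^{\mult}\oplus\D_n^{\ll}$ respected by $\rho$, and a direct sum of maps is surjective exactly when each summand is, so surjectivity of $\rho$ on $\D_n$ (the case $\star=\null$) yields surjectivity of $\rho$ on each $\D_n^{\star}$. I do not anticipate a genuine obstacle; the step requiring the most care is verifying the hypotheses of Proposition~\ref{PicInj}---in particular that $\pi_{n,m}$ really does acquire a totally ramified geometric point---which is exactly where the standing hypothesis $\Sigma\neq\emptyset$ of this subsection is used.
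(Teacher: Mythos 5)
Your proof is correct, but it takes a genuinely different route from the paper's. You work directly at the level of $p$-divisible groups: Proposition \ref{PicInj} (whose hypotheses you verify correctly --- total ramification of $\pi_{n,0}$ over $\Sigma$ does force total ramification of $\pi_{n,m}$ over $S_m$ by multiplicativity of ramification indices, and this is indeed where $\Sigma\neq\emptyset$ enters) gives triviality of $\ker(\pi^*)$ on Jacobians, hence a monomorphism $\Gscr_m[p^k]\hookrightarrow\Gscr_n[p^k]$ at every finite level; the exact anti-equivalence of Theorem \ref{thm:Dieudonne} converts this into a surjection of finite-length Dieudonn\'e modules, and the passage to the limit over $k$ is harmless since the kernels are of finite length (Mittag--Leffler), or, as in your alternative, because a monomorphism of $p$-divisible groups does have $p$-divisible cokernel, so one may apply $\D$ to a genuine short exact sequence. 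The paper instead reduces modulo $p$ (legitimate by Nakayama, the modules being finite free over $W$) and invokes Oda's Theorem \ref{thm:Oda} to identify $\o{\rho}$ with the trace map on $H^1_{\dR}$; surjectivity of the trace is then checked on the two graded pieces of the Hodge filtration by dualizing to the pullback maps on $H^0(\Omega^1)$ and $H^1(\O)$, whose injectivity follows from separability and from the same Proposition \ref{PicInj}, respectively. Both arguments thus rest on the same geometric input; yours is shorter and bypasses Oda's theorem and Serre duality entirely, while the paper's keeps the argument uniform in style with the proof of Lemma \ref{lem:DmapSurj}, where the de Rham picture (needed for Nakajima's theorem and the generalized Jacobians) is unavoidable. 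Your handling of the ``in particular'' via functoriality of the \'etale/multiplicative/local--local decomposition is exactly the paper's one-line argument.
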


\begin{proof}
    As $\D_n^{\star}$ is a functorial direct summand of $\D_n$, the second assertion
    follows from the first.  To prove the first,
    it suffices to do so after applying $(\cdot)\otimes_W k$,
    and we may assume that $k$ is algebraically closed.
    Arguing as in the proof of Lemma \ref{lem:DmapSurj},
    by Oda's Theorem \ref{thm:Oda}, for all $m\le n$ we have a commutative 
    diagram with indicated isomorphisms
    \begin{equation*}
        \xymatrix{
            {\o{\D}_n  \simeq \D(J_{X_n}[p])} \ar[r]^-{\simeq}\ar[d]_-{\o{\rho}} & {H^1_{\dR}(X_n/k)}\ar[d]^-{\pi_*} \\ 
            {\o{\D}_m  \simeq \D(J_{X_m}[p])} \ar[r]^-{\simeq} & {H^1_{\dR}(X_m/k)}
        }
    \end{equation*}
    so it suffices to prove that the trace map on de Rham cohomology is surjective.
    This map fits into a commutative diagram with short exact rows
    \begin{equation}
        \xymatrix{
            0 \ar[r] & {H^0(X_n,\Omega^1_{X_n/k})} \ar[r]\ar[d]^-{\pi_*} & {H^1_{\dR}(X_n/k)}   \ar[r] \ar[d]^-{\pi_*}& {H^1(X_n,\O_{X_n})} \ar[r]\ar[d]^-{\pi_*} & 0 \\
            0 \ar[r]& {H^0(X_m,\Omega^1_{X_m/k})} \ar[r] & {H^1_{\dR}(X_m/k)}   \ar[r] & {H^1(X_m,\O_{X_m})} \ar[r] & 0
        }
    \end{equation}
    so it is enough to prove that the flanking vertical maps are surjective.
    Dualizing and using Serre duality, we equivalently wish to show that the
    pullback maps
    \begin{equation}
        \xymatrix{
            {\pi^*: H^1(X_m,\O_{X_m})} \ar[r] & {H^1(X_n,\O_{X_n})}
        }\quad\text{and}\quad
        \xymatrix{
            {\pi^*: H^0(X_m,\Omega^1_{X_m/k})} \ar[r] & {H^0(X_n,\Omega^1_{X_n/k})}
        }\label{HdRsurjRed}
    \end{equation}
    are injective.  As in the proof of Lemma \ref{lem:DmapSurj}, the second
    map of \eqref{HdRsurjRed} is injective as $\pi_{n,m}$
    is generically \'etale, while the first map is injective
    due to Proposition \ref{PicInj}, as in the proof of Proposition \ref{PicInjCor}.
    %\bry{Note that total ramification at one or more points is a crucial hypothesis}
\end{proof}

\begin{proof}[Proof of Theorem \ref{thmE}]
    We apply $\varprojlim$ to the exact sequences of inverse systems
    given by \eqref{eq:DPic} and \eqref{eq:DAlb}.
    By Lemma \ref{lem:Dnsurj}, the inverse system $\{\D_n^{\mult}, \rho\}$
    is Mittag--Leffler, so the derived limit $\varprojlim^1_{n,\rho} \D_n^{\mult}$
    vanishes, which yields the short exact sequence of Theorem \ref{thmE} \ref{thmE:str}.
    Likewise, $\D(\Qscr_S)$ is a finite free $W$-module, so it follows
    from Lemma \ref{limitvanishing} 
    that applying $\varprojlim$ to \eqref{eq:DAlb}
    results in an isomorphism of $\Lambda$-modules $\D^{\et}_{S} \simeq \D^{\et}$,
    as in Theorem \ref{thmE} \ref{thmE:str}.  This isomorphism
    and the control aspect of Theorem \ref{thmD}
    yield canonical isomorphisms of
    $\Lambda_n$-modules
    \begin{equation*}
        \Lambda_n \tens_{\Lambda}\D^{\et} \simeq \Lambda_n \tens_{\Lambda} \D_S^{\et} \simeq \D_{n,S}^{\et}
    \end{equation*}
    and the first short exact sequence of Theorem \ref{thmE} \ref{thmE:control}
    follows from \eqref{eq:DAlb}.  To obtain the second exact sequence, we apply $\Lambda_n \otimes_{\Lambda} (\cdot)$
    to the short exact sequence of \ref{thmE:str} to get a long exact sequence that fits into the commutative diagram
    \begin{equation*}
        \xymatrix{
             0 \ar[r] & {\Tor_{\Lambda}^1(\Lambda_n,\D(\T_S))} \ar[r] & {\Lambda_n \tens_{\Lambda} \D^{\mult}} \ar[r]\ar[d] & 
             {\Lambda_n \tens_{\Lambda} \D_S^{\mult}} \ar[r]\ar[d] & {\Lambda_n \tens_{\Lambda} \D(\T_S)} \ar[r]\ar[d] & 0 \\
            &  0 \ar[r] & {\D_n^{\mult}} \ar[r] & {\D_{n,S}^{\mult}} \ar[r] & {\D(\T_S)} \ar[r] & 0
        }
    \end{equation*}
    whose bottom row is \eqref{eq:DPic} and whose vertical maps are the canonical projections.  
    The rightmost vertical map is identified with the multiplication map which,
    as the $\Lambda$-module structure map of $\D(\T_S)$
    factors through the augmentation map $\Lambda\twoheadrightarrow \Lambda_n \twoheadrightarrow W$,
    is easily seen to be an isomorphism.  The middle vertical map is an isomorphism thanks to Theorem \ref{thmD} \ref{thmD:control},
    so a diagram chase yields the short exact sequence
    \begin{equation}
        \xymatrix{
            0 \ar[r] & {\Tor_{\Lambda}^1(\Lambda_n,\D(\T_S))} \ar[r] & {\Lambda_n \tens_{\Lambda} \D^{\mult}} \ar[r] &
            {\D_n^{\mult}} \ar[r] & 0
        }.\label{Tor1seq}
    \end{equation}
    On the other hand, applying $(\cdot)\otimes_{\Lambda} \D(\T_S)$ to the tautological exact sequence 
    \begin{equation*}
        \xymatrix{
            0 \ar[r] & {I_{n}} \ar[r] & \Lambda \ar[r] & \Lambda_n \ar[r] & 0
        }
    \end{equation*}
    gives the exact sequence
    \begin{equation*}
        \xymatrix{
            0 \ar[r] & {\Tor_1^{\Lambda}(\Lambda_n,\D(\T_S))} \ar[r] & 
            {I_{n} \tens_{\Lambda} \D(\T_S)} \ar[r] & {\Lambda \tens_{\Lambda} \D(\T_S)}\ar[r] & {\Lambda_n\tens_{\Lambda} \D(\T_S)}\ar[r] & 0
        }.
    \end{equation*}
    Again, as the $\Lambda$-module structure on $\D(\T_S)$ factors through $\Lambda\rightarrow \Lambda_n \rightarrow W$, 
    the final map above is an isomorphism, which yields 
    \begin{equation}
        \Tor^{\Lambda}_1(\Lambda_n,\D(\T_S))\simeq I_{n}\tens_{\Lambda}\D(\T_S)
        \simeq (I_{n} \tens_{\Lambda} W) \tens_W \D({\T_S}) \simeq \frac{I_{n}}{I \cdot I_{n}}\tens_W \D(\T_S),
        \label{Tor1desc}
    \end{equation}
    where the final isomorphism results from the canonical identification $W\simeq \Lambda/I$ via the augmentation map. 
    Putting \eqref{Tor1seq} and \ref{Tor1desc} together gives the second exact sequence in Theorem \ref{thmE} \ref{thmE:control}.
    
    It remains to establish the explicit descriptions of $\D(\T_S)$ and $\D(\Qscr_S)$ given in Theorem \ref{thmE} \ref{thmE:torus}.
    By Corollary \ref{GJstr} and the very definition of $\T_S$, we have a short exact sequence
    of $p$-divisible groups 
    \begin{equation*}
        \xymatrix{
            0 \ar[r] & {\mu_{p^{\infty}}}  \ar[r]^-{\Delta} & {\Res_{S/k} \mu_{p^{\infty}}} \ar[r] & {\T_S} \ar[r] & 0
        }
    \end{equation*}
    Applying $\D(\cdot)$ to this gives the claimed description of $\D(\T_S)$, and 
%    \bry{This involves the calculation that the Dieudonn\'e module of
%$\Res_{k'/k}(\Gm)$ is canonically isomorphic to $W(k')$,
%viewed as $W$-module in the natural way, 
%with $F=p\sigma$ and $V=\sigma^{-1}$.  I don't have a reference 
%for this fact, but it can be proved
%via descent from a finite Galois
%    extension of $k$ over which $S$ splits,
%as the Dieudonne module over $k$ is the space of Galois invariants in the Dieudonn\'e module over $k'$.
%The latter is easily calculated, using that the restriction of scalars splits
%as a product of copies of $\Gm$ indexed by $\Gal(k'/k)$ over $k'$.}    
    the description of $\D(\Qscr_S)$ then follows from the very definition
    $\Qscr_S:=\T_S^{\vee}$ and the compatibility of $\D(\cdot)$ with duality.
\end{proof}

\begin{proof}[Proof of Theorem \ref{thmF}]
    Let $d$ be the dimension of the $p$-adic Lie group $\Gamma$,
    and without loss of generality assume that $\{\Gamma_n\}_n$
    is the lower central $p$-series of $\Gamma$, so that $\Gamma/\Gamma_n$
    is a finite $p$-group of order $p^{d n }$.

    The functorial decompositions 
    $\Gscr_n = \Gscr_n^{\et} \times \Gscr_n^{\mult}\times \Gscr_n^{\ll}$ yield a canonical isomorphism of $\Lambda$-modules
    $$\D\simeq \D^{\et}\times \D^{\mult}\times \D^{\ll},$$
    so as $\D^{\star}$ is finitely generated (even as a $\Lambda$-module) for $\star=\et,\mult$ thanks to Theorems \ref{thmD} and \ref{thmE},
    it suffices to prove that $\D$ is not a finitely generated $\Lambda[\![F,V]\!]$-module.
    Assume to the contrary that it is, 
    and choose $\Lambda[\![F,V]\!]$-module generators $\delta_1,\ldots,\delta_r$.
    Since the transition maps $\rho:\D_n\rightarrow \D_m$ are surjective due to Lemma \ref{lem:Dnsurj},
    the canonical projection $\D\rightarrow \D_n$ is surjective for every $n$,
    whence the images of $\delta_1,\ldots\delta_r$ generate $\D_n $
    as a $\Lambda_n[F,V]$-module. It follows that $\D_n / (F,V)$
    is generated by at most $r$ elements as a $\Lambda_n$-module, for all $n$.
    But the canonical isomorphisms $\D_n/p\D_n \simeq H^1_{dR}(X_n/k)$ of Oda's theorem induce isomorphisms
    \begin{equation*}
        D_n/(F,V) \simeq \coker\left(V : H^0(\Omega^1_{X_n})\rightarrow H^0(\Omega^1_{X_n})\right)
    \end{equation*}
    for each $n$
    so that the cokernel of $V$ is generated as an $\Omega_n$-module by at most $r$ generators.
    It follows that the cokernel---and hence the kernel---of $V$ on $H^0(\Omega^1_{X_n})$
    has $k$-dimension at most $r\cdot |\Gamma/\Gamma_n| = r p ^{dn}$.
    Writing $a_n$ for this dimension, we will prove that for {\em any} $D>0$, there exists
    $N$ with $a_n > D p^{dn}$ for all $n > N$, contradicting the above bound coming from our
    finite generation assumption.  To do so, we may---and henceforth do---assume that $k$
    is algebraically closed.
    
    Since $\Gamma/\Gamma_n$ is a $p$-group, it is solvable, and we may 
    find a subgroup $H_n \leq \Gamma/\Gamma_n$ of order $p$.
    Let $L_n\subset K_n$ be the fixed field of $H_n$, so 
    $K_n/L_n$ a degree-$p$ Galois extension, with Galois group $H_n\simeq \ZZ/p\ZZ$,
    totally ramified over the finite set of places of $L_n$ lying over $S\subseteq K_0$.
    For $Q\in S$, let $d_{Q,n}$ be the unique break in the lower numbering ramification
    filtration of $H_n$ above $Q$.  As the lower numbering of the ramification groups
    is inherited by subgroups, $d_{Q,n}$ is the largest lower break in the
    ramification filtration of $\Gamma/\Gamma_n$ above $Q$.  Working locally at $Q$, 
    let $K_{n,Q}$ be the localization of $K_n$ at $Q$, 
    and $L_Q$ the compositum of all $K_{n,Q}$. 
    Then $L_Q/K_Q$ is {\em strictly APF} \cite[Th\'eor\`eme 1.2]{Wintenberger},
    so the Herbrand functions
    $\psi(x):=\int_0^x [\Gamma: \Gamma^t] dt$ and $\phi:=\psi^{-1}$ are well-defined
    and continuos, piecewise linear increasing bijections on $[0,\infty)$.
    Let $u_n:= \phi(d_n)$ be the last upper break in the ramification filtration of $\Gamma/\Gamma_n$
    at $Q$, so $d_n = \psi(u_n)$.  By \cite[Th\'eor\`eme 1.2]{Wintenberger},
    the ratio  ${\psi(x)}/{[\Gamma: \Gamma^x]}$     
    tends to infinity with $x$. It follows that for any $D>0$,
     we have $\psi(x) > D [\Gamma: \Gamma^x]$ for all $x$ sufficiently large.
     Evaluating on $x= u_n+\varepsilon$, and noting that by definition of $u_n$
     we have $\Gamma^x \subseteq \Gamma_n$ for all $x > u_n$, we find
     that for any $D$, there exists $N$ so that, for all $n>N$ and all $\varepsilon > 0$
     \begin{equation}
         \psi(u_n+\varepsilon) > D [\Gamma: \Gamma_n] \cdot [\Gamma_n : \Gamma^{u_n + \varepsilon}]
     \end{equation}
     As $\psi$ is continuous, we conclude that $d_{Q,n}=\psi(u_n) \ge D [\Gamma: \Gamma_n]$ for all $n > N$.
     We may clearly arrange for this to hold for all $Q\in S$ simultaneously.
     
     On the other hand, by \cite[Theorem 1.1]{BooherCais} we have the lower bound
     \begin{equation*}
        a_n >  \left(1 - \frac{1}{p}\right)\left\lfloor\frac{p}{2}\right\rfloor \sum_{Q\in S}\left( 
        \left\lceil\frac{p}{2}\right\rceil \frac{d_{Q,n}}{p} - 1
        \right) 
     \end{equation*}
     and we conclude that for any $D>0$, there exists $N$ so that for all $n>N$ we have
     $a_n > D |\Gamma/\Gamma_n|$.
\end{proof}

To prove Theorem \ref{thmG}, we will need:

\begin{lem}\label{DseqAbvar}
    Let $A$ be an abelian variety over a finite field $k$ of cardinality $p^r$,
    let $\Gscr:=A[p^{\infty}]$ be the $p$-divisible group of $A$, and $\wp:=1-F^r$
    the Lang isogeny.
    There is a functorial exact sequence
    \begin{equation*}
        \xymatrix{
            0 \ar[r] & {\D(\Gscr^{\et})} \ar[r]^-{\wp} & {\D(\Gscr^{\et})} \ar[r] & {\Gscr(k)^*\otimes_{\ZZ_p} W} \ar[r] & 0
        }
    \end{equation*}
\end{lem}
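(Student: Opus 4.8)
The plan is to exhibit the sequence as the image, under the contravariant Dieudonné functor, of the short exact sequence of fppf sheaves underlying the Lang isogeny on $\Gscr^{\et}$, and then to identify the resulting cokernel term with $\Gscr(k)^{\ast}\otimes_{\ZZ_{p}}W$.

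First I would identify the operator $\wp = 1-F^{r}$ on $\D(\Gscr^{\et})$ as coming from an isogeny of $p$-divisible groups. By Theorem \ref{thm:Dieudonne}, the Frobenius operator $F$ on $\D(\Gscr)$ is $\D$ of the relative $p$-power Frobenius morphism of $\Gscr$; iterating $r$ times, and using that $\sigma^{r} = \mathrm{id}$ on $W = W(k)$ so that $F^{r}$ is $W$-linear, the operator $F^{r}$ on $\D(\Gscr^{\et})$ is $\D$ of the $q$-power Frobenius endomorphism $\pi_{A}$ of $\Gscr^{\et}$ (the restriction of $\pi_{A}\in\End(A)$; recall $F$ is bijective on the étale part). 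Hence $\wp = 1 - F^{r} = \D(L)$ for the Lang isogeny $L := \mathrm{id} - \pi_{A}\colon \Gscr^{\et}\to\Gscr^{\et}$, using additivity of $\D$. Now $\pi_{A}$ acts on $\Gscr^{\et}(\overline{k}) = A(\overline{k})[p^{\infty}]$ as the $q$-power map, which coincides with the canonical Frobenius generator of $\mathrm{Gal}(\overline{k}/k)$; therefore $\ker\bigl(L\colon\Gscr^{\et}(\overline{k})\to\Gscr^{\et}(\overline{k})\bigr) = \Gscr^{\et}(\overline{k})^{\pi_{A}=1} = \Gscr^{\et}(k) = \Gscr(k)$ (the connected parts $\Gscr^{\mult},\Gscr^{\ll}$ contribute no $\overline{k}$-points). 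Since $L$ is separable, its kernel is the constant finite étale group scheme $\underline{\Gscr(k)}$, and being of finite order $L$ is an isogeny of $p$-divisible groups, hence an epimorphism of fppf abelian sheaves.

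Applying $\D$ to the resulting short exact sequence of fppf sheaves
\[
0 \longrightarrow \underline{\Gscr(k)} \longrightarrow \Gscr^{\et} \xrightarrow{\ L\ } \Gscr^{\et} \longrightarrow 0,
\]
and using the standard fact that the Dieudonné module of the kernel of an isogeny of $p$-divisible groups is the (finite-length) cokernel of the induced injection of Dieudonné modules, gives an exact sequence
\[
0 \longrightarrow \D(\Gscr^{\et}) \xrightarrow{\ 1-F^{r}\ } \D(\Gscr^{\et}) \longrightarrow \D\bigl(\underline{\Gscr(k)}\bigr) \longrightarrow 0.
\]
It remains to identify the last term. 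By additivity of both $M\mapsto\D(\underline{M})$ and $M\mapsto M^{\ast}\otimes_{\ZZ_{p}}W$ in a finite abelian $p$-group $M$, it suffices to treat $M = \ZZ/p^{n}\ZZ$; there $\underline{\ZZ/p^{n}\ZZ} = \underline{\QQ_{p}/\ZZ_{p}}[p^{n}]$, and since $\D(\underline{\QQ_{p}/\ZZ_{p}}) = W$ (with $F=\sigma$) we get $\D(\underline{\ZZ/p^{n}\ZZ}) = W/p^{n}W \cong (\ZZ/p^{n}\ZZ)^{\ast}\otimes_{\ZZ_{p}}W$ canonically. Hence $\D(\underline{\Gscr(k)})\cong\Gscr(k)^{\ast}\otimes_{\ZZ_{p}}W$, which yields the asserted sequence; functoriality in $A$ is immediate since each of the above steps is functorial.

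The step I expect to require the most care is the identification $\D(\wp) = 1-F^{r}$ — that is, that the operator $F^{r}$ on the Dieudonné module genuinely is $\D$ applied to the Frobenius endomorphism $\pi_{A}$, and is $W$-linear — together with the bookkeeping that $L$ is an isogeny of $p$-divisible groups rather than a levelwise epimorphism (it is \emph{not} surjective on $[p^{n}]$-torsion in general), so that the exactness of the displayed sequence of Dieudonné modules is genuinely a formal consequence of the exactness of $\D$ rather than something to be verified by hand.
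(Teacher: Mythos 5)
Your proof is correct, and it arrives at the same short exact sequence of $p$-divisible groups as the paper, but by a genuinely different route. You identify $\wp$ explicitly as $\D$ of the Lang isogeny $L=\id-\pi_A$ restricted to $\Gscr^{\et}$ (checking that $F^r=\D(\pi_A)$ is $W$-linear), observe that $L$ is \'etale with constant kernel $\underline{\Gscr(k)}$, and then invoke the general theory of isogenies of $p$-divisible groups to conclude that $L$ is an fppf epimorphism. The paper instead starts from the surjection $\wp\colon A\to A$ (smoothness and connectedness of $A$), applies the snake lemma to the $p^n$-torsion level to get, for each $n$, an exact sequence $0\to A[p^n](k)\to A[p^n]\to A[p^n]\to A(k)/p^nA(k)\to 0$, and then passes to \'etale parts and to the filtered colimit over $n$, where the cokernel terms $A(k)/p^nA(k)$ (with transition maps multiplication by $p$) vanish. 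In effect, the paper handles by hand the failure of $L$ to be a levelwise epimorphism that you flag at the end of your write-up, while you absorb that bookkeeping into the black box ``an isogeny of $p$-divisible groups is an fppf epimorphism.'' Both are valid; your version is shorter, the paper's is more self-contained. One minor point: your identification $\D(\underline{H})\cong H^{\ast}\otimes_{\ZZ_p}W$ is argued by reducing to cyclic $H$ via additivity, which gives the isomorphism but does not by itself exhibit it as a \emph{natural} transformation (which is what ``functorial exact sequence'' in the Lemma wants). The paper's direct description $\D(\underline{H})=\varinjlim_n\Hom_{k\mathrm{-gp}}(\underline{H},W_n)=\Hom(H,\QQ_p/\ZZ_p)\otimes_{\ZZ_p}W$ produces the natural isomorphism in one stroke, so you may prefer to quote that instead of the reduction-to-cyclic argument.
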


\begin{proof}
    As $A$ is smooth and geometrically connected, $\wp: A\rightarrow A$ is surjective,
     and yields a short exact sequence of abelian sheaves
    \begin{equation*}
        \xymatrix{
            0 \ar[r] & {A(k)} \ar[r] & {A} \ar[r]^-{\wp} & {A} \ar[r] & 0
        },
    \end{equation*}
    from which we deduce the exact sequence 
    \begin{equation}
        \xymatrix{
            0 \ar[r] & {\Gscr(k)} \ar[r] & {\Gscr^{\et}} \ar[r]^-{\wp} & {\Gscr^{\et}} \ar[r] & 0
        }.\label{pdivseq}
    \end{equation}
    Indeed, for each $n$, multiplication by $p^n$ and the snake lemma give exact sequences
    \begin{equation}        
        \xymatrix{
            0 \ar[r] & {A[p^n](k)} \ar[r] & {A[p^n]} \ar[r]^{\wp} & {A[p^n]} \ar[r] & {A(k)/p^nA(k)} \ar[r] & 0
        }\label{pdivseqfinite}
    \end{equation}
    which, for variable $n$, form an inductive system in which the transition maps $A(k)/p^nA(k)\rightarrow A(k)/p^{n+1}A(k)$
    are mutiplication by $p$.  Passing to \'etale parts (which is exact) and applying the exact functor $\varinjlim$ to \eqref{pdivseqfinite}
    therefore yields \eqref{pdivseq}.
    Now the Dieudonn\'e module functor for 
    a constant group scheme $\underline{H}$
    is particularly simple:
    \begin{equation*}
        \D(\underline{H}) = \varinjlim_n\Hom_{k\mhyphen\mathrm{gp}}(\underline{H}, W_n) = 
    (\Hom(H, \QQ_p/\ZZ_p)\otimes_{\ZZ_p} W(\o{k}))^{\Gal(\o{k}/k)} = \Hom(H,\QQ_p/\ZZ_p)\otimes_{\ZZ_p} W
    \end{equation*}
    as the Galois action on $\underline{H}(\o{k})=H$ is trivial.
    Applying $\D(\cdot)$ to \eqref{pdivseq} therefore gives the exact sequence of Lemma \ref{DseqAbvar}.
\end{proof}

\begin{remark}
    If $G$ is {\em any} $p$-divisible group or finite $k$-group, one always has an exact sequence of \'etale abelian sheaves
    \begin{equation*}
        \xymatrix{
            0 \ar[r] & {G(k)} \ar[r] & {G^{\et}} \ar[r]^-{\wp}  & {G^{\et}}
        }
    \end{equation*}
    which yields the isomorphism of finite length $W$-modules
    $$
        \D(G^{\et})/\wp\D(G^{\et}) \simeq G(k)^*\otimes_{\ZZ_p} W.
    $$
\end{remark}

\begin{proof}[Proof of Theorem \ref{thmG}]
By Lemma \ref{DseqAbvar}, for each $n$ we have exact sequences of $\Lambda_n$-modules
\begin{equation}
    \xymatrix{
        0 \ar[r] & {\D(\Gscr_n^{\et})} \ar[r]^-{\wp} & {\D(\Gscr_n^{\et})}  \ar[r] & {\Gscr_n(k)^*\otimes_{\ZZ_p} W} \ar[r] & 0
    }\label{DmodEtLvln}
\end{equation}
that are compatibile with change in $n$ using the maps induced by Picard functoriality.
By Lemma \ref{lem:Dnsurj}, applying $\varprojlim$ results in the first short exact sequence 
of Theorem \ref{thmG} once we observe that the canonical map
\begin{equation*}
    \xymatrix{
        {M_W:=\left(\varprojlim_{n} \Gscr_n(k)^*\right) \otimes_{\ZZ_p} W} \ar[r] & 
        {\varprojlim_{n} \left(\Gscr_n(k)^* \otimes_{\ZZ_p} W\right)}
    }
\end{equation*}
is an isomorphism, due to the fact that $\Gscr_n(k)$ is {\em finite}
(hence of finite length as a $\ZZ_p$-module) and $W$ is finitely presented as a $\ZZ_p$-module.
Theorem \ref{thmE} \ref{thmE:control} and the exact sequence \eqref{DmodEtLvln}
provide the commutative diagram with exact rows and columns
\begin{equation*}
    \xymatrix{
                & & & 0 \ar[d] & \\
        0 \ar[r] & {\D(\Qscr_S)} \ar[r]\ar[d]^-{\wp} & {\Lambda_n\tens_{\Lambda} \D^{\et}} \ar[r]\ar[d]^-{1\otimes\wp} & {\D(\Gscr_n^{\et})} \ar[r]\ar[d]^-{\wp} & 0 \\
        0 \ar[r] & {\D(\Qscr_S)} \ar[r]\ar[d] & {\Lambda_n\tens_{\Lambda} \D^{\et}} \ar[r]\ar[d] & {\D(\Gscr_n^{\et})} \ar[r]\ar[d] & 0\\
            & \D(\Qscr_S)/\wp \D(\Qscr_S) \ar[d] & \coker(1\otimes \wp) \ar[d] &  {\Gscr_n(k)\tens_{\ZZ_p} W} \ar[d] & \\   
            & 0 & 0  & 0 & 
    }
\end{equation*}
Using the snake lemma, the first exact sequence of Theorem \ref{thmG}, and the fact that 
tensor product commutes with the formation of cokernels,
we deduce the exact second sequence of Theorem \ref{thmG}.
\end{proof}

\begin{remark}
    If every point of $S$ is $k$-rational, then $F^r$ acts trivially on $\D(\Qscr_S)$
    thanks to the explicit description given in Theorem \ref{thmE} \ref{thmE:torus},
    so that $\wp=0$ on $\D(\Qscr_S)$ in this case.
\end{remark}

\begin{proof}[Proof of Corollary \ref{thmH}]
    Applying $(\cdot)\otimes_W k$ to the second exact sequence of Theorem \ref{thmG} gives an exact sequence
    \begin{equation*}
        \xymatrix{
            {\Qscr_S(k)^*\tens_{\ZZ_p} k } \ar[r] & {\left(\Lambda_n \tens_{\Lambda}M_W \right)\tens_{W} k} \ar[r] & {\Gscr_n(k)^* \tens_{\ZZ_p} k } \ar[r] & 0
        }.
    \end{equation*}
    On the other hand, for each $n$ we have canonical isomorphisms 
    \begin{equation*}
        \left(\Lambda_n \tens_{\Lambda}M_W \right)\tens_{W} k \simeq \Omega_n \tens_{\Omega} \left(M_W \tens_W k\right)
    \end{equation*}
and for any abelian group $G$, we have $G^*/pG^* = (G[p])^*$, whence the exact sequence
\begin{equation*}
        \xymatrix{
            {\Qscr_S[p](k)^*\tens_{\FF_p} k } \ar[r] & {\Omega_n \tens_{\Omega}\left(M_W\tens_{W} k\right)} \ar[r] & {\Gscr_n[p](k)^* \tens_{\FF_p} k } \ar[r] & 0
        },
    \end{equation*}
    and we deduce that for all $n$,
    \begin{equation*}
        \log_p |\Gscr_n[p](k)| = \dim_k \left(\Gscr_n[p](k)\tens_{\FF_p} k \right)
        =\dim_{k} \left(\Omega_n\tens_{\Omega}(M_W \otimes_W k) \right) + O(1).
    \end{equation*}
    On the other hand, applying $(\cdot)\otimes_W k$ to the first exact sequence of Theorem \ref{thmG}
    shows that $M_W \otimes_W k$ is a finitely generated $\Omega$-module, and the
    result follows from \cite[Proposition 2.18]{EmertonPaskunas} for general $\Gamma$
    and from \cite[Theorem 1.8]{MonskyHK} when $\Gamma=\ZZ_p^d$ is abelian.
\end{proof}

\begin{remark}
    As noted in \cite[Remark 2.19]{EmertonPaskunas}, a stronger version of  
    \cite[Proposition 2.18]{EmertonPaskunas}---which would imply that we may
    {\em always} take $\mu=\nu$ in Corollary \ref{thmH}---is stated in
    \cite[Theorem 2.3]{EmertonCalegari}, which refers to 
    \cite[\S5]{AB} for a proof.
    We were unfortunately not able to 
    deduce a proof of \cite[Theorem 2.3]{EmertonCalegari} from the provided reference. 
    According to \cite[Remark 2.20]{EmertonPaskunas}, a weaker version 
    of \cite[Proposition 2.18]{EmertonPaskunas} follows from
    (the proof of) \cite[Theorem 1.10]{Harris1},
    %should readily adapt to provide a proof of \cite[Theorem 2.3]{EmertonCalegari},
    though we have not checked the details (see also \cite{Harris2}).
    In \cite{Harris1}, Harris writes ``I understand that Y.~Ochi, in his 1998 Cambridge Ph.D.~thesis, 
    has also found [a] new proof[]~of Theorem 1.10...''.  However, in the introduction to his 1999 Cambridge Ph.D~thesis,
    Ochi writes ``...the asymptotic formlua [Theorem 1.10] of Harris will not be found in this paper'' \cite[p.~14]{OchiThesis}.
\end{remark}

\subsection{The \'etale case}

We now suppose that $\Sigma=\emptyset$; {\em i.e.}~that 
$\pi_{n,0}:X_n\rightarrow X_0$
is \'etale for all $n$,
and we recall that $S=\{x_0\}$ for a fixed closed point $x_0$ of $X_0$,
and $S_n=\pi_{n,0}^{-1}(S_0)$ is the fiber of $\pi_{n,0}$ over $S$. 
The proofs of Theorems \ref{thmA} and \ref{thmB}
proceed along much the same lines as those of Theorems \ref{thmD} and \ref{thmE},
though there are some key differences.

\begin{lem}\label{lem:DmapSurjet}
    For $\star\in \{\et,\mult,\ll\}$ and each pair of integers $n\ge m\ge 0$, the maps
    $\o{\rho}: \o{\D}_{n,S}^{\star}\rightarrow \o{\D}_{n,S}^{\star}$ are surjective.
      Moreover, $\o{\D}_{n,S}^{\star}$ is a free $\Omega_n$-module
      of rank $\gamma$ when $\star=\mult,\et$, and is free of rank $2(g-\gamma)$
      when $\star=\ll$.
\end{lem}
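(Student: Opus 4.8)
The plan is to run the argument of Lemma~\ref{lem:DmapSurj} with the \'etale case of Nakajima's theorem and Proposition~\ref{PicInjCor} in place of the inputs special to the ramified setting. As there, I would first reduce to $k=\o{k}$: the $\Omega_n$-modules and the maps $\o{\rho}$ commute with extension of the perfect base field via the base-change compatibility of $\D(\cdot)$, freeness over $\Omega_n$ descends faithfully flatly, and surjectivity is insensitive to faithfully flat base change. With $k$ algebraically closed, $S=S_0=\{x_0\}$ is a single $k$-point, $S_n=\pi_{n,0}^{-1}(S_0)$ is $[\Gamma:\Gamma_n]$ distinct $k$-points, and $\pi_{n,m}^{-1}(S_m)=S_n$ for all $m\le n$. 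Oda's Theorem~\ref{thm:Oda} together with~\eqref{DDual} ({\em cf.}~Lemma~\ref{SingularOda} for the generalized Jacobian) then give natural $\Omega_n$-linear identifications
\begin{equation*}
    \o{\D}^{\mult}_{n,S}\simeq H^0\!\big(X_n,\Omega^1_{X_n/k}(S_n)\big)^{\Vbij},\qquad
    \o{\D}^{\et}_{n,S}\simeq \Big(H^0\!\big(X_n,\Omega^1_{X_n/k}(S_n)\big)^{\Vbij}\Big)^{\!\vee},
\end{equation*}
while Remark~\ref{loclocS} and Oda's theorem for $X_n$ give $\o{\D}^{\ll}_{n,S}=\o{\D}^{\ll}_{n}\simeq H^1_{\dR}(X_n/k)^{\FVnil}$. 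These are compatible with change in $n$: the transition map $\o{\rho}$ becomes the trace $\pi_*=(\pi_{n,m})_*$ on $H^0(\Omega^1(S_\bullet))^{\Vbij}$ for $\star=\mult$, the $k$-linear transpose of pullback $\pi^*=\pi_{n,m}^*$ on $H^0(\Omega^1(S_\bullet))^{\Vbij}$ for $\star=\et$, and the reduction modulo $p$ of $\D(\pi^*)$ (i.e.\ the trace $\pi_*$ on $H^1_{\dR}(X_\bullet/k)^{\FVnil}$) for $\star=\ll$.

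For freeness I would apply Nakajima's Theorem~\ref{Nakajima}. Part~\ref{NakBij}, applied to the \'etale cover $\pi_{n,0}$ with group $\Gamma/\Gamma_n$ and the nonempty set $S_0\supseteq\Sigma=\emptyset$, makes $H^0(X_n,\Omega^1_{X_n}(S_n))^{\Vbij}$ a free $\Omega_n=k[\Gamma/\Gamma_n]$-module of rank $\gamma_{X_0}+|S_0|-1=\gamma$, hence so is its $k$-dual, giving the claim for $\star=\mult,\et$. For $\star=\ll$, Part~\ref{NakNil} gives that $H^0(X_n,\Omega^1_{X_n})^{\Vnil}$ and $H^1(X_n,\O_{X_n})^{\Fnil}$ are free $\Omega_n$-modules of rank $g-\gamma$. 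Since $(\cdot)^{\FVnil}$ is an exact direct-summand endofunctor of the category of Dieudonn\'e modules killed by $p$, applying it to~\eqref{CrvdRseq}---and using $F=0$ on $H^0(\Omega^1_{X_n})$ and $V=0$ on $H^1(\O_{X_n})$---produces a short exact sequence of $\Omega_n$-modules
\begin{equation*}
    0\longrightarrow H^0(X_n,\Omega^1_{X_n})^{\Vnil}\longrightarrow H^1_{\dR}(X_n/k)^{\FVnil}\longrightarrow H^1(X_n,\O_{X_n})^{\Fnil}\longrightarrow 0;
\end{equation*}
as $\Omega_n$ is local (the group algebra of a finite $p$-group over a field of characteristic $p$) and the quotient is free, this sequence splits and $\o{\D}^{\ll}_{n,S}$ is free of rank $2(g-\gamma)$.

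There remains the surjectivity of $\o{\rho}$. For $\star=\et$ this is the injectivity of $\pi_{n,m}^*$ on $H^0(X_m,\Omega^1_{X_m}(S_m))$, which holds as $\pi_{n,m}$ is separable. For $\star=\mult$, since trace commutes with the Cartier operator (hence respects the $\Vbij$-decomposition), it suffices that $\pi_*\colon H^0(X_n,\Omega^1_{X_n}(S_n))\rightarrow H^0(X_m,\Omega^1_{X_m}(S_m))$ be surjective, which is precisely Proposition~\ref{PicInjCor} for the \'etale map $\pi_{n,m}$ with $S=S_m$ and $T=\pi_{n,m}^{-1}(S_m)=S_n$. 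I expect the main difficulty to be $\star=\ll$: here $\o{\rho}$ is the reduction of $\D(\pi_{n,m}^*)$ on local--local parts, surjective exactly when $\pi_{n,m}^*\colon\Gscr_m^{\ll}\rightarrow\Gscr_n^{\ll}$ is a monomorphism, equivalently when $\ker\!\big(\Pic^0(\pi_{n,m})\big)$ meets the local--local locus trivially. Unlike in the ramified case this kernel is genuinely nonzero, so Proposition~\ref{PicInj} does not apply directly; instead I would argue it is of multiplicative type, as follows. Choosing $y_0\in X_m(k)$ and setting $T_n:=\pi_{n,m}^{-1}(y_0)$, the induced map $\rho\colon X_{n,T_n}\rightarrow X_{m,\{y_0\}}=X_m$ is finite and---by miracle flatness (the Corollary to Theorem~23.1 of~\cite{Matsumura}, applicable since the reduced curve $X_{n,T_n}$ is Cohen--Macaulay and $X_m$ is regular)---flat, and it is totally ramified over $y_0$, since $X_{n,T_n}$ is étale away from its glued point and $\pi_{n,m}^{-1}(y_0)=T_n$. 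By Proposition~\ref{PicInj}, $\rho^*=\pi_{n,m}^*\colon J_{X_m}\rightarrow J_{X_n,T_n}$ has trivial scheme-theoretic kernel; since $J_{X_n,T_n}$ is an extension of $J_{X_n}$ by a torus (Corollary~\ref{GJstr}) through which $\Pic^0(\pi_{n,m})$ factors (Proposition~\ref{GJfunc}), $\ker\Pic^0(\pi_{n,m})$ is a closed subgroup scheme of that torus, hence of multiplicative type, hence has trivial local--local part. Therefore $\pi_{n,m}^*$ is a monomorphism on local--local parts, $\D(\pi_{n,m}^*)$ is surjective there, and $\o{\rho}$ is surjective for $\star=\ll$. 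The delicate point, and the place the \'etale case genuinely departs from the ramified one, is exactly this reduction to Proposition~\ref{PicInj} via an auxiliary single-point modulus.
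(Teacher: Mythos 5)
Your proposal is correct, and for $\star=\mult,\et$ and for all the freeness assertions it coincides with the paper's argument (Oda plus duality, Nakajima's theorem, injectivity of pullback by separability, and Proposition \ref{PicInjCor} for surjectivity of the trace on $H^0(\Omega^1(S_\bullet))$). Where you genuinely depart from the paper is the surjectivity of $\o{\rho}$ for $\star=\ll$. The paper stays on the de~Rham side: it restricts the Hodge filtration sequence to $\FVnil$-parts and proves the two flanking trace maps are surjective --- the $H^1(\O)^{\Fnil}$ map as a direct summand of the dual of injective pullback of forms, and the $H^0(\Omega^1)^{\Vnil}$ map by first identifying $H^0(\Omega^1)^{\Vnil}\simeq H^0(\Omega^1(S))^{\Vnil}$ via the residue identity $\res_x(V\eta)^p=\res_x(\eta)$ and then invoking Proposition \ref{PicInjCor}. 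You instead translate surjectivity of $\o{\rho}=\D(\pi^*)$ mod $p$ into $\pi^*$ being a monomorphism on $J[p]^{\ll}$, and prove that $\ker\bigl(\Pic^0(\pi_{n,m})\bigr)$ is of multiplicative type by running the auxiliary one-point-modulus construction from the proof of Proposition \ref{PicInjCor} and trapping the kernel inside the torus $\ker\bigl(J_{X_n,T_n}\to J_{X_n}\bigr)$; a multiplicative-type group scheme meets the local--local part trivially, so $\D(\pi^*)$ is surjective there. Both routes are valid. The paper's buys the surjectivity of the flanking maps, which it reuses directly in the freeness count; yours is more geometric, avoids the residue identity, and isolates the structural fact (the kernel of Picard pullback in an \'etale $p$-tower is multiplicative) that explains \emph{why} the local--local towers behave as in the ramified case even though $\ker(\pi^*)\neq 0$.
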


\begin{proof}
    As noted in Remark \ref{loclocS}, we have $\D_{n,S}^{\ll} \simeq\D_{n}^{\ll}$ for all $n$,
    so that $\D_S^{\ll}\simeq \D^{\ll}$.
    We first show that for all integers $m\le n$, the map $\o{\rho}: \o{\D}_n^{\ll}\rightarrow \o{\D}_m^{\ll}$
    is surjective, and $\o{\D}^{\ll}$ is a free $\Omega_n$-module of rank $2(g-\gamma)$.
    To do so, we may assume that $k$ is algebraically closed.
    Oda's theorem gives a commutative diagram with horizontal isomorphisms
    \begin{equation*}
        \xymatrix{
            {\o{\D}^{\ll}_n} \ar[r]^-{\simeq}\ar[d]^-{\rho} & {H^1_{\dR}(X_n/k)^{\FVnil}} \ar[d]^-{\pi_*}\\
            {\o{\D}^{\ll}_m} \ar[r]^-{\simeq} & {H^1_{\dR}(X_m/k)^{\FVnil}} 
        }
    \end{equation*}
    wherein $M^{\FVnil}$ denotes the maximal submodule of $M$ on which $F$ and $V$ are nilpotent.
    The trace map on de~Rham cohomology fits into a commutative diagram with exact rows    
     \begin{equation}
        \xymatrix{
            0 \ar[r] & {H^0(X_n,\Omega^1_{X_n/k})^{\Vnil}} \ar[r]\ar[d]^-{\pi_*} & {H^1_{\dR}(X_n/k)^{\FVnil}}   \ar[r] \ar[d]^-{\pi_*}& {H^1(X_n,\O_{X_n})^{\Fnil}} \ar[r]\ar[d]^-{\pi_*} & 0 \\
            0 \ar[r]& {H^0(X_m,\Omega^1_{X_m/k})^{\Vnil}} \ar[r] & {H^1_{\dR}(X_m/k)^{\FVnil}}   \ar[r] & {H^1(X_m,\O_{X_m})^{\Fnil}} \ar[r] & 0
        },\label{drseqsurj}
    \end{equation}
    so it suffices to prove that the flanking vertical maps are surjective
    and that each of the flanking terms (in the top row say) 
    is free over $\Omega_n$ of rank $g-\gamma$.
    As $\pi$ is separable, pullback of differential forms is injective,
    whence the dual map $\pi_*: H^1(X_n,\O_{X_n})\rightarrow H^1(X_m,\O_{X_m})$
    is surjective; as the right vertical map in \eqref{drseqsurj} is a direct summand of this map,
    it is surjective as well.  On the other hand, for {\em any} smooth and proper curve $X/k$
    with {\em reduced} divisor $S$, 
    the canonical inclusion
    $$H^0(X,\Omega^1_{X/k})\rightarrow H^0(X,\Omega^1_{X/k}(S))$$
    induces an isomorphism on $V$-nilpotent subspaces, due to the formula 
    $\res_x(V\eta)^p = \res_x(\eta)$.  Thus, the surjectivity of the left vertical map in \eqref{drseqsurj}
    follows from Proposition \ref{PicInj}.
    
    By Nakajima's Theorem \ref{Nakajima} \ref{NakNil}, the $\Omega_n$-module $H^0(X_n,\Omega^1_{X_n/k})^{\Vnil}$
    is free of rank $g-\gamma$.  Since $H^1(X,\O_X)^{\Fnil}$ is functorially dual to this by Serre duality,
    it is isomorphic to the contragredient $\Omega_n$-module and hence free of rank $g-\gamma$ as well.
    
    The argument that $\o{\D}_S^{\mult}$ and $\o{\D}_S^{\et}$ are each free $\Omega_n$-modules of rank $\gamma$
    and that the mod $p$ transition maps are surjective is essentially the same as in the proof of Lemma \ref{lem:DmapSurj}:
    by Oda's theorem and duality, it is enough to prove that $H^0(X_n,\Omega^1_{S_n}(S_n))^{\Vbij}$
    is free as a $\Omega_n$-module, and the pullback (respectively trace) map
    \begin{equation*}
        \xymatrix{
            {H^0(X_m,\Omega^1_{X_m/k}(S_m))}\ar[r]<0.75ex>^-{\pi^*} &\ar[l]<0.75ex>^-{\pi_*} {H^0(X_n,\Omega^1_{X_n/k}(S_n))}
        }
    \end{equation*}
    is {\em injective} (respectively surjective).  The desired freeness follows from Nakajima's theorem, and pullback of differentials
    along a generically \'etale (even \'etale!) map is injective.  The surjectivity of $\pi_*$
    is Corollary \ref{PicInjCor}.
\end{proof}

\begin{proof}[Proof of Theorem \ref{thmA}]
    Lemma \ref{lem:DmapSurjet} and the discussion below \eqref{Dpairing}
    show that the $\Gamma$-towers of $W$-modules $\{\D_{n,S_n}^{\mult},\rho\}$
    and $\{\D_{n,S_n}^{\et},\rho'\}$ satisfy the
    hypotheses of Propositions \ref{prop:NCfree} and \ref{prop:NCDual},
    as does $\{\D_{n,S_n}^{\ll},\rho\}$.  %Together with Remark \ref{loclocS},    
    The conclusions of these Propositions immediately give Theorem \ref{thmA}.    
\end{proof}

For a finite group $G$, write $\BT_G$ for the category of $p$-divisible groups 
equipped with an action of $G$ by automorphisms; for simplicity we put $\BT:=\BT_1$.
If $H$ is a subgroup of $G$, we write $\Ind_H^G$ (respectively $\CoInd_H^G$) for the functor 
which is left (respectively right) adjoint to the forgetful functor $\BT_G\rightarrow \BT_H$.
Explicitly,
if $g_1,\ldots,g_n$ are coset representatives for $H$ in $G$, then $\Ind_H^G \Gscr$
(respectively $\CoInd_H^G$) is the co-product (resp. product) of $n$ copies of $\Gscr$, indexed by $\{g_i\}$, with obvious $G$-action.
As finite products and coproducts coincide, these functors are naturally isomorphic.
We write $\Nm: \Ind_H^G \Gscr\rightarrow \Gscr$ (respectively
$\Delta: \Gscr\rightarrow \CoInd_H^G \Gscr$) for the unique morphism corresponding to the identity map
on $\Gscr$ via the left (resp. right) adjointness of each functor.
If $H=1$, we simply write $\Ind^G$ and $\CoInd^G$ in place of $\Ind_1^G$ and $\CoInd_1^G$, respectively.
Note that $\Ind^{G}_H \circ \Ind^H = \Ind^G$,
and when $H$ is normal in $G$ and acts trivially on $\Gscr$, we 
may view $\Gscr$ in $\BT$ or in $\BT_H$, and have the formula
$\Ind^G_H \Gscr = \Ind^{G/H} \Gscr$ in $\BT_G$; analogous formulae hold for $\CoInd$.
In this way, if $H$ is normal in $G$ and acts trivially on $\Gscr$, we obtain maps in $\BT_G$
\begin{equation*}
    \xymatrix@C=50pt{
        {\CoInd^{G/H} \Gscr \simeq \CoInd_H^G \Gscr} \ar[r]^-{\CoInd_H^G(\Delta)} & {\CoInd_H^G \CoInd^H \Gscr \simeq \CoInd^G \Gscr}
        }
\end{equation*}
and
\begin{equation*}
    \xymatrix@C=50pt{
        {\Ind^{G} \Gscr \simeq \Ind_H^G \Ind^H \Gscr} \ar[r]^-{\Ind_H^G(\Nm)} & {\Ind_H^G  \Gscr \simeq \Ind^{G/H} \Gscr}
        }
\end{equation*}
that---by a slight abuse of notation---we again denote simply by $\Delta$ and $\Nm$, respectively.

\begin{lem}
    Assume that the chosen point $x_0$ is $k$-rational, and that the \'etale $k$-scheme 
    $S_n=\pi_n^{-1}x_0$ splits for all $n$.
    For each pair of positive integers $m\le n$, the map $\pi_{n,m}:X_n\rightarrow X_m$ induces commutative diagrams
    of $p$-divisible groups with exact rows
    \begin{subequations}
	\begin{equation}
    		\xymatrix@C=50pt{
        			0 \ar[r] & {\mu_{p^{\infty}}}\ar[r]^-{\Delta} & {\CoInd^{\Gamma/\Gamma_n} \mu_{p^{\infty}}} \ar[r]   & 
			{\Gscr_{n,S_n}} \ar[r]  & {\Gscr_{n}} 
			\ar[r] & 0 \\
        			0 \ar[r] & {\mu_{p^{\infty}}}\ar[r]^-{\Delta}\ar[u]^-{\id} & {\CoInd^{\Gamma/\Gamma_m} \mu_{p^{\infty}}}\ar[r]\ar[u]^-{\Delta} & {\Gscr_{m,S_m}} 
			\ar[r]	\ar[u]^-{\pi^*} & {\Gscr_{m}} 
			\ar[r]\ar[u]^-{\pi^*} & 0
    }\label{eq:Pic}
\end{equation}
and
\begin{equation}
    		\xymatrix@C=50pt{
        			0 \ar[r] & {\mu_{p^{\infty}}}\ar[r]^-{\Delta}\ar[d]_-{[\Gamma_m:\Gamma_n]} & {\Ind^{\Gamma/\Gamma_n} \mu_{p^{\infty}}}\ar[r]  \ar[d]_-{\Nm} & 
			{\Gscr_{n,S_n}} \ar[r] \ar[d]_-{\pi_*} & {\Gscr_{n}} 
			\ar[r] \ar[d]_-{\pi_*} & 0 \\
        			0 \ar[r] &{\mu_{p^{\infty}}}\ar[r]^-{\Delta} & {\Ind^{\Gamma/\Gamma_m} \mu_{p^{\infty}}} \ar[r] & {\Gscr_{m,S_m}} 
			\ar[r]	& {\Gscr_{m}} 
			\ar[r] & 0
    }\label{eq:Alb}
\end{equation}
\end{subequations}
	compatibly with change in $m,n$.
	Moreover, \eqref{eq:Pic} and \eqref{eq:Alb} are equivariant for the $(\cdot)^*$-action and the 
	$(\cdot)_*$-action of $\Gamma$, respectively.
\end{lem}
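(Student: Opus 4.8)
The plan is to realize each row of \eqref{eq:Pic} and \eqref{eq:Alb} as a single four-term exact sequence, obtained by splicing the tautological presentation of $\Res_{S_n/k}\mu_{p^{\infty}}$ with the extension of Corollary \ref{GJstr}, and then to read off the vertical maps from the explicit description of Picard and Albanese functoriality on the torus parts given in the last paragraph of Proposition \ref{GJfunc}.

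First I would handle the rows. Since $x_0\in X_0(k)$ and $\pi_{n,0}\colon X_n\to X_0$ is finite \'etale and Galois with group $\Gamma/\Gamma_n$, the fibre $S_n=\pi_{n,0}^{-1}(x_0)=X_n\times_{X_0}\Spec k$ is a $(\Gamma/\Gamma_n)$-torsor over $\Spec k$. The hypothesis that $S_n$ splits means this torsor is trivial, so after choosing base points $x_n\in S_n(k)$ compatibly with the transition maps $S_n\to S_m$ (possible as these are finite \'etale maps of split $k$-schemes, hence surjective on $k$-points), we get $(\Gamma/\Gamma_n)$-equivariant identifications $S_n\simeq\coprod_{h\in\Gamma/\Gamma_n}\Spec k$ compatible for $m\le n$ with the projections $\Gamma/\Gamma_n\to\Gamma/\Gamma_m$. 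Applying $\Res_{(-)/k}\mu_{p^{\infty}}$ and using that Weil restriction carries finite coproducts to products yields $\Gamma$-equivariant identifications
\[
    \Res_{S_n/k}\mu_{p^{\infty}}\;\simeq\;\prod_{h\in\Gamma/\Gamma_n}\mu_{p^{\infty}}\;=\;\CoInd^{\Gamma/\Gamma_n}\mu_{p^{\infty}}\;=\;\Ind^{\Gamma/\Gamma_n}\mu_{p^{\infty}},
\]
under which the diagonally embedded $\mu_{p^{\infty}}$ becomes the morphism $\Delta$ of the formalism above. Splicing the resulting sequence $0\to\mu_{p^{\infty}}\xrightarrow{\Delta}\Res_{S_n/k}\mu_{p^{\infty}}\to\T_{S_n}\to0$ with the extension $0\to\T_{S_n}\to\Gscr_{n,S_n}\to\Gscr_n\to0$ of Corollary \ref{GJstr} (valid since $S_n$ is reduced) produces the top rows of both \eqref{eq:Pic} and \eqref{eq:Alb}, which coincide because $\CoInd^{\Gamma/\Gamma_n}=\Ind^{\Gamma/\Gamma_n}$ on finite coproducts; the bottom rows are the same with $n$ replaced by $m$.

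Next I would produce the vertical maps. Because $\pi_{n,m}$ is finite \'etale with $\pi_{n,m}^{-1}S_m=S_n$ reduced, Proposition \ref{GJfunc} applied with $\m=S_m$ (noting that $S$ reduced forces $J_{X,\m}$ to be semiabelian, so it equals its maximal semiabelian quotient) gives $\pi^*\colon J_{X_m,S_m}\to J_{X_n,S_n}$ and $\pi_*\colon J_{X_n,S_n}\to J_{X_m,S_m}$ together with the commuting squares \eqref{PicAlb}; passing to $p$-divisible groups gives the two rightmost verticals of \eqref{eq:Pic} and \eqref{eq:Alb} and the commutativity of the rightmost squares. Restricting to multiplicative parts and invoking the last paragraph of Proposition \ref{GJfunc}, the map induced by $\pi^*$ on $\T_{S_m}\simeq\CoInd^{\Gamma/\Gamma_m}\mu_{p^{\infty}}$ is, factorwise over $S_m$, the diagonal into the fibres over $S_n$, which under the identifications above is exactly $\Delta\colon\CoInd^{\Gamma/\Gamma_m}\mu_{p^{\infty}}\to\CoInd^{\Gamma/\Gamma_n}\mu_{p^{\infty}}$; likewise, since every ramification index in the \'etale tower equals $1$, the map induced by $\pi_*$ is the norm $(a_t)_t\mapsto(\prod_{t\mapsto s}a_t)_s$, which is precisely $\Nm\colon\Ind^{\Gamma/\Gamma_n}\mu_{p^{\infty}}\to\Ind^{\Gamma/\Gamma_m}\mu_{p^{\infty}}$. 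Restricting these two maps to the diagonal $\mu_{p^{\infty}}$ gives the leftmost verticals: $\pi^*$ is the identity, while $\pi_*$ is multiplication by $[\Gamma_m:\Gamma_n]$, since each point of $S_m$ has exactly $[\Gamma_m:\Gamma_n]$ preimages in $S_n$. This establishes \eqref{eq:Pic} and \eqref{eq:Alb}.

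Finally, compatibility with change in $m\le n$ is the transitivity of Picard and Albanese functoriality in Proposition \ref{GJfunc} together with the transitivity of the $\CoInd/\Ind$ formalism ($\CoInd^{G}_H\circ\CoInd^{H}=\CoInd^{G}$ and $\Ind^{G}_H\circ\Ind^{H}=\Ind^{G}$), applied to transfer $\Delta$ and $\Nm$; and the $\Gamma$-equivariance follows because the natural $\Gamma$-action on $\CoInd^{\Gamma/\Gamma_n}\mu_{p^{\infty}}$ permuting the index set corresponds, under the identification above, to the action of $\Gamma$ on $\Res_{S_n/k}\mu_{p^{\infty}}$ coming from its action on the fibre $S_n$ --- which for \eqref{eq:Pic} is the $(\cdot)^*$-action (and $\pi^*$ is $(\cdot)^*$-equivariant), and for \eqref{eq:Alb} is the $(\cdot)_*$-action (and $\pi_*$ is $(\cdot)_*$-equivariant), both by Proposition \ref{GJfunc}. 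The main obstacle I anticipate is the identification in the previous paragraph: one must verify coordinate by coordinate that the abstract maps $\Delta$ and $\Nm$ defined via the adjointness of $\CoInd$ and $\Ind$ genuinely agree with the concrete pullback and trace maps on Weil restrictions of $\mu_{p^{\infty}}$ supplied by Proposition \ref{GJfunc}, and in particular that the leftmost vertical arrows come out as $\id$ and $[\Gamma_m:\Gamma_n]$ respectively. This is bookkeeping about how the chosen trivializations $S_n\simeq\Gamma/\Gamma_n$ are compatible with the fibres of $S_n\to S_m$, but it is the heart of the matter.
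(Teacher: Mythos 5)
Your argument is correct and is precisely the intended one: the paper states this lemma without proof (its ramified analogue is proved by the single line ``This follows immediately from Proposition \ref{GJfunc} and Corollary \ref{GJstr}''), and your write-up is exactly the elaboration of that---splicing the presentation of $\Res_{S_n/k}\mu_{p^{\infty}}$ with the extension of Corollary \ref{GJstr}, identifying $\Res_{S_n/k}\mu_{p^\infty}$ with $\CoInd^{\Gamma/\Gamma_n}\mu_{p^\infty}$ via the splitting hypothesis, and reading off the vertical maps from the explicit torus-level description of $\pi^*$ and $\pi_*$ in Proposition \ref{GJfunc} (all ramification indices being $1$ here). You also correctly flag the only genuinely delicate point, namely the compatibility of the chosen trivializations $S_n\simeq\Gamma/\Gamma_n$ across the tower.
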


    Next, we apply $\D(\cdot)$ to the multiplicative part of \eqref{eq:Pic},
%    \bryden{$\D$ is an exact, additive functor, so commutes with limits and colimits;
%    in general, a contravariant exct additive functor 
%    will swaps $\Delta$ and $\Nm$,
%    while covariant (such as $\D$ composed with duality) will take $\Delta$ to $\Delta$ 
%    and $\Nm$ to $\Nm$; this is part of preserving all limits and colimits, and finite products and coproducts 
%    coinciding.  Add explanation?}
    which yields a commutative diagram with exact rows
    \begin{equation*}
        \xymatrix{
            0 \ar[r] & {\D_n^{\mult}} \ar[r]\ar[d]^-{\rho} & {\D_{n,S_n}^{\mult}} \ar[r]\ar[d]^-{\rho} & {\Lambda_n} \ar[r]\ar[d]^-{\pr} & W\ar[d]^-{\id} \ar[r] & 0 \\
            0 \ar[r] & {\D_m^{\mult}} \ar[r] & {\D_{m,S_m}^{\mult}} \ar[r] & {\Lambda_m} \ar[r] & W \ar[r] & 0 
        }
    \end{equation*}
    in which $\pr$ is the canonical quotient map,
    %\bryden{Explain this? It follows easily from the way in which we are identifying the $\Ind$'s with group rings}
    and we deduce a commutative diagram with short exact rows:
    \begin{equation}
        \xymatrix{
            0 \ar[r] & {\D_n^{\mult}} \ar[r]\ar[d]^-{\rho} & {\D_{n,S_n}^{\mult}} \ar[r]\ar[d]^-{\rho} & {I_{\Gamma/\Gamma_n}} \ar[r]\ar[d]^-{\pr} & 0\\
            0 \ar[r] & {\D_m^{\mult}} \ar[r] & {\D_{m,S_m}^{\mult}} \ar[r] & {I_{\Gamma/\Gamma_m}} \ar[r] & 0
        }\label{ramminv}
    \end{equation}

\begin{proof}[Proof of \ref{thmB}]
    
As in Remark \ref{loclocS}, we have $\D_{n,S_n}^{\ll}\simeq \D_{n}^{\ll}$ for all $n$
so that \ref{eta} of Theorem \ref{thmB} follows from Theorem \ref{thmA} \ref{thmA:str} and \ref{thmA:control}.
Passing to projective limits on \eqref{ramminv} and using 
Lemma \ref{lem:DmapSurjet} yields 
the second exact sequence in Theorem \ref{thmB} \ref{etb}.  To obtain the first,
we pass to multiplicative parts on \eqref{eq:Alb}, dualize, apply $\D(\cdot)$ 
and invoke the canonical identifications $\Gscr_n^{\vee}\simeq \Gscr$ 
to obtain 
\begin{equation}
    		\xymatrix@C=50pt{
        			0 \ar[r] & {W}\ar[r]^-{\Delta_n}\ar[d]^-{[\Gamma_m:\Gamma_n]} & {\Lambda_n}\ar[r]  \ar[d]^-{\pr} & 
			{\D_{n,S_n}^{\et}} \ar[r] \ar[d]^-{\rho'} & {\D_n^{\et}} 
			\ar[r] \ar[d]^-{\rho} & 0 \\
        			0 \ar[r] &{W}\ar[r]_-{\Delta_m} & {\Lambda_m} \ar[r] & {\D_{m,S_m}^{\et}} 
			\ar[r]	& {\D_m^{\et}} 
			\ar[r] & 0
    }
\end{equation}
%\bryden{Again using  $\Nm^{\vee}=\Delta$ and $\D(\Delta)=\Nm$; explain this?}
in which $\pr$ is the canonical projection and $\Delta_n: W \rightarrow \Lambda_n$
is the ``diagonal map'' taking $\alpha\in W$ to $\alpha\sum_{\gamma\in \Gamma/\Gamma_n} \gamma$.
We break this diagram up into two diagrams with short exact rows:
\begin{equation}
    		\xymatrix@C=50pt{
        			0 \ar[r] & {W}\ar[r]^-{\Delta_n}\ar[d]^-{[\Gamma_m:\Gamma_n]} &{\Lambda_n} \ar[r]\ar[d]^-{\pr} & {\coker \Delta_n}\ar[r]  \ar[d] &  0 \\
        			0 \ar[r] &{W}\ar[r]_-{\Delta_m} & {\Lambda_m} \ar[r] & {\coker \Delta_m} \ar[r] & 0
    }\label{cokerD}
\end{equation}
and
\begin{equation}
    		\xymatrix@C=50pt{
        			0 \ar[r] & {\coker \Delta_n}\ar[d]\ar[r] & {\D_{n,S_n}^{\et}} \ar[r] \ar[d]^-{\rho'} & {\D_n^{\et}} 
			\ar[r] \ar[d]^-{\rho} & 0 \\
        			0 \ar[r] &{\coker \Delta_m} \ar[r] & {\D_{m,S_m}^{\et}} \ar[r]	& {\D_m^{\et}} 
			\ar[r] & 0
    }\label{etseqsys}
\end{equation}
As the projection map $\pr: \Lambda_n\rightarrow \Lambda_m$ is surjective for all $n\ge m$,
so too is the map
$\coker \Delta_n\rightarrow \coker \Delta_m$ thanks to the snake lemma applied to \eqref{cokerD}.
It follows that the inverse system $\coker \Delta_n$ is Mittag--Leffler, so passing to inverse limits
on \eqref{etseqsys} yields a short exact sequence of $\Lambda$-modules
\begin{equation}
    \xymatrix{
        0 \ar[r] & {\varprojlim_n \coker\Delta_n} \ar[r] & {\D^{\et}(S)} \ar[r] & {\D^{\et}} \ar[r] & 0
    }.\label{etseqalmost}
\end{equation}
On the other hand, passing to inverse limits on \eqref{cokerD} gives, 
thanks to Lemma \eqref{limitvanishing} and our assumption that $\Gamma$ is infinite, an isomorphism of $\Lambda$-modules
\begin{equation}\label{ckDel}
\Lambda=\varprojlim_{n} \Lambda_n \simeq \varprojlim_n \coker \Delta_n.
\end{equation}
Together with \eqref{etseqalmost}, this establishes the first exact sequence of Theorem \ref{thmB} \ref{etb}.

By the very construction of the first exact sequence of Theorem \ref{thmB} \ref{etb}
as the projective limit of the short exact sequence of inverse systems \eqref{etseqsys},
for each $n$ we have a commutative diagram with exact rows
\begin{equation*}
    \xymatrix{
       0\ar[r] & \Tor_1^{\Lambda}(\Lambda_n,\D^{\et})\ar[r] & {\Lambda_n\tens_{\Lambda}\left( \varprojlim_n \coker \Delta_n\right)} \ar[r]\ar@{->>}[d] &  {\Lambda_n\tens_{\Lambda}\D_{S}^{\et}} \ar[r]\ar[d]^-{\simeq} &  {\Lambda_n\tens_{\Lambda} \D^{\et}}\ar[d]\ar[r] & 0 \\
        & 0 \ar[r] & {\coker \Delta_n} \ar[r] & {\D_{n,S_n}^{\et}} \ar[r] & {\D_n^{\et}} \ar[r] & 0
    }
\end{equation*}
in which the leftmost vertical map is surjective, as was observed above, 
and the middle vertical map is an isomorphism by Theorem \ref{thmA} \ref{thmA:control}.
The snake lemma then implies that the rightmost vertical map
is an isomorphism for all $n$, and a diagram chase 
coupled with \eqref{ckDel} and \eqref{cokerD} yields the claimed isomorphism $\Tor^{\Lambda}_1(\Lambda_n,\D^{\et})\simeq W$. 
\end{proof}

\begin{remark}\label{limvanish}
    In the proofs of Theorems \ref{thmE} and \ref{thmB},
    we must show that certain derived limits  vanish.
    To do so, we have appealed to the Mittag--Leffler criterion \cite[\href{https://stacks.math.columbia.edu/tag/0598}{Tag 0598}]{stacks-project} and to Lemma \ref{limitvanishing},
    which relies on the explicit description of $\varprojlim^1$
    given in \cite[\href{https://stacks.math.columbia.edu/tag/091D}{Tag 091D}]{stacks-project}.
    In order to use these results,
    it is essential that all of our inverse systems are indexed by {\em countable} directed sets
    (see \cite[Proposition 2.3]{InvLim} and {\em cf.}~\cite[III.94 Exercise 4d]{BourbakiSet}).
    It is for this reason that
    we have assumed at the outset that the infinite pro-$p$ group $\Gamma$
    admits a {\em countable} basis of the identity consisting of open normal subgroups,
    which is equivalent to $\Gamma$ being first countable as a topological space.    
    Equivalently  
    \cite[Theorem 8.3]{HR79}, we demand that $\Gamma$ be metrizable as a topological space.
    As in Remark \ref{kcountable},
    this condition is automatic whenever $k$ itself is countable.
\end{remark}

\begin{remark}\label{imrho}
    Writing $\rho_S$ for the middle vertical map of \eqref{ramminv},
    we obtain from Theorem \ref{thmA} the description
    %
%    Applying $(\cdot)\otimes_{\Lambda_n} \D_{n,S_n}^{\mult}$ to the tautological short exact sequence
%\begin{equation*}
%    \xymatrix{
%        0 \ar[r] & {I_{\Gamma_m/\Gamma_n}} \ar[r] & {\Lambda_n} \ar[r] & {\Lambda_m} \ar[r] & 0
%    }
%\end{equation*}    
%  yields, thanks to Theorem \ref{thmD}, a commutative diagram with exact rows
%  \begin{equation*}
%    \xymatrix{
%        0 \ar[r] & {I_{\Gamma_m/\Gamma_n} \otimes_{\Lambda_n} \D_{n,S_n}^{\mult}} \ar[r]\ar[d] & {\Lambda_n\otimes_{\Lambda_n} %\D_{n,S_n}^{\mult}}\ar[r]\ar[d]^-{\simeq} & {\Lambda_m\otimes_{\Lambda_n} \D_{n,S_n}^{\mult}} \ar[r]\ar[d]^-{\simeq} & 0 \\
%        0 \ar[r] & {\ker \rho_S} \ar[r] & {\D_{n,S_n}^{\mult}} \ar[r]^-{\rho_S} & {\D_{m,S_m}^{\mult}} \ar[r] & 0
%    }
%  \end{equation*}
%  and indicated vertical isomorphisms.  It follows that the leftmost vertical map is an isomorphism,
%  whence 
  $\ker\rho_S  = I_{\Gamma_m/\Gamma_n} \D^{\mult}_{n,S_n}$.
    Applying the snake lemma to \eqref{ramminv}, 
    and noting 
%    that the kernel of projection $\pr:I_{\Gamma/\Gamma_n}\rightarrow I_{\Gamma/\Gamma_m}$
%    is the ideal $I_{\Gamma_m/\Gamma_n}$ of $\Lambda_n$, 
%we then deduce an exact sequence of $\Lambda_n$-modules
%\begin{equation*}
%    \xymatrix{
%        0\ar[r] & \ker(\rho) \ar[r] & {I_{\Gamma_m/\Gamma_n} \D_{n,S_n}^{\mult}} \ar[r] & {I_{\Gamma_m/\Gamma_n}} \ar[r] &
%        {\coker(\rho)}\ar[r] & 0
%    }
%\end{equation*}
that the surjective map $\D_{n,S_n}^{\mult}\twoheadrightarrow I_{\Gamma/\Gamma_n}$ 
%in the top row of \eqref{ramminv}
%is a surjective map of $\Lambda_n$-modules, it 
carries $I_{\Gamma_m/\Gamma_n}\D_{n,S_n}^{\mult}$
onto $I_{\Gamma_m/\Gamma_n}\cdot I_{\Gamma/\Gamma_n}$, we thereby deduce isomorphisms %of $\Lambda_n$-modules 
\begin{equation*}
       \coker(\rho: \D_n^{\mult}\rightarrow \D_m^{\mult}) %\simeq  \frac{I_{\Gamma_m/\Gamma_n}}{I_{\Gamma_m/\Gamma_n}\cdot I_{\Gamma/\Gamma_n}} 
       %\simeq \frac{  \frac{I_{m}}{I_{n}}  }{  \frac{I_{m}}{I_{n}} \cdot \frac{I}{I_{n}}}
       %\simeq \frac{  \frac{I_{m}}{I_{n}}  }{  \frac{I_{m} \cdot I+I_{n}}{I_{n}} } 
       \simeq I_{m} / (I_{m} \cdot I+I_{n}),
\end{equation*}
%wherein the final identification is the third isomorphism theorem, the penultimate identification is
%just the definition of ideal product in a quotient ring, and the second identification is Lemma \ref{idealquo}.
%
%from which it follows that 
% the inverse system $\D_n^{\mult}$ does {\em not} satisfy the ML condition.
Assuming that $\Gamma_n = P_n$ is the lower central $p$-series of $\Gamma$ as in Definition \ref{pseries},
it follows from the above and Lemma \ref{pcoker} that $\coker(\rho)$ is annhilated by $I+ p^{n-m}\Lambda$,
whence $\Im(\rho)$ contains $(I+p^{n-m}\Lambda) \D_{m}^{\mult}$. 
Thus, while $\rho$ is not surjective when $n>m$, its image is quite large.
In general, we do not know anything more about $\Im(\rho)$, 
and in particular do not know whether or not $\D^{\mult,1}$ vanishes.
\end{remark}

\begin{proof}[Proof of Corollary \ref{thmC}]
    Let $K_0:=K$ and for $n\ge 1$ inductively define $K_n$ to be
    the compositum of {\em all} unramified $\ZZ/p\ZZ$-extensions of $K_{n-1}$.
    Note that $K_{n}/K$ is a finite Galois $p$-extension, and that
    the maximal unramified $p$-extension of $K$ is the union of all $K_n$.
    Let $X_n$ be the unique smooth proper curve with function field $K_n$,
    so that $\{X_n\}_n$ is a $\Gamma$-tower of curves, with $\Gamma$
    the maximal pro-$p$ quotient of the \'etale fundamental group of $X_0$.
   
    For each $n$, we claim that the transition map $\rho:\D_{n+1}^{\mult}\rightarrow \D_n^{\mult}$
    has image contained in $p\D_n^{\mult}$.  Granting this claim, Lemma \ref{limitvanishing}
    gives $\D^{\mult}=0 = \D^{\mult,1}$ as desired.
    To prove the claim, by Oda's Theorem \ref{thm:Oda} we are reduced to
    proving that for all $n$, the trace mapping 
    \begin{equation*}
        \xymatrix{
            {\pi_* :H^0(X_{n+1},\Omega^1_{X_{n+1}})^{\Vbij}}\ar[r] & {H^0(X_n,\Omega^1_{X_n})^{\Vbij}}
            }
    \end{equation*}
    is {\em zero}.  To see this, let $\omega\in H^0(X_{n+1},\Omega^1_{X_{n+1}})^{\Vbij}$ be arbitrary, 
    and suppose that $\pi_*\omega\neq 0$.  We will derive a contradiction.  
    
    As $\pi_*\omega\neq 0$, there exists $u\in H^1(X_n,\O_{X_n})$ with $\langle \pi_*\omega,u \rangle\neq 0$,
    where
    \begin{equation*}
        \xymatrix{
            {\langle\cdot,\cdot,\rangle:  H^0(X_n,\Omega^1_{X_n/k}) \times H^1(X_n ,\O_{X_n})} \ar[r] & k
            }
    \end{equation*}
    is the canonical (perfect!) Serre duality pairing.
    We may uniquely decompose $u = u_0 + z$ with $u_0\in H^1(X_E,\O_{X_E})^{\Fbij}$
    and $z\in H^1(X_E,\O_{X_E})^{\Fnil}$.  Choose $N$ large enough so that $F^Nz = 0$;
    as $\pi_*\omega$ lies in the $V$-bijective subspace, we may find $\omega_0$
    so that $V^N \omega_0 = \pi_*\omega$.  Then
    \begin{equation*}
        \langle \pi_*\omega,u\rangle = \langle \pi_*\omega, u_0\rangle + \langle \pi_*\omega, z\rangle
        = \langle \pi_*\omega, u_0\rangle + \langle V^N\omega_0, z\rangle
        = \langle \pi_*\omega, u_0\rangle + \sigma^{-N}\langle \omega_0, F^Nz\rangle
        = \langle \pi_*\omega, u_0\rangle         
    \end{equation*}
    As $k$ is algebraically closed, the canonical map 
    \begin{equation*}
        \xymatrix{
            {H^1(X_n,\O_{X_n})^{F=1}\tens_{\FF_p} k} \ar[r] & {H^1(X_n,\O_{X_n})^{\Fbij}}
        }
    \end{equation*}
    is an isomorphism of $k$-vector spaces, so that  $H^1(X_n,\O_{X_n})^{\Fbij}$
    has a $k$-basis consisting of $F$-fixed vectors.  Since $\langle \pi_*\omega,u_0\rangle\neq 0$,
    there must exist some $F$-fixed basis vector $e$ with $\langle \pi_*\omega, e\rangle \neq 0$.
    Let $Y\rightarrow X_n$ be the unramified $\ZZ/p\ZZ$-cover of $X_n$ corresponding to $e$,
    and $E$ its function field.
    By construction, $E$ is an unramified $\ZZ/p\ZZ$-extension of $K_n$, whence a subfield of $K_{n+1}$, 
    so the covering map $\pi: X_{n+1}\rightarrow X_{n}$ factors as
    \begin{equation*}
        \xymatrix{
            \pi: X_{n+1} \ar[r]^-{\rho} & Y \ar[r]^{\tau} & X_{n}
        }.
    \end{equation*}
    We then compute
    \begin{equation*}
        \langle \pi_* \omega , e\rangle = \langle \tau_*\rho_* \omega, e\rangle = \langle \rho_* \omega, \tau^*e\rangle = 0,
    \end{equation*}
    since by construction, the pullback $\tau^*e$ of $e$ to $H^1(Y,\O_Y)^{F=1}$ is zero.
    This is a contradiction, whence $\pi_*\omega = 0$ as claimed.
    
    From  Theorem \ref{thmB} \ref{etb} we deduce a canonical isomorphism $\D_S^{\mult}\simeq I$,
    whence $I$ is a free $\Lambda$-module of rank $\gamma$ by Theorem \ref{thmA} \ref{thmA:str}.
    Writing $\Lambda':=\ZZ_p[\![\Gamma]\!]$ and $I'$ for its augmentation ideal, 
    it follows that $I'$ is a free $\Lambda'$-module of rank $\gamma$.
    One way to see this is to use the base change isomorphism
    $$
        \Lambda\tens_{\Lambda'}\Tor^1_{\Lambda'}(\FF_p,I')  \simeq \Tor^1_{\Lambda}(\Lambda\tens_{\Lambda'}\FF_p,\Lambda\tens_{\Lambda'}I')\simeq \Tor^1_{\Lambda}(k,I) =0
    $$
    together with faithful flatness of $\Lambda'\rightarrow \Lambda$ 
    to conclude that $\Tor^1_{\Lambda'}(\FF_p,I')=0$, and hence that $I'$
    is a projective module over the local ring $\Lambda'$.  The isomorphism $\Lambda\otimes_{\Lambda'}I'\simeq I$
    then shows that $I'$ has rank $\gamma$ as a $\Lambda'$-module.
    If $M$ is any finite $\Gamma$-module of $p$-power order, applying $\Hom_{\Lambda'}(\cdot,M)$ to the tautological exact sequence 
    \begin{equation*}
        \xymatrix{
            0 \ar[r] & {I'} \ar[r] & {\Lambda'} \ar[r]^-{\varepsilon} & {\ZZ_p} \ar[r] & 0
        }
    \end{equation*}
    therefore yields
    \begin{equation*}
        H^i(\Gamma,M) \simeq \Ext^i_{\Lambda'}(\ZZ_p,M) = 0
    \end{equation*}
    for $i > 1$ as well as the exact sequence 
    \begin{equation*}
        \xymatrix{
            0 \ar[r] & {\Hom_{\Lambda'}(\ZZ_p,M)} \ar[r]^{\varepsilon^*} & {\Hom_{\Lambda'}(\Lambda',M)}\ar[r] & {\Hom_{\Lambda'}(I',M)}
            \ar[r] & {\Ext^1_{\Lambda'}(\ZZ_p,M)}\ar[r] & 0
        }.
    \end{equation*}
    When $M=\FF_p$, the map $\varepsilon^*$ is an isomorphism as any $\Lambda'$-module homomorphism $\Lambda'\rightarrow \FF_p$
    factors through $\varepsilon: \Lambda'\twoheadrightarrow \ZZ_p$, and we find
    \begin{equation*}
        H^1(\Gamma,\FF_p) \simeq \Ext^1_{\Lambda'}(\ZZ_p,\FF_p) \simeq \Hom_{\Lambda'}(I',\FF_p) \simeq \FF_p^{\gamma}.
    \end{equation*}
    By Propositions 21 and 24 of \cite[Chapter I, \S4]{SerreGC}, 
     $\Gamma$ is a free pro-$p$ group on $\gamma$ generators.
\end{proof}

\begin{proof}[Proof of Corollary \ref{thmI}]
    This is proceeds along the same lines as the proof of Corollary \ref{thmH}:
    from the isomorphisms  $\D_n^{\ll}/J\D_n^{\ll}\simeq \D(\Gscr_n[J])$
    and Theorem \ref{thmB} \ref{eta} we deduce
    \begin{equation*}
        \D(\Gscr_n[J]) \simeq \Omega_n \otimes_{\Omega}\D^{\ll}/J\D^{\ll},
    \end{equation*}
    and the asymptotic formula then follows from Theorem \ref{thm:Dieudonne} \ref{lenord}, \cite[Proposition 2.18]{EmertonPaskunas},
    and---when $\Gamma=\ZZ_p^d$ is abelian---\cite[Theorem 1.8]{MonskyHK}.
\end{proof}

\begin{remark}\label{llDS}.
Let $\Gamma$ be a torsion-free $p$-adic Lie group of dimension $d$,
and $\{X_n\}$ a $\Gamma$-tower with $X_n$ corresponding to the $n$-th 
subgroup in the lower central $p$-series of $\Gamma$.
Note that when $J=(p)$, we know that 
 $\D^{\ll}/J\D^{\ll}$ is a {\em free} $\Omega$-module
 by Theorem \ref{thmB} \ref{eta}, and it follows from
  \cite[Proposition 3.5 (ii)]{Venjakob}
  that $\D^{\ll}/p\D^{\ll}$ has dimension $\delta=d$.
  The asymptotic formula of Corollary \ref{thmH} therefore yields
  constants $\nu \ge \mu \ge \frac{1}{d!}$ with
  $\mu p^{nd} + O(p^{(d-1)n})\le\log_p|\Gscr_n^{\ll}[p]|\le\nu p^{nd} + O(p^{(d-1)n})$. 
  On the other hand, by definition
$\log_p|\Gscr_n^{\ll}[p]| = h_n$,
where $h_n$ is the height of the $p$-divisible group $\Gscr_n$, and---since $\Gscr_n$
is the $p$-divisible group of the Jacobian of $X_n$---one {\em knows} that $h_n=2(g_n-\gamma_n)$,
where $g_n$ and $\gamma_n$ are the Genus and $p$-rank of $X_n$, respectively.
From the Riemann--Hurwitz and Deuring Shafarevich formulae we thus obtain the {\em exact} formula
\begin{equation*}
    \log_p|\Gscr_n^{\ll}[p]| = 2(g_0-\gamma_0)\cdot p^{dn}
\end{equation*}
so we may take $\mu=\nu=2(g_0-\gamma_0)$ in this case.  One has analogous exact formulae when $J=(F)$ or $J=(V)$;
in general, the order of $\Gscr_n^{\ll}[J]$ can be {\em extremely} subtle,
and we do not know of any exact formula beyond the three cases $J=(p)$, $J=(F)$, $J=(V)$ we have mentioned.
In this way, we may view Corollary \ref{thmI} as an asymptotic generalization of
the Deuring--Shafarevich formula to local--local $p$-torsion subgroup schemes in unramified $\Gamma$-towers.
\end{remark}

\bibliographystyle{amsalpha}
\bibliography{iw}

\end{document}